\def\hyph{-\penalty0\hskip0pt\relax} % allow for hyphenation for words with dash: e.g. ``non-archimedean'' does't break, but ``non\hyph archimedean'' does
\providecommand \@dotsep{5} \def\listtodoname{List of Todos} \def\listoftodos{\@starttoc{tdo}\listtodoname} \makeatother %\Todo{} for margin notes, suppress in pdf with option [disable]
\makeatletter\newcommand{\mylabel}[2]{#2\def\@currentlabel{#2}\label{#1}}\makeatother                 % allows for reference to user defined labels in enumerated lists
\patchcmd{\@startsection}{\@afterindenttrue}{\@afterindentfalse}{}{}             %omit indentation of the first paragraph of a section
\patchcmd{\part}{\bfseries}{\bfseries}{}{}
\patchcmd{\section}{\scshape}{\bfseries}{}{}\renewcommand{\@secnumfont}{\bfseries} %boldface no smallcaps section and subsection titles with numbers
\patchcmd{\@settitle}{\uppercasenonmath\@title}{\large}{}{}
\patchcmd{\@setauthors}{\MakeUppercase}{}{}{}
\theoremstyle{plain}
\newtheorem{thm}{Theorem}[section] % provides command \Cref{}, which produces citations like ``Theorem 1.1''.
\newaliascnt{lemma}{thm}\newtheorem{lemma}[lemma]{Lemma}\aliascntresetthe{lemma}
\newaliascnt{cor}{thm}\newtheorem{cor}[cor]{Corollary}\aliascntresetthe{cor}
\newaliascnt{prop}{thm}\newtheorem{prop}[prop]{Proposition}\aliascntresetthe{prop}
\newtheorem{thmA}{Theorem} %alphabetic theorem counter: Theorem A, Theorem B, ...
\newaliascnt{propA}{thmA}\newtheorem{propA}[propA]{Proposition}\aliascntresetthe{propA}
\newaliascnt{lemmaA}{thmA}\aliascntresetthe{lemmaA}
\newtheorem*{prop*}{Proposition}
\newtheorem*{thm*}{Theorem}
\newtheorem*{lem*}{Lemma}
\newtheorem*{cor*}{Corollary}
\def\equationautorefname~#1\null{Equation~(#1)\null}
\theoremstyle{definition}
\newaliascnt{df}{thm}\newtheorem{df}[df]{Definition}\aliascntresetthe{df}
\newaliascnt{rem}{thm}\newtheorem{rem}[rem]{Remark}\aliascntresetthe{rem}
\newaliascnt{ex}{thm}\newtheorem{ex}[ex]{Example}\aliascntresetthe{ex}
\newaliascnt{conj}{thm}\aliascntresetthe{conj}
\newaliascnt{problem}{thm}\aliascntresetthe{problem}
\newtheorem*{df*}{Definition}
\newtheorem*{ex*}{Example}
\newtheorem*{rem*}{Remark}
\newtheorem*{question*}{Question}
\newtheorem*{problem*}{Problem}
\theoremstyle{remark}
\DeclareRobustCommand{\gobblefour}[5]{}    % Command \SkipTocEntry for suppressing a section title in TOC
\DeclareMathOperator{\upD}{D}   % The D for the degeneracy locus
\DeclareMathOperator{\upLin}{Lin}   % The Lin for the lineality space
\DeclareMathOperator{\upR}{R}   % The R for representation spaces
\DeclareMathOperator{\Gr}{Gr}
\DeclareMathOperator{\Dr}{Dr}
\DeclareMathOperator{\PolyGr}{PolyGr}
\DeclareMathOperator{\PolyDr}{PolyDr}
\DeclareMathOperator{\Pl}{Pl}
\DeclareMathOperator{\pl}{pl}
\DeclareMathOperator{\ulineGr}{\underline{Gr}}
\DeclareMathOperator{\Hom}{Hom}
\DeclareMathOperator{\Tracts}{Tracts}
\DeclareMathOperator{\Sets}{Sets}
\DeclareMathOperator{\Maps}{Maps}
\DeclareMathOperator{\Stab}{Stab}
\DeclareMathOperator{\sign}{sign}
\DeclareMathOperator{\conv}{conv}
\DeclareMathOperator{\rk}{rk}
\DeclareMathOperator{\colim}{colim\,}
\DeclareMathOperator{\hypersum}{\,\raisebox{-2.2pt}{\larger[2]{$\boxplus$}}\,}
\DeclareMathOperator{\Up}{\Pi^{\uparrow}}
\newcommand\D{{\mathbb D}}
\newcommand\F{{\mathbb F}}
\newcommand\G{{\mathbb G}}
\renewcommand\H{{\mathbb H}}
\newcommand\K{{\mathbb K}}
\newcommand\N{{\mathbb N}}
\renewcommand\P{{\mathbb P}}
\newcommand\R{{\mathbb R}}
\renewcommand\S{{\mathbb S}}
\newcommand\T{{\mathbb T}}
\newcommand\U{{\mathbb U}}
\newcommand\Z{{\mathbb Z}}
\newcommand\bJ{{\mathbf J}}
\newcommand\bN{{\mathbf N}}
\newcommand\cB{{\mathcal B}}
\newcommand\cF{{\mathcal F}}
\newcommand\cP{{\mathcal P}}
\newcommand\fR{{\mathfrak R}}
\newcommand\br{{\textup{\bf{r}}}}
\newcommand\balpha{{\bm{\alpha}}}
\newcommand\bbeta{{\bm{\beta}}}
\newcommand\bgamma{{\bm{\gamma}}}
\newcommand\biota{{\bm{\iota}}}
\newcommand\bchi{{\bm{\chi}}}
\newcommand\brho{{\bm{\rho}}}
\newcommand\bsigma{{\bm{\sigma}}}
\renewcommand{\epsilon}{\varepsilon}
\newcommand\0{{\mathbf{0}}}
\newcommand\1{{\mathbf{1}}}
\newcommand\rbar{{\bm\bar{r}}}
\newcommand\Jbar{{\bm\bar{J}}}
\newcommand\Funpm{{\F_1^\pm}}
\newcommand\ev{\textup{ev}}
\renewcommand\max{\textup{max}}
\renewcommand{\min}{\textup{min}}
\renewcommand\geq{\geqslant}
\renewcommand\leq{\leqslant}
\newcommand{\gen}[1]{\langle #1 \rangle}
\newcommand{\norm}[1]{|#1|}
\newcommand{\pastgen}[2]{#1/\!\!/\gen{#2}}
\newcommand{\cross}[5]{\mathchoice{\scalebox{1.3}{$\big[$}\,\raisebox{1pt}{$\begin{matrix}{\scalebox{0.9}{$#1$}}\hspace{-5pt}&{\scalebox{0.9}{$#2$}}\\[-2pt]{\scalebox{0.9}{$#3$}}\hspace{-5pt}&{{\scalebox{0.9}{$#4$}}}\end{matrix}$}\,\scalebox{1.3}{$\big]$}_{#5}}{\big[\begin{smallmatrix}{#1}&{#2}\\{#3}&{#4}\end{smallmatrix}\big]_{#5}}{}{}}   % macro for cross ratios, with different sizes for displaystyle and textstyle
\newcommand{\crossinv}[5]{\mathchoice{\scalebox{1.3}{$\big[$}\,\raisebox{1pt}{$\begin{matrix}{\scalebox{0.9}{$#1$}}\hspace{-5pt}&{\scalebox{0.9}{$#2$}}\\[-2pt]{\scalebox{0.9}{$#3$}}\hspace{-5pt}&{{\scalebox{0.9}{$#4$}}}\end{matrix}$}\,\scalebox{1.3}{$\big]$}_{#5}^{-1}}{\big[\begin{smallmatrix}{#1}&{#2}\\{#3}&{#4}\end{smallmatrix}\big]_{#5}^{-1}}{}{}}   % macro for inverses of cross ratios, with different sizes for displaystyle and textstyle
\renewcommand{\setminus}{\backslash}
\newcommand{\minor}[2]{\setminus #1 / #2}
\newcommand{\hyperplus}{\mathrel{\,\raisebox{-1.1pt}{\larger[-0]{$\boxplus$}}\,}}
\renewcommand\emptyset\varnothing
\def\smallunderbrace#1{\mathop{\vtop{\m@th\ialign{##\crcr $\hfil\displaystyle{#1}\hfil$\crcr \noalign{\kern3\p@\nointerlineskip} \tiny\upbracefill\crcr\noalign{\kern3\p@}}}}\limits}\makeatother
\title{Representation theory for polymatroids}
\author{Matthew Baker}
\address{\rm Matthew Baker, Georgia Institute of Technology}
\email{mbaker@math.gatech.edu}
\author{June Huh}
\address{\rm June Huh, Princeton University and Korea Institute for Advanced Study}
\email{huh@princeton.edu}
\author{Donggyu Kim}
\address{\rm Donggyu Kim, Georgia Institute of Technology}
\email{donggyu@gatech.edu}
\author{Mario Kummer}
\address{\rm Mario Kummer, Technische Universit\"at Dresden}
\email{mario.kummer@tu-dresden.de}
\author{Oliver Lorscheid}
\address{\rm Oliver Lorscheid, University of Groningen}
\email{olorscheid@rug.nl}
\begin{document}

%\ \vspace{-30pt}

\begin{abstract}
 We develop a theory of representations of (discrete) polymatroids over tracts in terms of Pl\"ucker coordinates and suitable Pl\"ucker relations. As special cases, we recover polymatroids themselves as polymatroid representations over the Krasner hyperfield $\K$ and M-convex functions as polymatroid representations over the tropical hyperfield $\T_0$. 
 
 We introduce and study several useful operations for polymatroid representations, such as translation and refined notions of minors and duality which have better properties than the existing definitions; for example, deletion and contraction become dual operations (up to translation) in our setting. We also prove an \emph{idempotency principle} which asserts that polymatroids which are not translates of matroids are representable only over tracts that are idempotent in a certain specific sense (in particular $-1 = 1$).
 
 The space of all representations of a polymatroid $J$, which we call the \emph{thin Schubert cell} of $J$, is represented by an algebraic object called the {\emph{universal tract}} of $J$. When we restrict to just the $3$-term Pl\"ucker relations, we obtain the \emph{weak thin Schubert cell}, and passing to torus orbits yields the \emph{realization space}. These are represented by the \emph{universal pasture} and the \emph{foundation} of $J$, respectively. We exhibit a canonical bijection between the universal tract and the universal pasture, which is new even in the case of matroids, and we show that the foundation of a polymatroid is generated by \emph{cross ratios}. We also describe a complete list of multiplicative relations between cross ratios.
 
 Thin Schubert cells and realization spaces are canonically embedded in certain tori. Over idempotent tracts, we show that thin Schubert cells contain a canonical torus orbit and split naturally as a product of the realization space with this distinguished torus.
\end{abstract}

\maketitle

\begingroup
  \setlength{\parskip}{0pt}
  \begin{small}
  \tableofcontents
  \end{small}
\endgroup

%\begin{small} \tableofcontents \end{small}

%%%%%%%%%%%%%%%%%%%%%%%%%%%%%%%%%%%%%%%%%%%%%%%%%%%%%%%%%%%%%%%%%%%%%%%%%%%%%%%%%%%%%%%%%%%%%%%%%%%%%%%%%%%%%%%%%%%%%%%%%%%%%%%%%%%%%%%%%%%%%%%%%%%%%%%%%%%%%%%%%%%%%%%%%%%%%%%%%%%%%%%%%%%%%%%%%%%%%%%%%%

\section*{Introduction}
\label{introduction}

Polymatroids were originally introduced in 1970 by Jack Edmonds
% and Gian-Carlo Rota \cite{Edmonds-Rota66}, and further studied by 
\cite{Edmonds70} in the context of combinatorial optimization theory. Kazuo Murota demonstrated the importance of discrete polymatroids\footnote{In the body of this paper, we will be concerned solely with discrete polymatroids and so we omit the modifier ``discrete'', calling these objects simply ``polymatroids''. However, for the purposes of this historical introduction, we distinguish between polymatroids in the sense of Edmonds (which are certain real-valued submodular functions) with discrete polymatroids (a.k.a. M-convex sets or integer polymatroids) in the sense of Murota et.~al.} to discrete convex analysis, via the equivalent notion of M-convex sets, and introduced M-convex functions as valuated generalizations. Much of this theory is developed in his book ``Discrete Convex Analysis'' \cite{Murota03}. 

Discrete polymatroids are treated more systematically from a combinatorial point of view analogous to matroid theory in Herzog and Hibi’s article \cite{Herzog-Hibi02}. As a sampling of the many papers generalizing results from matroid theory to discrete polymatroids, we mention \cite{Bernardi-Kalman-Postnikov22,Oxley-Semple-Whittle16,Oxley-Whittle93a,Oxley-Whittle93}.

Postnikov \cite{Postnikov09} investigated a class of polytopes called ``generalized permutohedra'', which arise as deformations of the standard permutohedron. These polytopes were subsequently recognized to be equivalent (up to translation) to base polytopes of polymatroids by an extension of the classical result by Gelfand, Goresky, MacPherson, and Serganova \cite{Gelfand-Goresky-MacPherson-Serganova87}.
Since polymatroids are cryptomorphically equivalent to their base polytopes, generalized permutohedra can be viewed as another combinatorial representation of polymatroids.

In recent years, there has been an explosion of interest in polymatroids, motivated in part by Petter Br\"and\'en's discovery \cite{Branden07} that the support of a homogeneous multivariate stable polynomial is an M-convex set, that is, the set of bases of a discrete polymatroid. This generalized an earlier result of Choe, Oxley, Sokal, and Wagner \cite{Choe-Oxley-Sokal-Wagner04} showing that the support of a \emph{multi-affine} homogeneous stable polynomial is the set of bases of a matroid, and was generalized by Br\"and\'en and Huh, who showed that the support of a \emph{Lorentzian} polynomial is always M-convex \cite[Theorem 2.25]{Branden-Huh20}. The latter result is particularly interesting because, in fact,   a subset of $\N^n$ is M-convex if and only if its normalized generating polynomial is a Lorentzian polynomial \cite[Theorem 3.10]{Branden-Huh20}.

Lorentzian polynomials have a close connection to combinatorial Hodge theory, and Hodge theory for matroids in the sense of Adiprasito--Huh--Katz \cite{Adiprasito-Huh-Katz18} was recently extended to discrete polymatroids by Pagaria and Pezzoli \cite{Pagaria-Pezzoli23}. Other papers exploring recently discovered connections between discrete polymatroids and combinatorial Hodge theory include the work of Crowley--Huh--Larson--Simpson--Wang \cite{Crowley-Huh-Larson-Simpson-Wang22}, Crowley--Simpson--Wang \cite{Crowley-Simpson-Wang24}, and Eur--Larson \cite{Eur-Larson24}.

The relevance of, and interest in, discrete polymatroids is by no means confined to combinatorial Hodge theory. As a sampling of some other interesting applications of discrete polymatroids, we mention:
\begin{itemize}\itemsep 5pt
\item Knutson and Tao's proofs of Horn's conjecture and the saturation conjecture \cite{Knutson-Tao99} implicitly employ M-convex functions, which correspond to $\mathbb{T}_0$-representations of discrete polymatroids in our terminology (where $\mathbb{T}_0$ denotes the tropical hyperfield). The ``hives'' that provide the technical foundation for their work are precisely $\mathbb{T}_0$-representations of $\Delta^r_3$. We discuss this example in greater detail in \autoref{subsection: hives}.
\item Br\"and\'en \cite{Branden11} disproved a conjecture of Helton and Vinnikov, that any real zero polynomial admits a certain determinantal representation, by studying the discrete polymatroid which Gurvits had previously associated to a hyperbolic polynomial.
\item Amini and Esteves showed that the tropicalization of linear series on an algebraic curve gives rise to certain families of tilings of vector spaces by discrete polymatroids \cite{Amini-Esteves24}.
\item Farr\`as, Mart\'i--Farr\'e, and Padr\'o provide applications of discrete polymatroids to cryptographic secret-sharing schemes \cite{Farras-Marti-Farre-Padro12}.
\item Discrete polymatroids arise naturally from the Klyaschko datum of a (framed) toric vector bundle, see \cite{Khan-Maclagan24}.
\end{itemize}

\subsection*{Motivation for the present paper} 
Recently some intriguing links between Lorentzian polynomials and representations of discrete polymatroids over tracts in the sense of this paper were discovered in \cite{BHKL1,BHKL2}. For example, for every discrete polymatroid $J$, the dimension of the space $L_J$ of Lorentzian polynomials with support $J$ is equal to the rank of the finitely generated abelian group $\widehat T_J^\times$ defined in \autoref{subsection: the universal tract} below.\footnote{As we will see, $\widehat T_J^\times$ is the multiplicative group of the ``extended universal tract'' of $J$, which is characterized by the property that ${\rm Hom}(\widehat T_J,F)$ is equal to the set of $F$-representations of $J$ for every tract $F$.} Moreover, $L_J$ is always contained in the space $\upR_J(\T_2)$ of representations of $J$ over the generalized triangular hyperfield $\T_2$ defined in \autoref{subsection: more examples of tracts} below.

These observations served as the primary motivation for working out the ``foundational'' results described in the present paper. Indeed, in order to precisely formulate and explore such a connection, one needs a rigorous development of the representation theory of discrete polymatroids over tracts, generalizing Baker and Bowler's theory \cite{Baker-Bowler19} of matroids with coefficients and Baker and Lorscheid's subsequent work \cite{Baker-Lorscheid20,Baker-Lorscheid21,Baker-Lorscheid-Zhang24} on foundations of matroids. This is what we systematically set out to do below.

\subsection*{A word of warning}
Linear representations of a discrete polymatroid $J$ over a field $F$, as studied in \cite{Farras-Marti-Farre-Padro12,Oxley-Whittle93}, differ from the $F$-representations of $J$ defined in this paper.
In the present work, we are primarily concerned with generalizing M-convex functions in the sense of Murota by viewing them as $\T_0$-representations; there does not appear to be a systematic way to generalize both this point of view and the traditional notion of linear representations over fields. %We hope this does not cause any confusion either here or in the future.

%%%%%%%%%%%%%%%%%%%%%%%%%%%%%%%%%%%%%%%%%%%%%%%%%%%%%%%%%%%%%%%%%%%%%%%%%%%%%%%%%%%5
\subsection*{Content outline} 
Before we describe the contents of this text in detail, we provide the following overview of our results.

\begin{enumerate}
 \item[\textbf{Part 1.}] 
 We introduce and study several operations for polymatroids: duality and translates (\autoref{subsection: duality and translates}), embedded minors (\autoref{subsection: embedded minors}), which refine previous notions of polymatroid minors, permutation and extension of variables (\autoref{subsection: permutation and extension of variables}), and direct sums (\autoref{subsection: direct sums}).
 \item[\textbf{Part 2.}]
 We exhibit a characterization of polymatroids in terms of Pl\"ucker relations (\autoref{thm: Pluecker relations for polymatroids}), which leads to the notion of a \emph{representation} of a polymatroid $J$ over a tract $F$ (\autoref{subsection: representations over tracts}). A feature of central importance is the \emph{idempotency principle for proper polymatroids} (\autoref{prop: idempotency principle}). M-convex functions are essentially the same as polymatroid representations over the tropical hyperfield (\autoref{subsection: M-convex functions as representations over the tropical hyperfield}).
 \item[\textbf{Part 3.}]
 We investigate thin Schubert cells of polymatroids and show that they are represented by the \emph{universal tract} (\autoref{prop: universal property of the universal tract}). In an analogous way, the weak thin Schubert cell, defined by the $3$-term Pl\"ucker relations, is represented by the \emph{universal pasture} (\autoref{prop: universal property of the universal pasture}). The \emph{comparison theorem} (\autoref{thm: bijection between the universal pasture and the universal tract}) establishes a bijection between the universal pasture and the universal tract . The realization space consists of polymatroid representations modulo rescaling and is represented by the \emph{foundation} (\autoref{prop: universal property of the foundation}). The foundation is generated by cross ratios, for which we establish a complete list of relations (\autoref{thm: generators and relations for the foundation}). We extend the polymatroid operations from Part 1 to thin Schubert cells and realization spaces (\autoref{section: representations of embedded minors and duals}).
 \item[\textbf{Part 4.}]
 We establish and study canonical embeddings of representations spaces, thin Schubert cells, and realization spaces into tori (\autoref{section: the torus embedding of the representation space}, \autoref{section: the Plucker embedding for thin Schubert cells}, and \autoref{section: the canonical torus embedding for realization spaces}, respectively). In the idempotent case, we study lineality spaces (\autoref{subsection: the lineality space}) and a decomposition of thin Schubert cells into a product of the realization space with a torus orbit (\autoref{subsection: decomposition of the representation space}).
\end{enumerate}

%%%%%%%%%%%%%%%%%%%%%%%%%%%%%%%%%%%%%%%%%%%%%%%%%%%%%%%%%%%%%%%%%%%%%%%%%%%%%%%%%%%
\subsection*{Polymatroids}
Discrete polymatroids appear in different cryptomorphic disguises in the literature: as a ``rank function'' $\rk\colon 2^{[n]}\to\N$ (where $2^{[n]}$ is the power set of $[n]=\{1,\dotsc,n\}$), as an integral polytope in $\R_{\geq0}^n$, as a collection of ``independent vectors'' in $\N^n$, and as a collection of ``bases'' contained in the dilated discrete simplex
\[
 \Delta^r_n \ = \ \big\{\alpha\in\N^n \, \big| \, \norm\alpha=r \big\}
\]
for some $r\geq0$ (the \emph{rank of the polymatroid}) where $\norm\alpha=\alpha_1+\dotsc+\alpha_n$ (``M-convex set''). See \autoref{section: polymatrois and M-convex sets} for an overview of these different descriptions of polymatroids, and of the relations between them. 

\begin{quote}
 \textbf{Convention:} We use the terms \emph{polymatroid} and \emph{M-convex set} interchangeably in this paper as synonyms for ``discrete polymatroid''. In particular, we omit the modifier ``discrete'' except when we wish to make comparisons to the literature.
\end{quote}

\begin{ex*}%[matroids as polymatroids]
 Matroids are polymatroids in a natural way: a matroid $M$ defines the polymatroid $J=\{ \sum_{i\in B} \epsilon_i \mid B\text{ is a basis of $M$}\}$, where $\epsilon_i$ is the $i$-th standard basis vector of $\N^n$. We say that a polymatroid $J$ is a \emph{matroid} if it of this form.
\end{ex*}

For the purposes of the introduction, it suffices to understand polymatroids using a novel characterization in terms of Pl\"ucker relations in the Krasner hyperfield (\autoref{thmA}). In order to properly formulate this result, we first introduce the concept of a {\em tract}.

%%%%%%%%%%%%%%%%%%%%%%%%%%%%%%%%%%%%%%%%%%%%%%%%%%%%%%%%%%%%%%%%%%%%%%%%%%%%%%%%%%%5
\subsection*{Tracts}
Tracts were introduced by Baker and Bowler in \cite{Baker-Bowler19} as a generalization of fields over which one can still develop a satisfying theory of matroid representations. Examples of matroid representations over a tract include matroids themselves, oriented matroids, valuated matroids, and linear subspaces of a vector space.

In this paper, we develop a theory of \emph{polymatroid representations} over a tract.

%%%%%%%%%%%%%%%%%%%%%%%%%%%%%%%%%%%%%%%%%%%%%%%%%%%%%%%%%%%%%%%%%%%%%%%%%%%%%%%%%%%5
\subsection*{Definition and examples of tracts}
A \emph{tract} is a multiplicatively written commutative monoid $F$ with an absorbing element $0\in F$ such that $F^\times=F-\{0\}$ forms an abelian group (the \emph{unit group}), together with a subset $N_F$ (the \emph{null set}) of the group semiring $F^+=\N[F^\times]$ that satisfies the following properties:
\begin{enumerate}\itemsep 5pt
 \item $N_F$ is an \emph{ideal} of $F^+$, that is,\ $0\in N_F$, $N_F+N_F=N_F$ and $F\cdot N_F=N_F$;
 \item there is an element $-1\in F$ such that for all $a,b\in F$, one has $a+b\in N_F$ if and only if $b=(-1)\cdot a$.
\end{enumerate}
We write $-a=(-1)\cdot a$ and call this element the \emph{additive inverse of $a$}. We write $a-b$ for the element $a+(-b)$ of $F^+$.

Examples of tracts are fields $F$, whose addition gets replaced by the null set 
\[\textstyle
 N_F \ = \ \big\{ \sum a_i \, \big| \, \sum a_i=0 \text{ as elements of }F\},
\]
the \emph{Krasner hyperfield} $\K=\{0,1\}$, with null set 
\[
 N_\K \ =\ \N-\{1\} \ = \ \{0, \ 1+1,\ 1+1+1,\ 1+1+1+1, \dotsc \},
\]
and the \emph{tropical hyperfield} $\T_0=\R_{\geq0}$, with null set 
\[\textstyle
 N_{\T_0} \ = \ \big\{ \sum a_i \, \big| \, \text{the maximum appears twice among }a_1,\dotsc,a_n\in\R_{\geq0}\big\}.
\]
Note that $-1=1$ in $\K$ and in $\T_0$. See \autoref{section: tracts} for a more comprehensive introduction to tracts.

%%%%%%%%%%%%%%%%%%%%%%%%%%%%%%%%%%%%%%%%%%%%%%%%%%%%%%%%%%%%%%%%%%%%%%%%%%%%%%%%%%%5
\subsection*{Pl\"ucker relations for polymatroids}
The fundamental insight that leads to our notion of polymatroid representations over tracts is the following result, which is \autoref{thm: Pluecker relations for polymatroids}. 

We equip $\N^n$ with the component-wise partial order, where $\alpha\leq\beta$ if and only if $\alpha_i\leq\beta_i$ for all $i\in[n]$. For a subset $J\subseteq\Delta^r_n$, we define the infimum $\delta^-_J=\inf\, J$ and the supremum $\delta^+_J=\sup\, J$. These are the elements of $\N^n$ whose components are given by 
\[
 \delta^-_{J,i} \ = \ \min\{ \alpha_i \mid \alpha\in J\} \ \ \text{and} \ \ \delta^+_{J,i} \ = \ \max\{ \alpha_i \mid \alpha\in J\}.
\]
We write $\chi_J\colon\Delta^r_n\to\K$ for the characteristic function of $J$, defined by $\chi_J(\alpha)=1$ if and only if $\alpha\in J$.

\begin{thmA}\label{thmA}
A nonempty subset $J\subseteq\Delta^r_n$ is a polymatroid if and only if $\chi_J$ satisfies the Pl\"ucker relations
 \[
  \sum_{k=0}^s \ \chi_J(\alpha-\epsilon_{i_k}+\epsilon_{i_0}+\dotsb+\epsilon_{i_s})  \cdot  \chi_J(\alpha+\epsilon_{i_k}+\epsilon_{j_2}+\dotsb+\epsilon_{j_s})  \in  N_\K
 \]
 for any $s=2,\dotsc,r$, any $\alpha\in\Delta^{r-s}_n$, and any $i_0,\dotsc,i_s,j_2,\dotsc,j_s\in [n]$ such that
 \[
  \delta^-_J \ \leq \ \alpha \qquad \text{and} \qquad \alpha+\epsilon_{i_0}+\dotsb+\epsilon_{i_s}+\epsilon_{j_2}+\dotsb+\epsilon_{j_s} \ \leq \ \delta^+_J.
 \]
\end{thmA}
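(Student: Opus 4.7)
The Plücker relation over $\K$ unpacks to a purely combinatorial statement: since $N_\K = \N \setminus \{1\}$, it requires that for every admissible tuple $(\alpha, s, i_0, \ldots, i_s, j_2, \ldots, j_s)$, the number of indices $k \in \{0, \ldots, s\}$ with both $\beta_k := \alpha + \sum_{j \neq k} \epsilon_{i_j} \in J$ and $\gamma_k := \alpha + \epsilon_{i_k} + \sum_{j=2}^s \epsilon_{j_j} \in J$ is never exactly one. Both $\beta_k, \gamma_k \in \Delta^r_n$, and $\beta_k + \gamma_k$ is independent of $k$, so each index $k$ provides a candidate decomposition of a fixed vector into two elements of $\Delta^r_n$; the range constraints on $\alpha$ keep these decompositions inside the envelope $[\delta^-_J, \delta^+_J]$.

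For the forward direction, I plan to assume $J$ is M-convex and argue by contradiction: if some Plücker relation admits exactly one valid index $k_0$, I would apply the symmetric exchange axiom for M-convex sets to the pair $(\beta_{k_0}, \gamma_{k_0}) \in J \times J$ at a coordinate $i_{k'}$ (for some suitably chosen $k' \neq k_0$) lying in the positive support of $\beta_{k_0} - \gamma_{k_0}$. The exchange yields a coordinate $j$ in the negative support such that swapping produces two new elements of $J$; a matching argument shows that $j$ must lie in the prescribed set $\{i_{k_0}, j_2, \ldots, j_s\}$, so the resulting pair coincides with $(\beta_{k_1}, \gamma_{k_1})$ for some $k_1 \neq k_0$, contradicting uniqueness.

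For the backward direction, I would derive the M-convex exchange axiom from the Plücker relations. Given $\alpha, \beta \in J$ with $\alpha \neq \beta$, set $2s = \sum_i |\alpha_i - \beta_i|$, let $\alpha'$ be the componentwise minimum of $\alpha$ and $\beta$, and enumerate the positive support of $\alpha - \beta$ (with multiplicity) by indices $i_1, \ldots, i_s$ and the positive support of $\beta - \alpha$ by $i_0, j_2, \ldots, j_s$. Then $(\beta_0, \gamma_0) = (\alpha, \beta)$ is a valid pair for the $s$-Plücker relation based at $\alpha'$, so the relation forces a second valid pair $(\beta_{k^*}, \gamma_{k^*}) = (\alpha - \epsilon_{i_{k^*}} + \epsilon_{i_0}, \, \beta + \epsilon_{i_{k^*}} - \epsilon_{i_0})$ lying in $J \times J$, which is exactly the desired exchange. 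To obtain the prescribed-pivot form (with $i_{k^*}$ specified in advance), I would vary the Plücker data over all compatible choices or invoke the known equivalence between the ``for-all'' and ``exists'' versions of the M-convex exchange axiom.

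The main obstacle will be the careful bookkeeping of coincident indices---the lists $i_0, \ldots, i_s$ and $j_2, \ldots, j_s$ may share entries or contain repeated values---and the verification that the range constraints $\delta^-_J \leq \alpha$ and $\alpha + \sum \epsilon_{i_j} + \sum \epsilon_{j_j} \leq \delta^+_J$ are preserved at each step of the reduction. An attractive alternative strategy worth pursuing is to lift $J$ to a matroid $M$ on a larger ground set (splitting each $i \in [n]$ into $\delta^+_{J,i}$ copies) so that $J$ is recovered from the bases of $M$ via a symmetrized projection, and then transfer the classical characterization of matroids by $\K$-Plücker relations to the polymatroid setting.
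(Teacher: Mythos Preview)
Your backward direction is essentially the paper's argument, and your worry about the ``prescribed-pivot form'' is unfounded: once you set $i_0$ equal to the given coordinate $i$ with $\alpha_i < \beta_i$, the Pl\"ucker relation based at $\alpha' = \inf\{\alpha,\beta\}$ has $(\beta_0,\gamma_0)=(\alpha,\beta)$ as a nonzero term, and the second nonzero term $(\beta_{k^*},\gamma_{k^*})$ is exactly $(\alpha + \epsilon_{i_0} - \epsilon_{i_{k^*}},\, \beta - \epsilon_{i_0} + \epsilon_{i_{k^*}})$ with $i_{k^*}$ in the positive support of $\alpha-\beta$. That is the M-convex exchange axiom at the prescribed pivot $i_0$; no appeal to an equivalence of exchange axioms is needed. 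This is what the paper does.

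The forward direction, however, has a genuine gap. You propose to apply the symmetric exchange axiom to $(\beta_{k_0},\gamma_{k_0})$ at some coordinate $i_{k'}$ with $k' \neq k_0$ lying in the positive support of $\beta_{k_0} - \gamma_{k_0}$. The exchange then returns a coordinate $j$ in the positive support of $\gamma_{k_0} - \beta_{k_0}$, which does lie in $\{i_{k_0}, j_2, \ldots, j_s\}$ as you say. But if $j$ is one of the $j_\ell$ rather than $i_{k_0}$, then
\[
\beta_{k_0} - \epsilon_{i_{k'}} + \epsilon_j \ = \ \alpha + \sum_{l \neq k_0,\,l\neq k'} \epsilon_{i_l} + \epsilon_j
\]
is \emph{not} of the form $\beta_{k_1} = \alpha + \sum_{l\neq k_1}\epsilon_{i_l}$ for any $k_1$, because the multiset $\{i_l : l \neq k_0, l \neq k'\} \cup \{j\}$ does not match $\{i_l : l \neq k_1\}$ unless $j = i_{k_0}$. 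So you have not produced a second nonzero term of the \emph{same} Pl\"ucker relation.

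The fix---and this is precisely the paper's argument---is to exchange in the other direction, at the coordinate $i_{k_0}$ itself. Either $(\beta_{k_0})_{i_{k_0}} \geq (\gamma_{k_0})_{i_{k_0}}$, in which case $i_{k_0}$ is repeated among the $i_l$ and the corresponding duplicate index already furnishes a second nonzero term; or $(\beta_{k_0})_{i_{k_0}} < (\gamma_{k_0})_{i_{k_0}}$, in which case the exchange axiom yields $j$ in the positive support of $\beta_{k_0} - \gamma_{k_0} \subseteq \{i_l : l \neq k_0\}$, say $j = i_{k'}$, and then
\[
(\beta_{k_0} + \epsilon_{i_{k_0}} - \epsilon_{i_{k'}},\ \gamma_{k_0} - \epsilon_{i_{k_0}} + \epsilon_{i_{k'}}) \ = \ (\beta_{k'}, \gamma_{k'})
\]
is the desired second nonzero term.
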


%%%%%%%%%%%%%%%%%%%%%%%%%%%%%%%%%%%%%%%%%%%%%%%%%%%%%%%%%%%%%%%%%%%%%%%%%%%%%%%%%%%
\subsection*{Polymatroid representations}

The generalization of the Pl\"ucker relations in \autoref{thmA} from $\K$ to an arbitrary tract $F$ involves a delicate choice of signs, unless $-1=1$ in $F$. 
It will turn out, \emph{a posteriori}, that a polymatroid which is not ``essentially'' a matroid is only representable over tracts in which $-1 = 1$, see \autoref{propC}.
Thus the reader can ignore the power of $-1$ in the following definition if he/she wishes, without much loss of generality.
The \emph{support} of a function $\rho\colon\Delta^r_n\to F$ is the set of $\alpha\in\Delta^r_n$ such that $\rho(\alpha)\neq0$. 

\begin{df*}
 Let $J\subseteq\Delta^r_n$ be a polymatroid and let $F$ be a tract. A \emph{strong Grassmann--Pl{\"u}cker representation of $J$ over $F$} is a map $\rho\colon\Delta^r_n\to F$, whose support is $J$, such that $\rho$ satisfies the \emph{Pl\"ucker relations}
 \begin{equation*}
  \sum_{k=0}^s \ (-1)^{k+\sigma(k)}\cdot \rho(\alpha -\epsilon_{i_k}+\epsilon_{i_0}+\dotsb+\epsilon_{i_s}) \cdot \rho(\alpha +\epsilon_{i_k}+\epsilon_{j_2}+\dotsb +\epsilon_{j_s}) \in   N_{F}
 \end{equation*}
 for any $2\leq s\leq r$, any $\alpha\in\Delta^{r-s}_n$, any $1\leq i_0\leq \dotsc\leq i_s\leq n$ and $1\leq j_2\leq\dotsc\leq j_s\leq n$ such that 
 \[
  \delta^-_J \ \leq \ \alpha \qquad \text{and} \qquad \alpha+\epsilon_{i_0}+\dotsb+\epsilon_{i_s}+\epsilon_{j_2}+\dotsb+\epsilon_{j_s} \ \leq \ \delta^+_J.
 \]
 where $\sigma(k)$ is the number of $k\in\{2,\ldots,s\}$ with $i_k<j_s$.
\end{df*}

\begin{rem*}
This definition turns out to be equivalent to \autoref{df: polymatroid representation}, which is stated using a slightly different formalism; see \autoref{subsection: simplified description of idempotent polymatroid representations} for details.
\end{rem*}

\begin{quote}
 \textbf{Convention:} Unless otherwise noted, we will use the term ``representation'' as shorthand for ``strong Grassmann--Pl{\"u}cker representation'' throughout this paper. We will also frequently refer to (strong Grassmann--Pl\"ucker) representations of $J$ over $F$ as ``$F$-representations of $J$''.
\end{quote}

\begin{ex*}
 Let $J\subseteq\Delta^r_n$ be a polymatroid. The characteristic function $\chi_J\colon\Delta^r_n\to\K$ of $J$ is the unique $\K$-representation of $J$. This establishes an equivalence between polymatroids and their (unique) $\K$-representations.
\end{ex*}

Another example of central interest are $\T_0$-representations, which are essentially the same as M-convex functions in the sense of Murota (\cite{Murota03}); cf.\ \autoref{subsection: M-convex functions as representations over the tropical hyperfield} for a definition. The following is \autoref{prop: M-convex functions are strong T_0-representations}.

\begin{propA}\label{propB}
 Let $\rho\colon\Delta^r_n\to \T_0$ be a map with support $J$. Then $-\log\rho\colon\Delta^r_n\to\R\cup\{\infty\}$ is an M-convex function if and only if $J$ is a polymatroid and $\rho$ is a $\T_0$-representation of $J$.
\end{propA}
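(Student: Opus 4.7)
The plan is to verify both directions of the equivalence simultaneously by translating the tropical Pl\"ucker relations of $\rho$ into conditions on $f=-\log\rho$ (with the convention $-\log 0=+\infty$), and then matching these with the standard Murota exchange axiom (M-EXC) for M-convex functions. The key observation is that $N_{\T_0}$ consists precisely of formal sums whose maximum is attained at least twice, so a sum $\sum_k \rho(\gamma_k)\rho(\delta_k)$ lies in $N_{\T_0}$ exactly when the minimum of $f(\gamma_k)+f(\delta_k)$ is either $+\infty$ throughout or is attained for at least two indices $k$.

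The first step is to reduce to the polymatroid case in both directions. If $\rho$ is a $\T_0$-representation with support $J$, composing with the tract morphism $\T_0\to\K$ that sends every nonzero element to $1$ turns $\rho$ into $\chi_J$, which is then a $\K$-representation; hence $J$ is a polymatroid by \autoref{thmA}. Conversely, if $f$ is M-convex, its effective domain $J$ is a polymatroid by the classical theory of M-convex functions. So in either direction I may assume that $J$ is a polymatroid and focus on the numerical exchange content of the axioms on $J$.

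Next I would analyze the $s=2$ Pl\"ucker relations. Because $\T_0$ is idempotent, the sign factors $(-1)^{k+\sigma(k)}$ drop out, as recorded in \autoref{subsection: simplified description of idempotent polymatroid representations}. Each $s=2$ relation involves three pairs $(\gamma_k,\delta_k)_{k=0,1,2}$ with the common sum $2\alpha+\epsilon_{i_0}+\epsilon_{i_1}+\epsilon_{i_2}+\epsilon_{j_2}$, parameterizing the three ways of splitting this multiset into two elements of $\Delta^r_n$. By a direct combinatorial translation---reading (M-EXC) as the assertion that for any split $(x,y)$ and any $i$ with $x_i>y_i$ some other split $(x',y')$ satisfies $f(x')+f(y')\leq f(x)+f(y)$---I would show that the ``min attained twice'' condition across all admissible parameters is logically equivalent to (M-EXC): one direction casts (M-EXC) as the statement that a given pair cannot be the unique minimizer, while the other applies (M-EXC) to a minimizing pair to produce a second one.

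The main obstacle is the remaining step: deriving the strong Pl\"ucker relations for $s\geq 3$ from the $s=2$ ones. This is a tropical Pl\"ucker theorem for polymatroids, generalizing Murota's analogue for valuated matroids. My plan is an induction on $s$: given a relation of length $s+1$, I would use the polymatroid exchange on $J$ to identify a pivot index appearing in all leading multi-indices, then apply 3-term relations repeatedly to rewrite each non-minimal summand until what remains is an instance of a length-$s$ relation covered by the inductive hypothesis. The constraints $\delta^-_J\leq\alpha$ and $\alpha+\epsilon_{i_0}+\dotsb+\epsilon_{i_s}+\epsilon_{j_2}+\dotsb+\epsilon_{j_s}\leq\delta^+_J$ ensure that all intermediate multi-indices lie in $\Delta^r_n$ and in $J$ when needed, preventing spurious $+\infty$ contributions; repeated indices---typical for polymatroids but absent for matroids---only widen the set of valid exchange routes and do not obstruct the reduction.
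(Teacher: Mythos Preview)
Your overall architecture is sound, but there is a real gap in step~2 and an unnecessary detour in step~3 compared to the paper's argument.

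In step~2 you claim that the collection of $s=2$ (three-term) Pl\"ucker relations is ``logically equivalent to (M-EXC)'' by a ``direct combinatorial translation''. One direction is indeed direct: if $f$ satisfies the full exchange axiom (M-EXC), then applying it to a minimizing pair in any three-term relation produces a second pair with value at most the minimum, hence equal to it. But the converse is not a translation at all. The three-term relations only constrain pairs $(x,y)$ that differ in at most four coordinates; (M-EXC) must hold for arbitrary $x,y\in J$. Bridging this gap is precisely Murota's local--global theorem \cite[Thm.~6.4]{Murota03}, which the paper invokes explicitly: a function with M-convex support satisfying the \emph{local} exchange inequality \eqref{eq: Murata's 3-term relations} is already M-convex. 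Your sketch does not supply this step, and without it the implication ``three-term Pl\"ucker $\Rightarrow$ $f$ M-convex'' is unproven.

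Step~3 then compounds the difficulty. You propose to derive the higher Pl\"ucker relations from the three-term ones by an induction on $s$, which amounts to proving directly that $\T_0$ is excellent. The paper avoids this entirely: once $f$ is known to be M-convex (via Murota's theorem), it satisfies the \emph{full} exchange axiom \eqref{eq: Murata's exchange relations}, and this is applied \emph{directly} to each Pl\"ucker relation of arbitrary length. One picks a $k$ for which $\rho(\beta)\rho(\gamma)$ is maximal (with $\beta=\alpha+\epsilon_{i_0}+\dotsb\widehat{\epsilon_{i_k}}\dotsb+\epsilon_{i_s}$, $\gamma=\alpha+\epsilon_{i_k}+\epsilon_{j_2}+\dotsb+\epsilon_{j_s}$); either $i_k$ repeats among the $i_\ell$, giving a second maximal term for free, or (M-EXC) applied at the coordinate $i_k$ yields an $\ell$ with $\rho(\beta-\epsilon_{i_k}+\epsilon_{i_\ell})\rho(\gamma+\epsilon_{i_k}-\epsilon_{i_\ell})\geq\rho(\beta)\rho(\gamma)$, forcing equality. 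No induction, no rewriting of summands. Your proposed reduction, by contrast, would need careful bookkeeping to ensure every intermediate index tuple stays within the admissible range, and you have not indicated how the ``pivot'' argument survives when repeated indices collapse distinct summands.
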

 
\begin{ex*}[Hives]
 Hives are combinatorial gadgets that were introduced by Knutson and Tao in \cite{Knutson-Tao99} in their celebrated proof of the {\em saturation conjecture} for Littlewood--Richardson coefficients, which implies \emph{Horn's conjecture} on the possible eigenvalues of a sum of Hermitian matrices. As pointed out by Br\"and\'en in \cite{Branden10}, hives are naturally in bijection with $M$-concave functions (which are the negatives of M-convex functions) having support $\Delta^r_3$, and thus with $\T_0$-representations of $\Delta^r_3$. 
 
 Due to their explicit combinatorial nature, hives are easier to understand than $\T_0$\hyph representations in general, and we encourage the interested reader to have a look at \autoref{subsection: hives} before continuing with the introduction.
\end{ex*}

%%%%%%%%%%%%%%%%%%%%%%%%%%%%%%%%%%%%%%%%%%%%%%%%%%%%%%%%%%%%%%%%%%%%%%%%%%%%%%%%%%%5
\subsection*{The idempotency principle for proper polymatroids}

Let $\delta^-_J=\inf\, J$. The \emph{reduction of $J$} is $\Jbar=\{\alpha\in\N^r\mid \alpha+\delta^-_J\in J\}$. We say that $J$ is a \emph{proper polymatroid} if $\Jbar$ is not a matroid. A tract $F$ is \emph{idempotent} if $-1=1$ and $1+1+1\in N_F$ (in other words, if $F$ is a $\K$-algebra). A tract $F$ is \emph{near-idempotent} if $-1=1$ and $1+1+x\in N_F$ for some $x\in F^\times$. The following is \autoref{prop: idempotency principle}.

\begin{propA}\label{propC}
 Let $J$ be a proper polymatroid and suppose there exists a representation of $J$ over $F$. Then $F$ is near-idempotent. If $\delta^+_{J,i}-\delta^-_{J,i}\geq3$ for some $i\in[n]$, then $F$ is idempotent.
\end{propA}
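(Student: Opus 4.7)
The plan is to exploit the Plücker relations with repeated indices---which are trivial in the matroid (antisymmetric) setting, but yield nontrivial constraints in the polymatroid (symmetric) setting. First I apply the translation operation from Part 1 to reduce to the case $\delta^-_J = 0$, so that $J = \bar{J}$ is not a matroid. In particular, there exists $i\in[n]$ with $\delta^+_{J,i}\geq 2$.

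Next, using the M-convex exchange axiom applied to $J$, I locate $\alpha\in\Delta^{r-2}_n$ together with indices $j, k\in[n]\setminus\{i\}$ (possibly $j = k$) such that the four points
\[
\alpha+2\epsilon_i,\quad \alpha+\epsilon_i+\epsilon_j,\quad \alpha+\epsilon_i+\epsilon_k,\quad \alpha+\epsilon_j+\epsilon_k
\]
all lie in $J$ and satisfy $\alpha + 2\epsilon_i + \epsilon_j + \epsilon_k \leq \delta^+_J$. Since the support of $\rho$ equals $J$, the corresponding four values of $\rho$ all lie in $F^\times$.

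The core of the argument is to apply the strong Grassmann--Plücker relation with $s=2$ twice, using two different orderings of the index multiset $\{i,i,j,k\}$ as the tuple $(i_0,i_1,i_2,j_2)$. Setting $P := \rho(\alpha+\epsilon_i+\epsilon_j)\,\rho(\alpha+\epsilon_i+\epsilon_k)$ and $Q := \rho(\alpha+2\epsilon_i)\,\rho(\alpha+\epsilon_j+\epsilon_k)$, a careful computation of the sign factors $(-1)^{k+\sigma(k)}$ produces the pair of relations
\[
P+P+Q\in N_F \qquad \text{and} \qquad P+P+(-Q)\in N_F.
\]
Dividing through by the unit $P$ yields $1+1+x\in N_F$ and $1+1+(-x)\in N_F$ for the unit $x = Q/P\in F^\times$. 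The second of these already supplies the unit needed for near-idempotency. To extract $1+1\in N_F$ (equivalently, $-1=1$), I combine the two relations within the group semiring $F^+ = \N[F^\times]$, using the ideal property $F\cdot N_F \subseteq N_F$ and the two-term axiom $a+b\in N_F \iff b=-a$. This algebraic manipulation, which must produce $1+1\in N_F$ using only the limited tract axioms (no saturation of $N_F$ under addition is available), is the main technical obstacle of the proof.

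For the strengthening under the hypothesis $\delta^+_{J,i}-\delta^-_{J,i}\geq 3$, I translate to the case $\delta^+_{J,i}\geq 3$ and apply the Plücker relation with $s = 2$ and the triply-repeated index $i_0 = i_1 = i_2 = i$, choosing $\alpha$ and $j_2$ so that $\alpha + 3\epsilon_i + \epsilon_{j_2} \leq \delta^+_J$ and all relevant $\rho$-values are units. All three Plücker terms then coincide with a single unit $R$; under the sign convention all three signs become $+1$, giving $R+R+R\in N_F$ and hence $1+1+1\in N_F$. Combined with $-1=1$ established in the near-idempotent case, this is precisely the idempotency of $F$.
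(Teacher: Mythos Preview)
Your sign computation is wrong, and the error propagates through the whole argument. With the convention in the Introduction (where $\sigma(k)$ counts the indices $j_m$ with $j_m<i_k$, so that the $\rho$-relation matches the alternating $\brho$-relation of Definition~3.3), the two terms equal to $P=\rho(\alpha+\epsilon_i+\epsilon_j)\rho(\alpha+\epsilon_i+\epsilon_k)$ always carry \emph{opposite} signs. For instance, with $i<j\leq k$ and $(i_0,i_1,i_2,j_2)=(i,i,j;k)$ one gets signs $+1,-1,+1$, yielding $P-P+Q\in N_F$; the other admissible splits of $\{i,i,j,k\}$ give the same relation up to a global sign. You never obtain $P+P\pm Q$, and the relation $P-P+Q\in N_F$ does not by itself force near-idempotency. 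The same problem occurs in your idempotent step: with $(i_0,i_1,i_2,j_2)=(i,i,i;j)$ the signs are $+1,-1,+1$, giving $R-R+R\in N_F$, not $R+R+R$.

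The missing idea is that $-1=1$ comes \emph{first} and \emph{for free}, not from Pl\"ucker relations at all. In the paper's equivalent alternating formulation (Definition~3.3, axiom (SR2)), a proper polymatroid has some $\balpha ii\in\bJ$, whence $\brho(\balpha ii)=-\brho(\balpha ii)\in F^\times$ forces $-1=1$ immediately. Once $-1=1$, the relation $P-P+Q$ \emph{becomes} $P+P+Q$, and dividing by $P$ gives $1+1+x\in N_F$; likewise $R-R+R$ becomes $1+1+1\in N_F$ in the width-$\geq 3$ case. So your exchange-axiom setup and choice of Pl\"ucker relation are exactly right---you are one line away---but the attempt to manufacture $-1=1$ by combining two Pl\"ucker relations is both unnecessary and, as stated, unworkable (even granting closure of $N_F$ under addition, which you doubted but which does hold here, one cannot cancel $Q+(-Q)$ inside $N_F$).
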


In other words, a polymatroid which is representable over some tract that is not near-idempotent must be a translate of a matroid. 
Note that a field (considered in the natural way as a tract) is never near-idempotent. Consequently, a polymatroid which is not the translate of a matroid is not representable over any field.

Since the representation theories of translates are essentially equivalent (cf.\ \autoref{thm: foundations of embedded minors}), and since the representation theory of matroids in the sense of this paper is well-understood:
\begin{center}
 {\it \textbf{We assume for the rest of the introduction that all tracts are near-idempotent.}}
\end{center}
 
In particular, we assume that $-1=1$, which allows us to suppress all signs, thus simplifying various expressions.

%%%%%%%%%%%%%%%%%%%%%%%%%%%%%%%%%%%%%%%%%%%%%%%%%%%%%%%%%%%%%%%%%%%%%%%%%%%%%%%%%%%
\subsection*{Discrepancy with other concepts of polymatroid representations}

Just as matroids can be viewed as combinatorial abstractions of hyperplane arrangements, polymatroids can be viewed as combinatorial abstractions of subspace arrangements. More precisely, for any field $K$, let $V_1,\ldots,V_n$ be $K$-linear subspaces of a fixed $K$-vector space $V$. Then
\[
\rk(S) := {\rm codim}_V ( \cap_{i \in S} V_i ) 
\]
is the rank function of a polymatroid $P$ on $E=[n]$.
The polymatroid $P$ is a matroid if and only if every $V_i$ is a hyperplane.
A polymatroid arising in this way is said to be \emph{linearly representable}, or \emph{realizable}, over $K$, and the subspace arrangement is called a \emph{linear representation}, or \emph{realization}, of $P$ over $K$, cf.~\cite{Oxley-Whittle93,Farras-Marti-Farre-Padro12,Crowley-Huh-Larson-Simpson-Wang22}.

As we see from the idempotency principle, the notion of (Grassmann--Pl\"ucker) polymatroid representations in this text differs from the concept of a linear representation, as a proper polymatroid has no Grassmann--Pl\"ucker representations over a field.

%%%%%%%%%%%%%%%%%%%%%%%%%%%%%%%%%%%%%%%%%%%%%%%%%%%%%%%%%%%%%%%%%%%%%%%%%%%%%%%%%%%
\subsection*{Thin Schubert cells and the universal tract}
The last two parts of the paper are dedicated to the study of the spaces of all polymatroid representations. We consider several variations of such spaces (weak and strong representation spaces, weak and strong thin Schubert cells, and realization spaces); for the purpose of this introduction, we first discuss our main results in the context of thin Schubert cells, and turn to the other spaces afterwards.

The \emph{representation space of $J$ over $F$} is the set $\upR_J(F)$ of all $F$-representations $\rho\colon\Delta^r_n\to F$ of $J$. The \emph{thin Schubert cell of $J$ over $F$} is the quotient $\Gr_J(F)=\upR_J(F)/F^\times$ by the diagonal action of $F^\times$ on $\upR_J(F)$. Both $\upR_J(F)$ and $\Gr_J(F)$ are functorial in $F$ (by composing an $F$-representation $\rho\colon\Delta^r_n\to F$ with a tract morphism $F\to F'$; cf.\ \autoref{subsection: functoriality}). The following is \autoref{prop: universal property of the universal tract}.

\begin{propA}\label{propD}
 Given a polymatroid $J$, the functor sending a tract $F$ to the thin Schubert cell $\Gr_J(F)$ is represented by a tract $T_J$, i.e.,\ there is a bijection $\Gr_J(F)\to\Hom(T_J,F)$ which is functorial in $F$. We call $T_J$ the \emph{universal tract of $J$}.
\end{propA}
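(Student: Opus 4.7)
The plan is to construct $T_J$ explicitly by generators and relations and then verify the universal property directly. Fix a reference point $\alpha_0\in J$ (possible since $J$ is nonempty). I would define the unit group $T_J^\times$ to be the free abelian group on symbols $\{p_\alpha\}_{\alpha\in J,\,\alpha\neq\alpha_0}$, setting $p_{\alpha_0}=1$ by convention, and take $T_J=T_J^\times\sqcup\{0\}$ with the obvious monoid structure. The null set $N_{T_J}\subseteq T_J^+=\N[T_J^\times]$ is defined as the smallest subset containing $0$, closed under addition and $T_J$-multiplication, containing $p+p$ for every $p\in T_J^\times$ (this enforces $-1=1$, consistent with the standing near-idempotency assumption), and containing the formal Pl\"ucker expression $\sum_{k=0}^{s}p_{\alpha-\epsilon_{i_k}+\epsilon_{i_0}+\cdots+\epsilon_{i_s}}\cdot p_{\alpha+\epsilon_{i_k}+\epsilon_{j_2}+\cdots+\epsilon_{j_s}}$ for every admissible datum $(\alpha,s,i_0,\ldots,i_s,j_2,\ldots,j_s)$, under the convention that $p_\beta=0$ (so the corresponding term drops) whenever $\beta\notin J$.

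Granting that $(T_J,N_{T_J})$ is a tract, I would establish the required bijection with $\Gr_J(F)$ naturally in $F$ as follows. To a tract morphism $\phi\colon T_J\to F$ I associate $\rho_\phi\colon\Delta^r_n\to F$ defined by $\rho_\phi(\alpha)=\phi(p_\alpha)$ for $\alpha\in J$ and $\rho_\phi(\alpha)=0$ otherwise; since $\phi$ carries $N_{T_J}$ into $N_F$, the Pl\"ucker relations hold for $\rho_\phi$ in $F$, whence $\rho_\phi\in\upR_J(F)$ and its class in $\Gr_J(F)$ is well-defined. Conversely, given $[\rho]\in\Gr_J(F)$, I select the unique representative with $\rho(\alpha_0)=1$ (possible because $\alpha_0\in J$ forces $\rho(\alpha_0)\in F^\times$) and set $\phi_\rho(p_\alpha)=\rho(\alpha)$; this extends uniquely to a monoid morphism $T_J\to F$, and the Pl\"ucker relations satisfied by $\rho$ ensure that the generators of $N_{T_J}$ are mapped into $N_F$, making $\phi_\rho$ a tract morphism. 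These assignments are mutually inverse, and naturality in $F$ is immediate from the construction.

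The main obstacle will be verifying that $T_J$ is actually a tract: the ideal axioms are immediate from the construction, but the binomial axiom (namely that $a+b\in N_{T_J}$ forces $b=-a=a$) requires care, since one must rule out spurious binomial relations arising during the closure process that defines $N_{T_J}$. Here I would exploit the existence of the Krasner representation $\chi_J$ of $J$ supplied by \autoref{thmA}: it furnishes a tract morphism $T_J\to\K$ sending every $p_\alpha$ to $1$, which already guarantees $N_{T_J}\neq T_J^+$. The full binomial axiom is then established by induction on the closure procedure generating $N_{T_J}$, using the fact that the Pl\"ucker generators are quadratic in the $p_\alpha$ and respect a natural multigrading indexed by $\N^n$, so any element of $N_{T_J}$ is a sum of terms of prescribed multidegrees; this structural rigidity, combined with the morphism to $\K$, prevents accidental identifications and makes the argument parallel to the construction of the universal tract in the matroid case.
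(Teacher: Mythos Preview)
Your construction of $T_J$ has a genuine gap that prevents it from being a tract. You declare $T_J^\times$ to be the \emph{free} abelian group on the $p_\alpha$ (with $p_{\alpha_0}=1$), but this is incompatible with the tract axiom that $a+b\in N_F$ forces $b=-a$. Concretely, whenever $J$ admits a \emph{degenerate} Pl\"ucker relation---one with exactly two nonzero terms, say $p_\beta p_\gamma + p_{\beta'} p_{\gamma'}$---your null set contains this binomial, and since you have imposed $-1=1$, the tract axiom would force $p_\beta p_\gamma = p_{\beta'} p_{\gamma'}$. But these are distinct elements of a free group, so your $(T_J,N_{T_J})$ violates the uniqueness of additive inverses. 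Such degenerate relations occur for essentially every nontrivial $J$: already for a rank-$2$ matroid on $[4]$ with $\{3,4\}$ parallel, the relation $p_{13}p_{24}+p_{14}p_{23}\in N_{T_J}$ is forced, yet $p_{13}p_{24}\neq p_{14}p_{23}$ in your group.

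Your proposed remedy via the morphism to $\K$ does not help: that morphism shows $N_{T_J}\neq T_J^+$, but it cannot rule out the degenerate binomials, since $1+1\in N_\K$ as well. The multigrading argument likewise fails, as both terms of a degenerate Pl\"ucker relation share the same multidegree. The correct fix is to \emph{quotient} the unit group by the subgroup generated by the degenerate cross ratios $p_\beta p_\gamma / (p_{\beta'} p_{\gamma'})$. This is precisely what the paper does, though by a slightly different route: it forms an \emph{extended} universal tract $\widehat T_J=\pastgen{\Funpm(x_\beta\mid\beta\in\bJ)}{S}$ as a quotient of a free algebra by the Pl\"ucker relations---the tract-quotient construction automatically identifies $a$ with $b$ whenever $a-b$ lies in the relation set, so the binomial axiom comes for free---and then defines $T_J$ as the degree-$0$ subtract. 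Your normalization $p_{\alpha_0}=1$ amounts to a choice of splitting of the degree map, so once the missing quotient is inserted your approach would yield an isomorphic object, and your verification of the bijection $\Gr_J(F)\simeq\Hom(T_J,F)$ would go through.
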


The universal tract $T_J$ is given by a simple construction: up to taking degree $0$ elements, it is generated by symbols $x_\beta$ (for $\beta\in J$), and its null set is generated by Pl\"ucker relations for the $x_\beta$. This explicit description makes $T_J$ amenable to computations. On the other hand, the universal property of $T_J$ implicit in \autoref{propD} allows us to show that thin Schubert cells are functorial with respect to polymatroid embeddings (\autoref{thmE}), which we introduce in the following section.

%%%%%%%%%%%%%%%%%%%%%%%%%%%%%%%%%%%%%%%%%%%%%%%%%%%%%%%%%%%%%%%%%%%%%%%%%%%%%%%%%%%
\subsection*{Minors and polymatroid embeddings}

Let $\delta^-_J=\inf\, J$ and $\delta^+_J=\sup\, J$. The \emph{translation} of $J$ is $J+\tau=\{\alpha+\tau\mid\alpha\in J\}$, which is a polymatroid (and, in particular, contained in $\N^n$) provided that $\tau\geq-\delta^-_J$ (\autoref{lemma: translates and differences}). 

Let $\nu,\mu\in\N^n$ with $\mu+\delta^-_J\leq\alpha\leq\delta^+_J-\nu$ for some $\alpha\in J$. The \emph{deletion of $\nu$ in $J$} is
\[
 J\setminus\nu \ = \ \big\{ \alpha\in J \, \big| \, \alpha\leq\delta^+_J-\nu\}
\]
and the \emph{contraction of $\mu$ in $J$} is
\[
 J/\mu \ = \ \big\{ \alpha-\mu\in \N^n \, \big| \, \alpha\in J,\ \delta^-_J+\mu\leq\alpha\}.
\]
Both sets are polymatroids (\autoref{lemma: contraction and deletion}). 

An \emph{embedded minor of $J$} is a sequence of deletions, contractions, and translations. While translations commute with both deletions and contractions, the exchange of deletions and contractions is more subtle since these operations have an irregular effect on $\delta^-_J$ and $\delta^+_J$; cf.\ \autoref{prop: bounds on the duality vectors of minors}. Still, \autoref{prop: contraction and deletion commute} attests that for given $\nu$ and $\mu$, there are $\nu'$, $\mu'$ and $\tau'$ such that $(J/\mu)\setminus\nu=((J\setminus\nu')/\mu')+\tau'$, which allows us to represent every embedded minor as $J\minor\nu\mu+\tau=((J\setminus\nu)/\mu)+\tau$.

A \emph{minor embedding} is an inclusion of polymatroids of the form
\[
 \iota_{J\minor\nu\mu+\tau}\colon \ J\minor\nu\mu+\tau \ \longrightarrow \ J, \qquad \text{with} \qquad \iota_{J\minor\nu\mu+\tau}(\alpha) \ = \ \alpha+\mu-\tau.
\]

A permutation $\sigma\in S_n$ of $[n]$ induces a bijection $\iota_\sigma\colon\Delta^r_n\to \Delta^r_n$ by permuting the coordinates, which restricts to a bijection $\iota_\sigma\colon J\to\sigma(J)$ (called a \emph{permutation of variables}), and $\sigma(J)$ is a polymatroid (\autoref{lemma: permutation of variables}).

The embedding $\iota_n\colon\N^n\to\N^{n+1}$ into the first $n$ coordinates restricts to a bijection $\iota_n\colon J\to\iota_n(J)$ (called an \emph{extension of variables}), and $\iota_n(J)$ is a polymatroid (\autoref{lemma: extension of variables}). Its inverse bijection is called a \emph{restriction of variables}.

\begin{df*}
 A \emph{polymatroid embedding} is a composition of minor embeddings with permutations, extensions, and restrictions of variables. Two polymatroids $J$ and $J'$ are \emph{combinatorially equivalent} if there is a bijective polymatroid embedding $J\to J'$.
\end{df*}

Note that the inverse of a bijective polymatroid embedding is again a polymatroid embedding.
The following summarizes \autoref{thm: foundations of embedded minors}, \autoref{prop: foundations of coordinate inclusions}, and \autoref{prop: foundations of permutations}.

\begin{thmA}\label{thmE}
 Let $\iota\colon J\to J'$ be a polymatroid embedding and let $F$ be a tract. Precomposing an $F$-representation $\rho$ of $J'$ with $\iota$ yields a map $\iota^*\colon\Gr_{J'}(F)\to\Gr_J(F)$. If $\iota$ is bijective, then so is $\iota^\ast$.
\end{thmA}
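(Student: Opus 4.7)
The plan is to reduce the theorem to its building blocks. By definition, $\iota\colon J\to J'$ is a finite composition of permutations of variables, coordinate extensions and restrictions, and minor embeddings $\iota_{J'\minor\nu\mu+\tau}\colon J'\minor\nu\mu+\tau \to J'$ sending $\alpha \mapsto \alpha+\mu-\tau$. Because the pullback operation $\iota\mapsto\iota^\ast$ is contravariantly functorial, thanks to the identity $\rho \circ (\iota_2\circ\iota_1) = (\rho\circ\iota_2)\circ\iota_1$, it suffices to verify both assertions for each type of building block separately.

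For the first assertion I would write down $\iota^\ast\rho = \rho\circ\iota$ explicitly and check (a) that its support equals $J$ and (b) that it satisfies the Pl\"ucker relations defining an $F$-representation of $J$. Part (a) is immediate from the fact that $\iota$ restricts to a bijection of $J$ onto a subset of $J'$. Part (b) is the substantive point. For permutations and for coordinate extensions and restrictions, a Pl\"ucker relation for $\iota^\ast\rho$ pulls back term by term to a Pl\"ucker relation for $\rho$ after a straightforward relabeling of indices; in the extension case, only indices in the original index set are involved on both sides and the support condition rules out any contribution of the added coordinate. The main obstacle is the minor embedding case. Here a Pl\"ucker relation for $\iota^\ast\rho$ at $(\alpha, i_0,\dotsc,i_s,j_2,\dotsc,j_s)$ subject to $\delta^-_{J'\minor\nu\mu+\tau}\leq\alpha$ and $\alpha+\epsilon_{i_0}+\dotsb+\epsilon_{i_s}+\epsilon_{j_2}+\dotsb+\epsilon_{j_s}\leq\delta^+_{J'\minor\nu\mu+\tau}$ must be recovered, after the substitution $\alpha\leadsto\alpha+\mu-\tau$, from a Pl\"ucker relation satisfied by $\rho$. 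The task therefore reduces to showing that the shifted constraints $\delta^-_{J'}\leq\alpha+\mu-\tau$ and $\alpha+\mu-\tau+\epsilon_{i_0}+\dotsb+\epsilon_{j_s}\leq\delta^+_{J'}$ follow from the original ones; this is a direct application of the relationships between the $\delta^\pm$-vectors of a polymatroid and those of its minors established in \autoref{prop: bounds on the duality vectors of minors}.

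Finally, since $\iota^\ast(c\cdot\rho)=c\cdot(\iota^\ast\rho)$ for all $c\in F^\times$, the map $\iota^\ast$ is $F^\times$-equivariant and descends to a well-defined map $\Gr_{J'}(F)\to\Gr_J(F)$. If $\iota$ is bijective, then as noted in the text $\iota^{-1}$ is again a polymatroid embedding, and contravariant functoriality yields $(\iota^{-1})^\ast\circ\iota^\ast=\id_{\Gr_{J'}(F)}$ and $\iota^\ast\circ(\iota^{-1})^\ast=\id_{\Gr_J(F)}$. Hence $\iota^\ast$ is a bijection with inverse $(\iota^{-1})^\ast$.
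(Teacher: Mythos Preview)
Your approach is correct and is essentially the same as the paper's: reduce to elementary embeddings, and for minor embeddings verify that a Pl\"ucker relation for $\iota^\ast\rho$ becomes, after the shift $\alpha\leadsto\alpha+\mu-\tau$, a Pl\"ucker relation for $\rho$ by invoking the bounds on $\delta^\pm$ from \autoref{prop: bounds on the duality vectors of minors}. The only difference in presentation is that the paper packages this verification as the construction of a tract morphism $\widehat T_{J\minor\nu\mu+\tau}\to\widehat T_J$ sending $x_\bbeta\mapsto x_{\bgamma\bbeta}$ (\autoref{thm: foundations of embedded minors}, \autoref{prop: foundations of coordinate inclusions}, \autoref{prop: foundations of permutations}) and then applies $\Hom(-,F)$; the substance of the proof, namely showing that the image of each Pl\"ucker relation is again a Pl\"ucker relation using exactly the $\delta^\pm$-inequalities you cite, is identical.
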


As a consequence of \autoref{thmE}, it makes sense to talk about (embedded) minors of polymatroid representations, which are defined as $\rho\minor\nu\mu+\tau=\iota^\ast_{J\minor\nu\mu+\tau}(\rho)$.

\begin{rem*}
 Embedded minors in the sense of this text correspond to polymatroid truncations that appear in the work of Br\"and\'en--Huh (see \autoref{rem: matroid truncations} for details).
\end{rem*}

%%%%%%%%%%%%%%%%%%%%%%%%%%%%%%%%%%%%%%%%%%%%%%%%%%%%%%%%%%%%%%%%%%%%%%%%%%%%%%%%%%%
\subsection*{Duality}

Whittle has shown in \cite{Whittle92} that there is no involution on polymatroids which interchanges deletion and contraction. \autoref{thmE} suggests an alternative perspective: perhaps polymatroid duality should only interchange deletion and contraction up to combinatorial equivalence. It turns out that this paradigm leads to a satisfactory notion of duality.

\begin{df*}
 The \emph{duality vector of $J$} is $\delta_J=\delta^-_J+\delta^+_J=\inf\, J+\sup\, J$. The \emph{dual of $J$} is $J^\ast=\delta_J-J$.
\end{df*}

The following summarizes \autoref{lemma: dual}, \autoref{prop: duals of contractions and deletions}, and \autoref{thm: foundations of duals}.

\begin{thmA}\label{thmF}
 Polymatroid duality satisfies the following properties:
 \begin{enumerate}
  \item $(J^\ast)^\ast=J$;
  \item $(J\setminus\nu)^\ast$ and $J^\ast/\nu$ are combinatorially equivalent (by a translation);
%  \item $(J/\mu)^\ast$ and $J^\ast\setminus\mu$ are combinatorially equivalent (by a translation);
  \item there is a canonical bijection $\Gr_J(F)\to\Gr_{J^\ast}(F)$ that is functorial in $J$ and $F$.
 \end{enumerate}
\end{thmA}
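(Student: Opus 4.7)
The plan is to prove the three assertions in order, working throughout with the explicit formula $J^\ast = \delta_J - J$ where $\delta_J = \delta^-_J + \delta^+_J$.

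For part (1), the map $\alpha \mapsto \delta_J - \alpha$ swaps componentwise maxima with minima, so $\delta^\pm_{J^\ast} = \delta^\pm_J$ and in particular $\delta_{J^\ast} = \delta_J$. Involutivity is then immediate: $(J^\ast)^\ast = \delta_J - (\delta_J - J) = J$.

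For part (2), I unpack the definitions. Writing $\alpha \in J^\ast$ as $\alpha = \delta_J - \beta$ with $\beta \in J$, the condition $\delta^-_{J^\ast} + \nu \leq \alpha$ becomes $\beta \leq \delta^+_J - \nu$, yielding
\[
J^\ast / \nu \;=\; \delta_J - \nu - (J\setminus\nu).
\]
Since $(J\setminus\nu)^\ast = \delta_{J\setminus\nu} - (J\setminus\nu)$, these two polymatroids differ by the translation vector $\tau = \delta_J - \nu - \delta_{J\setminus\nu}$, so $J^\ast/\nu = (J\setminus\nu)^\ast + \tau$ and they are combinatorially equivalent via a translation. The bounds on $\delta^\pm_{J\setminus\nu}$ from \autoref{prop: bounds on the duality vectors of minors} suffice to verify that $\tau$ is a valid translation vector.

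Part (3) is the main content. Given an $F$-representation $\rho \colon \Delta^r_n \to F$ of $J$, I define its dual $\rho^\ast\colon \Delta^{|\delta_J|-r}_n \to F$ by $\rho^\ast(\beta) = \rho(\delta_J - \beta)$. The support is visibly $\delta_J - J = J^\ast$. For the Pl\"ucker relations, substituting $\beta \mapsto \delta_J - \beta$ carries the relation for $\rho^\ast$ at $\alpha'$ with exchange data $i_0,\dots,i_s,j_2,\dots,j_s$ to the relation for $\rho$ at
\[
\alpha \;=\; \delta_J - \alpha' - \epsilon_{i_0} - \cdots - \epsilon_{i_s} - \epsilon_{j_2} - \cdots - \epsilon_{j_s},
\]
with the two factors in each summand swapped; the admissibility bounds for $\alpha'$ correspond bijectively to those for $\alpha$ via $\delta^\pm_{J^\ast} = \delta^\pm_J$. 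Under the standing near-idempotency assumption the signs collapse, and commutativity of multiplication closes the argument. The assignment $\rho \mapsto \rho^\ast$ is involutive and equivariant under the diagonal $F^\times$-action, so it descends to a bijection $\Gr_J(F) \to \Gr_{J^\ast}(F)$; functoriality in $F$ is immediate, and functoriality in $J$ reduces via \autoref{thmE} to compatibility with the basic polymatroid embeddings — translations, permutations, and extensions of variables are routine, while deletion and contraction are interchanged by part (2) up to a translation.

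The main obstacle is the index tracking in the Pl\"ucker-relation computation of (3). In the near-idempotent setting it is mechanical, but over a general tract one would need to verify that the signs $(-1)^{k+\sigma(k)}$ transform correctly under the involution $\alpha \mapsto \delta_J - \alpha$ — a computation parallel to, and no easier than, the classical matroid duality sign check.
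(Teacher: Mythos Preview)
Your argument is correct and follows the same underlying idea as the paper: the involution $\alpha \mapsto \delta_J - \alpha$ carries Pl\"ucker relations for $J$ to Pl\"ucker relations for $J^\ast$ with the two factors swapped and the roles of the lower and upper bounds interchanged. Parts (1) and (2) are handled identically to \autoref{lemma: dual} and \autoref{prop: duals of contractions and deletions}.

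For part (3) the paper takes a slightly different route: rather than working with individual representations $\rho$, it constructs the duality as an isomorphism of universal tracts $T_J \to T_{J^\ast}$ via $x_{\bbeta} \mapsto \sign(\bbeta\bbeta^\ast)\cdot x_{\bbeta^\ast}$ (working with the reduction $\Jbar$, so the target index satisfies $\Sigma\bbeta^\ast = \omega_J - \Sigma\bbeta$), and then invokes the universal property of $T_J$ to get the bijection on $\Gr_J(F)$. This buys two things: the sign factor $\sign(\bbeta\bbeta^\ast)$ handles the general (not necessarily near-idempotent) case uniformly, resolving exactly the issue you flag at the end; and the same isomorphism restricts automatically to $P_J \simeq P_{J^\ast}$ and $F_J \simeq F_{J^\ast}$, which the paper also needs. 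Your direct approach is equally valid under the standing near-idempotent assumption of the introduction and is arguably more transparent, but it would require the additional sign computation you mention to cover translates of matroids over tracts with $-1 \neq 1$.
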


%%%%%%%%%%%%%%%%%%%%%%%%%%%%%%%%%%%%%%%%%%%%%%%%%%%%%%%%%%%%%%%%%%%%%%%%%%%%%%%%%%%
\subsection*{Direct sums}
The \emph{direct sum} $J_1\oplus J_2$ of two polymatroids $J_1\subseteq\Delta^{r_1}_{n_1}$ and $J_2\subseteq\Delta^{r_2}_{n_2}$ is a polymatroid contained in $\Delta^{r_1+r_2}_{n_1+n_2}$ (see \autoref{subsection: direct sums}). The following is \autoref{thm: representations of direct sums}.

\begin{thmA}\label{thmG}
 There is a canonical bijection $\Gr_{J_1\oplus J_2}(F)\to\Gr_{J_1}(F)\times\Gr_{J_2}(F)$ that is functorial in $F$.
\end{thmA}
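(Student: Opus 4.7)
The plan is to construct a canonical bijection at the level of representation spaces and then descend to thin Schubert cells. Given $\rho_1\in\upR_{J_1}(F)$ and $\rho_2\in\upR_{J_2}(F)$, I would define $\rho_1\boxplus\rho_2\colon\Delta^{r_1+r_2}_{n_1+n_2}\to F$ by
\[
 (\rho_1\boxplus\rho_2)(\alpha_1,\alpha_2) \ = \ \rho_1(\alpha_1)\cdot\rho_2(\alpha_2) \quad\text{when } |\alpha_1|=r_1 \text{ and } |\alpha_2|=r_2,
\]
and zero otherwise; the support is then exactly $J_1\oplus J_2$. Since this construction scales diagonally in $(\rho_1,\rho_2)$, once I verify $\rho_1\boxplus\rho_2\in\upR_{J_1\oplus J_2}(F)$ the assignment descends to a well-defined map $\Gr_{J_1}(F)\times\Gr_{J_2}(F)\to\Gr_{J_1\oplus J_2}(F)$.

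To verify the Pl\"ucker relations for $\rho_1\boxplus\rho_2$, I classify each relation by the numbers $a$ and $b$ of indices among $i_0,\dots,i_s$ and $j_2,\dots,j_s$ that lie in the first block $[n_1]$. Computing the $[n_1]$-weight of each factor shows that nonzero summands appear only when $a=b+2$ (with contributions from $k$ such that $i_k\in[n_1]$) or $a=b$ (with contributions from $k$ such that $i_k\in[n_2]$). In the first regime, after pulling out the common factor $\rho_2(\alpha_2+\eta_2)\rho_2(\alpha_2+\zeta_2)$, where $\eta_2,\zeta_2$ collect the $[n_2]$-parts of the $i$'s and $j$'s, the remaining sum is a Pl\"ucker relation for $\rho_1$ with parameter $s'=a-1$ and hence lies in $N_F$; the second regime reduces symmetrically to a Pl\"ucker relation for $\rho_2$. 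The two degenerate subcases $(a,b)=(2,0)$ and $(a,b)=(s-1,s-1)$ yield $s'=1$ (where no Pl\"ucker relation is available) and collapse to a sum of two identical summands, which lies in $N_F$ because $-1=1$ forces $1+1\in N_F$ in any near-idempotent tract.

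For the inverse, I fix basepoints $\alpha_1^0\in J_1$ and $\alpha_2^0\in J_2$ and first establish the multiplicativity identity
\[
 \rho(\alpha_1,\alpha_2)\cdot\rho(\alpha_1',\alpha_2') \ = \ \rho(\alpha_1,\alpha_2')\cdot\rho(\alpha_1',\alpha_2)
\]
for all $(\alpha_1,\alpha_2),(\alpha_1',\alpha_2')\in J_1\oplus J_2$. For a single adjacent exchange in each block, say $\alpha_1'=\alpha_1+\epsilon_i-\epsilon_j$ with $i,j\in[n_1]$ and $\alpha_2'=\alpha_2+\epsilon_k-\epsilon_l$ with $k,l\in[n_2]$, the $3$-term Pl\"ucker relation at base $(\alpha_1-\epsilon_j,\alpha_2-\epsilon_l)$ with $i_0=i$, $i_1=j$, $i_2=k$, $j_2=l$ has one of its three summands forced to zero by the support of $J_1\oplus J_2$, and the remaining two-term identity is exactly multiplicativity (using $-1=1$). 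The general case then follows by concatenating single exchanges, invoking the symmetric-exchange property of $J_1$ and $J_2$ and the trivial case when one of the two components is fixed.

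With multiplicativity in hand, after rescaling $\rho$ so that $\rho(\alpha_1^0,\alpha_2^0)=1$, I set $\rho_1(\beta_1):=\rho(\beta_1,\alpha_2^0)$ and $\rho_2(\beta_2):=\rho(\alpha_1^0,\beta_2)$. Every Pl\"ucker relation of $J_1$ lifts to a Pl\"ucker relation of $J_1\oplus J_2$ in which the second-block coordinate is $\alpha_2^0$ throughout, and upon substitution it reproduces the relation for $\rho_1$; symmetrically for $\rho_2$. Multiplicativity then yields $\rho=\rho_1\boxplus\rho_2$, so the two constructions are mutually inverse. A change of basepoints rescales $\rho_1$ and $\rho_2$ by global constants (again by multiplicativity), so the bijection descends canonically to $\Gr_{J_1\oplus J_2}(F)\to\Gr_{J_1}(F)\times\Gr_{J_2}(F)$; functoriality in $F$ is immediate since both constructions involve only multiplication in $F$. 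The main obstacle is the Pl\"ucker-relation case analysis for $\rho_1\boxplus\rho_2$: the book-keeping over how indices distribute between $[n_1]$ and $[n_2]$ is delicate, but conceptually transparent once one recognises that the only nontrivial contributions correspond to ``block-internal'' Pl\"ucker relations of $J_1$ or $J_2$.
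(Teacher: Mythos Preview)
Your strategy coincides with the paper's: build $\rho_1\boxplus\rho_2$, analyse the Pl\"ucker relations for $J_1\oplus J_2$ according to how the indices split between the two blocks, and pass to $F^\times$-classes. Your inverse construction via the multiplicativity identity is in fact \emph{more} explicit than the paper's, which simply asserts that every class decomposes uniquely.

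There is, however, a genuine gap. You work throughout in the signless formulation $\rho\colon\Delta^r_n\to F$ and invoke $-1=1$ at two places: in the degenerate subcases $(a,b)=(2,0)$ and $(a,b)=(s-1,s-1)$, where you use $1+1\in N_F$, and in the multiplicativity step. But the theorem is stated for \emph{all} tracts $F$, not just near-idempotent ones. If $J_1$ and $J_2$ are both translates of matroids then so is $J_1\oplus J_2$, and the idempotency principle gives you nothing; one can take $F$ to be a field of characteristic $\neq 2$, where $1+1\notin N_F$. In that regime your degenerate-subcase argument and your multiplicativity argument both fail as written.

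The repair is to use the alternating formulation $\brho\colon[n]^{\bar r}\to F$ with axiom \ref{SR2}, as the paper does. Then the Pl\"ucker relations carry the signs $(-1)^k$, and when you decompose $\brho(\balpha\bbeta')=\pm\brho_1(\balpha)\brho_2(\bbeta)$ the reordering into block form contributes a further sign. In the degenerate two-term subcases these signs conspire so that the two surviving summands are $c$ and $-c$ rather than $c$ and $c$, and the relation holds in any tract. Likewise, in the $3$-term Pl\"ucker relation underlying multiplicativity the two surviving terms already carry opposite signs, so the identity $\rho(\alpha_1,\alpha_2)\rho(\alpha_1',\alpha_2')=\rho(\alpha_1,\alpha_2')\rho(\alpha_1',\alpha_2)$ follows without assuming $-1=1$. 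Alternatively, you can dispatch the matroid case separately by citing the known matroid result and then invoke the idempotency principle for the proper-polymatroid case, where your signless argument is valid.
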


A polymatroid $J\subseteq\Delta^r_n$ is \emph{nontrivial} if $n\geq1$, and \emph{indecomposable} if $J$ is nontrivial and $J$ is not the direct sum of two nontrivial polymatroids. Every polymatroid $J$ has a unique decomposition into a direct sum $\bigoplus_{i=1}^{c(J)} J_i$ of indecomposable polymatroids $J_i$, which are unique up to combinatorial equivalence and a permutation of indices (\autoref{prop: unique decompistion into indecomposable polymatroids}). %We call $c(J)$ the \emph{number of components of $J$}.

%%%%%%%%%%%%%%%%%%%%%%%%%%%%%%%%%%%%%%%%%%%%%%%%%%%%%%%%%%%%%%%%%%%%%%%%%%%%%%%%%%%
\subsection*{The canonical torus embedding}
The association $\rho\mapsto(\rho(\alpha))_{\alpha\in J}$ defines a canonical embedding $\upR_J(F)\to (F^\times)^J$, which we show factors through a smaller subgroup $\upD_J(F)$ of $(F^\times)^J$ defined as follows.

We say that a Pl\"ucker relation 
\begin{equation*}
 \sum_{k=0}^s \quad \rho(\alpha -\epsilon_{i_k}+\epsilon_{i_0}+\dotsb+\epsilon_{i_s}) \cdot \rho(\alpha +\epsilon_{i_k}+\epsilon_{j_2}+\dotsb +\epsilon_{j_s}) \quad \in \quad  N_{F}
\end{equation*}
is \emph{degenerate} if it has exactly two nonzero terms, say 
\[
 \rho(\beta-\epsilon_{i_k})\rho(\gamma+\epsilon_{i_k}) \qquad \text{and} \qquad \rho(\beta-\epsilon_{i_\ell})\rho(\gamma+\epsilon_{i_\ell})
\]
for $k\neq\ell$, where $\beta=\alpha+\epsilon_{i_0}+\dotsb+\epsilon_{i_s}$ and $\gamma=\alpha+\epsilon_{j_2}+\dotsb+\epsilon_{j_s}$. By the uniqueness of additive inverses, and since we assume that $-1=1$, a degenerate Pl\"ucker relation corresponds to an equality of the form 
% is tantamount to the equality
\[
 \rho(\beta-\epsilon_{i_k})\rho(\gamma+\epsilon_{i_k}) \ = \ \rho(\beta-\epsilon_{i_\ell})\rho(\gamma+\epsilon_{i_\ell}).
\]
The \emph{degeneracy locus of $J$ over $F$} is the subgroup
\[
 \upD_J(F) \ = \ \big\{ \rho\in (F^\times)^J \, \big| \, \rho\text{ satisfies all degenerate Pl\"ucker relations}\big\}
\]
of $(F^\times)^J$, and it contains the image of the embedding $\upR_J(F)\to(F^\times)^J$.

%%%%%%%%%%%%%%%%%%%%%%%%%%%%%%%%%%%%%%%%%%%%%%%%%%%%%%%%%%%%%%%%%%%%%%%%%%%%%%%%%%%
\subsection*{The Pl\"ucker embedding and the Polygrassmannian}
The map $[\rho]\mapsto[\rho(\alpha)]_{\alpha\in J}$ defines a canonical embedding $\Gr_J(F)\to(F^\times)^J/F^\times$, which can be considered as a stratum of the projective space $\P(F^{\Delta^r_n})=\big(F^{\Delta^r_n}-\{0\}\big)/F^\times$ (cf.\ \autoref{section: the Plucker embedding for thin Schubert cells} for details). Composing these two inclusions yields the Pl\"ucker embedding $\Gr_J(F)\to\P(F^{\Delta^r_n})$.

The union of the images of $\Gr_J(F)$ in $\P(F^{\Delta^r_n})$ for all polymatroids $J\subseteq\Delta^r_n$ defines the \emph{Polygrassmannian $\PolyGr(r,n)(F)$}, which is in general larger than the Grassmannian; see \autoref{subsection: the Polygrassmannian for idempotent fusion tracts} for details.

%%%%%%%%%%%%%%%%%%%%%%%%%%%%%%%%%%%%%%%%%%%%%%%%%%%%%%%%%%%%%%%%%%%%%%%%%%%%%%%%%%%
\subsection*{Weak thin Schubert cells and the universal pasture}

A nontrivial result in matroid theory is that for many tracts of interest, including fields, $\K$, and $\T_0$, the thin Schubert cell $\Gr_J(F)$ of a matroid $J$ is cut out by just the \emph{$3$-term Pl\"ucker relations}, which are of the form
\begin{multline*}
 \rho(\alpha+\epsilon_j+\epsilon_k) \cdot \rho(\alpha +\epsilon_i+\epsilon_l) \ - \ \rho(\alpha +\epsilon_i+\epsilon_k) \cdot \rho(\alpha +\epsilon_j+\epsilon_l) \\ + \ \rho(\alpha +\epsilon_i+\epsilon_j) \cdot \rho(\alpha +\epsilon_k+\epsilon_l) \quad \in \quad N_F
\end{multline*}
for $\alpha\in\Delta^{r-2}_n$ and $1\leq i\leq j\leq k\leq l\leq n$ with $\delta^-_J\leq\alpha$ and $\alpha+\epsilon_i+\epsilon_j+\epsilon_k+\epsilon_l\leq\delta_J^+$ (where we can ignore the sign if $F$ is near-idempotent).

Maps $\rho\colon\Delta^r_n\to F$ with support $J$ that satisfy the $3$-term Pl\"ucker relations are called \emph{weak $F$-representations of $J$}, a notion that makes sense for all polymatroids. The \emph{weak thin Schubert cell $\Gr^w_J(F)$} is the space of $F^\times$-classes of weak $F$-representations of $J$.

\begin{propA}\label{propH}
 Given a polymatroid $J$, the functor sending a tract $F$ to the weak thin Schubert cell $\Gr^w_J(F)$ is represented by a tract $P_J$, i.e.,\ there is a bijection $\Gr^w_J(F)\to\Hom(P_J,F)$ which is functorial in $F$. We call $P_J$ the \emph{universal pasture of $J$}. 
\end{propA}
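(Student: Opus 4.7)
The plan is to mimic the construction of the universal tract $T_J$ from \autoref{propD}, imposing in the null set only the $3$-term Pl\"ucker relations in place of the full Pl\"ucker relations. Because the universal property for $\Gr^w_J(F)$ is formally identical to that of $\Gr_J(F)$ up to this relaxation, the same construction scheme will produce the desired representing tract $P_J$.

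\emph{Construction.} Let $G_J$ denote the free abelian group on generators $\{x_\beta\mid\beta\in J\}$, and let $G_J^0\subseteq G_J$ be the subgroup of degree-$0$ elements (those $\prod_\beta x_\beta^{n_\beta}$ with $\sum_\beta n_\beta=0$). Define the multiplicative monoid of $P_J$ to be $G_J^0\cup\{0\}$, with $0$ absorbing, and take the null set $N_{P_J}$ to be the smallest ideal of the semiring $\N[G_J^0]$ containing, for each admissible $\alpha\in\Delta^{r-2}_n$ and $1\leq i\leq j\leq k\leq l\leq n$, a degree-$0$ normalization of the $3$-term Pl\"ucker expression
\[
 x_{\alpha+\epsilon_j+\epsilon_k}\,x_{\alpha+\epsilon_i+\epsilon_l} \;+\; x_{\alpha+\epsilon_i+\epsilon_k}\,x_{\alpha+\epsilon_j+\epsilon_l} \;+\; x_{\alpha+\epsilon_i+\epsilon_j}\,x_{\alpha+\epsilon_k+\epsilon_l}.
\]
Since all three terms have the same total degree $2$, multiplying by a common degree-$(-2)$ monomial produces an element of $\N[G_J^0]$, and the resulting ideal does not depend on this choice.

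\emph{Universal property and naturality.} A tract morphism $\varphi\colon P_J\to F$ is determined by the images of the degree-$0$ ratios of the generators, equivalently by values $\varphi(x_\beta)\in F^\times$ for $\beta\in J$ modulo simultaneous $F^\times$-rescaling — precisely the projective ambiguity present in $\Gr^w_J(F)=\upR^w_J(F)/F^\times$. Setting $\rho(\beta)=\varphi(x_\beta)$ for $\beta\in J$ and $\rho(\alpha)=0$ otherwise yields a map $\rho\colon\Delta^r_n\to F$ with support $J$, and the requirement that $\varphi$ send $N_{P_J}$ into $N_F$ is precisely the requirement that $\rho$ satisfy the $3$-term Pl\"ucker relations, i.e.,\ $[\rho]\in\Gr^w_J(F)$. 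The assignment $\varphi\mapsto[\rho]$ therefore defines a bijection $\Hom(P_J,F)\to\Gr^w_J(F)$, and naturality in $F$ follows from the construction since composing with a tract morphism $F\to F'$ commutes with evaluation at the generators.

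\emph{Main obstacle.} The delicate step is verifying that $(P_J,N_{P_J})$ is a valid tract — in particular, that axiom (2) in the definition of a tract continues to hold, so that $a+b\in N_{P_J}$ forces $b=-a$ rather than accidentally collapsing unrelated pairs of units. As in the proof of \autoref{propD}, this is controlled by the grading on $G_J$, since the imposed relations are homogeneous and the $3$-term Pl\"ucker expressions cannot, after degree-$0$ normalization, collapse distinct units; the near-idempotency assumption further suppresses sign complications. Once this is checked, representability and functoriality follow exactly as for $T_J$, with the $3$-term Pl\"ucker relations in place of the full Pl\"ucker relations.
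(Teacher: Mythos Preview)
Your high-level plan is correct and matches the paper's: one builds $P_J$ exactly as $T_J$ was built, replacing the full set of Pl\"ucker relations by the $3$-term ones, and the universal property follows by the identical argument. The paper's proof is literally one line: ``the proof of \autoref{prop: universal property of the universal tract} applies \emph{mutatis mutandis}.''

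However, your concrete construction has a genuine gap, and it is precisely the point you flag as the ``main obstacle'' but then dismiss. You take the underlying monoid of $P_J$ to be the unquotiented group $G_J^0\cup\{0\}$ and put the $3$-term relations only into the null set. But degenerate $3$-term Pl\"ucker relations --- those for which one of the six lattice points $\alpha+\epsilon_i+\epsilon_j,\dotsc$ lies outside $J$ --- contribute two-term elements $a+b$ (or $a-b$) to your null ideal in which $a$ and $b$ are \emph{distinct} monomials of $G_J^0$. This directly violates tract axiom~(2): there is no longer a unique additive inverse. Your grading argument does not help, since both surviving terms have the same degree; homogeneity says nothing about whether they are equal as group elements. (If instead you silently drop the degenerate relations, the resulting object no longer represents $\Gr^w_J$, because weak representations are required to satisfy those relations too.)

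The paper avoids this by using the quotient construction $\past{\Funpm(x_\bbeta\mid\bbeta\in\bJ)}{S}$: this operation simultaneously generates the null ideal \emph{and} quotients the monoid by the equivalence relation $ca\sim cb$ whenever $a-b\in S$, so that two-term relations force the required identifications in the unit group and axiom~(2) is preserved automatically. Once you use that construction (and, outside the near-idempotent case, the tuple-indexed generators $x_\bbeta$ with the alternating-sign convention rather than the sign-free $x_\beta$), the verification of the universal property is indeed identical to that for $T_J$.
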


Evidently, every (strong) $F$-representation is a weak $F$-representation, which yields an inclusion $\Gr_J(F)\subseteq\Gr^w_J(F)$, or, equivalently, a surjection $P_J\to T_J$. The following non-obvious result is \autoref{thm: bijection between the universal pasture and the universal tract}:

\begin{thmA}\label{thmI}
 The canonical morphism $P_J\to T_J$ is a bijection.
\end{thmA}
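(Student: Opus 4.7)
The strategy is to construct a two-sided inverse to the canonical morphism $P_J \to T_J$. This morphism is induced by the inclusion of the functor $\Gr_J$ of strong representations into the functor $\Gr^w_J$ of weak representations; it is surjective because both tracts are generated, as $\F_1^\pm$-algebras, by the Pl\"ucker coordinates $x_\beta$ indexed by $\beta \in J$. By the universal property of the universal pasture, the identity map $\id_{P_J}$ corresponds to a tautological weak representation $\rho_\mathrm{univ}\colon \Delta^r_n \to P_J$ with $\rho_\mathrm{univ}(\beta) = x_\beta$ for $\beta \in J$ and $\rho_\mathrm{univ}(\beta) = 0$ otherwise. The key reduction is that, by the universal property of $T_J$, constructing the inverse morphism is equivalent to showing that $\rho_\mathrm{univ}$ is in fact a \emph{strong} Grassmann--Pl\"ucker representation over $P_J$.

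Granting this, the universal property of $T_J$ yields a tract morphism $T_J \to P_J$ sending $x_\beta \mapsto x_\beta$. The two compositions are tract endomorphisms of $P_J$ and $T_J$ fixing all generators $x_\beta$, hence each is the identity by the respective universal property. The comparison morphism is therefore a two-sided inverse of tract morphisms, and in particular a bijection of underlying sets.

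The substantive content is thus the verification that $\rho_\mathrm{univ}$ satisfies every Pl\"ucker relation, not merely the $3$-term ones. We proceed by induction on the order $s$ of the relation, with $s=2$ holding by the defining null relations of $P_J$. The idempotency principle splits the inductive step into two regimes: if $J$ is a proper polymatroid, then $P_J$ is near-idempotent, so all signs collapse and the Pl\"ucker sums become unsigned; if $J$ is a translate of a matroid, we combine the matroid version of this comparison result (which, as the statement notes, is also new and must be proved by an adapted version of the same induction) with the translation invariance from Theorem~E. In the idempotent regime, the plan is a \emph{telescoping exchange}: fix a distinguished index, and iterate the $3$-term Pl\"ucker relation to pivot through pairs of indices, thereby expressing an $(s{+}1)$-term Pl\"ucker expression as a $P_J^+$-linear combination of $3$-term Pl\"ucker expressions multiplied by monomials in $P_J^\times$. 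Since $N_{P_J}$ is an ideal of $P_J^+$, closed under addition and multiplication by units, this forces the higher-order relation to lie in $N_{P_J}$.

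The principal difficulty lies in making the telescoping exchange go through in the polymatroid setting. Unlike the matroid case, the arguments $\alpha + \epsilon_{i_0} + \dotsb + \epsilon_{i_s}$ and $\alpha + \epsilon_{j_2} + \dotsb + \epsilon_{j_s}$ live in the dilated simplex $\Delta^r_n$ and may contain repeated indices, producing degenerate pivot configurations that do not occur for matroids. One must track the admissibility conditions $\delta^-_J \leq \alpha$ and $\alpha + \epsilon_{i_0} + \dotsb + \epsilon_{i_s} + \epsilon_{j_2} + \dotsb + \epsilon_{j_s} \leq \delta^+_J$ throughout the expansion to ensure that each intermediate $3$-term relation invoked is a legitimate defining relation of $P_J$, and handle separately those cases in which a naive pivot would step outside $\Delta^r_n$; in such cases one uses a degenerate $3$-term Pl\"ucker relation, which in the near-idempotent regime reduces to a multiplicative identity already built into the monoid structure of $P_J$. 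The idempotency principle is essential here, since it removes the sign bookkeeping and enables the free rearrangement of terms needed for the telescoping.
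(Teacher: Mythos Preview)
Your approach has a genuine gap: it would prove too much. You propose to show that the universal weak representation $\rho_{\mathrm{univ}}\colon \Delta^r_n \to P_J$ is in fact a \emph{strong} representation, which via the universal property of $T_J$ would yield a tract morphism $T_J \to P_J$ inverse to the canonical map. That would make $P_J \to T_J$ an \emph{isomorphism of tracts}, not merely a bijection of underlying sets. But the paper explicitly states (just before the theorem) that ``the bijective morphism $P_J\to T_J$ is in general not an isomorphism,'' because the non-degenerate higher Pl\"ucker relations are \emph{not} generated by the $3$-term ones in the semiring ideal sense. Your telescoping exchange would need to express each higher Pl\"ucker sum as an $\N[P_J^\times]$-linear combination of $3$-term Pl\"ucker sums; since $P_J^+$ is a semiring with no subtraction, this is much stronger than ideal membership in a ring, and it fails in general. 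Equivalently, if your argument worked, every tract would be excellent, contradicting the careful distinction the paper draws.

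The paper's proof targets a strictly weaker statement: it shows that the \emph{unit groups} $\widehat P_J^\times$ and $\widehat T_J^\times$ coincide. Both are quotients of the free abelian group on $-1$ and the $x_\beta$ by subgroups $H_J^w$ and $H_J$ generated by the \emph{degenerate} $3$-term and \emph{degenerate} general Pl\"ucker relations, respectively (only degenerate relations, having exactly two nonzero terms, impose multiplicative identities $a=b$ via uniqueness of additive inverses). The induction on $s$ is then purely multiplicative: one shows each degenerate generalized cross ratio factors as a product of degenerate $3$-term cross ratios, using the exchange axiom to locate suitable pivot indices. Non-degenerate higher Pl\"ucker relations play no role in determining the monoid, so the null sets $N_{P_J}\subsetneq N_{T_J}$ can differ while the underlying pointed groups agree. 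Your strategy conflates bijectivity of the monoid map with equality of null sets; to repair it, you must abandon the attempt to verify the strong Pl\"ucker relations in $N_{P_J}$ and instead work at the level of unit groups and degenerate relations only.
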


We deduce from this result that the functoriality of $\Gr_J(F)$ described in \autoref{thmE} also holds for $\Gr^w_J(F)$, and that $\Gr^w_J(F)$ is functorial with respect to duality (\autoref{thmF}) and direct sums (\autoref{thmG}). \autoref{thmI} also implies that $\Gr^w_J(F)$ is contained in the degeneracy locus $\upD_J(F)$, and shows that the Tutte group of a matroid $M$ (which coincides with $P_M^\times$) is canonically isomorphic to $T_M^\times$ (a result which was known by Wenzel \cite[p.\ 39]{Wenzel1996}).

%%%%%%%%%%%%%%%%%%%%%%%%%%%%%%%%%%%%%%%%%%%%%%%%%%%%%%%%%%%%%%%%%%%%%%%%%%%%%%%%%%%
\subsection*{Excellent tracts}
In general, the inclusion $\Gr_J(F)\subseteq\Gr^w_J(F)$ is not an equality. We call a tract $F$ \emph{excellent} if this inclusion is an equality for all polymatroids $J$.

Excellent tracts are closely related to perfect tracts $F$ (see \cite{Baker-Bowler19} for the definition), for which the equality $\Gr_J(F)=\Gr^w_J(F)$ holds whenever $J$ is a matroid  (see \cite[Thm.\ 3.46]{Baker-Bowler19}). Note that every field, the Krasner hyperfield $\K$, and the tropical hyperfield $\T$ are perfect.

A tract $F$ is \emph{degenerate} if $N_F$ contains every formal sum $\sum a_i$ with at least $3$ nonzero terms $a_1,a_2,a_3\in F^\times$. The following summarizes our present state of knowledge about excellent tracts (see \autoref{cor: perfect tracts are excellent} and \autoref{cor: degenerate tracts are excellent}).

\begin{thmA}\label{thmJ}
 \begin{enumerate}
 \item[]
 \item Every perfect tract is excellent. 
 \item Every degenerate tract is excellent. 
 \end{enumerate}
\end{thmA}

%%%%%%%%%%%%%%%%%%%%%%%%%%%%%%%%%%%%%%%%%%%%%%%%%%%%%%%%%%%%%%%%%%%%%%%%%%%%%%%%%%%
\subsection*{Realization spaces and the foundation}
The \emph{torus} $T(F)=(F^\times)^n$ acts on $\Gr^w_J(F)$ through the formula $(t.\rho)(\alpha)=(t_1^{\alpha_1}\dotsb t_n^{\alpha_n})\cdot\rho(\alpha)$. The \emph{realization space of $J$ over $F$} is the set $\ulineGr^w_J(F)$ of $T(F)$-orbits of this action.

\begin{propA}\label{propK}
 For every polymatroid $J$, the functor sending a tract $F$ to the realization space $\ulineGr^w_J(F)$ is represented by a tract $F_J$, i.e.,\ there is a bijection $\ulineGr^w_J(F)\to\Hom(F_J,F)$ which is functorial in $F$. We call $F_J$ the \emph{foundation of $J$}.  
\end{propA}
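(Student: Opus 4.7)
The plan is to construct $F_J$ as a natural sub-pasture of the universal pasture $P_J$ provided by \autoref{propH}, extracting the torus-invariant information via a multigrading. Under the identification $\Gr^w_J(F)=\Hom(P_J,F)$, the torus $T(F)$ acts on $\phi\in\Hom(P_J,F)$ by twisting $\phi(x_\alpha)\mapsto t^\alpha\phi(x_\alpha)$; passing to $\Z^n$-invariants at the level of the representing pasture should therefore yield $F_J$.

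First I equip $P_J$ with the natural $\Z^n$-grading in which the generator $x_\alpha$ has multidegree $\alpha\in\Z^n$. Each three-term Pl\"ucker relation of the shape $x_{\alpha+\epsilon_j+\epsilon_k}x_{\alpha+\epsilon_i+\epsilon_l}+x_{\alpha+\epsilon_i+\epsilon_k}x_{\alpha+\epsilon_j+\epsilon_l}+x_{\alpha+\epsilon_i+\epsilon_j}x_{\alpha+\epsilon_k+\epsilon_l}$ is homogeneous of multidegree $2\alpha+\epsilon_i+\epsilon_j+\epsilon_k+\epsilon_l$, so the grading descends to $P_J$ and $N_{P_J}$ is generated by homogeneous elements. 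Since $|\alpha|=r$ for every $\alpha\in J$, any multidegree-$0$ element of $P_J^\times$ automatically has total degree $0$, so the diagonal $F^\times$-rescaling already quotiented out in $\Gr^w_J$ is a special case of the torus action, and the combined orbit data $\ulineGr^w_J(F)$ is indexed by the $T(F)$-orbits on $\Hom(P_J,F)$.

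Next I define $F_J$ to be the sub-pasture of $P_J$ with unit group $F_J^\times:=(P_J^\times)_0$ (the multidegree-$0$ units) and null set $N_{F_J}:=N_{P_J}\cap\N[F_J^\times]$. Restriction provides a canonical natural map $\Phi_F\colon \ulineGr^w_J(F)\to\Hom(F_J,F)$. For surjectivity of $\Phi_F$, given $\psi\colon F_J\to F$, I lift $\psi$ to some $\phi\colon P_J\to F$ by choosing compatible values $\phi(x_\alpha)\in F^\times$; this reduces to extending the induced character of the sublattice $K_J:=\ker(\Z^J\to\Z^n,\ \epsilon_\alpha\mapsto\alpha)$, encoding the multidegree-$0$ monomial relations, to all of $\Z^J$, which is possible because $\Z^J/K_J$ embeds in $\Z^n$ and is therefore free, forcing the defining exact sequence to split. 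The Pl\"ucker relations are preserved under this lift because each lies in a fixed multidegree and its image under $\phi$ is forced by $\psi$ up to a common monomial factor. For injectivity, two lifts of the same $\psi$ differ by a character $\bar c\colon\Z\langle J\rangle\to F^\times$, and I need $\bar c$ to arise from a pair $(c,t)\in F^\times\times T(F)$, equivalently to extend along the inclusion $I_J\hookrightarrow\Z^{n+1}$ where $I_J$ is the sublattice generated by $\{(1,\alpha):\alpha\in J\}$.

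The main obstacle is this last extension step, which amounts to the torsion-freeness of $\Z^{n+1}/I_J$. This is a genuine feature of polymatroids --- for an arbitrary subset of $\Delta^r_n$ the quotient can have torsion (for example, $\{(2,0),(0,2)\}\subseteq\Delta^2_2$ produces $\Z/2\Z$-torsion) --- and I expect to derive it from the exchange-type content of the Pl\"ucker characterization in \autoref{thmA}, which provides enough integer combinations among the elements of $J$ to saturate $I_J$ inside the hyperplane $\{(k,v)\in\Z^{n+1}:|v|=rk\}$. Granting this, $\mathrm{Ext}^1(\Z^{n+1}/I_J,F^\times)=0$, $\Phi_F$ becomes bijective and functorial in $F$, and $F_J$ represents the functor $\ulineGr^w_J$.
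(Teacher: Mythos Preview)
Your construction is exactly the paper's: $F_J$ is the multidegree-zero subtract of the (extended) universal pasture, and the bijection $\ulineGr^w_J(F)\simeq\Hom(F_J,F)$ is obtained by identifying $\Hom(F_J,F)$ with the $\widehat T(F)$-orbit space of $\Hom(\widehat P_J,F)\cong\upR^w_J(F)$. The paper's proof is in fact quite terse at precisely the point you flag --- it simply asserts that $\Phi_{J,F}$ ``descends to a functorial bijection between the respective sets of $\widehat T(F)$-orbits'' --- so you are being more careful than the paper here, not less.

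You have correctly isolated the missing ingredient: two morphisms $\widehat P_J\to F$ that agree on $F_J$ differ by a character of the image lattice $I_J=\langle(1,\alpha):\alpha\in\Jbar\rangle\subseteq\Z^{n+1}$, and for them to lie in the same $\widehat T(F)$-orbit this character must extend to $\Z^{n+1}$, which for arbitrary $F$ requires $\Z^{n+1}/I_J$ to be torsion-free. Your example $\{(2,0),(0,2)\}$ shows this genuinely needs M-convexity. However, you only \emph{promise} the saturation argument rather than give it. Here is how to complete it. Reduce to indecomposable $J$ (the lattice decomposes along $J=J_1\oplus J_2$). Declare $i\sim j$ on $[n]$ if $(0,\epsilon_i-\epsilon_j)\in I_J$. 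The exchange axiom connects any $\alpha,\beta\in J$ by single-step exchanges $\gamma\mapsto\gamma-\epsilon_i+\epsilon_j$ within $J$; each step gives $i\sim j$, and summing along the path shows $(\alpha-\beta)_S=0$ for every $\sim$-class $S$. Thus $S$ is a direct summand of $J$, and indecomposability forces $S=[n]$, so $(0,\epsilon_i-\epsilon_j)\in I_J$ for all $i,j$. Together with any single $(1,\alpha_0)$ this yields $I_J=\{(d,v):|v|=d\rbar\}$, a direct summand of $\Z^{n+1}$. (This is essentially the stabilizer computation the paper performs later in \autoref{thm: decomposition of the representation space}.)

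One cosmetic remark: you slide between $P_J$ and $\widehat P_J$ (writing ``$\phi(x_\alpha)$'' for a morphism out of $P_J$, where $x_\alpha$ has degree~$1$). This is harmless because the multidegree-zero parts coincide, but working uniformly in $\widehat P_J$ would be cleaner.
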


The foundation of a matroid has proven to be a valuable tool for the study of matroid representations (see, for example, \cite{Baker-Lorscheid20,Baker-Lorscheid21,Baker-Lorscheid-Zhang24}), in part because one knows an explicit presentation for it in terms of generators and relations, with the generators being certain canonical elements called \emph{cross ratios}.
This presentation is closely connected to Tutte's homotopy theory (\cite{Tutte58a,Tutte58b}; see also \cite{Baker-Jin-Lorscheid25} for a ``modern'' account). 

We succeed in generalizing this presentation to polymatroids (for the definition of cross ratios and a precise formulation, see \autoref{thm: generators and relations for the foundation}).

\begin{thmA}\label{thmL}
 The foundation $F_J$ of $J$ is generated by the cross ratios for $J$. All types of relations between cross ratios that hold for matroids also hold for all polymatroids, and such relations together with $1=-1$ if $J$ is proper form a complete list of relations between cross ratios for polymatroids.
\end{thmA}

The realization space $\ulineGr_J(F)$ satisfies further properties similar to the thin Schubert cells: it is functorial with respect to polymatroid embeddings (\autoref{thm: foundations of embedded minors}, \autoref{prop: foundations of coordinate inclusions}, \autoref{prop: foundations of permutations}), duality (\autoref{thm: foundations of duals}), and direct sums (\autoref{thm: representations of direct sums}).

%%%%%%%%%%%%%%%%%%%%%%%%%%%%%%%%%%%%%%%%%%%%%%%%%%%%%%%%%%%%%%%%%%%%%%%%%%%%%%%%%%%
\subsection*{The lineality space}
If $F$ is idempotent, the characteristic map $\chi_J\colon\Delta^r_n\to F$ of $J$ is an $F$-representation. The \emph{lineality space of $J$ over $F$} is the torus orbit $\upLin_J(F)=T(F).\chi_J$. The following is \autoref{thm: decomposition of the representation space}.

\begin{thmA}\label{thmM}
 Let $c(J)$ be the number of indecomposable components of $J\subseteq\Delta^r_n$. Then $P_J\simeq F_J(x_1,\dotsc,x_{n-c(J)})$ is a free algebra over $F_J$, and this isomorphism defines a bijection
 \[
  \Gr^w_J(F) \ \simeq \ \ulineGr^w_J(F) \times (F^\times)^{n-c(J)}
 \]
 which is functorial in $F$. If $F$ is idempotent, then $\upLin_J(F)\simeq(F^\times)^{n-c(J)}$.
\end{thmA}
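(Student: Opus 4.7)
The plan is to prove \autoref{thmM} by reducing via the direct sum decomposition to the indecomposable case, and then constructing an explicit section of the torus action. Using the unique decomposition $J = \bigoplus_{i=1}^{c(J)} J_i$ into indecomposable summands $J_i \subseteq \Delta^{r_i}_{n_i}$ with $\sum n_i = n$ and \autoref{thmG}, one obtains compatible product decompositions $\Gr^w_J(F) \simeq \prod_i \Gr^w_{J_i}(F)$ and $\ulineGr^w_J(F) \simeq \prod_i \ulineGr^w_{J_i}(F)$, the latter because the torus $T(F) = \prod_i (F^\times)^{n_i}$ acts componentwise. Via the universal properties \autoref{propH} and \autoref{propK}, this translates to $P_J \simeq \bigotimes_i P_{J_i}$ and $F_J \simeq \bigotimes_i F_{J_i}$ in the appropriate tensor of tracts. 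It thus suffices to prove $P_J \simeq F_J(x_1,\ldots,x_{n-1})$ when $J$ is indecomposable.

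\textbf{Step 2 (Splitting off the torus).} Assume $J$ indecomposable. The key lattice-theoretic input is that indecomposability forces the $\Q$-affine span of $J \subseteq \R^n$ to be precisely the hyperplane $\{|\alpha|=r\}$: any additional independent linear relation on $J$ would split its support into two blocks with separately fixed coordinate sums, contradicting indecomposability. Fixing $\beta_0 \in J$, I choose $\beta_1,\ldots,\beta_{n-1} \in J$ such that $\{\beta_i - \beta_0\}$ is a $\Q$-basis of $\{v \in \Z^n : |v|=0\}\otimes\Q$. The universal weak representation $\alpha \mapsto x_\alpha$ in $P_J$ carries a natural $\Z^n$-grading, with $F_J$ (up to the relevant localization) sitting as the degree-zero subtract. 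The claim is that the $n-1$ ratios $y_i := x_{\beta_i}/x_{\beta_0}$ are algebraically independent over $F_J$ and, together with $F_J$, generate $P_J$, yielding $P_J \simeq F_J(y_1,\ldots,y_{n-1})$ as a free $F_J$-algebra.

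\textbf{Step 3 (Bijections and the lineality space).} Applying $\Hom(-,F)$ to this isomorphism and invoking \autoref{propH} and \autoref{propK} gives
\[
\Gr^w_J(F) \ \simeq \ \Hom(P_J,F) \ \simeq \ \Hom(F_J,F) \times (F^\times)^{n-c(J)} \ \simeq \ \ulineGr^w_J(F) \times (F^\times)^{n-c(J)},
\]
functorially in $F$. If $F$ is idempotent, then $\chi_J$ is an $F$-representation and its stabilizer in $T(F)$ consists of $t \in (F^\times)^n$ with $t^{\alpha-\beta}=1$ for all $\alpha,\beta \in J$, i.e.\ those $t$ trivial on the lattice of differences. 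By the Step 2 analysis applied componentwise, this lattice spans a $\Q$-subspace of dimension $n-c(J)$, so the stabilizer is $(F^\times)^{c(J)}$ and the orbit $\upLin_J(F) = T(F).\chi_J$ is isomorphic to $(F^\times)^{n-c(J)}$.

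\textbf{Main obstacle.} The crux is Step 2: showing that the $3$-term Plücker relations impose no nontrivial relation among the $y_i$ beyond what is already recorded in $F_J$. I would exploit that cross ratios, which generate $F_J$ by \autoref{thmL}, are degree-zero in the $\Z^n$-grading and thus distinct in character from the $y_i$. A further subtlety is that the lattice $\Z\langle \alpha-\beta : \alpha,\beta\in J\rangle$ may be a proper sublattice of $\{v \in \Z^n : |v|=0\}$, which could a priori introduce torsion in the torus-character analysis; verifying that this does not obstruct the free-algebra structure will likely require careful inspection of the universal Plücker relations together with the cross-ratio presentation of $F_J$.
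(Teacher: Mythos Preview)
Your Step~3 lineality computation and the overall architecture are sound and close to the paper's argument. The real divergence is in Step~2, and the paper's route there is cleaner than the explicit-generator approach you propose.

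Rather than choosing specific $y_i = x_{\beta_i}/x_{\beta_0}$ and confronting the two obstacles you name, the paper proceeds abstractly. First, dividing any $3$-term Pl\"ucker relation by one of its nonzero terms lands it in $N_{F_J}$; this single observation shows that $N_{P_J}$ is generated as an ideal by $N_{F_J}$, dissolving your ``main obstacle'' without further work. Second, the multi-degree map $\deg_{[n]}\colon P_J^\times \to \Z^n$ has kernel exactly $F_J^\times$ by the very definition of $F_J$, so $P_J^\times/F_J^\times$ embeds in $\Z^n$ and is therefore free abelian of some rank $s\leq n$; no explicit basis is needed, and together with the previous point this already yields $P_J\simeq F_J(x_1,\dotsc,x_s)$. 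Your proposed $y_i$, with $\beta_i-\beta_0$ only a $\Q$-basis, need not $\Z$-generate this quotient, so the generation half of your Step~2 claim has a real gap as written.

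To pin down $s=n-c(J)$, the paper does not argue via the affine span. Instead it computes, exactly as in your Step~3, that the $T(F)$-stabilizer of any $[\brho]\in\Gr^w_J(F)$ is the block-diagonal torus $(F^\times)^{c(J)}$, so every fiber of $\Gr^w_J(F)\to\ulineGr^w_J(F)$ is a single $T(F)$-orbit in bijection with $(F^\times)^{n-c(J)}$. Evaluating at a finite idempotent tract with nontrivial unit group (the paper takes $F=\H\otimes\K$) and comparing with the decomposition $\Gr^w_J(F)\simeq\ulineGr^w_J(F)\times(F^\times)^s$ then forces $s=n-c(J)$ by a cardinality count. This trick sidesteps any direct lattice analysis. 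Incidentally, the inductive stabilizer argument via the exchange axiom actually produces, for indecomposable $J$, enough differences of the form $\epsilon_k-\epsilon_\ell$ inside $J-J$ to make the difference lattice equal to all of $\{v\in\Z^n:|v|=0\}$, so your sublattice worry is in fact unfounded --- but the paper never needs to say so.
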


\subsection*{Acknowledgements} 
We thank Ari Pomeranz, and Noah Solomon for helpful remarks on an early draft of this text. We thank the Korea Institute for Advanced Study in Seoul and the Institute for Advanced Study in Princeton for hosting us during our collaboration. M.B.\ was partially supported by NSF grant DMS-2154224 and a Simons Fellowship in Mathematics (1037306, Baker). J.H.\ is partially supported by the Oswald Veblen Fund and the Simons Investigator Grant. M.K.\ was partially supported by DFG grant 502861109. O.L.\ was partially supported by NSF grant DMS-1926686 and by the Institute for Advanced Study.

%%%%%%%%%%%%%%%%%%%%%%%%%%%%%%%%%%%%%%%%%%%%%%%%%%%%%%%%%%%%%%%%%%%%%%%%%%%%%%%%%%%%%%%%%%%%%%%%%%%%%%%%%%%%%%%%%%%%%%%%%%%%%%%%%%%%%%%%%%%%%%%%%%%%%%%%%%%
%%%%%%%%%%%%%%%%%%%%%%%%%%%%%%%%%%%%%%%%%%%%%%%%%%%%%%%%%%%%%%%%%%%%%%%%%%%%%%%%%%%%%%%%%%%%%%%%%%%%%%%%%%%%%%%%%%%%%%%%%%%%%%%%%%%%%%%%%%%%%%%%%%%%%%%%%%%
\part{Polymatroids: duality and embedded minors}
%%%%%%%%%%%%%%%%%%%%%%%%%%%%%%%%%%%%%%%%%%%%%%%%%%%%%%%%%%%%%%%%%%%%%%%%%%%%%%%%%%%%%%%%%%%%%%%%%%%%%%%%%%%%%%%%%%%%%%%%%%%%%%%%%%%%%%%%%%%%%%%%%%%%%%%%%%%
%%%%%%%%%%%%%%%%%%%%%%%%%%%%%%%%%%%%%%%%%%%%%%%%%%%%%%%%%%%%%%%%%%%%%%%%%%%%%%%%%%%%%%%%%%%%%%%%%%%%%%%%%%%%%%%%%%%%%%%%%%%%%%%%%%%%%%%%%%%%%%%%%%%%%%%%%%%
\section{Polymatroids and M-convex sets}
\label{section: polymatrois and M-convex sets}

In this section, we review the concepts of (discrete and integral) polymatroids and their relation to M-convex sets. 

We will consider the nonnegative orthant $\R_{\geq0}^n$ in $\R^n$, together with the partial order $\leq$ defined by $\alpha\leq\beta$ iff $\alpha_i\leq\beta_i$ for all $i=1,\dotsc,n$. We define $\norm\alpha=\alpha_1+\dotsb+\alpha_n$, and we let $\epsilon_i$ denote the $i$-th standard basis vector of $\R^n$. 

%%%%%%%%%%%%%%%%%%%%%%%%%%%%%%%%%%%%%%%%%%%%%%%%%%%%%%%%%%%%%%%%
\subsection{Polymatroids}
\label{subsection: polymatroids}

Like matroids, polymatroids have several equivalent (``cryptomorphic'') characterizations, for example in terms of rank functions and polytopes. For the purposes of this paper, the following approach seems the most economical.

A \emph{polymatroid on $[n]$} is a nonempty compact subset $\cP$ of $\R_{\geq0}^n$ that satisfies the following two axioms:
\begin{enumerate}[label={(P\arabic*)}]
 \item\label{poly1} if $\beta\in\cP$ and $\alpha\leq\beta$, then $\alpha\in\cP$;
 \item\label{poly2} if $\alpha,\beta\in\cP$ and $\norm\alpha<\norm\beta$, then there exist $i\in[n]$ and $r\in[0,\ \beta_i-\alpha_i]$ such that $\alpha+r\epsilon_i\in\cP$.
\end{enumerate}
It follows from these axioms that $\cP$ is a convex polytope in $\R_{\geq0}^n$. 

An \emph{integral polymatroid on $[n]$} is a polymatroid $\cP$ on $[n]$ whose vertices have integer coordinates, i.e., $\cP$ is the convex hull $\conv(S)$ of a finite subset $S$ of $\N^n$.

A \emph{discrete polymatroid on $[n]$} is a nonempty finite subset $P$ of $\N^n$ that satisfies the following two axioms:
\begin{enumerate}[label={(DP\arabic*)}]
 \item\label{dpoly1} if $\beta\in P$ and $\alpha\leq\beta$, then $\alpha\in P$;
 \item\label{dpoly2} if $\alpha,\beta\in P$ and $\norm\alpha<\norm\beta$, then there exists $i\in[n]$ such that $\alpha_i<\beta_i$ and $\alpha+\epsilon_i\in P$.
\end{enumerate}

The elements of a discrete polymatroid $P$ are called its \emph{independent vectors}, and the maximal elements (w.r.t.\ $\leq$) are called its \emph{bases}. Axiom \ref{dpoly1} implies that a discrete polymatroid is determined by its set of bases, and axiom \ref{dpoly2} implies that $\norm \beta = \norm{\beta'}$ for any two bases $\beta,\beta'$ of $P$.

Taking the convex hull of a discrete polymatroid $P$, considered as a subset of $\R_{\geq0}^n$, yields an integral polymatroid $\cP=\conv(P)$. By \cite[Thm.\ 3.4]{Herzog-Hibi02}, we recover $P$ as $\cP\cap\N^n$. As a consequence of \cite[Theorem 4.15]{Murota03}, this establishes a bijective correspondence between discrete and integral polymatroids.

%%%%%%%%%%%%%%%%%%%%%%%%%%%%%%%%%%%%%%%%%%%%%%%%%%%%%%%%%%%%%%%%
\subsection{M-convex sets}
\label{subsection: M-convex sets}

Let $\Delta^r_n=\{\alpha\in\N^n \mid \alpha_1+\dotsb+\alpha_n=r\}$. An \emph{M-convex set} of rank $r$ on $[n]$ is a nonempty subset $J$ of $\Delta^r_n$ such that for all $\alpha,\beta\in J$ and every $i\in[n]$ with $\alpha_i<\beta_i$, there exists an $j\in[n]$ with $\alpha_j > \beta_j$ such that $J$ contains both $\alpha+\epsilon_i-\epsilon_j$ and $\beta-\epsilon_i+\epsilon_j$.

It follows directly from the definition that for an M-convex set $J\subseteq\Delta^r_n$, the subset $P_J=\{\alpha\in\N^n\mid \alpha\leq\beta\text{ for some }\beta\in J\}$ of $\N^n$ is a discrete polymatroid. \emph{A priori,} the (symmetric) exchange axiom for M-convex sets seems stronger than the exchange axiom \ref{dpoly2} for discrete polymatroids, since the symmetric exchange axiom requires both $\alpha+\epsilon_i-\epsilon_j$ and $\beta-\epsilon_i+\epsilon_j$ to be in $J$. However, it follows from \cite[Thm.\ 2.7]{Vladoiu06} that every discrete polymatroid is of the form $P_J$ for some M-convex set $J$, which establishes a bijective correspondence between M-convex sets $J$ and discrete polymatroids $P_J$.

\begin{ex}\label{ex: matroids as M-convex sets}
 A matroid $M$ of rank $r$ on $[n]$ can be considered as the M-convex subset 
 \[\textstyle
  J \ = \ \{\sum_{i\in B} \epsilon_i\mid B\text{ is a basis of }M\}
 \]
 of $\Delta^r_n$. In this sense, we consider matroids as particular kinds of M-convex sets. For simplicity, we say that an M-convex set $J$ \emph{is a matroid} if it stems from a matroid in the above sense. Note that an M-convex set $J$ is a matroid if and only if $J\subseteq\{0,1\}^n$.
 
 Another class of examples of M-convex sets are the sets $J=\Delta^r_n$, which are not matroids for $r\geq2$. In particular, there are M-convex sets whose rank $r$ is bigger than $n$, which does not occur for matroids.
\end{ex}

%%%%%%%%%%%%%%%%%%%%%%%%%%%%%%%%%%%%%%%%%%%%%%%%%%%%%%%%%%%%%%%%
\subsection{Rank functions}
\label{subsection: rank functions}

Let $\cP\subseteq\R_{\geq0}^n$ be a polymatroid and let $2^{[n]}$ denote the power set of $[n]$. The \emph{rank function of $\cP$} is the function $\br\colon 2^{[n]}\to\R_{\geq0}$ with values
\[\textstyle
 \br(S) \ = \ \max\, \{ \, \alpha_S \mid \alpha\in\cP \, \}, %\qquad \text{where} \quad \alpha_S \ = \ \sum_{i\in S}\alpha_i,
\]
for $S\subseteq [n]$, where $\alpha_S=\sum_{i\in S}\alpha_i$ (with $\alpha_\emptyset=0$). The polymatroid $\cP$ is characterized by $\br$ through the formula
\[\textstyle
 \cP \ = \ \big\{ \, \alpha\in\R_{\geq0} \ \big| \ \alpha_S\leq \br(S)\text{ for all }S\subseteq[n] \, \big\}.
\]

By \cite[Proposition 1.2]{Herzog-Hibi02}, a function $\br\colon2^{[n]}\to\R_{\geq0}$ is a rank function of a polymatroid if and only if it is \emph{normalized}, i.e., $\br(\emptyset)=0$, \emph{non-decreasing}, i.e., $\br(S)\leq \br(T)$ whenever $S\subseteq T$, and \emph{submodular}, i.e., $\br(S)+\br(T)\leq \br(S\cup T)+\br(S\cap T)$ for all $S,T\subseteq [n]$.

The polymatroid $\cP$ is integral if and only if the image of its rank function $\br\colon2^{[n]}\to\R_{\geq0}$ is contained in $\N$. If $\cP$ is the integral polymatroid corresponding to an M-convex set $J$, then its rank function is given by the formula 
\[\textstyle
 \br(S) \ = \ \max\, \{ \, \alpha_S\mid \alpha\in J \, \}
\]
for $S\subseteq[n]$.

%%%%%%%%%%%%%%%%%%%%%%%%%%%%%%%%%%%%%%%%%%%%%%%%%%%%%%%%%%%%%%%%
\subsection{Base polytopes and generalized permutohedra}
\label{subsection: permutohedra}

An \emph{(integral) generalized permutohedron} is a polytope $\cB \subseteq \R^n$ such that all vertices of $\cB$ belong to $\Z^n$ and all edges of $\cB$ are parallel to $\epsilon_i - \epsilon_j$ for some $i \neq j$.

Generalized permutohedra are closely related to polymatroids. Given an integral polymatroid $\cP$ on $[n]$ with associated M-convex set $J$, its \emph{base polytope $\cB$} is defined as the convex hull of $J$, considered as elements of $\N^n \subseteq \R^n$. The base polytope $\cB$ characterizes the matroid since $J=\cB\cap\N^n$.

The following theorem is proved, for example, in \cite{Derksen-Fink10} and \cite{Postnikov-Reiner-Williams08}. It establishes a bijection (modulo translations) between discrete polymatroids and generalized permutohedra, and generalizes a well-known polytopal characterization of matroids due to Gelfand and Serganova \cite{Gelfand-Serganova87}.

\begin{thm*}
A polytope $\cB \subseteq \R^n$ is the base polytope of an integral polymatroid if and only if it is a generalized permutohedron and lies in the nonnegative orthant $\R_{\geq 0}^n$.
\end{thm*}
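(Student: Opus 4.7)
The plan is to pass through the rank function description of polymatroids from \autoref{subsection: rank functions}, realizing both directions via the function $\mathbf{r}_\cB\colon 2^{[n]}\to\R$ defined by $\mathbf{r}_\cB(S):=\max_{x\in\cB}x_S$, and its submodularity properties.

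For the forward direction, I would suppose $\cB=\conv(J)$ is the base polytope of an integral polymatroid with M-convex set $J\subseteq\N^n$. The containment $\cB\subseteq\R_{\geq 0}^n$ is automatic, and every vertex of $\cB$ lies in $J\subseteq\Z^n$. To establish the edge condition, I would fix adjacent vertices $\alpha\neq\beta$ together with a linear functional $\ell$ uniquely maximized on $[\alpha,\beta]$. Since $\norm\alpha=\norm\beta$, some index $i$ satisfies $\alpha_i<\beta_i$; the symmetric exchange axiom from \autoref{subsection: M-convex sets} then produces $j$ with $\alpha_j>\beta_j$ such that $\alpha':=\alpha+\epsilon_i-\epsilon_j$ and $\beta':=\beta-\epsilon_i+\epsilon_j$ both lie in $J$. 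Because $\alpha'+\beta'=\alpha+\beta$, the segments $[\alpha,\beta]$ and $[\alpha',\beta']$ share a midpoint, so $\ell$ attains its maximum at $\alpha'$ and $\beta'$ as well, forcing $\alpha',\beta'\in[\alpha,\beta]$ and hence $\beta-\alpha$ to be a positive multiple of $\epsilon_i-\epsilon_j$.

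For the converse, let $\cB\subseteq\R_{\geq 0}^n$ be a generalized permutohedron. Since every edge direction $\epsilon_i-\epsilon_j$ annihilates $x\mapsto\norm x$ and $\cB$ is edge-connected, the coordinate sum is constant on $\cB$, equal to some $r\in\N$ by integrality of the vertices. I would define $\mathbf{r}\colon 2^{[n]}\to\N$ by $\mathbf{r}(S):=\max_{x\in\cB}x_S$; integrality holds since this maximum is attained at a vertex, nondecreasingness follows from $\cB\subseteq\R_{\geq 0}^n$, and $\mathbf{r}(\emptyset)=0$. The heart of the argument is to verify submodularity, after which the equivalence in \autoref{subsection: rank functions} delivers an integral polymatroid whose base polytope is readily seen to coincide with $\cB$.

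The principal obstacle is the submodularity step. The cleanest route is to invoke Postnikov's classification (cited in \autoref{subsection: permutohedra}) identifying integral generalized permutohedra with polytopes of the form $\{x\in\R^n:\norm x=r,\ x_S\leq g(S)\text{ for all }S\}$ with $g$ submodular, and to specialize to the case $\cB\subseteq\R_{\geq 0}^n$. A direct alternative is to take vertices $v,w\in\cB$ maximizing $x_{S\cap T}$ and $x_{S\cup T}$, connect them by an edge path in $\cB$, and observe that each move along $\epsilon_i-\epsilon_j$ changes $x_{S\cup T}+x_{S\cap T}$ and $x_S+x_T$ by the same amount unless $\{i,j\}$ meets both $S\setminus T$ and $T\setminus S$, in which case only the latter sum changes; summing these contributions along the path yields the inequality. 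The bookkeeping in this direct argument is the delicate part, which is why citing Postnikov's classification is the preferred strategy.
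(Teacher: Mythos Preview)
The paper does not prove this theorem; it simply cites \cite{Derksen-Fink10} and \cite{Postnikov-Reiner-Williams08}. So there is no in-paper proof to compare against, and your forward direction via the symmetric exchange axiom is a clean self-contained argument that goes beyond what the paper offers.

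In the converse direction, however, your direct submodularity sketch has a genuine error. You propose to track how an edge step $\epsilon_i-\epsilon_j$ changes $x_{S\cup T}+x_{S\cap T}$ versus $x_S+x_T$, claiming these differ when $\{i,j\}$ meets both $S\setminus T$ and $T\setminus S$. But for \emph{every} $x\in\R^n$ one has the linear identity $x_S+x_T=x_{S\cup T}+x_{S\cap T}$, so the two quantities change identically along every edge and the walk from $v$ to $w$ carries no information. The submodularity inequality $\mathbf r(S)+\mathbf r(T)\geq\mathbf r(S\cup T)+\mathbf r(S\cap T)$ compares maxima achieved at \emph{different} vertices, so a single path between two specific maximizers cannot detect it.

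A workable direct route is to show that on a generalized permutohedron the greedy algorithm succeeds: for any chain $S\cap T\subseteq S\subseteq S\cup T$ there is a single vertex $v$ with $v_{S\cap T}=\mathbf r(S\cap T)$, $v_S=\mathbf r(S)$, and $v_{S\cup T}=\mathbf r(S\cup T)$ (obtained by maximizing a functional that weights $S\cap T$ most, then $S\setminus T$, then $T\setminus S$, and using the edge directions to rule out any improvement). Then $\mathbf r(S)+\mathbf r(T)\geq v_S+v_T=v_{S\cup T}+v_{S\cap T}=\mathbf r(S\cup T)+\mathbf r(S\cap T)$. You also need to close the loop by showing that the base polytope of the polymatroid with rank function $\mathbf r$ is exactly $\cB$, not merely contained in it; this again uses the generalized-permutohedron hypothesis and is not ``readily seen'' without it. Your alternative of invoking Postnikov's classification is of course valid, but it amounts to citing one of the very references the paper already cites for the theorem.
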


%%%%%%%%%%%%%%%%%%%%%%%%%%%%%%%%%%%%%%%%%%%%%%%%%%%%%%%%%%%%%%%%
\subsection{Terminological convention for this paper}
\label{subsection: terminological convention}

Similar to the usage of the term ``matroid,'' which might refer to different characterizations---in terms of independent sets, bases, or a host of other quantities---we consider a polymatroid as an abstract mathematical object, which we typically describe in terms of its bases. Moreover, we assume from this point on, as a standing assumption whenever the term `polymatroid' appears:
\begin{center}\it \textbf{All polymatroids are discrete.}\end{center}

This means that we can describe a polymatroid in terms of its associated M-convex set $J$, which is our principal perspective on polymatroids.

%%%%%%%%%%%%%%%%%%%%%%%%%%%%%%%%%%%%%%%%%%%%%%%%%%%%%%%%%%%%%%%%
\subsection{The natural matroid}
Let $J \subseteq \Delta^r_n$ be a polymatroid with rank function $\br\colon 2^{[n]} \to \N$. We let $E_1,\dots,E_n$ be disjoint sets with $\norm{E_i} = \br(i)$ and let $E := E_1 \cup \dots \cup E_n$. The projection $\theta\colon \Z^E \to \Z^n$ is given by $\theta(\epsilon_j) = \epsilon_i$ for $j \in E_i$. The \emph{natural matroid} of $J$ is $N_J := \theta^{-1} \left( J \right) \cap \{0,1\}^E$. The exchange axiom for $N_J$ follows immediately from that for $J$, and thus the natural matroid $N_J$ is indeed a matroid. 

\begin{ex}
  For $J = \Delta^2_2$, the natural matroid $N_J$ is the uniform matroid $U_{2,4}$.
\end{ex}

Natural matroids are quite useful for studying polymatroids. In particular, Crowley--Huh--Larson--Simpson--Wang \cite{Crowley-Huh-Larson-Simpson-Wang22} discovered connections between polymatroids and combinatorial Hodge theory by making use of natural matroids (which they refer to as \emph{minimal multisymmetric lifts}). In this paper, we utilize natural matroids to establish several useful properties of representations of polymatroids (see \autoref{subsection: the up operator} and \autoref{subsection: generators and relations for the foundation}). For basic properties of natural matroids, see~\cite{Bonin-Chun-Fife23} and the references therein.

%%%%%%%%%%%%%%%%%%%%%%%%%%%%%%%%%%%%%%%%%%%%%%%%%%%%%%%%%%%%%%%%%%%%%%%%%%%%%%%%%%%%%%%%%%%%%%%%%%%%%%%%%%%%%%%%%%%%%%%%%%%%%%%%%%%%%%%%%%%%%%%%%%%%%%%%%%%
%%%%%%%%%%%%%%%%%%%%%%%%%%%%%%%%%%%%%%%%%%%%%%%%%%%%%%%%%%%%%%%%%%%%%%%%%%%%%%%%%%%%%%%%%%%%%%%%%%%%%%%%%%%%%%%%%%%%%%%%%%%%%%%%%%%%%%%%%%%%%%%%%%%%%%%%%%%
\section{Duality and embedded minors}
\label{section: duality and minors}

In \cite{Whittle92}, Whittle introduces deletion and contraction operations for polymatroids, and discusses the existence and non-existence of a duality operation for polymatroids which interchanges these two operations, as is the case for matroids. The executive summary is that only polymatroids of a special shape allow for such a duality (cf.\ \autoref{rem: comparison with duality by Whittle}).

In this section, we bypass the limitations which Whittle encountered by introducing a duality operation which only interchanges deletion and contraction ``up to translation,'' leading to a more satisfactory theory. We also refine Whittle's notion of polymatroid minors.

%%%%%%%%%%%%%%%%%%%%%%%%%%%%%%%%%%%%%%%%%%%%%%%%%%%%%%%%%%%%%%%%%%%%%%%%%%%%%%%%%%%%%%%%%%%%%%%%%%%%%%%%%%%%%%%%%%%%%%%%%%%%%%%%%%%%%%%%%%%%%%%%%%%%%%%%%%%
\subsection{Duality and translation}
\label{subsection: duality and translates}

For the rest of this section, we fix an M-convex set $J\subseteq\Delta^r_n$. Let $\gamma = (\gamma_1,\ldots,\gamma_n) \in\Z^n$. We define $\norm\gamma=\gamma_1+\dotsb+\gamma_n$ and
\[
 J \ + \ \gamma \ = \ \{ \alpha+\gamma\mid \alpha\in J\}
 \qquad \text{and} \qquad 
 \gamma \ - \ J \ = \ \{ \gamma-\alpha\mid \alpha\in J\} 
\]
as subsets of $\Z^n$.

\begin{lemma}\label{lemma: translates and differences}
 If $J+\gamma\subseteq\N^n$ (resp.\ $\gamma-J\subseteq\N^n$), then $J+\gamma$ (resp.\ $\gamma-J$) is M-convex of rank $\norm\gamma+r$ (resp.\ $\norm\gamma-r$).
\end{lemma}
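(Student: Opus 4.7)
The plan is to verify that each of $J+\gamma$ and $\gamma-J$ satisfies the two defining properties of an M-convex set (nonemptiness and the symmetric exchange axiom), starting from the same properties for $J$, and to read off the rank from the coordinate sums.

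For the translate $J+\gamma$, I would argue as follows. Nonemptiness and the fact that $J+\gamma\subseteq\Delta^{r+\norm\gamma}_n$ are immediate from $\norm{\alpha+\gamma}=\norm\alpha+\norm\gamma=r+\norm\gamma$ together with the hypothesis $J+\gamma\subseteq\N^n$. For the symmetric exchange axiom, take $\alpha',\beta'\in J+\gamma$, written as $\alpha'=\alpha+\gamma$ and $\beta'=\beta+\gamma$ with $\alpha,\beta\in J$, and suppose $\alpha'_i<\beta'_i$. Then $\alpha_i<\beta_i$, so by M-convexity of $J$ there exists $j$ with $\alpha_j>\beta_j$ such that $\alpha+\epsilon_i-\epsilon_j\in J$ and $\beta-\epsilon_i+\epsilon_j\in J$. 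Adding $\gamma$ transfers these membership statements to $J+\gamma$, and the coordinate inequality $\alpha_j>\beta_j$ transfers to $\alpha'_j>\beta'_j$ without change.

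For the negative translate $\gamma-J$, the argument is symmetric but with a sign flip. The coordinate sum of any $\gamma-\alpha$ equals $\norm\gamma-r$, giving $\gamma-J\subseteq\Delta^{\norm\gamma-r}_n$, and nonemptiness is inherited. For the exchange axiom, take $\alpha'=\gamma-\alpha$ and $\beta'=\gamma-\beta$ in $\gamma-J$ with $\alpha'_i<\beta'_i$; this is equivalent to $\beta_i<\alpha_i$. Applying the M-convexity of $J$ to the pair $(\beta,\alpha)$ in the role of $(\alpha,\beta)$ at index $i$ produces an index $j$ with $\beta_j>\alpha_j$ and $\beta+\epsilon_i-\epsilon_j,\alpha-\epsilon_i+\epsilon_j\in J$. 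Subtracting from $\gamma$ yields $\alpha'+\epsilon_i-\epsilon_j$ and $\beta'-\epsilon_i+\epsilon_j$ in $\gamma-J$, and the inequality $\beta_j>\alpha_j$ translates to $\alpha'_j>\beta'_j$, which is what is required.

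There is essentially no obstacle: both claims reduce to the observation that the affine maps $\alpha\mapsto\alpha+\gamma$ and $\alpha\mapsto\gamma-\alpha$ intertwine the elementary moves $\pm(\epsilon_i-\epsilon_j)$ that appear in the symmetric exchange axiom (the second one swaps the roles of $\alpha$ and $\beta$, which is harmless since the axiom is symmetric in this pair). The only small care needed is to note that the hypotheses $J+\gamma\subseteq\N^n$ and $\gamma-J\subseteq\N^n$ ensure that the outputs lie in $\N^n$; the exchanged elements $\alpha+\epsilon_i-\epsilon_j+\gamma$, etc., automatically stay in $\N^n$ because they arise from elements of $J$ translated by $\gamma$, and the hypothesis was stated for the whole set $J+\gamma$ (respectively $\gamma-J$).
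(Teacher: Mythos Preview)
Your proof is correct and follows essentially the same approach as the paper's: both verify the symmetric exchange axiom for $J+\gamma$ by translating the exchange in $J$ by $\gamma$, and for $\gamma-J$ by noting that the sign flip swaps the roles of the two elements in the exchange axiom. The paper is slightly terser (it just says the second case is ``the same argument with the appropriate signs reversed''), while you write out the $\gamma-J$ case explicitly, but the arguments are identical.
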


\begin{proof}
 Consider $\alpha+\gamma,\ \beta+\gamma\in J+\gamma$ with $\alpha,\beta\in J$ and $i\in[n]$ such that $(\alpha+\gamma)_i>(\beta+\gamma)_i$. Then $\alpha_i>\beta_i$, and by the M-convexity of $J$, there is a $j\in[n]$ such that $\alpha_j<\beta_j$ and $\alpha-\epsilon_i+\epsilon_j,\beta+\epsilon_i-\epsilon_j\in J$. Thus $(\alpha+\gamma)_j<(\beta+\gamma)_j$ and $\alpha+\gamma-\epsilon_i+\epsilon_j,\beta+\gamma+\epsilon_i-\epsilon_j\in J+\gamma$, which shows that $J+\gamma$ is M-convex. It is clear that the rank of $J+\gamma$ is $\norm{\gamma}+r$.
 
 The claim for $\gamma-J$ is proven by the same argument, but with the appropriate signs reversed.
\end{proof}

We consider the partial order on $\Z^n$ defined by $\alpha\leq\beta$ iff $\alpha_i\leq\beta_i$ for all $i\in[n]$. Note that every finite subset $S$ of $\Z^n$ has a greatest lower bound $\delta_S^-=\inf\, S$ and a least upper bound $\delta_S^+=\sup\, S$, whose respective coefficients are given by
\[
 \delta^-_{S,i} \ = \ \min\{\alpha_i\mid\alpha\in S\} \qquad \text{and} \qquad \delta^+_{S,i} \ = \ \max\{\alpha_i\mid\alpha\in S\}.
\]

\begin{df}
 The \emph{duality vector of $J$} is $\delta_J=\delta_J^-+\delta_J^+$. The \emph{dual of $J$} is $J^\ast=\delta_J-J$.
\end{df}

Before we discuss examples (see \autoref{rem: duals of loops} and \autoref{ex: duals} at the end of this section), we discuss several properties of polymatroid duality and compare it to matroid duality.

\begin{lemma}\label{lemma: dual}
 The dual $J^\ast$ of $J$ is M-convex of rank $\norm{\delta_J}-r$ with duality vector $\delta_{J^\ast}=\delta_J$ and dual $J^{\ast\ast}=J$.
\end{lemma}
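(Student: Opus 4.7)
The plan is to derive all three assertions as direct consequences of \autoref{lemma: translates and differences}, once we verify that $J^{\ast}$ is a translate of $-J$ lying in $\N^{n}$. Since every $\alpha\in J$ satisfies $\delta_{J}^{-}\leq\alpha\leq\delta_{J}^{+}$, the element $\delta_{J}-\alpha=\delta_{J}^{-}+(\delta_{J}^{+}-\alpha)$ is the sum of two vectors in $\N^{n}$, so $\gamma-J\subseteq\N^{n}$ for $\gamma=\delta_{J}$. Applying \autoref{lemma: translates and differences} then immediately yields that $J^{\ast}=\delta_{J}-J$ is M-convex of rank $\norm{\delta_{J}}-r$.

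The next step is to identify the extremal vectors of $J^{\ast}$ componentwise. For each $i\in[n]$, taking minima and maxima over $\alpha\in J$ gives
\[
 \delta^-_{J^{\ast},i} \;=\; \min_{\alpha\in J}\bigl(\delta_{J,i}-\alpha_{i}\bigr) \;=\; \delta_{J,i}-\delta^{+}_{J,i} \;=\; \delta^{-}_{J,i},
\]
and similarly $\delta^{+}_{J^{\ast},i}=\delta_{J,i}-\delta^{-}_{J,i}=\delta^{+}_{J,i}$. Hence $\delta^{-}_{J^{\ast}}=\delta^{-}_{J}$ and $\delta^{+}_{J^{\ast}}=\delta^{+}_{J}$, whence $\delta_{J^{\ast}}=\delta_{J^{\ast}}^{-}+\delta_{J^{\ast}}^{+}=\delta_{J}$.

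Finally, substituting this identity into the definition of the double dual yields
\[
 J^{\ast\ast} \;=\; \delta_{J^{\ast}}-J^{\ast} \;=\; \delta_{J}-(\delta_{J}-J) \;=\; J,
\]
which completes the argument. No real obstacle is anticipated: the M-convexity is handed to us by the previous lemma, and the remaining content is a short calculation with infima and suprema, the only subtlety being the verification that $\delta_{J}-\alpha\in\N^{n}$ so that the translation lemma actually applies.
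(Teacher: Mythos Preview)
Your proof is correct and follows essentially the same route as the paper's own argument: invoke \autoref{lemma: translates and differences} for M-convexity and rank, compute $\delta^{\pm}_{J^{\ast}}$ componentwise, and unwind the definition of the double dual. If anything, you are slightly more careful in explicitly checking $\delta_{J}-J\subseteq\N^{n}$ before applying the lemma.
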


\begin{proof}
 By \autoref{lemma: translates and differences}, $J^\ast$ is M-convex of rank $\norm{\delta_J}-r$. The equality $\delta_{J^\ast}=\delta_J$ follows from
 \[
  \delta^-_{J^\ast,i} \ = \ \min\{ \delta_{J,i}-\alpha_i\mid \alpha\in J\} \ = \ \delta^-_{J,i} \ + \ \delta^+_{J,i} \ - \ \underbrace{\max\{\alpha_i\mid \alpha\in J\}}_{=\delta^+_{J,i}} \ = \ \delta^-_{J,i}
 \]
 and
 \[
  \delta^+_{J^\ast,i} \ = \ \max\{ \delta_{J,i}-\alpha_i\mid \alpha\in J\} \ = \ \delta^-_{J,i} \ + \ \delta^+_{J,i} \ - \ \underbrace{\min\{\alpha_i\mid \alpha\in J\}}_{=\delta^-_{J,i}} \ = \ \delta^+_{J,i},
 \]
 and $J^{\ast\ast}=J$ follows from the equality $\delta_{J^\ast}-(\delta_J-\alpha)=\alpha$ for $\alpha\in J$.
\end{proof}

\begin{lemma}\label{lemma: dual of a translate}
 Let $\gamma\in\Z^n$ such that $J+\gamma\subseteq\N^n$. Then $\delta_{J+\gamma}=\delta_J+2\gamma$ and $(J+\gamma)^\ast=J^\ast+\gamma$.
\end{lemma}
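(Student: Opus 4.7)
The plan is a direct unwinding of the definitions of $\delta^-$, $\delta^+$, $\delta_J$, and $J^\ast$, so I expect this lemma to be essentially a one-line computation; no step should pose a real obstacle. Still, I want to lay it out cleanly.

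First I would verify how the infimum and supremum behave under translation. Since taking componentwise min and max commutes with adding a constant vector, one gets
\[
 \delta^-_{J+\gamma,i} \ = \ \min\{\alpha_i+\gamma_i\mid \alpha\in J\} \ = \ \delta^-_{J,i}+\gamma_i
\]
and analogously $\delta^+_{J+\gamma}=\delta^+_J+\gamma$. Adding these two identities yields
\[
 \delta_{J+\gamma} \ = \ \delta^-_{J+\gamma}+\delta^+_{J+\gamma} \ = \ (\delta^-_J+\gamma)+(\delta^+_J+\gamma) \ = \ \delta_J+2\gamma,
\]
which proves the first assertion.

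For the second assertion, I would apply the definition of duality to $J+\gamma$ and substitute the first assertion:
\[
 (J+\gamma)^\ast \ = \ \delta_{J+\gamma}-(J+\gamma) \ = \ (\delta_J+2\gamma)-(J+\gamma) \ = \ (\delta_J-J)+\gamma \ = \ J^\ast+\gamma.
\]
Before closing, I would briefly remark that the hypothesis $J+\gamma\subseteq\N^n$ ensures $J+\gamma$ is a polymatroid by \autoref{lemma: translates and differences}, so that $\delta_{J+\gamma}$ and $(J+\gamma)^\ast$ are defined in the first place; no further appeal is needed.
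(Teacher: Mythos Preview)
Your proof is correct and follows essentially the same approach as the paper's own proof: both compute $\delta^\pm_{J+\gamma}=\delta^\pm_J+\gamma$ (you make this step slightly more explicit), add to obtain $\delta_{J+\gamma}=\delta_J+2\gamma$, and then substitute into the definition of the dual. The only cosmetic difference is that the paper verifies the second identity elementwise while you do it at the level of sets.
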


\begin{proof}
 The first claim follows from 
 \[
  \delta_{J+\gamma} \ = \ \delta_{J+\gamma}^- \ + \ \delta_{J+\gamma}^+ \ = \ \delta_J^- \ + \ \gamma \ + \ \delta_J^+ \ + \ \gamma \ = \ \delta_J \ + \ 2\gamma,
 \]
 and the second claim follows from
 \[
  \delta_{J+\gamma} \ - \ (\alpha+\gamma) \ = \ \delta_J \ + \ 2\gamma \ - \ \alpha \ - \ \gamma \ = \ (\delta_J-\alpha) \ + \ \gamma 
 \]
 for $\alpha\in J$, together with the observation that $\delta_{J+\gamma}-(\alpha+\gamma)\in (J+\gamma)^\ast$ and $\delta_J-\alpha\in J^\ast$.
\end{proof}

We say that an element $i\in[n]$ is \emph{isolated in $J$} if $\delta^-_{J,i}=\delta^+_{J,i}$, and that $J$ is \emph{without isolated elements} if no element of $[n]$ is isolated in $J$.

\begin{lemma}\label{lemma: isolated elements}
 Let $J$ be a matroid. Then $i\in[n]$ is isolated in $J$ if and only if $i$ is a loop or a coloop.
\end{lemma}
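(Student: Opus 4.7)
The plan is simply to unpack the definitions. By \autoref{ex: matroids as M-convex sets}, viewing the matroid $M$ as an M-convex set means $J\subseteq\{0,1\}^n$, with the bijection sending a basis $B$ of $M$ to the indicator vector $\sum_{i\in B}\epsilon_i$. In particular, for every $\alpha\in J$ and every $i\in[n]$ one has $\alpha_i\in\{0,1\}$, hence $\delta^-_{J,i},\delta^+_{J,i}\in\{0,1\}$.

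Thus $i\in[n]$ is isolated (i.e., $\delta^-_{J,i}=\delta^+_{J,i}$) if and only if the $i$-th coordinate of $\alpha$ takes a constant value $c_i\in\{0,1\}$ as $\alpha$ ranges over $J$. The case $c_i=0$ says that no basis of $M$ contains $i$, which by definition means $i$ is a loop of $M$; the case $c_i=1$ says that every basis of $M$ contains $i$, i.e., $i$ is a coloop of $M$. Conversely, if $i$ is a loop then $\alpha_i=0$ for all $\alpha\in J$, and if $i$ is a coloop then $\alpha_i=1$ for all $\alpha\in J$; in either case $\delta^-_{J,i}=\delta^+_{J,i}$, so $i$ is isolated.

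There is no real obstacle here: the argument is a one-line unwinding of the embedding of matroids into M-convex sets, combined with the standard definitions of loop and coloop. The only thing to be mildly careful about is ensuring that the possibilities $\delta^-_{J,i}=0,\delta^+_{J,i}=1$ are excluded under the isolation hypothesis, which is immediate from the binary nature of the coordinates.
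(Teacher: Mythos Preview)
Your proof is correct and takes essentially the same approach as the paper: both arguments simply unwind the definitions, using that $J\subseteq\{0,1\}^n$ so the $i$-th coordinate is constant across $J$ precisely when it is always $0$ (loop) or always $1$ (coloop). The paper phrases the converse as a contrapositive (if $i$ is neither a loop nor a coloop then some basis contains $i$ and some does not), but this is the same content.
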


\begin{proof}
 If $i$ is a loop, then $\delta^-_{J,i}=\delta^+_{J,i}=0$. If $i$ is a coloop, then $\delta^-_{J,i}=\delta^+_{J,i}=1$. In both cases, $i$ is isolated in $J$. If $i$ is not a loop nor a coloop, then there are $\alpha,\beta\in J$ such that $i\in\alpha$ and $i\notin\beta$. Thus $\delta^-_{J,i}=0$ and $\delta^+_{J,i}=1$, which shows that $i$ is not isolated in $J$.
\end{proof}

\begin{prop}\label{prop: matroid duality as polymatroid duality}
 If $J$ is a matroid without isolated elements, then the matroid dual of $J$ is equal to $J^\ast$.
\end{prop}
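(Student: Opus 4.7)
The plan is to reduce the statement to computing the duality vector $\delta_J$ and showing it equals $\1=(1,\dots,1)$, since then $J^\ast=\1-J$ is precisely the matroid $\{0,1\}$-encoding of the set of complements of bases of $M$, which is the matroid dual.

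First I would observe that since $J$ is a matroid in the sense of \autoref{ex: matroids as M-convex sets}, we have $J\subseteq\{0,1\}^n$, so for every $i\in[n]$ the numbers $\delta^-_{J,i}$ and $\delta^+_{J,i}$ lie in $\{0,1\}$. Next, I would use the hypothesis that $J$ has no isolated elements, which by definition gives $\delta^-_{J,i}<\delta^+_{J,i}$, forcing $\delta^-_{J,i}=0$ and $\delta^+_{J,i}=1$ for every $i$. (Alternatively, one can invoke \autoref{lemma: isolated elements}: absence of isolated elements means no loops or coloops, so each $i$ appears in the support of some basis and is missing from the support of some other basis.) Therefore $\delta_J=\delta_J^-+\delta_J^+=\1$.

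Finally, I would plug this into the definition $J^\ast=\delta_J-J=\1-J$ and note that if $J$ encodes the bases of $M$ via $B\mapsto \sum_{i\in B}\epsilon_i$, then $\1-\sum_{i\in B}\epsilon_i=\sum_{i\in[n]\setminus B}\epsilon_i$, which is precisely the encoding of the complement basis $[n]\setminus B$ of the matroid dual $M^\ast$. Thus $J^\ast$ coincides with the matroid dual of $J$.

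There is no real obstacle: the whole argument is simply the remark that under the no-isolated-elements hypothesis, $\delta_J$ is forced to be $\1$, after which polymatroid duality $\alpha\mapsto \1-\alpha$ reproduces the standard set-complement operation on bases.
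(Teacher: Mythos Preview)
Your proof is correct and follows essentially the same approach as the paper: both arguments use the no-isolated-elements hypothesis together with $J\subseteq\{0,1\}^n$ to conclude $\delta^-_{J,i}=0$ and $\delta^+_{J,i}=1$ for all $i$, hence $\delta_J=\1$, and then observe that $\1-\alpha$ encodes the complement of the basis corresponding to $\alpha$.
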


\begin{proof}
 Since $J$ is without isolated elements, we have $\delta^-_{J,i}=0$ and $\delta^-_{J,i}=1$ for all $i\in[n]$. Thus $\delta_{J,i}=1$ for all $i$ and $(\delta_J-\alpha)_i=1-\alpha_i$. Therefore the support of $\delta_J-\alpha$ is the complement of the support of $\alpha$ (as subsets of $[n]$), which agrees with the matroid dual of $J$.
\end{proof}

\begin{rem}\label{rem: duals of loops} 
 There is a discrepancy between matroid duality and polymatroid duality in the presence of isolated elements. The prototypical example is $J=\Delta^r_1=\{r\epsilon_1\}$, which has duality vector $\delta_J=\delta^-_J+\delta^+_J=(r)+(r)=(2r)$ and dual $J^\ast=\{2r\epsilon_1-r\epsilon_1\}=\{r\epsilon_1\}=J$. In particular, the loop $U_{0,1}=\Delta^0_1$ and the coloop $U_{1,1}=\Delta^1_1$ are self-dual as polymatroids, in contrast to matroid duality, which interchanges these two matroids. 
 In any case, the matroid dual and polymatroid dual differ only by a translation.
\end{rem}

\begin{ex}\label{ex: duals}
 For $J=\Delta^r_2$, we have $\delta^-_J=0$ and $\delta_J=\delta^+_J=r\epsilon_1+r\epsilon_2$. So $J^\ast=\Delta^r_2$ is self-dual, just as in the case of $\Delta^r_1$. The situation is different for $n\geq3$. If $J=\Delta^2_3$, then $\delta^-_J=0$ and $\delta_J=\delta^+_J=2\epsilon_1+2\epsilon_2+2\epsilon_3$. Thus 
 \[
  J^\ast \ = \ \big\{2\epsilon_i+2\epsilon_j,\ 2\epsilon_i+\epsilon_j+\epsilon_k \ \big| \ \{i,j,k\}=\{1,2,3\}\big\}.
 \]
\end{ex}

%%%%%%%%%%%%%%%%%%%%%%%%%%%%%%%%%%%%%%%%%%%%%%%%%%%%%%%%%%%%%%%%%%%%%%%%%%%%%%%%%%%%%%%%%%%%%%%%%%%%%%%%%%%%%%%%%%%%%%%%%%%%%%%%%%%%%%%%%%%%%%%%%%%%%%%%%%%
\subsection{Embedded minors}
\label{subsection: embedded minors}

An \emph{embedded minor} of a matroid $M$ is a matroid of the form $M \minor JI$ with $I$ independent and $J$ coindependent, together with the data of $I$ and $J$. In this section, we extend this concept to polymatroids. 

Recall that $\delta_J=\delta^-_J+\delta^+_J$ with $\delta^-_J=\inf\,J$ and $\delta^+_J=\sup\,J$.

\begin{df}\label{df:reduction}
The \emph{reduction of $J$} is the M-convex set $\Jbar:=J-\delta^-_J$.
\end{df}

\begin{df}\label{df:independent}
 Let $\mu\in\N^n$. We say that $\mu$ is \emph{effectively independent in $J$} if $\mu\leq\alpha-\delta^-_J$ for some $\alpha\in J$. We say that $\nu$ is \emph{effectively coindependent in $J$} if $\nu\leq\delta^+_J-\alpha$ for some $\alpha\in J$.

 Let $\mu$ be effectively independent in $J$ and let $\nu$ be effectively coindependent in $J$. The \emph{contraction of $\mu$ in $J$} is
 \[
  J/\mu \ = \ \{\alpha-\mu \mid \alpha\in J\text{ and }\mu\leq\alpha-\delta^-_J\},
 \]
 and the \emph{deletion of $\nu$ in $J$} is
 \[
  J\setminus\nu \ = \ \{\alpha\mid \alpha\in J\text{ and }\nu\leq\delta^+_J-\alpha\}.
 \]
\end{df}

Note that $\mu$ is effectively independent in $J$ if and only if $\mu-\delta^-_J$ is independent in $\Jbar:=J-\delta^-_J$, and thus it differs from the usual notion of independence for polymatroids (cf.~\autoref{subsection: polymatroids}). We won't use the latter meaning of independence in this paper, however, so we will frequently omit the modifier ``effectively'' in what follows, for ease of terminology.

Contractions and deletions come with injections 
\[
 \begin{array}{cccl}
  \iota_{J/\mu}\colon & J/\mu  & \longrightarrow & J \\
                 & \alpha & \longmapsto     & \alpha+\mu
 \end{array}
 \hspace{30pt} \text{and} \hspace{30pt}
 \begin{array}{cccc}
  \iota_{J\setminus \nu}\colon & J\setminus \nu  & \longrightarrow & J \\
                          & \alpha          & \longmapsto     & \alpha
 \end{array}
\]
into $J$.

\begin{ex}\label{ex: first example of contraction and deletion}
 Consider the M-convex set $J=\Delta^3_3\setminus\{(0,3,0)\}$, which has $\delta^-_J=0$ and $\delta^+_J=(3,2,3)$. The contractions and deletions of $\epsilon_3$ and $2\epsilon_3$ are illustrated in \autoref{fig: example of contraction and deletions}, where we write $ijk$ for the tuple $(i,j,k)\in J$. More examples can be found in \autoref{ex: contractions deletions and duality vectors}.
 \begin{figure}[ht]
  \includegraphics{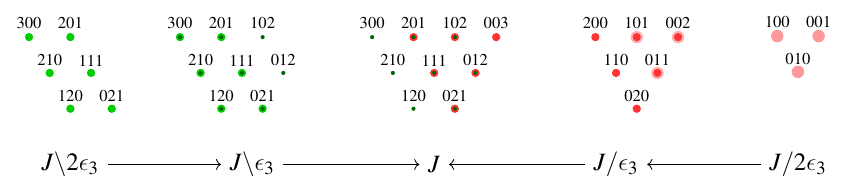}
  \caption{Some contractions and deletions of $J=\Delta^3_3\setminus\{(0,3,0)\}$}
  \label{fig: example of contraction and deletions}
 \end{figure}
\end{ex}

\begin{lemma}\label{lemma: contraction and deletion}
 Both $J/\mu$ and $J\setminus\nu$ are M-convex.
\end{lemma}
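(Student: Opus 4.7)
My plan is to verify the symmetric exchange axiom directly for both constructions, leveraging the fact that the symmetric exchange in $J$ produces elements whose deviation from the original pair is controlled precisely by the strict inequality $\alpha_i<\beta_i$ that triggers the exchange.

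Before checking exchange, I would first observe that both sets are nonempty (this is exactly what effective (co)independence guarantees) and that both lie in some $\Delta^{r'}_n$: $J\setminus\nu\subseteq J\subseteq\Delta^r_n$ trivially, while $J/\mu\subseteq\Delta^{r-\norm\mu}_n$ since every element is of the form $\alpha-\mu$ with $\alpha\in\Delta^r_n$, and $\alpha-\mu\in\N^n$ because $\mu\leq\alpha-\delta^-_J\leq\alpha$.

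For $J\setminus\nu$, suppose $\alpha,\beta\in J\setminus\nu$ and $\alpha_i<\beta_i$. Apply M-convexity of $J$ to get $j\in[n]$ with $\alpha_j>\beta_j$ such that $\alpha+\epsilon_i-\epsilon_j$ and $\beta-\epsilon_i+\epsilon_j$ lie in $J$. I need to verify that the coindependence condition persists. For $\alpha+\epsilon_i-\epsilon_j$, only the $i$-th coordinate is in danger, and there I use $\beta\in J\setminus\nu$ together with $\beta_i\geq\alpha_i+1$:
\[
\nu_i \ \leq \ \delta^+_{J,i}-\beta_i \ \leq \ \delta^+_{J,i}-\alpha_i-1 \ = \ \big(\delta^+_J-(\alpha+\epsilon_i-\epsilon_j)\big)_i.
\]
For $\beta-\epsilon_i+\epsilon_j$, the dangerous coordinate is $j$, and the symmetric inequality $\alpha_j\geq\beta_j+1$ combined with $\alpha\in J\setminus\nu$ gives the required bound. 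At all other coordinates the inequalities are preserved or weakened, so both new elements lie in $J\setminus\nu$.

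For $J/\mu$, take $\alpha-\mu,\beta-\mu\in J/\mu$ with $(\alpha-\mu)_i<(\beta-\mu)_i$, i.e., $\alpha_i<\beta_i$. Applying M-convexity of $J$ gives $j$ with $\alpha_j>\beta_j$ and both $\alpha+\epsilon_i-\epsilon_j,\ \beta-\epsilon_i+\epsilon_j\in J$. I must check the independence condition $\mu\leq(\cdot)-\delta^-_J$ on the shifted elements. For $\alpha+\epsilon_i-\epsilon_j$, only the $j$-th slot requires care, and here I use the hypothesis $\mu\leq\beta-\delta^-_J$ together with $\alpha_j\geq\beta_j+1$:
\[
\mu_j \ \leq \ \beta_j-\delta^-_{J,j} \ \leq \ \alpha_j-1-\delta^-_{J,j} \ = \ \big((\alpha+\epsilon_i-\epsilon_j)-\delta^-_J\big)_j.
\]
Symmetrically, $\mu\leq\alpha-\delta^-_J$ and $\alpha_i<\beta_i$ handle $\beta-\epsilon_i+\epsilon_j$ at coordinate $i$. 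Since $(\alpha+\epsilon_i-\epsilon_j)-\mu$ and $(\beta-\epsilon_i+\epsilon_j)-\mu$ differ from $\alpha-\mu,\beta-\mu$ by $\pm\epsilon_i\mp\epsilon_j$, the symmetric exchange axiom for $J/\mu$ follows. The main subtlety—and the only real obstacle—is the bookkeeping that the strict inequality $\alpha_i<\beta_i$ supplies exactly the one unit of slack needed to absorb the $\pm\epsilon$ perturbations while preserving the defining inequalities, so the argument is essentially symmetric in the two cases and no additional ideas are required.
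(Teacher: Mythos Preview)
Your proof is correct and follows essentially the same approach as the paper: directly verify the symmetric exchange axiom for $J/\mu$ and $J\setminus\nu$ by applying M-convexity of $J$ and then checking that the exchanged elements $\alpha+\epsilon_i-\epsilon_j$ and $\beta-\epsilon_i+\epsilon_j$ still satisfy the defining inequalities, using the one unit of slack provided by the strict inequalities $\alpha_i<\beta_i$ and $\alpha_j>\beta_j$. Your bookkeeping is in fact cleaner than the paper's version, which carries out the same coordinate-by-coordinate verification.
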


\begin{proof}
 Consider $\alpha-\mu,\beta-\mu\in J/\mu$ and $i\in[n]$ with $(\alpha-\mu)_i>(\beta-\mu)_i$. Then $\alpha,\beta\in J$ with $\mu+\delta^-_J\leq\alpha,\beta$ and $\alpha_i>\beta_i$. Since $J$ is M-convex, there is a $j\in[n]$ such that $\alpha_j<\beta_j$ and $\alpha-\epsilon_i+\epsilon_j,\ \beta+\epsilon_i-\epsilon_j\in J$. Thus also $(\alpha-\mu)_j<(\beta-\mu)_j$. Since
 \begin{align*}
  \mu_i+\delta^-_{J,i} \ \leq \ \beta_i  \ &\leq \ \alpha_i-1 \ = \ (\alpha-\epsilon_i+\epsilon_j)_i,    & \mu_i+\delta^-_{J,i} \ &\leq \ \beta_i \ < \ (\beta+\epsilon_i-\epsilon_j)_i, \\
  \mu_j+\delta^-_{J,j} \ \leq \ \alpha_j \ &\leq \ \beta_j-1  \ = \ (\beta+\epsilon_i-\epsilon_j)_j,     & \mu_j+\delta^-_{J,j} \ &\leq \ \alpha_j \ < \ (\alpha-\epsilon_i+\epsilon_j)_j, 
 \end{align*}
 we have $\alpha-\mu-\epsilon_i+\epsilon_j,\ \alpha-\mu+\epsilon_i-\epsilon_j\in J/\mu$, which shows that $J/\mu$ is M-convex.
 
 Consider $\alpha,\beta\in J\setminus\nu$ and $i\in[n]$ with $\alpha_i<\beta_i$. Since $J$ is M-convex, there is a $j\in[n]$ such that $\alpha_j>\beta_j$ and $\alpha-\epsilon_i+\epsilon_j,\ \beta+\epsilon_i-\epsilon_j\in J$. Since
 \[
  (\beta+\nu-\epsilon_i+\epsilon_j)_i \ = \ \beta_i+\nu_i+1 \ \leq \ \alpha_i+\nu_i \ \leq \ \delta^+_{J,i}, 
 \]
% and 
 \[
  (\alpha+\nu-\epsilon_i+\epsilon_j)_j \ = \ \alpha_j+\nu_j+1 \ \leq \ \beta_j+\nu_j \ \leq \ \delta^+_{J,j}, 
 \]
 \[
  (\beta+\nu+\epsilon_i-\epsilon_j)_j \ < \ \beta_j+\nu_j \ \leq \ \delta^+_{J,j},
  \quad \text{and} \quad 
  (\alpha+\nu+\epsilon_i-\epsilon_j)_i \ < \ \alpha_i+\nu_i \ \leq \ \delta^+_{J,i},
 \]
 we have $\alpha+\nu+\epsilon_i-\epsilon_j,\ \beta+\nu+\epsilon_i-\epsilon_j\in J\setminus\nu$, which shows that $M\setminus\nu$ is M-convex.
\end{proof}

\begin{lemma}\label{lemma: translations commute with contractions and deletions}
 Let $\mu$ be independent and let $\nu$ coindepdendent in $J$, and let $\tau\in\Z^n$ be such that $\tau\geq-\delta^-_J$. Then %$(J+\tau)/\mu=(J/\mu)+\tau$ and $(J+\tau)\setminus\nu=(J\setminus\nu)+\tau$.
 \[
  (J+\tau)/\mu \ = \ (J/\mu)+\tau \qquad \text{and} \qquad (J+\tau)\setminus\nu \ = \ (J\setminus\nu)+\tau,
 \]
 with all sets M-convex.
\end{lemma}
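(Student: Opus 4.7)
The plan is to reduce everything to a direct unpacking of definitions, using the earlier lemmas to cover all M-convexity claims for free. The only genuinely substantive preliminary step is tracking how the infimum and supremum transform under a translation.

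First, I would observe that by \autoref{lemma: translates and differences}, the assumption $\tau\geq-\delta^-_J$ ensures that $J+\tau\subseteq\N^n$ is an M-convex set, so it makes sense to speak of its contractions and deletions. Next, a one-line calculation from the definitions gives
\[
 \delta^-_{J+\tau} \ = \ \delta^-_J+\tau \qquad\text{and}\qquad \delta^+_{J+\tau} \ = \ \delta^+_J+\tau.
\]
I would use this to verify that $\mu$ remains independent and $\nu$ remains coindependent in $J+\tau$: if $\alpha\in J$ witnesses $\mu\leq\alpha-\delta^-_J$, then $\alpha+\tau\in J+\tau$ witnesses $\mu\leq(\alpha+\tau)-\delta^-_{J+\tau}$, and symmetrically for $\nu$. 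Thus the expressions $(J+\tau)/\mu$ and $(J+\tau)\setminus\nu$ are well-defined.

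For the first equality, I would expand
\[
 (J+\tau)/\mu \ = \ \bigl\{(\alpha+\tau)-\mu \,\big|\, \alpha\in J,\ \mu\leq(\alpha+\tau)-\delta^-_{J+\tau}\bigr\}
\]
and substitute $\delta^-_{J+\tau}=\delta^-_J+\tau$ to eliminate $\tau$ from the inequality, yielding exactly $\{(\alpha-\mu)+\tau\mid\alpha\in J,\ \mu\leq\alpha-\delta^-_J\}=(J/\mu)+\tau$. The deletion case proceeds identically, using $\delta^+_{J+\tau}=\delta^+_J+\tau$ to rewrite the defining inequality $\nu\leq\delta^+_{J+\tau}-(\alpha+\tau)$ as $\nu\leq\delta^+_J-\alpha$.

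Finally, M-convexity of all four sets in the statement follows from combining \autoref{lemma: translates and differences} with \autoref{lemma: contraction and deletion}: $(J/\mu)+\tau$ is the translate of an M-convex set by a vector that keeps it in $\N^n$ (since it equals $(J+\tau)/\mu$, which lives in $\N^n$), and similarly for $(J\setminus\nu)+\tau$. There is no real obstacle here; the only point requiring any care is the bookkeeping that shows the independence/coindependence hypotheses transport along the translation, which is immediate from the shift formulas for $\delta^-$ and $\delta^+$.
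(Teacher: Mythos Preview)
Your proposal is correct and follows essentially the same approach as the paper: a direct unpacking of the definitions using the shift formulas $\delta^\pm_{J+\tau}=\delta^\pm_J+\tau$, followed by an appeal to the earlier lemmas for M-convexity. If anything, you are slightly more careful than the paper in explicitly checking that $\mu$ and $\nu$ remain independent and coindependent in $J+\tau$ before forming the minors.
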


\begin{proof}
 As subsets of $\Z^n$, we have
 \begin{multline*}
  (J+\tau)/\mu \ = \ \{\alpha+\tau-\mu \mid \alpha+\tau\in J+\tau,\ \mu+\delta^-_{J+\tau}\leq\alpha+\tau\} \\
  = \ \{\alpha-\mu+\tau \mid \alpha\in J,\ \mu+\delta^-_{J}\leq\alpha\} \ = \ (J/\mu)+\tau
 \end{multline*}
 and
 \begin{multline*}
  (J+\tau)\setminus\nu \ = \ \{\alpha+\tau \mid \alpha+\tau\in J+\tau,\ \alpha+\tau\leq\delta^+_{J+\tau}-\nu\} \\
  = \ \{\alpha+\tau \mid \alpha\in J,\ \alpha\leq\delta^+_J-\nu\} \ = \ (J\setminus\nu)+\tau.
 \end{multline*}
 Since $\tau\geq-\delta^-_J$, all of these set are contained in $\N^n$ and are thus M-convex.
\end{proof}

The following result shows that contractions and deletions behave well --- up to translation --- with respect to polymatroid duality. Note that $\mu$ is independent in $J$ if and only if it is coindependent in $J^\ast$, and vice versa.

\begin{prop}\label{prop: duals of contractions and deletions}
 Let $\mu$ be independent and let $\nu$ coindependent in $J$. Then 
 \[
  (J/\mu)^\ast \ = \ J^\ast\setminus\mu \ + \ (\delta_{J/\mu}+\mu-\delta_J) \quad \text{and} \quad (J\setminus\nu)^\ast \ = \ J^\ast/\nu \ + \ (\delta_{J\setminus\nu}+\nu-\delta_J).
 \]
\end{prop}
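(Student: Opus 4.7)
The plan is to verify both identities by direct computation, parametrizing each side by elements $\alpha \in J$ and using the defining formulas for duals, contractions, deletions, translations, as well as the identities $\delta_J = \delta^-_J + \delta^+_J$ and $\delta^-_{J^\ast} = \delta^-_J$, $\delta^+_{J^\ast} = \delta^+_J$ from \autoref{lemma: dual}. First I would observe that the coindependence/independence hypotheses match up correctly under duality: $\mu$ is independent in $J$ iff there exists $\alpha \in J$ with $\mu + \delta^-_J \leq \alpha$, and unwinding $J^\ast = \delta_J - J$ shows this is equivalent to $\mu$ being coindependent in $J^\ast$. Hence $J^\ast \setminus \mu$ is well-defined; the analogous check works for $J^\ast/\nu$.

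For the first identity, I would compute each side as a subset of $\Z^n$. The left-hand side unfolds as
\[
(J/\mu)^\ast \ = \ \delta_{J/\mu} - J/\mu \ = \ \{\delta_{J/\mu} + \mu - \alpha \mid \alpha \in J,\ \mu + \delta^-_J \leq \alpha\}.
\]
For the right-hand side, writing elements of $J^\ast$ as $\delta_J - \alpha$ with $\alpha \in J$, the condition $\delta_J - \alpha \leq \delta^+_{J^\ast} - \mu = \delta^+_J - \mu$ is equivalent to $\mu + \delta^-_J \leq \alpha$, so
\[
J^\ast \setminus \mu \ = \ \{\delta_J - \alpha \mid \alpha \in J,\ \mu + \delta^-_J \leq \alpha\}.
\]
Translating by $\delta_{J/\mu} + \mu - \delta_J$ yields precisely the set above. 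The second identity is proved by the symmetric computation: elements of $(J\setminus\nu)^\ast$ are of the form $\delta_{J\setminus\nu} - \alpha$ with $\alpha \in J$ and $\alpha \leq \delta^+_J - \nu$, while elements of $J^\ast/\nu$ are of the form $\delta_J - \alpha - \nu$ with $\alpha \in J$ and $\alpha \leq \delta^+_J - \nu$, so translating by $\delta_{J\setminus\nu} + \nu - \delta_J$ converts one into the other.

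There is no genuine obstacle here; the only nontrivial point is keeping track of which of $\delta^-_J$, $\delta^+_J$ and $\delta_J$ appears in the contraction versus deletion formulas, and recognizing that the translation vectors $\delta_{J/\mu} + \mu - \delta_J$ and $\delta_{J\setminus\nu} + \nu - \delta_J$ are precisely the correction terms needed to reconcile the duality vectors of $J$, $J/\mu$, and $J\setminus\nu$ (which can differ from $\delta_J$ in an irregular way, in contrast to the clean translation behavior in \autoref{lemma: dual of a translate}). Once both sides are rewritten using the same parametrization by $\alpha \in J$, the translated sets match term-by-term and the proposition follows.
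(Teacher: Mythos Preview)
Your proposal is correct and follows essentially the same approach as the paper: both proofs parametrize each side by $\alpha\in J$, unfold the definitions of dual, contraction, deletion, and translation, and verify that the two sets agree term by term once the translation vector is applied. Your additional remark that $\mu$ independent in $J$ is equivalent to $\mu$ coindependent in $J^\ast$ (so that $J^\ast\setminus\mu$ and $J^\ast/\nu$ are well-defined) is a helpful sanity check that the paper leaves implicit.
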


\begin{proof}
 These equalities follow from the direct computations
 \begin{align*}
  (J/\mu)^\ast \ &= \ \big\{ \delta_{J/\mu}-(\alpha-\mu) \ \big| \ \alpha\in J, \ \mu\leq\alpha-\delta^-_J \big\} \\
                 &= \ \big\{\delta_J-\alpha \ \big| \ \alpha\in J,\ \mu\leq\delta_J^+-(\delta_J-\alpha)\big\} \ + \ (\delta_{J/\mu}+\mu-\delta_J) \\
                 &= \ J^\ast\setminus\mu \ + \ (\delta_{J/\mu}+\mu-\delta_J) 
 \end{align*}
 and 
 \begin{align*}
  (J\setminus\nu)^\ast \ &= \ \big\{ \delta_{J\setminus\nu}-\alpha \ \big| \ \alpha\in J, \ \nu\leq\delta^+_J-\alpha \big\} \\
                         &= \ \big\{\delta_J-\alpha-\nu \ \big| \ \alpha\in J,\ \nu\leq(\delta_J-\alpha)-\delta^-_J\big\} \ + \ (\delta_{J\setminus\nu}+\nu-\delta_J) \\
                         &= \ J^\ast/\nu \ + \ (\delta_{J\setminus\nu}+\nu-\delta_J). \qedhere
 \end{align*}
\end{proof}

Note that if $\delta_{J/\mu}+\mu=\delta_J$ and $\delta_{J\setminus\nu}+\nu=\delta_J$, then the equalities from \autoref{prop: duals of contractions and deletions} simplify to $(J/\mu)^\ast=J^\ast\setminus\mu$ and $(J\setminus\nu)^\ast=J^\ast/\nu$, which resemble the analogous formulas from matroid theory.
In general, we only have the following bounds on the duality vectors of minors. \autoref{ex: contractions deletions and duality vectors} shows that these bounds are attained.

Let $\1\in\N^n$ be the all-ones vector. %primitive diagonal vector with coefficients $\1_i=1$ for all $i\in[n]$. 
Recall that $\norm\mu=\mu_1+\dotsb+\mu_n$.

\begin{prop}\label{prop: bounds on the duality vectors of minors}
 Let $\mu=\mu_1+\mu_2$ be independent and let $\nu=\nu_1+\nu_2$ be coindependent in $J$. Then $\mu_2$ is independent in $J/\mu_1$, $\nu_2$ is coindependent in $J\setminus\nu_1$, and
 \[
  J/\mu \ = \ (J/\mu_1)/\mu_2 \qquad \text{and} \qquad J\setminus\nu \ = \ (J\setminus\nu_1)\setminus\nu_2.
 \]
 Moreover,
 \begin{align*}
  \delta^-_{J/\mu} \ &= \ \delta^-_J,             & 0 \quad &\leq \quad \delta^+_J-(\delta^+_{J/\mu}+\mu)          \hspace{-29.5pt} && \leq \quad \norm{\mu}\cdot\1 \ - \ \mu, \\ 
  \delta^+_{J\setminus\nu} \ &= \ \delta^+_J-\nu, & 0 \quad &\leq \quad \delta^-_{J\setminus \nu} \ - \ \delta^-_J \hspace{-30pt} && \leq \quad \norm{\nu}\cdot\1  \ - \ \nu,
 \end{align*}
 and thus
 \begin{align*}
  0 \quad \leq \quad \delta_J \ - \ \big( \delta_{J/\mu} \ + \ \mu \big)             \quad &\leq \quad \norm{\mu}\cdot\1 \ - \ \mu ,\\
  0 \quad \leq \quad \big( \delta_{J\setminus \nu} \ + \ \nu \big) \ - \ \delta_J \, \quad &\leq \quad \norm{\nu}\cdot\1  \ - \ \nu.
 \end{align*}
\end{prop}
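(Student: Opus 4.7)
The plan is to reduce all claims to explicit computations of $\delta^-$ and $\delta^+$ of the sets $J/\mu$ and $J\setminus\nu$, with the extremal coordinates controlled via iterated M-convex exchanges. Observe first that $\mu_1$ is independent and $\nu_1$ is coindependent in $J$, because a witness for $\mu$ (respectively $\nu$) is also a witness for $\mu_1$ (respectively $\nu_1$). Writing $J/\mu = J'_\mu - \mu$ and $J\setminus\nu = J'_\nu$, where
\[
 J'_\mu \ := \ \{\alpha \in J \mid \alpha \geq \mu + \delta^-_J\} \quad \text{and} \quad J'_\nu \ := \ \{\alpha \in J \mid \alpha \leq \delta^+_J - \nu\},
\]
we have $\delta^\pm_{J/\mu} = \delta^\pm_{J'_\mu} - \mu$ and $\delta^\pm_{J\setminus\nu} = \delta^\pm_{J'_\nu}$; all four $\delta^\pm$-assertions in the statement thereby become questions about extremal values within $J'_\mu$ and $J'_\nu$.

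The equality $\delta^-_{J/\mu} = \delta^-_J$ is proved by downward M-convex exchanges: the inequality $\delta^-_{J'_\mu,i} \geq \mu_i + \delta^-_{J,i}$ is immediate; conversely, start from any $\beta \in J'_\mu$ and any $\gamma \in J$ realizing $\gamma_i = \delta^-_{J,i}$, and as long as $\beta_i > \mu_i + \delta^-_{J,i}$, apply the symmetric M-convex exchange axiom at coordinate $i$ (where $\gamma_i < \beta_i$) to obtain $j$ with $\gamma_j > \beta_j$ and $\beta - \epsilon_i + \epsilon_j \in J$; the new element still lies in $J'_\mu$ (the $i$-coordinate drops by one from a strictly larger value, and the $j$-coordinate only increases), so iteration produces the desired witness. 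The analogous argument, or duality via \autoref{prop: duals of contractions and deletions}, yields $\delta^+_{J\setminus\nu} = \delta^+_J - \nu$.

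The technical heart of the proof is the upper bound $\delta^+_J - (\delta^+_{J/\mu} + \mu) \leq \norm\mu\cdot\1 - \mu$ (nonnegativity is trivial from $J'_\mu \subseteq J$), equivalently $\delta^+_{J'_\mu,i} \geq \delta^+_{J,i} - (\norm\mu - \mu_i)$. Starting from $\alpha^{(0)} \in J$ with $\alpha^{(0)}_i = \delta^+_{J,i}$, introduce the \emph{deficit}
\[
 d(\alpha) \ := \ \sum_j \, \max\bigl(0, \ \mu_j + \delta^-_{J,j} - \alpha_j\bigr),
\]
which vanishes precisely on $J'_\mu$. Independence of $\mu$ forces $\delta^+_{J,i} \geq \mu_i + \delta^-_{J,i}$, so the $i$-th summand vanishes and $d(\alpha^{(0)}) \leq \sum_{j \neq i}\mu_j = \norm\mu - \mu_i$. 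At each step, pick a coordinate $j$ of positive deficit and a fixed witness $\beta^* \in J'_\mu$; since $\alpha^{(t)}_j < \mu_j + \delta^-_{J,j} \leq \beta^*_j$, symmetric exchange produces $k$ with $\alpha^{(t)}_k > \beta^*_k$ and $\alpha^{(t+1)} := \alpha^{(t)} + \epsilon_j - \epsilon_k \in J$. One verifies that this decreases the deficit at $j$ by one, keeps the deficit at $k$ zero (because $\alpha^{(t)}_k - 1 \geq \beta^*_k \geq \mu_k + \delta^-_{J,k}$), and leaves all other deficits unchanged; meanwhile $\alpha^{(t)}_i$ drops by at most one, and only when $k = i$. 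Summing, after $d(\alpha^{(0)}) \leq \norm\mu - \mu_i$ iterations we reach some $\alpha^{(T)} \in J'_\mu$ with $\alpha^{(T)}_i \geq \delta^+_{J,i} - (\norm\mu - \mu_i)$, proving the bound. The dual argument (or duality via \autoref{prop: duals of contractions and deletions}) gives $\delta^-_{J\setminus\nu,i} - \delta^-_{J,i} \leq \norm\nu - \nu_i$.

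The remaining claims follow formally. Using $\delta^-_{J/\mu_1} = \delta^-_J$, the set-builder condition $\alpha - \mu_1 \geq \mu_2 + \delta^-_{J/\mu_1}$ collapses to $\alpha \geq \mu + \delta^-_J$, yielding $(J/\mu_1)/\mu_2 = J/\mu$; independence of $\mu_2$ in $J/\mu_1$ is witnessed by $\alpha - \mu_1$ for any $\alpha$ witnessing independence of $\mu$ in $J$, and the deletion case is symmetric. The consolidated bounds on $\delta_J - (\delta_{J/\mu} + \mu)$ and $(\delta_{J\setminus\nu} + \nu) - \delta_J$ are immediate from adding the $\delta^-$ and $\delta^+$ contributions. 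The principal obstacle is the deficit-reduction argument of paragraph three: one must check that each selected exchange is legal, that the deficit strictly decreases by one at each step, and that the bookkeeping on the $i$-th coordinate gives the sharp loss $\norm\mu - \mu_i$.
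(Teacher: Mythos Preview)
Your proof is correct and takes a genuinely different route from the paper's. The paper proceeds by induction on $\norm{\mu}$: it first establishes all claims for the single-element contraction $J/\epsilon_i$ via one-step exchange arguments, then proves $(J/\epsilon_i)/\mu_2=J/\mu$ using the already-established equality $\delta^-_{J/\epsilon_i}=\delta^-_J$, and finally bootstraps the general bounds by summing the single-step bounds along a decomposition $\mu=\epsilon_{i_1}+\dotsb+\epsilon_{i_s}$. The deletion case is then derived entirely by duality through \autoref{prop: duals of contractions and deletions}. Your approach instead attacks general $\mu$ directly via the deficit function $d(\alpha)=\sum_j\max(0,\mu_j+\delta^-_{J,j}-\alpha_j)$ and an iterated exchange that decreases it monotonically; this yields the sharp upper bound $\norm{\mu}-\mu_i$ in one pass without induction, and the identity $(J/\mu_1)/\mu_2=J/\mu$ then falls out formally from $\delta^-_{J/\mu_1}=\delta^-_J$. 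Your argument is arguably cleaner and more self-contained for the bounds, while the paper's inductive scheme has the structural benefit of isolating the single-element case as a reusable building block; both routes rely on the same underlying symmetric exchange mechanism and arrive at the same conclusions.
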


 \begin{proof}
 We first treat the case of contractions, and then derive the results for deletions by duality. We begin with the bounds for $\delta^\pm_{J/\epsilon_i}$ for $i\in[n]$. 
 Fix $\alpha\in J$ with $\alpha-\epsilon_i\in J/\epsilon_i$, i.e., $\alpha_i\geq\delta^-_{J,i}+1$. 
 
 By \autoref{lemma: translations commute with contractions and deletions}, $(J-\delta^-_J)/\epsilon_i=(J/\epsilon_i)-\delta^-_J$, which allows us to assume that $\delta^-_J=0$ for simplicity. Then $\delta^-_{J/\epsilon_i}\geq0=\delta^-_J$ is evident. In order to establish the reverse inequality, consider $j\in[n]$ and $\beta\in J$ with $\beta_j=\delta^-_{J,j}$. If $\beta_i=\delta^-_{J,i}<\alpha_i$, then there is a $k\in[n]$ with $\beta_k>\alpha_k$ (and thus $k\neq i,j$) and $\beta'=\beta+\epsilon_i-\epsilon_k\in J$. Thus we can assume that $\beta_j=\delta^-_{J,j}$ and $\beta_i\geq\delta^-_{J,i}+1$, i.e., $\beta-\epsilon_i\in J/\epsilon_i$. Therefore $\delta^-_{J/\epsilon_i,j}\leq(\beta-\epsilon_i)_j\leq\beta_j=\delta^-_{J,j}$, as desired.  
 
 Since $J/\epsilon_i\subseteq J-\epsilon_i$, we have $\delta^+_{J/\epsilon_i}+\epsilon_i\leq \delta^+_J$ and thus $0 \leq\delta^+_J-(\delta^+_{J/\epsilon_i}+\epsilon_i)$. In order to establish the upper bound, choose $j\in[n]$ and $\beta\in J$ such that $\beta_j=\delta^+_{J,j}$. If $\beta_i=\delta^-_{J,i}<\alpha_i$, the M-convexity of $J$ implies that there is a $k\in[n]$ with $\beta_k>\alpha_k$ (and thus $k\neq i$, but possibly $k=j$) and $\beta'=\beta+\epsilon_i-\epsilon_k\in J$. Thus we can assume that $\beta_j\geq\delta^+_{J,j}-1$ (and $\beta_j=\delta^+_{J,j}$ if $j=i$) and $\beta_i\geq\delta^-_{J,i}+1$, i.e., $\beta-\epsilon_i\in J/\epsilon_i$. Therefore $\delta^+_{J,j}-1\leq\beta_j=(\beta-\epsilon_i)_j\leq\delta^+_{J/\epsilon_i,j}$ for $j\neq i$, and $\delta^+_{J,i}-1=\beta_i-1=(\beta-\epsilon_i)_i\leq\delta^+_{J/\epsilon_i,i}$, which establishes the desired upper bound $\delta^+_J-(\delta^+_{J/\epsilon_i}+\epsilon_i)\leq\1-\epsilon_i$. 

 Next we establish the equality $J/\mu=(J/\mu_1)/\mu_2$ for $\mu_1=\epsilon_i$, by the following direct computation:
 \begin{multline*}
  (J/\mu_1)/\mu_2 \ = \ \big\{ (\alpha-\mu_1)-\mu_2 \, \big| \, \alpha-\mu_1\in J/\mu_1,\ (\alpha-\mu_1)-\mu_2 \geq\delta^-_{J/\mu_1} \big\} \\
  = \ \big\{ \alpha-(\mu_1+\mu_2) \, \big| \, \alpha\in J,\ \alpha-(\mu_1+\mu_2) \geq\delta^-_{J} \big\} \ = \ J/\mu,
 \end{multline*}
 where we use that $\delta^-_{J/\mu_1}=\delta^-_J$ for $\mu_1=\epsilon_i$. Note that since $\mu$ is independent in $J$, there exists $\alpha\in J$ with $(\alpha-\mu_1)-\mu_2=\alpha-\mu\geq\delta^-_J=\delta^-_{J/\mu_1}$; thus, in particular, $\alpha-\mu_1\in J/{\mu_1}$ and $\mu_2$ is independent in $J/\mu_1$, as claimed.
 
 As a consequence, an induction over $s=\norm{\mu}$ for $\mu=\epsilon_{i_1}+\dotsc+\epsilon_{i_s}$ shows that $\delta^-_{J/\mu}=\delta^-_{J/(\mu-\epsilon_{i_s})}=\dotsc=\delta^-_J$ as well as $0\leq\delta^+_J-(\delta^+_{J/\mu})+\mu\leq \norm{\mu}\cdot\1 -\mu$. This in turn makes the above verification of $J/\mu=(J/\mu_1)/\mu_2$ valid for all $\mu_1$. This establishes all claims of the proposition for contractions.
 
 We proceed with deletions, and begin with the bounds for $(\delta_{J\setminus\nu}+\nu)-\delta_J$. By \autoref{prop: duals of contractions and deletions}, we have
 \[
  J\setminus\nu \ = \ (J^\ast/\nu)^\ast \ + \ (\delta_J-\delta_{J^\ast/\nu}-\nu),
 \]
 and by \autoref{lemma: dual of a translate} ($\delta_{J\setminus\nu}=\delta_{(J^\ast/\nu)^\ast}+2\delta_J-2\delta_{J^\ast/\nu}-2\nu$) and \autoref{lemma: dual} ($\delta_J=\delta_{J^\ast}$), we have
 \[
  \big( \delta_{J\setminus\nu} + \nu \big) - \delta_J \ = \ \delta_{(J^\ast\setminus\nu)^\ast} + 2\delta_J - 2\delta_{J^\ast/\nu} - 2\nu + \nu - \delta_J \ = \ \delta_{J^\ast} - \big( \delta_{J^\ast/\nu} + \nu \big).
 \]
 Applying the bounds for the contraction to $\delta_{J^\ast}-(\delta_{J^\ast/\nu}+\nu)$ yields the desired bounds for $(\delta_{J\setminus\nu}+\nu)-\delta_J$. Together with the trivial inequalities $0\leq\delta^-_{J\setminus\nu}-\delta^-_J$ and $\delta^+_{J\setminus\nu}\leq\delta^+_J-\nu$, this implies the other two desired bounds for $\delta^-_{J\setminus\nu}$ and $\delta^+_{J\setminus\nu}$.
 
 Finally, the equality $J\setminus\nu=(J\setminus\nu_1)\setminus\nu_2$, can also be deduced by duality from the previous results, using \autoref{prop: duals of contractions and deletions}:
 \begin{align*}
  \big((J\setminus\nu_1)\setminus\nu_2\big)^\ast \ &= \ (J\setminus\nu_1)^\ast/\nu_2 \ + \ \big( \delta_{(J\setminus\nu_1)\setminus\nu_2}+\nu_2-\delta_{J\setminus\nu_1}\big) \\
  &= \ (J/\nu_1)/\nu_2 \ + \ \big(\delta_{J\setminus\nu_1}+\nu_1-\delta_J\big) \ + \ \big( \delta_{(J\setminus\nu_1)\setminus\nu_2}+\nu_2-\delta_{J\setminus\nu_1}\big) \\
  &= \ J/\nu \ + \ \big( \delta_{J\setminus\nu}+\nu-\delta_{J}\big) \\
  &= \ (J\setminus\nu)^\ast. \qedhere
 \end{align*}
\end{proof}

\begin{ex}\label{ex: contractions deletions and duality vectors}
 The only independent vector and the only coindependent vector for $\Delta^r_1$ is $0$. Thus $\Delta^r_1$ does not have any proper minors.

 For $J=\Delta^r_n$ with $n\geq2$ and $r\geq1$, we have $\delta^-_J=0$ and $\delta_J=\delta^+_J=r\epsilon_1+\dotsb+r\epsilon_n$. For $\mu=s\epsilon_1$ with $0\leq s\leq r$, we find 
 \[
  \Delta^r_n/s\epsilon_1 \ = \ \Delta^{r-s}_n \qquad \text{and} \qquad \Delta^r_n \setminus s\epsilon_1 \ = \ \Delta^r_n \ \setminus \ \{(r-s+1)\epsilon_1+\alpha \mid \alpha\in\Delta^{s-1}_n\}.
 \]
 In the case of the contraction, the difference of the dimension vectors assumes the extremal value $\delta_J-(\delta_{J/\mu}+\mu)=\norm{\mu^+}\cdot\1-\mu$ from \autoref{prop: bounds on the duality vectors of minors}. In the case of the deletion, we have $\delta^-_{\Delta^r_n\setminus s\epsilon_1}=s\epsilon_2$ if $n=2$ and $\delta^-_{\Delta^r_n\setminus s\epsilon_1}=0$ for $n\geq3$. Thus we have
 \[
  \delta_{J\setminus\mu} \ = \ \begin{cases} (r-s)\epsilon_1+(r+s)\epsilon_2 & \text{if }n=2, \\ (r-s)\epsilon_1+r\epsilon_2+\dotsc+r\epsilon_n & \text{if }n\geq3, \end{cases} 
 \]
 and the difference of the dimension vectors is
 \[
  (\delta_{J\setminus\mu} \ + \ \mu) \ - \ \delta_J \ = \ \begin{cases}
                                                           \norm{\mu}\cdot\1 - \mu & \text{if }n=2, \\ 
                                                           0 & \text{if }n\geq3,
                                                          \end{cases}
 \]
 which assumes the different extremal values from \autoref{prop: bounds on the duality vectors of minors}, depending on the value of $n$.
 
 An example in which the extremal value $\delta_J-\delta_{J/\mu}-\mu=0$ appears is the M-convex set
 \[
  J \ = U^+_{2,3} := \ \big\{2\epsilon_1,\ \epsilon_1+\epsilon_2,\ \epsilon_1+\epsilon_3,\ \epsilon_2+\epsilon_3 \big\}
 \] 
 in $\Delta^2_3$. Here we have $\delta^-_J=0$ and $\delta_J=2\epsilon_1+\epsilon_2+\epsilon_3$. For $\mu=\epsilon_1$, we find $J/\epsilon_1=\Delta^1_3$ and $\delta_{J/\epsilon_1}=\epsilon_1+\epsilon_2+\epsilon_3$. Thus $\delta_J-(\delta_{J/\mu}+\mu)=0$.
\end{ex}

The exchange of order of contractions and deletions involves a translation, as the following result shows.

\begin{prop}\label{prop: contraction and deletion commute}
 Let $\mu,\nu\in\N^n$. Then the following are equivalent:
 \begin{enumerate}
  \item \label{minor1} $\delta^-_J+\mu\leq\alpha\leq\delta^+_J-\nu$ for some $\alpha\in J$;
  \item \label{minor2} $\mu$ is independent in $J$ and $\nu'=\sup\{\0,\ \nu-\delta^+_J+\delta^+_{J/\mu}+\mu\}$ is coindependent in $J/\mu$; 
  \item \label{minor3} $\nu$ is coindependent in $J$ and $\mu'=\sup\{\0,\ \delta^-_J-\delta^-_{J\setminus \nu}+\mu\}$ is independent in $J\setminus\nu$.
 \end{enumerate}
 If these conditions are met, then
 \[
  (J\setminus \nu)/\mu' \ = \ (J/\mu)\setminus\nu' \ + \ \sup\{-\mu,\ \ \delta^-_{J\setminus \nu} - \delta^-_J\}.
 \]
\end{prop}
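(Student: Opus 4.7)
The plan is to establish the three equivalences first, reducing the work by half via the duality between deletion and contraction developed in the preceding proposition, and then to prove the displayed formula by exhibiting a common parametrization of both sides.

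For $(1) \Rightarrow (2)$, I would take a witness $\alpha\in J$ with $\delta^-_J+\mu\leq\alpha\leq\delta^+_J-\nu$: such $\alpha$ already shows that $\mu$ is independent in $J$, and then $\beta=\alpha-\mu\in J/\mu$ witnesses the coindependence of $\nu'$ in $J/\mu$. To verify $\nu'\leq\delta^+_{J/\mu}-\beta$ componentwise, I would split $\nu'=\sup\{\0,\,\nu-\delta^+_J+\delta^+_{J/\mu}+\mu\}$ into its two defining arguments: the $\0$-bound reduces to $\beta\leq\delta^+_{J/\mu}$, which holds since $\beta\in J/\mu$, while the other bound reduces after cancellation to the hypothesis $\alpha\leq\delta^+_J-\nu$. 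For the converse $(2) \Rightarrow (1)$, I would pick $\beta\in J/\mu$ witnessing the coindependence of $\nu'$ and set $\alpha=\beta+\mu\in J$; the lower bound $\alpha\geq\delta^-_J+\mu$ uses the identity $\delta^-_{J/\mu}=\delta^-_J$ from the bounds proposition, and the upper bound $\alpha\leq\delta^+_J-\nu$ follows by unpacking the inequality $\nu'\geq\nu-\delta^+_J+\delta^+_{J/\mu}+\mu$ implicit in the sup.

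The equivalence $(1) \Leftrightarrow (3)$ is dual to $(1) \Leftrightarrow (2)$. Applying the already-proved equivalence $(1) \Leftrightarrow (2)$ to $J^\ast$ with the roles of $\mu$ and $\nu$ interchanged, and translating back through the duality identities $(J/\mu)^\ast=J^\ast\setminus\mu+(\delta_{J/\mu}+\mu-\delta_J)$ and $(J\setminus\nu)^\ast=J^\ast/\nu+(\delta_{J\setminus\nu}+\nu-\delta_J)$ from the preceding proposition, yields the desired equivalence. The key point is that, under this translation, the dual definition of the $\nu'$ appearing in condition (2) for $J^\ast$ becomes precisely the definition of the $\mu'$ appearing in condition (3) for $J$; this uses that $\delta^+_{J^\ast/\nu}$ and $\delta^-_{J\setminus\nu}$ are related by the dual translation.

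For the formula, I would exhibit a common parametrization of both sides by
$\Gamma=\{\gamma\in J\mid \delta^-_J+\mu\leq\gamma\leq\delta^+_J-\nu\}$. On the LHS, an element $\gamma-\mu'\in(J\setminus\nu)/\mu'$ comes from $\gamma\in J\setminus\nu$ with $\gamma\geq\delta^-_{J\setminus\nu}+\mu'=\sup\{\delta^-_{J\setminus\nu},\,\delta^-_J+\mu\}$; since $\gamma\geq\delta^-_{J\setminus\nu}$ is automatic for $\gamma\in J\setminus\nu$, this constraint collapses to $\gamma\in\Gamma$. Dually, on the RHS, an element $\delta-\mu\in(J/\mu)\setminus\nu'$ comes from $\delta\in J$ with $\delta\geq\delta^-_J+\mu$ and $\delta-\mu\leq\delta^+_{J/\mu}-\nu'=\inf\{\delta^+_{J/\mu},\,\delta^+_J-\nu-\mu\}$; the first inf-term gives the automatic constraint $\delta-\mu\leq\delta^+_{J/\mu}$, leaving $\delta\in\Gamma$. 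Setting $\gamma=\delta\in\Gamma$, the two parametrized elements are $\gamma-\mu'$ and $(\gamma-\mu)+\tau$, where $\tau=\sup\{-\mu,\,\delta^-_{J\setminus\nu}-\delta^-_J\}$, so the formula reduces to the componentwise identity $\mu-\mu'=\tau$ on $\Gamma$, which is verified directly from the definition of $\mu'$ together with the non-negativity of $\delta^-_{J\setminus\nu}-\delta^-_J$ from the bounds proposition.

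The main obstacle will be the final componentwise identity, whose verification requires a case split according to whether $\mu_i$ dominates $\delta^-_{J\setminus\nu,i}-\delta^-_{J,i}$: in coordinates where the deletion has already forced $\delta^-$ strictly upward, the effective contraction amount $\mu'$ absorbs part of $\mu$, and the shift $\tau$ records exactly the missing amount so that both parametrizations produce the same element of $\N^n$.
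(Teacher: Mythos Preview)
Your approach mirrors the paper's almost exactly: the argument for $(1)\Leftrightarrow(2)$ is the same, and for the displayed formula both you and the paper parametrize the two sides by $\Gamma=\{\gamma\in J\mid\delta^-_J+\mu\leq\gamma\leq\delta^+_J-\nu\}$ and reduce to identifying the translation as $\mu-\mu'$. The one structural difference is that the paper handles $(1)\Leftrightarrow(3)$ by a direct argument parallel to $(1)\Leftrightarrow(2)$ (swapping the role of $J/\mu$ for $J\setminus\nu$), rather than via the duality detour you propose; the direct route is shorter and avoids tracking the translation terms in the duality formulas.

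There is, however, a genuine problem at your final step, and it is worth actually carrying out the case split you announce rather than deferring it. Write $d=\delta^-_{J\setminus\nu}-\delta^-_J\geq\0$. Componentwise $\mu'_i=\max\{0,\mu_i-d_i\}$, hence $\mu_i-\mu'_i=\min\{\mu_i,d_i\}$. On the other hand, since $-\mu\leq\0\leq d$, the stated translation collapses to $\tau=\sup\{-\mu,d\}=d$. These agree only in coordinates with $\mu_i\geq d_i$; in coordinates where $\mu_i<d_i$ (which does occur---take $\mu=\0$ and any $\nu$ for which $\delta^-_{J\setminus\nu}>\delta^-_J$, e.g.\ $J=\Delta^r_2$ with $\nu=s\epsilon_1$), the identity $\mu-\mu'=\tau$ fails. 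Your parametrization is correct and shows that the actual translation is $\mu-\mu'=\inf\{\mu,\,\delta^-_{J\setminus\nu}-\delta^-_J\}$; the $\sup$ in the displayed formula appears to be a slip in the statement (the paper's own proof writes $\mu-\mu'$ in its penultimate line and then silently equates it with the $\sup$ in the last line). So your strategy is sound, but the claimed identity cannot be ``verified directly from the definition'': as written, it is false.
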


\begin{proof}
 We begin with the equivalence of \eqref{minor1} and \eqref{minor2}. First note that $\mu+\delta^-_J\leq\alpha$ for $\alpha\in J$ means that $\mu$ is independent in $J$ and that $\beta=\alpha-\mu$ is in $J/\mu$. Thus \eqref{minor1} is equivalent to $\mu$ being independent in $J$, together with the existence of a $\beta\in J/\mu$ such that
 \[
  \beta \ \leq \ \delta^+_J \ - \ \nu \ - \ \mu \ = \ \delta^+_{J/\mu} \ - \ \big(\nu \ + \ \delta^+_{J/\mu} \ - \ \delta^+_J \ + \ \mu\big).
 \]
 Since $\beta\leq\delta^+_{J/\mu}-0$, this latter inequality turns into $\beta\leq\delta^+_{J/\mu}-\nu'$, which means that $\nu'$ is coindependent in $J/\mu$. This establishes the equivalence between \eqref{minor1} and \eqref{minor2}.
 
 We continue with the equivalence of \eqref{minor1} and \eqref{minor3}, which is proven similarly. The inequality $\alpha\leq\delta^+_J-\nu$ for $\alpha\in J$ expresses that $\nu$ is coindependent in $J$ and that $\alpha\in J\setminus\nu$. Thus \eqref{minor1} is equivalent to $\nu$ being coindependent in $J$, together with the existence of $\alpha\in J\setminus\nu$ such that
 \[
  \delta^-_{J\setminus\nu} \ + \ \big(\delta^-_J \ - \ \delta^-_{J\setminus \nu} \ + \ \mu \big) \ = \ \delta^-_J \ \ + \ \mu \leq \ \alpha.
 \]
 Since $\delta^-_{J\setminus\nu}+0\leq\alpha$, this latter inequality turns into $\delta^-_{J\setminus\nu}+\mu'\leq\alpha$, which means that $\mu'$ is independent in $J\setminus\nu$. This establishes the equivalence between \eqref{minor1} and \eqref{minor3}.
 
 We turn to the last claim, assuming \eqref{minor1}--\eqref{minor3}, which follows from the direct computation
  \begin{align*}
  (J\setminus \nu)/\mu \ &= \ \big\{ \alpha-\mu' \ \big| \ \alpha\in J,\ \mu'+\delta^-_{J\setminus\nu}\leq \alpha\leq\delta^+_J-\nu \big\} \\
  &= \ \big\{ \alpha-\mu \ \big| \ \alpha\in J,\ \mu+\delta^-_J\leq \alpha,\ \alpha-\mu\leq\delta^+_{J/\mu}-\nu' \big\} \ + \ \big(\mu-\mu'\big) \\
  &= \ (J/\mu)\setminus\nu' \ + \ \sup\{-\mu,\ \ \delta^-_{J\setminus \nu} - \delta^-_J\}. \qedhere
 \end{align*}
\end{proof}

\begin{ex}
 Note that we need to use the supremum in the definitions of $\mu'$ and $\nu'$ in \autoref{prop: contraction and deletion commute} since the tuples $\nu-\delta^+_J+\delta^+_{J/\mu}+\mu$ and $\delta^-_J-\delta^-_{J\setminus \nu}+\mu$ might have negative coefficients. An example where this happens is $J=\Delta^1_3$ with $\mu=\epsilon_1$ and $\nu=\epsilon_3$. In this case, we have $\nu-\delta^+_J+\delta^+_{J/\mu}+\mu=-\epsilon_2$. Similarly, we have $\delta^-_{J^\ast}-\delta^-_{J^\ast\setminus\nu}+\mu=-\epsilon_2$.
\end{ex}

\begin{df}
 An \emph{embedded minor of $J$} is an M-convex set of the form
 \[
  J\minor{\nu}{\mu} \ + \ \tau \ = \ (J\setminus\nu)/\mu \ + \ \tau,
 \]
 together with the \emph{minor embedding}
 \[
  \begin{array}{cccc}
   \iota_{J\minor\nu\mu+\tau}: & J\minor{\nu}{\mu}+\tau & \longrightarrow & J \\
                               & \alpha                 & \longmapsto     & \alpha+\mu-\tau, 
  \end{array}
 \]
 where $\tau\geq-\delta^-_{J\minor\nu\mu}$ and $\mu+\delta^-_{J\setminus\nu}\leq\alpha\leq\delta^+_{J}-\nu$ for some $\alpha\in J$.
\end{df}

Note that the minor embedding $\iota_{J\minor\nu\mu+\tau}$ is an injective map. Note further that $\nu$, $\mu$ and $\tau$ are uniquely determined by the minor embedding $\iota_{J\minor\nu\mu+\tau}$. In \autoref{section: representations of embedded minors and duals}, we investigate the relationship between representations of $J$ and representations of its embedded minors.

\begin{rem}[Truncations with cubes and the multi-affine part]\label{rem: matroid truncations}
 A \emph{cube in $\N^n$} is a the non-empty intersection of $\N^n$ with a product of closed intervals, i.e.\ a subset of the form
 \[
  I \ = \ I_{\beta,\gamma} \ = \ \{\alpha\in\N^n\mid \beta\leq\alpha\leq\gamma\}
 \]
 with $\beta\leq\gamma$ in $\N^n$. As observed in the proof of \cite[Lemma 2.8]{Branden-Huh20}, the intersection of an M-convex set $J$ with a cube $I$ is M-convex, provided it is non-empty. In fact, such an intersection is an embedded minor of $J$:
 \[
  J\cap I_{\beta,\gamma} \ = \ J\minor\nu\mu+\mu
 \]
 for $\nu=\sup\{\0,\ \delta^+_J-\gamma\}$ and $\mu=\sup\{\0,\ \beta-\delta^-_{J\setminus\nu}\}$. In particular, the \emph{multi-affine part of $J$} is the intersection of $J$ with the unit cube $I_{\0,\1}$, which is M-convex whenever it is not empty (cf.\ \cite[Cor.\ 3.5]{Branden-Huh20}).
\end{rem}

%%%%%%%%%%%%%%%%%%%%%%%%%%%%%%%%%%%%%%%%%%%%%%%%%%%%%%%%%%%%%%%%%%%%%%%%%%%%%%%%%%%%%%%%%%%%%%%%%%%%%%%%%%%%%%%%%%%%%%%%%%%%%%%%%%%%%%%%%%%%%%%%%%%%%%%%%%%
\subsection{Permutation and extension of variables}
\label{subsection: permutation and extension of variables}

In this section, we discuss two additional operations on an M-convex set $J\subseteq\Delta^r_n$: permutation and extension of variables.

The proofs of the following two lemmas are immediate from the defining property of M-convex sets:

\begin{lemma}\label{lemma: permutation of variables}
 Let $\sigma\in S_n$ be a permutation of $[n]$ and $\iota_\sigma\colon\Delta^r_n\to\Delta^r_n$ the bijection defined by $\iota_\sigma(\alpha_i)=\alpha_{\sigma(i)}$ for $\alpha\in\Delta^r_n$ and $i\in[n]$. Then $\iota_\sigma(J)$ is M-convex.
\end{lemma}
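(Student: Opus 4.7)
The plan is to verify the symmetric exchange axiom for $\iota_\sigma(J)$ by transporting it along $\iota_\sigma$ from the corresponding axiom for $J$. First I would note that $\iota_\sigma$ is a well-defined bijection $\Delta^r_n \to \Delta^r_n$: since $|\iota_\sigma(\alpha)| = \sum_i \alpha_{\sigma(i)} = \sum_i \alpha_i = |\alpha|$, it preserves rank, and its inverse is $\iota_{\sigma^{-1}}$. In particular, $\iota_\sigma(J)$ is a nonempty subset of $\Delta^r_n$.

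Next I would take arbitrary $\alpha', \beta' \in \iota_\sigma(J)$ and $i \in [n]$ with $\alpha'_i < \beta'_i$, and write $\alpha' = \iota_\sigma(\alpha)$, $\beta' = \iota_\sigma(\beta)$ for unique $\alpha, \beta \in J$. Setting $i' := \sigma(i)$, the hypothesis $\alpha'_i < \beta'_i$ unravels to $\alpha_{i'} < \beta_{i'}$. By the M-convexity of $J$, there exists $j' \in [n]$ with $\alpha_{j'} > \beta_{j'}$ such that both $\alpha + \epsilon_{i'} - \epsilon_{j'}$ and $\beta - \epsilon_{i'} + \epsilon_{j'}$ lie in $J$. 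I then take $j := \sigma^{-1}(j')$; by construction $\alpha'_j = \alpha_{j'} > \beta_{j'} = \beta'_j$.

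The final bookkeeping step is the identity $\iota_\sigma(\epsilon_{\sigma(k)}) = \epsilon_k$ for each $k \in [n]$, which holds because the $\ell$-th coordinate of $\iota_\sigma(\epsilon_{\sigma(k)})$ equals $(\epsilon_{\sigma(k)})_{\sigma(\ell)}$, and this is $1$ iff $\sigma(\ell) = \sigma(k)$, iff $\ell = k$. Since $\iota_\sigma$ is $\Z$-linear on $\Z^n$, this gives
\[
 \iota_\sigma(\alpha + \epsilon_{i'} - \epsilon_{j'}) \ = \ \alpha' + \epsilon_i - \epsilon_j
 \qquad \text{and} \qquad
 \iota_\sigma(\beta - \epsilon_{i'} + \epsilon_{j'}) \ = \ \beta' - \epsilon_i + \epsilon_j,
\]
so both of these vectors lie in $\iota_\sigma(J)$, completing the verification of the symmetric exchange axiom.

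There is no main obstacle here; the entire content is careful bookkeeping of indices and the fact that coordinate permutation is a rank-preserving $\Z$-linear bijection that carries standard basis vectors to standard basis vectors. This is exactly why the authors describe the proof as ``immediate from the defining property of M-convex sets''.
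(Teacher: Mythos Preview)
Your proof is correct and is precisely the direct verification of the symmetric exchange axiom that the paper alludes to when it says the result is ``immediate from the defining property of M-convex sets''; the paper gives no further details. Your index bookkeeping is accurate, in particular the identity $\iota_\sigma(\epsilon_{\sigma(k)})=\epsilon_k$ and the use of $i'=\sigma(i)$, $j=\sigma^{-1}(j')$.
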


\begin{lemma}\label{lemma: extension of variables}
 Let $\iota_{n+1}\colon[n]\to[n+1]$ be the inclusion given by $\iota_{n+1}(i)=i$ for $i\in[n]$. Then $\iota_{n+1}(J)\subseteq\Delta^r_{n+1}$ is M-convex.
\end{lemma}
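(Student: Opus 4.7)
The plan is to verify the symmetric exchange axiom directly, transporting it along $\iota_{n+1}$. First, I would extend $\iota_{n+1}$ to a map on integer tuples, sending $\alpha = (\alpha_1,\dots,\alpha_n) \in \N^n$ to $(\alpha_1,\dots,\alpha_n,0) \in \N^{n+1}$. This map sends $\Delta^r_n$ into $\Delta^r_{n+1}$ and is injective, so $\iota_{n+1}(J)$ is a well-defined nonempty subset of $\Delta^r_{n+1}$. The key structural observation is that every element of $\iota_{n+1}(J)$ has its $(n+1)$-st coordinate equal to $0$.

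Next, I would check the exchange property. Take $\alpha', \beta' \in \iota_{n+1}(J)$ and $i \in [n+1]$ with $\alpha'_i < \beta'_i$. Because $\alpha'_{n+1} = \beta'_{n+1} = 0$, the inequality forces $i \in [n]$. Writing $\alpha' = \iota_{n+1}(\alpha)$ and $\beta' = \iota_{n+1}(\beta)$ for unique $\alpha, \beta \in J$, we have $\alpha_i < \beta_i$, so the M-convexity of $J$ yields some $j \in [n]$ with $\alpha_j > \beta_j$ such that both $\alpha + \epsilon_i - \epsilon_j$ and $\beta - \epsilon_i + \epsilon_j$ belong to $J$. Since $\iota_{n+1}$ commutes with addition of $\pm \epsilon_k$ for $k \in [n]$ (viewing the $\epsilon_k$ on both sides as the $k$-th standard basis vectors of their respective ambient spaces), the images $\alpha' + \epsilon_i - \epsilon_j$ and $\beta' - \epsilon_i + \epsilon_j$ lie in $\iota_{n+1}(J)$, and $j \in [n] \subseteq [n+1]$ still satisfies $\alpha'_j > \beta'_j$. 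This verifies the exchange axiom.

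There is essentially no obstacle here: the rigidity of the last coordinate being pinned to $0$ automatically rules out the case $i = n+1$ and ensures the exchange index $j$ produced by the M-convexity of $J$ lies in $[n]$, so it transports verbatim along $\iota_{n+1}$. This is why the paper announces the proof as ``immediate from the defining property of M-convex sets''.
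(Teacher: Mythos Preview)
Your proof is correct and matches the paper's approach: the paper simply states that the result is ``immediate from the defining property of M-convex sets'' without writing out the details, and your direct verification of the symmetric exchange axiom via pullback along $\iota_{n+1}$ is exactly that immediate argument.
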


\begin{df}\label{df: type of a polymatroid}
 An M-convex set $J'$ is said to be \emph{elementary equivalent to $J$} if $J'=J+\tau$ for some $\tau\geq-\delta^-_J$, or $J'=\iota_\sigma(J)$ for some $\sigma\in S_n$, or $J'=\iota_n(J)$, or $J=\iota_n(J')$. Two M-convex sets $J$ and $J'$ are said to be \emph{combinatorially equivalent} if there exists a chain of elementary equivalences $J=J_0\sim \dotsc\sim J_\ell=J'$. In this case, we say that $J'$ is \emph{of type $J$}.

An \emph{elementary polymatroid embedding} is a map $\iota\colon J\to J'$ between polymatroids which is given by either a minor embedding $\iota_{J\minor\nu\mu+\tau}$, a permutation of variables $\iota_\sigma$, an extension of variables $\iota_n$, or the inverse $\iota_n^{-1}$ of an extension of variables.
 A \emph{polymatroid embedding} is a map $\iota\colon J\to J'$ between polymatroids that is the composition of elementary polymatroid embeddings.
\end{df}

\begin{lemma}\label{lemma: combinatoral equivalence as bijective polymatroid embedding}
 Two polymatroids $J$ and $J'$ are combinatorially equivalent if and only if there exists a bijective polymatroid embedding $J\to J'$.
\end{lemma}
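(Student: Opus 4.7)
The plan is to toggle between elementary equivalences and bijective elementary polymatroid embeddings.

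For the ``only if'' direction, suppose $J = J_0 \sim J_1 \sim \dotsb \sim J_\ell = J'$ is a chain of elementary equivalences. It suffices to exhibit a bijective polymatroid embedding for each individual step, since the composition of polymatroid embeddings is a polymatroid embedding by definition. If $J_{i+1} = \iota_\sigma(J_i)$, take $\iota_\sigma$ itself; if $J_{i+1} = \iota_n(J_i)$, take $\iota_n$; if $J_i = \iota_n(J_{i+1})$, take $\iota_n^{-1}$. Finally, if $J_{i+1} = J_i + \tau$, then $J_i = J_{i+1} \minor 0 0 + (-\tau)$ realizes $J_i$ as an embedded minor of $J_{i+1}$ (one verifies that $-\tau \geq -\delta^-_{J_{i+1}}$ since $\delta^-_{J_{i+1}} = \delta^-_{J_i} + \tau \geq \tau$), and the associated minor embedding $\iota_{J_{i+1}\minor 0 0 + (-\tau)} \colon J_i \to J_{i+1}$ is the bijection $\alpha \mapsto \alpha + \tau$.

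For the ``if'' direction, suppose $\iota\colon J \to J'$ is a bijective polymatroid embedding, and write $\iota = \iota_k \circ \dotsb \circ \iota_1$ as a composition of elementary polymatroid embeddings $\iota_j\colon K_{j-1} \to K_j$ with $K_0 = J$ and $K_k = J'$. Each $\iota_j$ is injective (permutations, extensions, and inverse extensions are obviously injective, and minor embeddings are translations of the identity), so the bijectivity of the composition combined with finiteness forces $|K_0| = \dotsb = |K_k|$ and thus forces each $\iota_j$ to be bijective. Bijective permutations, extensions, and inverse extensions yield permutation, extension, and restriction equivalences $K_{j-1} \sim K_j$ directly. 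A bijective minor embedding $\iota_{K_j \minor \nu \mu + \tau} \colon K_{j-1} \to K_j$ must satisfy $\nu = \mu = 0$ (see below), so $K_{j-1} = K_j + \tau$ is a translation equivalence. Chaining these elementary equivalences shows that $J$ and $J'$ are combinatorially equivalent.

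The one nontrivial step is to verify that a bijective minor embedding is necessarily a pure translation; I expect this to be the main (and in fact only) obstacle. This reduces to the observation that $|J \setminus \nu| \leq |J|$ with equality if and only if $\nu = 0$, and $|J / \mu| \leq |J \setminus \nu|$ with equality if and only if $\mu = 0$. Indeed, $J \setminus \nu = \{\alpha \in J \mid \alpha \leq \delta^+_J - \nu\}$ is a subset of $J$, and requiring $\nu \leq \delta^+_J - \alpha$ for every $\alpha \in J$ forces, coordinatewise, $\nu_i \leq \delta^+_{J,i} - \max_{\alpha \in J} \alpha_i = 0$; the contraction case is symmetric via the condition $\mu + \delta^-_{J\setminus\nu} \leq \alpha$. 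Consequently $|K_j \minor \nu \mu + \tau| = |(K_j \setminus \nu)/\mu| \leq |K_j|$ with equality if and only if $\nu = \mu = 0$, as needed.
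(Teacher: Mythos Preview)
Your proof is correct and follows essentially the same approach as the paper. The paper's ``if'' direction is slightly terser: it observes that the single-element embeddings $\iota_{J\setminus\epsilon_\ell}$ and $\iota_{J/\epsilon_\ell}$ are never surjective (implicitly decomposing a general minor embedding into single-element steps), whereas you argue directly via cardinality that $|J\minor\nu\mu|=|J|$ forces $\nu=\mu=0$. Both reach the same conclusion by the same mechanism.
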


\begin{proof}
 First note that each type of elementary equivalence induces a bijection $J\to J'$ between the respective M-convex sets, which is given by $\alpha\mapsto\alpha+\tau$ (translation), $\alpha\mapsto \iota_\sigma(\alpha)$ (permutation), and $\alpha\mapsto\iota_n(\alpha)$ (extension of variables), respectively. The inverse bijection is in each case also a polymatroid embedding, namely, $\alpha\mapsto\alpha-\tau$, $\alpha\mapsto \iota_{\sigma^{-1}}(\alpha)$, and $\iota_n^{-1}$, respectively. Thus, if $J$ and $J'$ are combinatorially equivalent, the composition of the defining elementary equivalences yield a bijective polymatroid embedding $J\to J'$.
 
 Conversely, we note that every elementary polymatroid embedding is injective and that
 the polymatroid embeddings $\iota_{J\setminus\epsilon_\ell}\colon J\setminus\epsilon_\ell\to J$ and $\iota_{J/\epsilon_\ell}\colon J/\epsilon_\ell\to J$ are not surjective. Thus, if $\iota\colon J\to J'$ is a bijective polymatroid embedding, it must be composed of elementary equivalences, which shows that $J$ and $J'$ are combinatorially equivalent.
\end{proof}

%%%%%%%%%%%%%%%%%%%%%%%%%%%%%%%%%%%%%%%%%%%%%%%%%%%%%%%%%%%%%%%%%%%%%%%%%%%%%%%%%%%%%%%%%%%%%%%%%%%%%%%%%%%%%%%%%%%%%%%%%%%%%%%%%%%%%%%%%%%%%%%%%%%%%%%%%%%
\subsection{Comparison with Whittle's notion of minors}
\label{subsection: comparison with Whittle's notion of minors}

In \cite{Whittle92}, Whittle introduces single-element deletions and contractions for rank functions of (discrete) polymatroids; cf.~also \cite{Oxley-Whittle93}. We recall Whittle's construction and compare it to our notions of deletion and contraction.

Let $\br\colon2^{[n]}\to\N$ be the rank function of $J$, i.e., $\br(S)=\max\,\{\alpha_S\mid\alpha\in J\}$ for $S\subseteq[n]$. For ease of notation, and without loss of generality, we describe Whittle's operations only for the element $n$. 

Let $\iota_{n-1}\colon\N^{n-1}\to\N^n$ be the inclusion of the first $n-1$ coordinates.
The \emph{deletion of $n$ in $\br$} is the function $\br\setminus n\colon2^{[n-1]}\to\N$ given by $\br\setminus n(S)=\br(\iota_{n-1}(S))$ for $S\subseteq[n-1]$. The \emph{contraction of $n$ in $\br$} is the function $\br/n\colon2^{[n-1]}\to\N$ given by $\br/n(S)=\br(\iota_{n-1}(S)\cup n)-\br(n)$ for $S\subseteq[n-1]$. 

Both $\br\setminus n$ and $\br/n$ are indeed rank functions of polymatroids, i.e., they satisfy
\[\textstyle
 \br\setminus n(S) \ = \ \max\,\{\alpha_S \mid \alpha\in J_{\setminus n} \} \qquad \text{and} \qquad \br/n(S) \ = \ \max\,\{\alpha_S \mid \alpha\in J_{/n} \}
\]
for all $S\subseteq[n-1]$ and (uniquely determined) M-convex sets $J_{\setminus n}$ and $J_{/n}$. The following result identifies these two M-convex sets (embedded into $\N^n$ via $\iota_{n-1}\colon\N^{n-1}\to\N^n$) with embedded minors of $J$ in the sense of our paper.

\begin{prop}\label{prop: comparison with minors of Whittle}
 Define $\mu=(\delta^+_{J,n}-\delta^-_{J,n})\cdot\epsilon_n$ and $\tau=\delta^-_n\cdot\epsilon_n$. Then
 \[
  \iota_{n-1}(J_{\setminus n}) \ = \ J\setminus \mu \ - \ \tau \qquad \text{and} \qquad \iota_{n-1}(J_{/n}) \ = \ J/\mu \ - \ \tau.
 \]
\end{prop}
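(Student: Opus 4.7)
The plan is to show that both sides of each equality coincide with the same explicit subset of $\N^n$---the ``bottom slice'' $\{\alpha \in J \mid \alpha_n = \delta^-_{J,n}\}$ of $J$, translated down to the hyperplane $\{\alpha_n = 0\}$, in the deletion case, and the analogous ``top slice'' $\{\alpha \in J \mid \alpha_n = \delta^+_{J,n}\}$ in the contraction case. Because an M-convex set is uniquely determined by its rank function (\autoref{subsection: rank functions}), it then suffices to check that the rank functions match, and this will follow from the M-convex exchange axiom.

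First, I would unpack the right-hand sides. Since $\mu = (\delta^+_{J,n} - \delta^-_{J,n})\epsilon_n$, the defining condition $\alpha \leq \delta^+_J - \mu$ for $J \setminus \mu$ reduces to $\alpha_n \leq \delta^-_{J,n}$, which together with the automatic lower bound $\alpha_n \geq \delta^-_{J,n}$ for $\alpha \in J$ forces $\alpha_n = \delta^-_{J,n}$. Hence
\[
 J \setminus \mu \ = \ \{\alpha \in J \mid \alpha_n = \delta^-_{J,n}\},
\]
and subtracting $\tau = \delta^-_{J,n}\epsilon_n$ places the result into $\{\alpha_n = 0\} = \iota_{n-1}(\N^{n-1})$. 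Dually, $\mu + \delta^-_J \leq \alpha$ forces $\alpha_n = \delta^+_{J,n}$, so
\[
 J/\mu - \tau \ = \ \{\alpha - \delta^+_{J,n}\epsilon_n \mid \alpha \in J,\ \alpha_n = \delta^+_{J,n}\},
\]
again sitting inside $\iota_{n-1}(\N^{n-1})$. In either case, the resulting set is M-convex by \autoref{lemma: contraction and deletion} and \autoref{lemma: translates and differences}, and one checks that the ambient translations are admissible (e.g.,~$\tau \leq \delta^-_{J\setminus\mu}$ since $\delta^-_{J\setminus\mu,n} = \delta^-_{J,n}$).

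The core step is then to compare rank functions. For the deletion, both $\iota_{n-1}(J_{\setminus n})$ and $J\setminus\mu - \tau$ live in $\{\alpha_n = 0\}$, so their rank functions on subsets of $[n]$ are determined by their values on subsets $S\subseteq[n-1]$; matching them reduces to the identity
\[
 \max\{\alpha_S \mid \alpha \in J\} \ = \ \max\{\alpha_S \mid \alpha \in J,\ \alpha_n = \delta^-_{J,n}\}.
\]
The nontrivial direction ($\leq$) I would prove via the symmetric exchange axiom: among all $\alpha \in J$ that maximize $\alpha_S$, choose one with $\alpha_n$ as small as possible; if $\alpha_n > \delta^-_{J,n}$, pick $\beta \in J$ realizing $\beta_n = \delta^-_{J,n}$, and apply exchange at coordinate $n$ to obtain $j \neq n$ with $\alpha_j < \beta_j$ and $\alpha' := \alpha - \epsilon_n + \epsilon_j \in J$. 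Then $\alpha'_S = \alpha_S + [j \in S]$, contradicting either the maximality of $\alpha_S$ (when $j \in S$) or the minimality of $\alpha_n$ among maximizers (when $j \notin S$).

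The contraction case is dual, and here the identity to verify is
\[
 \max\{\alpha_S + \alpha_n \mid \alpha \in J\} \ = \ \max\{\alpha_S + \delta^+_{J,n} \mid \alpha \in J,\ \alpha_n = \delta^+_{J,n}\}
\]
for $S \subseteq [n-1]$, which translates Whittle's formula $\br/n(S) = \br(S \cup \{n\}) - \br(\{n\})$ into this setting. One picks a maximizer of $\alpha_S + \alpha_n$ with $\alpha_n$ \emph{as large as possible}, and if $\alpha_n < \delta^+_{J,n}$, exchanges with $\beta \in J$ satisfying $\beta_n = \delta^+_{J,n}$ to produce $\alpha' = \alpha + \epsilon_n - \epsilon_j \in J$, which contradicts either the maximality of $\alpha_S + \alpha_n$ (when $j \notin S$) or the maximality of $\alpha_n$ among maximizers (when $j \in S$). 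The main obstacle is keeping track of the two exchange arguments carefully so that each of the four subcases (deletion/contraction $\times$ $j \in S$/$j \notin S$) yields a genuine contradiction; once this bookkeeping is in place, the proposition follows.
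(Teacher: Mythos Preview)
Your proposal is correct and follows essentially the same approach as the paper's proof: both compare rank functions and use the M-convex exchange axiom to show that a maximizer of $\alpha_S$ (resp.\ $\alpha_{S\cup\{n\}}$) can always be found on the slice $\alpha_n=\delta^-_{J,n}$ (resp.\ $\alpha_n=\delta^+_{J,n}$). Your write-up is in fact slightly more explicit than the paper's---you unpack $J\setminus\mu$ and $J/\mu$ as concrete slices before comparing ranks, and you spell out the contraction case in full rather than deferring it to an ``analogous'' remark---but the underlying argument is the same.
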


\begin{proof}
 We only explain the proof the first identity; the proof of the second identity is analogous. (Alternatively, it can be deduced from the first identity by establishing suitable compatibilities between polymatroid duality in the sense of this paper, rank functions, and $\iota_{n-1}$.)
 
 We establish the first equality by identifying the rank function $\br_{\setminus n}$ of $\iota_{n-1}(J_{\setminus n})$ with the rank function $\br'$ of 
 \[
  J\setminus\mu \ - \ \tau \ = \ \big\{\alpha\in\Delta^{r-\delta^-_n}_n \, \big| \, \alpha_n=0,\ \alpha+\tau\in J \big\}. 
 \]
 
 We first show that $\br'(S)\leq\br_{\setminus n}(S)$ for all $S\subseteq [n]$. Note that evidently $\br_{\setminus n}(S-n)\leq\br_{\setminus n}(S)$, and that the above description of $J\setminus \mu-\tau$ yields $\br'(S-n)=\br'(S)$. Consider $\alpha\in J\setminus\mu-\tau$ with $\br'(S-n)=\alpha_{S-n}$ and let $\beta=\alpha+\tau\in J$. Then
 \[
  \br'(S) \ = \ \br'(S-n) \ = \ \alpha_{S-n} \ = \ \beta_{S-n} \ \leq \ \br_{\setminus n}(S-n) \ \leq \ \br_{\setminus n}(S),
 \]
 as desired.
 
 Next we show that, for every $S\subseteq[n-1]$, there exists an $\alpha\in J$ with $\br_{\setminus n}(S)=\alpha_S$ and $\alpha_n=\delta^-_n$. For this, choose $\alpha\in J$ with $\br_{\setminus n}(S)=\alpha_S$ so that $\alpha_n$ is minimal, and choose $\beta\in J$ with $\beta_n=\delta^-_J$. If $\alpha_n>\delta^-_n=\beta_n$, there is a $k\in[n]$ such that $\alpha'=\alpha+\epsilon_k-\epsilon_n\in J$. This element satisfies $\br_{\setminus n}(\alpha)\leq\br_{\setminus n}(\alpha')$ and $\alpha'_n<\alpha_n$, which contradicts our assumptions on $\alpha$. Thus $\alpha_n=\delta^-_J$, as desired.
 
 In order to show that $\br'(S)\geq\br_{\setminus n}(S)$, we choose $\alpha\in J$ with $\br_{\setminus n}(S-n)=\alpha_{S-n}$ and $\alpha_n=\delta^-_J$. Then $\alpha-\tau\in J\setminus\mu-\tau$, and thus
 \[
  \br'(S) \ \geq \ (\alpha-\tau)_S \ = \ \br_{\setminus n}(S),
 \]
 which completes the proof.
\end{proof}

\begin{rem}\label{rem: comparison with duality by Whittle}
 The main focus of Whittle's paper \cite{Whittle92} is on duality operations which interchange deletion and contraction,
 in the sense that $(\br\setminus n)^\ast=\br^\ast/n$. This is a stricter requirement than what the duality operation, in the sense of this paper, satisfies. Under the additional assumption that duality is an involution, \cite{Whittle92} shows that such a duality operation only exists for particular subclasses of polymatroids. The corresponding duality functions from \cite{Whittle92} do not correspond to the duality in the sense of this paper (not even up to translation).
\end{rem}

%%%%%%%%%%%%%%%%%%%%%%%%%%%%%%%%%%%%%%%%%%%%%%%%%%%%%%%%%%%%%%%%%%%%%%%%%%%%%%%%%%%%%%%%%%%%%%%%%%%%%%%%%%%%%%%%%%%%%%%%%%%%%%%%%%%%%%%%%%%%%%%%%%%%%%%%%%%
\subsection{Direct sums}
\label{subsection: direct sums}

Let $J_1\subseteq\Delta^{r_1}_{n_1}$ and $J_2\subseteq\Delta^{r_2}_{n_2}$ be M-convex sets. Let $r=r_1+r_2$ and $n=n_1+n_2$. For $\alpha_1\in J_1$ and $\alpha_2\in J_2$, we define $\alpha_1\oplus\alpha_2\in\N^n$ by
\[
 (\alpha_1\oplus\alpha_2)_i \ = \ \begin{cases}
                                   \alpha_{1,i} & \text{if }1\leq i\leq n_1, \\
                                   \alpha_{2,i-n_1} & \text{if }n_1<i\leq n. \\
                                  \end{cases}
\]
Note that $\norm{\alpha_1\oplus\alpha_2}=r_1+r_2=r$ and thus $\alpha_1\oplus\alpha_2\in\Delta^r_n$. The \emph{direct sum of $J_1$ and $J_2$} is the subset
\[
 J_1\oplus J_2 \ = \ \{\alpha_1\oplus\alpha_2 \in\Delta^r_n\mid \alpha_1\in J_1,\ \alpha_2\in J_2\}
\]
of $\Delta^r_n$, which inherits the exchange property from $J_1$ and $J_2$ and is therefore M-convex.

The direct sum of polymatroids behaves well with respect to the operations and constructions of the previous sections. The following properties are easy to verify, and will not be used elsewhere in this paper, so we omit the straightforward proofs.

\begin{prop}\label{prop: properties of the direct sum of polymatroids}
 The direct sum $J=J_1\oplus J_2$ satisfies the following properties:
 \begin{enumerate}
  \item $\delta^-_J=\delta^-_{J_1}\oplus\delta^-_{J_2}$, \ $\delta^+_J=\delta^+_{J_1}\oplus\delta^+_{J_2}$ and $\delta_J=\delta_{J_1}\oplus\delta_{J_2}$;
  \item $J^\ast=J_1^\ast\oplus J_2^\ast$;
  \item $(J_1\oplus J_2)\minor{(\nu_1\oplus\nu_2)}{(\mu_1\oplus\mu_2)}=\big(J_1\minor{\nu_1}{\mu_1}\big)\oplus\big(J_1\minor{\nu_2}{\mu_2}\big)$;
  \item $J_1\oplus J_2$ and $J_2\oplus J_1$ are elementary equivalent (by a suitable permutation);
  \item $\overline J=\overline J_1\oplus\overline J_2$, where $\overline J$ denotes the reduction of a polymatroid $J$ (cf.~\autoref{df:reduction}).
 \end{enumerate}
\end{prop}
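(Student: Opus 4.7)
The plan is to verify each of the five items in turn by unwinding definitions, with item (1) being the backbone from which several other items follow mechanically.

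First I would establish (1) by computing coordinate-wise. For $1 \leq i \leq n_1$, the $i$-th coordinate of an element $\alpha_1 \oplus \alpha_2 \in J_1 \oplus J_2$ is $\alpha_{1,i}$, which is independent of $\alpha_2$; so minimizing (or maximizing) the $i$-th coordinate over $J_1 \oplus J_2$ amounts to minimizing (or maximizing) the $i$-th coordinate over $J_1$, giving $\delta^-_{J,i} = \delta^-_{J_1,i}$ (resp. the analogous $\delta^+$ statement). The case $n_1 < i \leq n$ is symmetric, and additivity of $\delta^\pm$ yields the claim for $\delta_J$. Item (5) then follows immediately: $\overline J = J - \delta^-_J = (J_1 \oplus J_2) - (\delta^-_{J_1} \oplus \delta^-_{J_2})$, which by the definition of $\oplus$ on elements equals $(J_1 - \delta^-_{J_1}) \oplus (J_2 - \delta^-_{J_2}) = \overline{J_1} \oplus \overline{J_2}$. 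Item (2) is equally short: by (1),
\[
J^\ast \ = \ \delta_J - J \ = \ (\delta_{J_1} \oplus \delta_{J_2}) - \{\alpha_1 \oplus \alpha_2 \mid \alpha_i \in J_i\} \ = \ \{(\delta_{J_1}-\alpha_1)\oplus(\delta_{J_2}-\alpha_2)\} \ = \ J_1^\ast \oplus J_2^\ast,
\]
using that subtraction commutes with the block-wise formation of $\oplus$.

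For item (4), I would exhibit the permutation $\sigma \in S_n$ that cyclically sends the block $\{1,\dotsc,n_1\}$ onto $\{n_2+1,\dotsc,n\}$ and the block $\{n_1+1,\dotsc,n\}$ onto $\{1,\dotsc,n_2\}$; by \autoref{lemma: permutation of variables} this is an elementary equivalence, and a direct check shows $\iota_\sigma(J_1 \oplus J_2) = J_2 \oplus J_1$.

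The main work is item (3), and this is the step I expect to be the mildly tricky one because the definitions of deletion and contraction involve the bounds $\delta^\pm_J$, so I need (1) to align the hypotheses. The plan is: first, $\mu_1 \oplus \mu_2$ is effectively independent in $J_1 \oplus J_2$ iff there exists $\alpha_1 \oplus \alpha_2 \in J_1 \oplus J_2$ with $\mu_1 \oplus \mu_2 \leq (\alpha_1 \oplus \alpha_2) - (\delta^-_{J_1} \oplus \delta^-_{J_2})$, and by the block structure this is equivalent to the separate conditions $\mu_i \leq \alpha_i - \delta^-_{J_i}$ for $i=1,2$, i.e., to $\mu_i$ being effectively independent in $J_i$; analogously for coindependence. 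Next, unfolding
\[
(J_1 \oplus J_2) \setminus (\nu_1 \oplus \nu_2) \ = \ \bigl\{ \alpha_1 \oplus \alpha_2 \in J_1 \oplus J_2 \ \bigm| \ \nu_i \leq \delta^+_{J_i} - \alpha_i \text{ for } i=1,2\bigr\}
\]
identifies with $(J_1 \setminus \nu_1) \oplus (J_2 \setminus \nu_2)$, using (1) for $\delta^+$. By (1) applied to this deletion (whose $\delta^-$ is componentwise $\delta^-_{J_i \setminus \nu_i}$), a parallel unfolding of the contraction $/(\mu_1 \oplus \mu_2)$ then produces $(J_1 \minor{\nu_1}{\mu_1}) \oplus (J_2 \minor{\nu_2}{\mu_2})$. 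The only subtlety is to track that the thresholds $\delta^\pm$ which appear inside the contraction and deletion for the direct sum really decouple into the corresponding thresholds for each summand, which is precisely what item (1) guarantees; once that bookkeeping is cleanly laid out, the identity of sets is immediate from the definition of $\oplus$ on elements.
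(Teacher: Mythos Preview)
Your proof is correct and proceeds by exactly the kind of direct verification the paper has in mind; the paper itself omits the proof entirely, stating only that the properties ``are easy to verify, and will not be used elsewhere in this paper, so we omit the straightforward proofs.'' One minor remark: in item (3) the relevant effective independence of $\mu_1\oplus\mu_2$ should be checked in $(J_1\oplus J_2)\setminus(\nu_1\oplus\nu_2)$ rather than in $J_1\oplus J_2$, but you handle this correctly later when you invoke (1) for the deletion before unfolding the contraction.
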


%%%%%%%%%%%%%%%%%%%%%%%%%%%%%%%%%%%%%%%%%%%%%%%%%%%%%%%%%%%%%%%%%%%%%%%%%%%%%%%%%%%%%%%%%%%%%%%%%%%%%%%%%%%%%%%%%%%%%%%%%%%%%%%%%%%%%%%%%%%%%%%%%%%%%%%%%%%
\subsubsection{Decomposition into indecomposable summands}
\label{subsubsection: decomposition into indecomposable summands}

An M-convex set $J\subseteq\Delta^r_n$ is \emph{decomposable} if it is combinatorially equivalent to the direct sum $J_1\oplus J_2$ of two polymatroids with nonempty ground sets $[n_1]$ and $[n_2]$. If $J$ is not decomposable and its ground set is nonempty, then it is \emph{indecomposable}. If $J$ is a matroid, then it is indecomposable as an M-convex set if and only if it is connected as a matroid.

For $\alpha\in J$ and $S\subseteq[n]$, we write $\alpha_S=\sum_{i\in S}\alpha_i$; in particular, $\norm\alpha=\alpha_{[n]}$. We say that a subset $S\subseteq[n]$ is a \emph{direct summand of $J$} if there is an $r_S\in\N$ such that $\alpha_S=r_S$ for all $\alpha\in J$.

\begin{lemma}\label{lemma: characterization of indecomposable polymatroids}
 If $S\subseteq[n]$ is a direct summand of $J$ and $T=[n]-S$, then $J$ is elementary equivalent to $J_1\oplus J_2$ (by a suitable permutation of variables) for two M-convex sets $J_1$ and $J_2$ with respective ground sets $[\# S]$ and $[\# T]$. In particular, $J\subseteq\Delta^r_n$ is indecomposable if and only if for every nonempty proper subset $S\subsetneq[n]$, there are elements $\alpha,\beta\in J$ with $\alpha_S\neq\beta_S$.
\end{lemma}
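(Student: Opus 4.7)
The plan is to prove the main statement directly and then deduce the ``in particular'' characterization as an immediate consequence. Assume that $S\subseteq[n]$ is a direct summand of $J$ with constant value $r_S$, and set $T=[n]-S$. I will consider the coordinate projections
\[
 J_S \ = \ \{\pi_S(\alpha)\mid\alpha\in J\} \ \subseteq \ \N^S \qquad \text{and}\qquad J_T \ = \ \{\pi_T(\alpha)\mid\alpha\in J\} \ \subseteq \ \N^T,
\]
and choose a permutation $\sigma\in S_n$ mapping $[\#S]$ bijectively onto $S$ and $\{\#S+1,\dotsc,n\}$ bijectively onto $T$. After identifying $J_S$ with a subset of $\N^{\#S}$ and $J_T$ with a subset of $\N^{\#T}$ through $\sigma$, the goal becomes to show that $J_S$ and $J_T$ are M-convex of respective ranks $r_S$ and $r-r_S$ and that $\iota_\sigma(J)=J_S\oplus J_T$.

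The core of the argument, and the step I expect to be the main obstacle, is a \emph{gluing claim}: for all $\alpha,\beta\in J$, the element $\gamma\in\Delta^r_n$ defined by $\gamma_i=\alpha_i$ for $i\in S$ and $\gamma_i=\beta_i$ for $i\in T$ belongs to $J$. I prove this by induction on $d(\alpha,\gamma):=\tfrac12\sum_{i\in T}|\alpha_i-\beta_i|$. The base case $d(\alpha,\gamma)=0$ gives $\gamma=\alpha\in J$. If $d(\alpha,\gamma)>0$, then some $i\in T$ satisfies $\beta_i<\alpha_i$. Applying the symmetric exchange axiom of $J$ to the pair $(\beta,\alpha)$ with this index produces some $j\in[n]$ with $\beta_j>\alpha_j$ such that $\alpha':=\alpha-\epsilon_i+\epsilon_j\in J$. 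Crucially, $j$ must lie in $T$: if instead $j\in S$, then $\pi_S(\alpha')$ would have coordinate sum $r_S+1$, contradicting that $S$ is a direct summand. Hence the $S$-coordinates of $\alpha'$ coincide with those of $\alpha$ and of $\gamma$, while the changes at $i,j\in T$ strictly decrease $d(\cdot,\gamma)$ by one, and the induction closes.

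Given the gluing claim, M-convexity of $J_S$ (and symmetrically of $J_T$) is straightforward: for any two elements of $J_S$ lifted to $\alpha,\beta\in J$ and any $i\in S$ at which they differ, the direct-summand property again forces the exchange index $j$ produced by the M-convexity of $J$ to lie in $S$, yielding the required exchange inside $J_S$. The inclusion $\iota_\sigma(J)\subseteq J_S\oplus J_T$ is tautological, while the gluing claim provides the reverse inclusion, completing the proof of the main statement. For the ``in particular'' part, any decomposition $J\sim J_1\oplus J_2$ with nonempty ground sets exhibits (the image under the defining permutation of) the ground set of $J_1$ as a nonempty proper direct summand of $J$; conversely, every nonempty proper direct summand produces such a decomposition via the main claim. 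Therefore $J$ is indecomposable if and only if no nonempty proper $S\subsetneq[n]$ is a direct summand, equivalently, for every such $S$ there exist $\alpha,\beta\in J$ with $\alpha_S\neq\beta_S$.
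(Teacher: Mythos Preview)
Your proof is correct and follows essentially the same approach as the paper: define the two factors as coordinate projections, use the direct-summand constraint to see that any exchange step forced by M-convexity stays within the same block, and prove the nontrivial inclusion $J_S\oplus J_T\subseteq\iota_\sigma(J)$ by an inductive exchange argument. Your ``gluing claim'' is exactly the paper's key step, just stated as a standalone lemma and inducted over the $T$-distance rather than the $S$-distance; the two are symmetric and the arguments coincide.
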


\begin{proof}
 By assumption there is an $r_1\in\N$ such that $\alpha_S=r_1$ for all $\alpha\in J$. After permuting $[n]$, we can assume that $S=[n_1]\subseteq[n]$ for $n_1=\# S$. Let $T=[n_2]$ for $n_2=\# T=n-n_1$ and $r_2=r-r_1$. Then every $\alpha\in J$ can be written as $\alpha=\alpha_{J_1}\oplus\alpha_{J_2}$ for $\alpha_{J_1}\in\Delta^{r_1}_{n_1}$ and $\alpha_{J_2}\in\Delta^{r_2}_{n_2}$. Define $J_1=\{\alpha_{J_1}\mid\alpha\in J\}$ and $J_2=\{\alpha_{J_2}\mid\alpha\in J\}$. We claim that $J=J_1\oplus J_2$ is a decomposition of $J$.

 First note that $J_1$ and $J_2$ are M-convex, since the fact that $\alpha_S=r=\beta_S$ for any two $\alpha,\beta\in J$ guarantees that when we apply the exchange axiom, we substitute an $i\in S$ by a $j\in S$. This allows us to deduce the exchange axiom for $J_1$ and $J_2$ from the exchange axiom for $J$.

 It is evident that $J\subseteq J_1\oplus J_2$. Consider $\alpha_1\in J_1$ and $\alpha_2\in J_2$. We need to show that $\alpha_1\oplus\alpha_2\in J$. By the definition of $J_1$ and $J_2$, there are $\beta_1\in J_1$ and $\beta_2\in J_2$ such that $\alpha_1\oplus\beta_2, \beta_1\oplus\alpha_2\in J$. Using the fact that the exchange axiom substitutes an $i\in S$ by a $j\in S$, we can exchange the elements in the difference $\beta_1 -\alpha_1$ one by one, and by induction we obtain 
 $\alpha_1\oplus\alpha_2 \in J$ as desired. This establishes the first claim.

 We turn to the second claim. If $J$ is indecomposable and $S\subseteq[n]$ is a direct summand, then $S=\emptyset$ or $S=[n]$. Conversely, if $J$ decomposes into the direct sum $J_1\oplus J_2$ of two polymatroids with nonempty ground sets $[n_1]$ and $[n_2]$, then 
for every nonempty proper subset $S=[n_1]$ of $[n]$, $\alpha_S=\norm{\alpha_1}$ is equal to the rank $r_1$ of $J_1$ for all $\alpha=\alpha_1\oplus\alpha_2\in J$. This establishes the reverse implication.
\end{proof}

Note that $(J_1\oplus J_2)\oplus J_3=J_1\oplus(J_2\oplus J_3)$, which allows us define the direct sum
\[
 \bigoplus_{i=1}^m J_i \ = \ J_1\oplus\dotsb\oplus J_m
\]
of M-convex sets $J_1,\dotsc,J_m$ unambiguously. As in \autoref{prop: properties of the direct sum of polymatroids}, permuting the summands results in a direct sum that is elementary equivalent to $\bigoplus J_i$ (by a suitable permutation of variables).

\begin{prop}\label{prop: unique decompistion into indecomposable polymatroids}
 Let $J$ be an M-convex set. Then there are a unique positive integer $m$ and indecomposable M-convex sets $J_1,\dotsc,J_m$, which are unique up to combinatorial equivalence and a permutation of indices, such that $J$ is combinatorially equivalent to $J_1\oplus\dotsb\oplus J_m$.
\end{prop}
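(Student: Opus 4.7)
I will prove existence by induction on $n$ and uniqueness by analyzing the Boolean lattice of direct summands of $[n]$.

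For existence, I proceed by induction on $n$. If $n \leq 1$, or more generally if $J$ is indecomposable, take $m = 1$ and $J_1 = J$. Otherwise the definition of decomposability gives a combinatorial equivalence $J \sim J' \oplus J''$ with both ground sets nonempty and thus strictly smaller than $n$; applying the inductive hypothesis to $J'$ and $J''$ and combining via associativity of $\oplus$ (together with \autoref{prop: properties of the direct sum of polymatroids}) yields a decomposition of $J$ into indecomposables.

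For uniqueness, the key step is to show that the collection
\[
\mathcal{D}(J) \ := \ \{\, S \subseteq [n] \mid \alpha_S \text{ is constant on } J\,\}
\]
of direct summands of $J$ is a Boolean subalgebra of $2^{[n]}$. Closure under complement is immediate, since $\alpha_{[n]\setminus S} = r - \alpha_S$. Closure under intersection is the central technical point: given $S, T \in \mathcal{D}(J)$, apply \autoref{lemma: characterization of indecomposable polymatroids} to write $J \sim J|_S \oplus J|_{S^c}$ (after a permutation of variables), so that any $\alpha, \beta \in J$ can be ``mixed'' to give a $\gamma \in J$ with $\gamma|_S = \alpha|_S$ and $\gamma|_{S^c} = \beta|_{S^c}$. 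Then $r_T = \gamma_T = \alpha_{T\cap S} + \beta_{T\cap S^c}$ while $r_T = \beta_T = \beta_{T\cap S} + \beta_{T\cap S^c}$, whence $\alpha_{T\cap S} = \beta_{T\cap S}$, showing $T\cap S \in \mathcal{D}(J)$.

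Since $\mathcal{D}(J)$ is a finite Boolean algebra, it has a unique set of atoms $C_1,\dotsc,C_m$ partitioning $[n]$. Iterated application of \autoref{lemma: characterization of indecomposable polymatroids} then gives $J \sim J|_{C_1} \oplus \dotsb \oplus J|_{C_m}$ up to a permutation of variables, and each $J|_{C_i}$ is indecomposable: any proper nonempty direct summand of $J|_{C_i}$ would lift to a direct summand of $J$ lying strictly between $\emptyset$ and $C_i$, contradicting atomicity. For uniqueness, any decomposition $J \sim J_1' \oplus \dotsb \oplus J_{m'}'$ into indecomposables induces a partition of the ground set of the right-hand model into direct summands, each of which (by indecomposability of the corresponding $J_j'$) must be a single atom of $\mathcal{D}$; chasing back through the chain of elementary equivalences matches these atoms with $C_1,\dotsc,C_m$ and matches each $J_j'$ with one $J|_{C_i}$ up to combinatorial equivalence and permutation.

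The main obstacle is the closure of $\mathcal{D}(J)$ under intersection, which rests on the mixing argument above—this is where \autoref{lemma: characterization of indecomposable polymatroids} does the essential work. A secondary subtlety involves the extensions of variables appearing in an arbitrary combinatorial equivalence: these can introduce additional rank-zero coordinates and thus rank-zero atoms in $\mathcal{D}$, so in the uniqueness statement one must interpret the matching of indecomposables modulo such trivial summands, which are themselves combinatorially equivalent to one another along the equivalence chain.
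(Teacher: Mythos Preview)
Your proof is correct and runs parallel to the paper's in outline, but the key technical step---closure of the set $\mathcal{D}(J)$ of direct summands under intersection---is handled differently. The paper proves $\alpha_{S\cap T}=\beta_{S\cap T}$ for all $\alpha,\beta\in J$ by induction on the distance $d(\alpha,\beta)=\tfrac12\sum_i|\alpha_i-\beta_i|$: one applies the exchange axiom to pass from $\alpha$ to a neighbour $\alpha'=\alpha+\epsilon_i-\epsilon_j\in J$, observes that $S$ and $T$ being direct summands forces $i\in S\Leftrightarrow j\in S$ and $i\in T\Leftrightarrow j\in T$, and concludes $\alpha_{S\cap T}=\alpha'_{S\cap T}$. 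Your mixing argument instead invokes the product structure already established in \autoref{lemma: characterization of indecomposable polymatroids}; this is slicker at the point of use, though it relies on the exchange axiom having already been deployed inside that lemma, so the two routes are at the same logical depth. Your explicit framing via the Boolean algebra $\mathcal{D}(J)$ and its atoms also makes the passage from intersection-closure to uniqueness more transparent than the paper, which leaves that implication to the reader. The residual subtlety with extensions of variables that you flag is shared by both arguments.
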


\begin{proof}
 By \autoref{lemma: characterization of indecomposable polymatroids}, every direct summand $S\subseteq[n]$ of $J$ induces a decomposition into $J_1\oplus J_2$ (up to permutation of variables). Using this fact repeatedly leads to a decomposition of $J$ into a finite number of indecomposable M-convex sets.
 
 The uniqueness follows from the fact that the intersection $S\cap T$ of two direct summands $S$ and $T$ of $J$ is also a direct summand of $[n]$. To see this, we prove that $\alpha_{S\cap T}=\beta_{S\cap T}$ for all $\alpha,\beta\in J$ by induction on the \emph{distance} $d(\alpha,\beta)=\frac12\cdot\sum_{i\in[n]}\norm{\alpha_i-\beta_i}$ between $\alpha$ and $\beta$.

 If $d=d(\alpha,\beta)=0$, then $\alpha_{S\cap T}=\beta_{S\cap T}$, as claimed. If $d>0$, then $\alpha_i<\beta_i$ for some $i\in[n]$, and thus there is a $j\in[n]$ such that $\alpha'=\alpha+\epsilon_i-\epsilon_j\in J$. Since $d(\alpha',\beta)=d(\alpha,\beta)-1$, the inductive hypothesis shows that $\alpha'_{S\cap T}=\beta_{S\cap T}$. Since $\alpha_S=\beta_S$ and $\alpha_T=\beta_T$, we have $i\in S$ if and only if $j\in S$, and $i\in T$ if and only if $j\in T$. Thus $\alpha_{S\cap T}=\alpha'_{S\cap T}$, and the result now follows by induction.
\end{proof}

%%%%%%%%%%%%%%%%%%%%%%%%%%%%%%%%%%%%%%%%%%%%%%%%%%%%%%%%%%%%%%%%%%%%%%%%%%%%%%%%%%%%%%%%%%%%%%%%%%%%%%%%%%%%%%%%%%%%%%%%%%%%%%%%%%%%%%%%%%%%%%%%%%%%%%%%%%%
%%%%%%%%%%%%%%%%%%%%%%%%%%%%%%%%%%%%%%%%%%%%%%%%%%%%%%%%%%%%%%%%%%%%%%%%%%%%%%%%%%%%%%%%%%%%%%%%%%%%%%%%%%%%%%%%%%%%%%%%%%%%%%%%%%%%%%%%%%%%%%%%%%%%%%%%%%%
\part{Representations of polymatroids}
%%%%%%%%%%%%%%%%%%%%%%%%%%%%%%%%%%%%%%%%%%%%%%%%%%%%%%%%%%%%%%%%%%%%%%%%%%%%%%%%%%%%%%%%%%%%%%%%%%%%%%%%%%%%%%%%%%%%%%%%%%%%%%%%%%%%%%%%%%%%%%%%%%%%%%%%%%%
%%%%%%%%%%%%%%%%%%%%%%%%%%%%%%%%%%%%%%%%%%%%%%%%%%%%%%%%%%%%%%%%%%%%%%%%%%%%%%%%%%%%%%%%%%%%%%%%%%%%%%%%%%%%%%%%%%%%%%%%%%%%%%%%%%%%%%%%%%%%%%%%%%%%%%%%%%%

\section{Tracts}
\label{section: tracts}

Tracts were introduced in \cite{Baker-Bowler19} as a streamlined way to systematically study matroids with coefficients. 
The axioms for a tract are similar to the axioms for a field, but we relax the requirement that addition is a binary operation. 
More precisely, we do not define the sum of two elements of a tract $F$, but rather declare certain formal sums $a_1 + \cdots + a_k$ of elements of $F$ to be ``null'' and the rest to be non-null.

In this section, we review the definition and basic properties of tracts and provide a number of examples.

\subsection{Definition of tracts} 

A \emph{pointed monoid} is a multiplicatively written commutative monoid $F$ with identity element $1$ and a distinguished \emph{absorbing} element $0$ that satisfies $0\cdot a=0$ for all $a\in F$. The \emph{unit group of $F$} is the group
\[
 F^\times \ := \ \{a\in F\mid ab=1\text{ for some }b\in F\}
\]
of all invertible elements in $F$. 

A \emph{pointed (abelian) group} is a pointed monoid $F$ such that $F^\times=F-\{0\}$. The \emph{ambient semiring} of a pointed group $F$ is the group semiring
\[
 F^+ \ := \ \N[F^\times].
\]

We denote its elements by $\sum n_a.a$, where $n_a\in\N$ and $n_a=0$ for all but finitely many $a\in F^\times$. We sometimes write the sum $\sum n_a.a$ as $n_1.a_1+\dotsb+n_r.a_r$ or $\sum a_i$ (where $a$ appears $n_a$ times as a summand). The pointed group $F$ embeds into $F^+$ by sending $0$ to the empty sum (which is the additive identity element of $F^+$) and $a\in F^\times$ to $a=1.a\in F^+$. 

An \emph{ideal of $F^+$} is a subset $I$ that contains $0$ and is closed under addition and under multiplication by elements of $F^+$. For a subset $S\subseteq F^+$, we denote by $\langle S\rangle$ the ideal of $F^+$ generated by $S$.

A \emph{tract}\footnote{We deviate in this text from the definition of a tract in \cite{Baker-Bowler19} by imposing the property that $N_F$ is closed under addition. What we call a tract in this text should, strictly speaking, be called an \emph{ideal tract} or \emph{idyll} (cf.\ \cite{Baker-Lorscheid21b}).} is a pointed group $F$ together with an ideal $N_F$ of $F^+$, called the \emph{null set of $F$}, such that for every $a\in F$ there is a unique $b\in F$ with $a+b\in N_F$. We write $-a$ for the unique element $b$ with $a+b\in N_F$, and call it the \emph{additive inverse of $a$}. We often write $a-b$ instead of $a+(-b)$. Typically, we denote a tract by $F$ and suppress its null set $N_F$ from the notation. 

Note that the tract axioms imply that $a\in N_F$ if and only if $a=0$; in particular $-0=0$. Furthermore, we have $(-1)^2=1$ and $a+b\in N_F$ if and only if $b=-a$.

A \emph{tract morphism} is a multiplicative map $f\colon F_1\to F_2$ with $f(0)=0$ and $f(1)=1$ such that $\sum f(a_i)\in N_{F_2}$ for all $\sum a_i\in N_{F_1}$. This defines the category $\Tracts$.
% of tracts.

%%%%%%%%%%%%%%%%%%%%%%%%%%%%%%%%%%%%%%%%%%%%%%%%%%%%%%%%%%%%%%%%%%%%%%%%%%%5
\subsection{First examples}
Every field $F$ is a tract by defining the null set of $F$ as
\[\textstyle
 N_F = \{ \sum a_i \mid \sum a_i=0\text{ in }F\}.
\]

This construction extends to \emph{partial fields} and \emph{hyperfields}. Semple and Whittle's original definition of a partial field in \cite{Semple-Whittle96} is in terms of a pointed group $F$ together with a partially defined addition $+:F\times F\dashrightarrow F$ that satisfies certain axioms. Equivalently, the partial addition can be captured in terms of a certain ring $R$ that contains $F$ as a multiplicative submonoid (\cite[Thm.\ 5.1]{Pendavingh-vanZwam10b}). By \cite{Baker-Lorscheid21b}, every partial field $F$ can be viewed as a tract by defining the null set of $F$ as
\[\textstyle
  N_F \ = \
  % \ \{ \sum a_i \mid \sum a_i=0\text{ in }F\} \ 
  \{ \sum a_i \mid \sum a_i=0\text{ in }R\}.
\]

Similarly a hyperfield $F$ with hyperaddition $\hyperplus\colon F\times F\to 2^F$ (where $2^F$ is the power set of $F$) is a tract with respect to the null set
\[\textstyle
  N_F \ = \ \{ \sum a_i \mid 0\in \hypersum a_i\}.
\]
See \cite{Baker-Bowler19,Baker-Lorscheid21b} for additional details on partial fields and hyperfields.

In each case, homomorphisms are naturally tract morphisms. More to the point, these constructions define fully faithful embeddings of the categories of fields, partial fields, and hyperfields into $\Tracts$, which allows us to consider (partial / hyper-)fields as tracts by abuse of terminology.

Two concrete examples are the \emph{regular partial field} $\Funpm=\{0,1,-1\}$, whose null set is
\[
 N_{\Funpm} \ = \ \{n.1+n.(-1)\mid n\in\N\} \ = \ \{0,\quad 1-1, \quad 1+1-1-1,\dotsc\},
\]
and the \emph{Krasner hyperfield} $\K=\{0,1\}$, whose null set is
\[
 N_\K \ = \ \N-\{1\} \ = \ \{0,\quad 1+1,\quad 1+1+1,\quad 1+1+1+1,\dotsc\}.
\]
Note that $-1=1$ in $\K$. The regular partial field $\Funpm$ is an initial object of $\Tracts$, and the Krasner hyperfield $\K$ is a terminal object of $\Tracts$. 

\subsection{Subtracts}
Let $F$ be a tract. A \emph{subtract of $F$} is a pointed submonoid $A$ of $F$ with $-1\in A^\times=A-\{0\}$, which is a tract in its own right with respect to the null set
\[\textstyle
 N_A \ = \ \{ \sum a_i\in N_F \mid a_i\in A\}.
\]

%%%%%%%%%%%%%%%%%%%%%%%%%%%%%%%%%%%%%%%%%%%%%%%%%%%%%%%%%%%%%%%%%%%%%%%%%%%5
\subsection{Free algebras}
\label{subsubsection: free algebras}
Let $k$ be a tract. A \emph{$k$-algebra} is a tract $F$ together with a tract morphism $\alpha_F\colon k\to F$. A $k$-linear morphism between $k$-algebras is a tract morphism $f\colon F_1\to F_2$ between $k$-algebras $F_1$ and $F_2$ such that $\alpha_{F_2}=f\circ\alpha_{F_1}$.

Let $\{x_i\}_{i\in I}$ be a set. The \emph{free $k$-algebra in $\{x_i\}$} is defined as follows: as a pointed monoid, it is
\[\textstyle
 k(x_i) \ = \ k(x_i)_{i\in I} \ = \ \big\{ a\cdot\prod x_i^{\epsilon_i} \, \big| \, a\in k,\ (\epsilon_i)\in\bigoplus_I\Z\big\} \, \big/ \, \sim,
\]
where $\sim$ is the equivalence relation generated by $0:=0\cdot\prod x_i^0\sim 0\cdot \prod x_i^{\epsilon_i}$ for any $(\epsilon_i)\in\bigoplus_I\N$. The association $a\mapsto a\cdot x_j^0$ defines an embedding of $k$ as a submonoid of $k(x_i)$, which extends by linearity to an embedding $k^+\to k(x_i)^+$. The nullset of $k(x_i)$ is the ideal $N_{k(x_i)}$ generated by the image of $N_k$ in $k(x_i)^+$. We write $a x_{i_1}^{\epsilon_{i_1}} \dotsb x_{i_r}^{\epsilon_{i_r}}$ for $a\cdot\prod x_i^{\epsilon_i}$ with $\epsilon_j=0$ for $j\notin\{i_1,\dotsc,i_r\}$. 

By construction, the inclusion $k\to k(x_i)$ is a tract morphism, which turns $k(x_i)$ into a $k$-algebra. It satisfies the expected universal property: every set-theoretic map $f_0\colon\{x_i\}\to F$ into a $k$-algebra $F$ extends uniquely to a $k$-linear morphism $f\colon k(x_i)\to F$ with $f(x_i)=f_0(x_i)$ (this is proven exactly as for pastures; cf.\ \cite[Prop.\ 2.6]{Baker-Lorscheid20}).

%%%%%%%%%%%%%%%%%%%%%%%%%%%%%%%%%%%%%%%%%%%%%%%%%%%%%%%%%%%%%%%%%%%%%%%%%%%5
\subsection{Quotients}
\label{subsubsection: quotients of tracts}
Let $F$ be a tract and $S\subseteq F^+$ be a subset that does not contain any element of $F^\times$. The \emph{quotient of $F$ by $S$} is the quotient monoid
\[
 \pastgen FS \ := \ F/\sim,
\]
where $\sim$ is the equivalence relation generated by the relations $ca\sim cb$ for all $a-b\in S$ and $c\in F$, together with the null set
\[\textstyle
 N_{\pastgen FS} \ := \ \big\langle \sum [ca_i] \, \big| \, c\in F,\ \sum a_i\in N_F\cup S\big\rangle,
\]
where $[a]$ denotes the class of $a\in F$ in $\pastgen FS$.

The quotient map $\pi_S\colon F\to \pastgen FS$ is a tract morphism, which turns $\pastgen FS$ into an $F$-algebra. It satisfies the expected universal property: every tract morphism $f\colon F\to F'$ with $\sum f(a_i)\in N_{F'}$ for all $\sum a_i\in S$ factors into $f=\bar f\circ\pi_S$ for a uniquely determined morphism $\bar f\colon\pastgen FS\to F'$ (this is proven exactly as for pastures; cf.\ \cite[Prop.\ 2.6]{Baker-Lorscheid20}).

%%%%%%%%%%%%%%%%%%%%%%%%%%%%%%%%%%%%%%%%%%%%%%%%%%%%%%%%%%%%%%%%%%%%%%%%%%%5
\subsection{Tensor products}
\label{subsection: tensor products}

The category of tracts is complete and cocomplete. In a nutshell, $\Funpm$ is an initial object, $\K$ is a terminal object, products are given by Cartesian products of the unit groups, equalizers are defined as the set-theoretic equalizers, and coequalizers can be constructed in terms of a quotient construction. The only subtle construction (similar to the constructions for rings) is the coproduct, or tensor product, of tracts, which is given by the following universal property.

The \emph{tensor product} of a family $\{F_i\}_{i\in I}$ of tracts is a tract $\bigotimes F_i$, together with morphisms $\iota_i:F_i\to \bigotimes F_i$ (one for each $i\in I$), such that the induced map
\[%\textstyle
 \Hom\Big(\bigotimes\limits_{i\in I} F_i, \, F'\Big) \ \longrightarrow \ \prod\limits_{i\in I} \ \Hom(F_i, F')
\]
is a bijection for all tracts $F'$. The construction of $\bigotimes F_i$ is analogous to the case of pastures (\cite[Lemma 2.7]{Baker-Lorscheid20}) and bands (\cite[Prop.\ 1.42]{Baker-Jin-Lorscheid25}).

%%%%%%%%%%%%%%%%%%%%%%%%%%%%%%%%%%%%%%%%%%%%%%%%%%%%%%%%%%%%%%%%%%%%%%%%%%%5
\subsection{More examples}
\label{subsection: more examples of tracts}

Every tract $F$ can be written  in the form $F=\pastgen{\Funpm(x_i)}S$ by choosing a suitable set of generators $x_i$ and a suitable set $S$ of defining relations. Some examples are:
\begin{align*}
 \F_2 \ &= \ \pastgen\Funpm{1+1}                      && \text{(the field with $2$ elements)} \\
 \F_3 \ &= \ \pastgen\Funpm{1+1+1}                    && \text{(the field with $3$ elements)} \\
% \K   \ &= \ \pastgen\Funpm{1+1,\ \  1+1+1}         && \text{(the \emph{Krasner hyperfield})} \\
 \S   \ &= \ \pastgen\Funpm{1+1-1}                    && \text{(the \emph{sign hyperfield})} \\
 \U   \ &= \ \pastgen{\Funpm(x,y)}{x+y-1}             && \text{(the \emph{near regular partial field})} \\
 \D   \ &= \ \pastgen{\Funpm(z)}{z-1-1}               && \text{(the \emph{dyadic partial field})} \\
 \H   \ &= \ \pastgen{\Funpm(z)}{z^3+1,\ \ z^2-z+1} && \text{(the \emph{hexagonal partial field})} \\
 \G   \ &= \ \pastgen{\Funpm(z)}{z^2-z-1}             && \text{(the \emph{golden ratio partial field})} 
% \T   \ &= \ \pastgen{\R_{\geq0}}{a+b+b\mid a\leq b}  && \text{(the \emph{tropical hyperfield})} \\
\end{align*}  

If $F$ is a pointed group, there are several general ways to define a tract structure on $F$.

We define the \emph{trivial} tract structure on $F$ by letting the null set be 
\[\textstyle
  N_F \ = \ \gen{a + a \mid a \in F^\times},
\]
so that in particular $1=-1$.
For example, this provides $\R_{\geq 0}$ with the structure of a tract. (And, from now on, when we write $\R_{\geq 0}$ as a tract, we consider it with the trivial tract structure.)

We define the \emph{degenerate} tract structure on $F$ by letting the null set be
\[\textstyle
 N_F \ = \ \big\{\ a_1+\dotsb+a_n \ \big| \ \text{$a_2=-a_1$ or at least $3$ terms are nonzero}\big\}.
\]

The \emph{tropical hyperfield} is the tract
\[\textstyle
 \T_0 \ = \ \pastgen{\R_{\geq0}}{\sum a_i\mid \text{the maximum among $a_1,\dotsc,a_n$ appears at least twice}},
\]
the \emph{triangular hyperfield} is the tract
\[\textstyle
 \T_1 \ = \ \pastgen{\R_{\geq0}}{\sum a_i\mid \text{$a_1,\dotsc,a_n$ are the side lengths of a (possibly degenerate) polygon}},
\]
and the \emph{degenerate triangular hyperfield} is the tract
\[\textstyle
 \T_\infty \ = \ \pastgen{\R_{\geq0}}{\sum a_i\mid \text{the maximum appears twice or at least $3$ terms are nonzero}}.
\]
Note that $\T_\infty$ is equal to the pointed group $F=\R_{\geq0}$ endowed with the degenerate tract structure.

The tracts $\T_0, \T_1$, and $\T_\infty$ play a major role in the forthcoming papers \cite{BHKL1} and \cite{BHKL2}, where  a continuous family of tracts $\T_q$ (for $q\in[0,\infty]$)  that interpolates between them (a process which Viro calls {\em Litvinov--Maslov dequantization} in \cite{Viro10}) is considered.
Concretely, for $q>0$ the generalized triangular hyperfield $\T_q$ is defined as
\[\textstyle
 \T_q \ = \ \pastgen{\R_{\geq0}}{\sum a_i\mid \text{$a_1^{1/q},\dotsc,a_n^{1/q}$ are the side lengths of a (possibly degenerate) polygon}}.
\]

We also define the \emph{discrete tropical hyperfield} $\T_0^\Z$ as the subtract of $\T_0$ corresponding to the pointed submonoid $e^\Z \cup \{ 0 \}$ of $\T_0=\R_{\geq 0}$.

%%%%%%%%%%%%%%%%%%%%%%%%%%%%%%%%%%%%%%%%%%%%%%%%%%%%%%%%%%%%%%%%%%%%%%%%%%%%%%%%%%%%%%%%%%%%%%%%%%%%%%%%%%%%%%%%%%%%%%%%%%%%%%%%%%%%%%%%%%%%%%%%%%%%%%%%%%%
%%%%%%%%%%%%%%%%%%%%%%%%%%%%%%%%%%%%%%%%%%%%%%%%%%%%%%%%%%%%%%%%%%%%%%%%%%%%%%%%%%%%%%%%%%%%%%%%%%%%%%%%%%%%%%%%%%%%%%%%%%%%%%%%%%%%%%%%%%%%%%%%%%%%%%%%%%%
\section{Representations of polymatroids}
\label{section: representations of polymatroids}

In this section, we extend the notions of strong and weak matroid representations over tracts (cf.\ \cite{Baker-Bowler19,Baker-Lorscheid20}) to polymatroids, using a novel characterization of polymatroids in terms of Pl\"ucker relations.

%%%%%%%%%%%%%%%%%%%%%%%%%%%%%%%%%%%%%%%%%%%%%%%%%%%%%%%%%%%%%%%%
\subsection{Pl\"ucker relations for polymatroids}
\label{subsection: Plucker relations for polymatroids}

We consider the characteristic function of a subset $J\subseteq\Delta^r_n$, defined by $\chi_J(\alpha)=1$ if $\alpha\in J$ and $\chi_J(\alpha)=0$ if not, as a function
\[
 \chi_J\colon \ \Delta^r_n \ \longrightarrow \ \K
\]
into the Krasner hyperfield $\K=\pastgen{\Funpm}{1+1,1+1+1}$, which has elements $0$ and $1=-1$.
\begin{thm}\label{thm: Pluecker relations for polymatroids}
 A subset $J\subseteq\Delta^r_n$ is M-convex if and only if the characteristic function $\chi_J\colon\Delta^r_n\to\K$ of $J$ satisfies the \emph{Pl\"ucker relations}
 \[
  \sum_{k=0}^s \ \chi_J(\alpha+\epsilon_{i_0}+\dotsb\widehat{\epsilon_{i_k}}\dotsb+\epsilon_{i_s}) \ \cdot \ \chi_J(\alpha+\epsilon_{i_k}+\epsilon_{j_2}+\dotsb+\epsilon_{j_s}) \ \in \ N_\K
 \]
 for all $s\in\{2,\dotsc,r\}$, all $\alpha\in\Delta^{r-s}_n$ with $\delta^-_J=\inf\,J\leq\alpha$, and all $i_0,\dotsc,i_s,j_2,\dotsc,j_s\in [n]$ such that $\alpha+\epsilon_{i_0}+\dotsb+\epsilon_{i_s}+\epsilon_{j_2}+\dotsb+\epsilon_{j_s}\leq\delta^+_J=\sup\,J$.
\end{thm}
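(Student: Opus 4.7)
The plan is to establish the two directions of the equivalence separately, exploiting the fact that $N_\K$ is the set of formal sums with zero or at least two nonzero summands, so each Plücker relation in $\K$ amounts to the implication: \emph{if one term is nonzero, then so is another.}

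For the forward direction, given an arbitrary Plücker instance, set $\beta := \alpha + \epsilon_{i_0} + \dotsb + \epsilon_{i_s}$ and $\gamma := \alpha + \epsilon_{j_2} + \dotsb + \epsilon_{j_s}$. The Plücker sum is invariant under permutations of $(i_0,\dotsc,i_s)$, so after relabeling I may assume the $k=0$ term is nonzero, i.e., $A := \beta - \epsilon_{i_0}$ and $B := \gamma + \epsilon_{i_0}$ both lie in $J$. I must then produce $k_1 \in \{1,\dotsc,s\}$ with $\beta - \epsilon_{i_{k_1}},\, \gamma + \epsilon_{i_{k_1}} \in J$. If $i_0$ is repeated in the multiset $(i_0,\dotsc,i_s)$, any such repeated index $k_1 \geq 1$ reproduces the pair $(A,B)$ and we are done. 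Otherwise a one-line coordinate count gives $(B - A)_{i_0} \geq 1$, and the symmetric exchange axiom applied to $B, A$ at position $i_0$ yields $m \in [n]$ with $(A - B)_m \geq 1$ such that $A + \epsilon_{i_0} - \epsilon_m \in J$ and $B - \epsilon_{i_0} + \epsilon_m \in J$. The case hypothesis forces $m \neq i_0$, and $(A - B)_m \geq 1$ then forces $m$ into $\{i_1,\dotsc,i_s\}$; writing $m = i_{k_1}$ translates the two elements above into $\beta - \epsilon_{i_{k_1}}$ and $\gamma + \epsilon_{i_{k_1}}$, supplying the desired second nonzero term.

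For the reverse direction, given $A, B \in J$ with $A_i > B_i$, I want to produce $j$ with $A_j < B_j$ such that $A - \epsilon_i + \epsilon_j,\, B + \epsilon_i - \epsilon_j \in J$. Set $\alpha_P := A \wedge B$ (coordinatewise minimum) and $d := \norm{A - \alpha_P}$. The case $d = 1$ is immediate since then $A$ and $B$ differ by a single swap. For $d \geq 2$ I construct a Plücker instance with $s := d \in \{2, \dotsc, r\}$ and base $\alpha_P$, where the multiset $(i_1,\dotsc,i_s)$ enumerates the coordinates (with multiplicity) of $B - \alpha_P$, $i_0 := i$, and $(j_2, \dotsc, j_s)$ enumerates the coordinates of $A - \alpha_P - \epsilon_i$. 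Admissibility is straightforward, using $\alpha_P \geq \delta^-_J$ and the identity $\alpha_P + \sum_k \epsilon_{i_k} + \sum_\ell \epsilon_{j_\ell} = A \vee B \leq \delta^+_J$. By construction the $k = 0$ term equals $\chi_J(B)\chi_J(A) = 1$, so the Plücker hypothesis yields some $k_1 \geq 1$ with a second nonzero term; since the supports of $A - \alpha_P$ and $B - \alpha_P$ are disjoint, $i_{k_1}$ automatically lies in the support of $B - \alpha_P$, so $j := i_{k_1}$ satisfies $A_j < B_j$, and the two factors unwind to $B + \epsilon_i - \epsilon_j$ and $A - \epsilon_i + \epsilon_j$, completing the exchange.

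The main obstacle is choosing the base $\alpha_P$ and partitioning the $\epsilon$-indices so that the $k = 0$ term equals exactly $\chi_J(A)\chi_J(B)$, while the remaining terms encode single-coordinate swaps of $(A,B)$; the natural choice $\alpha_P = A \wedge B$ works because it identifies the positive and negative parts of $A - B$ with the $i_k$ and $j_\ell$ multisets without overlap, and the upper-bound admissibility is packaged neatly in the identity $A + B - \alpha_P = A \vee B$. The secondary bookkeeping --- treating the repeated-index and $d \leq 1$ degeneracies separately, checking $s \in \{2,\dotsc,r\}$, and tracking the coordinate counts that force $m \in \{i_1,\dotsc,i_s\}$ in the forward direction --- reduces to short manipulations.
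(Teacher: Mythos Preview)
Your proof is correct and follows essentially the same approach as the paper's. Both directions use the same key constructions: in the forward direction you apply the symmetric exchange axiom at the coordinate $i_0$ (the paper's $i_k$) after handling the repeated-index case, and in the reverse direction you take $\alpha_P = A \wedge B$ (the paper's $\alpha = \inf\{\beta,\gamma\}$), enumerate the two positive parts of $A-B$ as the $i$- and $j$-multisets, and read off the second nonzero Pl\"ucker term as the desired exchange. The only differences are notational (your $A,B,\alpha_P$ versus the paper's $\beta,\gamma,\alpha$, with the inequality direction swapped) and in bookkeeping (you set $\beta=\alpha+\sum_k\epsilon_{i_k}$ of degree $r+1$, the paper keeps everything in $\Delta^r_n$).
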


\begin{proof}
 Assume that $J$ is M-convex. Since $N_\K=\N-\{1\}$, it suffices to show, for $\alpha\in\Delta_n^{r-s}$ with $s\in\{2,\ldots,r\}$ and $i_0,\dotsc,i_s,j_2,\dotsc,j_s\in [n]$, that either all terms in 
 \[
  \sum_{k=0}^s \ \chi_J(\alpha+\epsilon_{i_0}+\dotsb\widehat{\epsilon_{i_k}}\dotsb+\epsilon_{i_s}) \ \cdot \ \chi_J(\alpha+\epsilon_{i_k}+\epsilon_{j_2}+\dotsb+\epsilon_{j_s})
 \]
 are zero or that at least two terms are nonzero. Assume that the sum contains a nonzero term $\chi_J(\alpha+\epsilon_{i_0}+\dotsb\widehat{\epsilon_{i_k}}\dotsb+\epsilon_{i_s})\chi_J(\alpha+\epsilon_{i_k}+\epsilon_{j_2}+\dotsb+\epsilon_{j_s})$. Then we define $i=i_k$, $\beta=\alpha+\epsilon_{i_0}+\dotsb\widehat{\epsilon_{i_k}}\dotsb+\epsilon_{i_s}$, and $\gamma=\alpha+\epsilon_{i_k}+\epsilon_{j_2}+\dotsb+\epsilon_{j_s}$. 
 
 If $\beta_i\geq\gamma_i$, then there exists $k'\neq k$ with $i_{k'}=i_k$. Thus we find a second nonzero term $\chi_J(\alpha+\epsilon_{i_0}+\dotsb\widehat{\epsilon_{i_{k'}}}\dotsb+\epsilon_{i_s})\chi_J(\alpha+\epsilon_{i_{k'}}+\epsilon_{j_2}+\dotsb+\epsilon_{j_s})$ in the Pl\"ucker relation.

 If $\beta_i<\gamma_i$, then, by the exchange axiom for M-convex sets, there exists $j\in[n]$ such that $\gamma_j<\beta_j$ and such that both $\beta-\epsilon_j+\epsilon_i$ and $\gamma-\epsilon_i+\epsilon_j$ are in $J$. Since $\gamma_j<\beta_j$, we have $j=i_{k'}$ for some $k'\neq k$. Thus $\alpha+\epsilon_{i_0}+\dotsb\widehat{\epsilon_{i_{k'}}}\dotsb+\epsilon_{i_s}=\beta-\epsilon_j+\epsilon_i$ and $\alpha+\epsilon_{i_{k'}}+\epsilon_{j_2}+\dotsb+\epsilon_{j_s}=\gamma-\epsilon_i+\epsilon_j$, which yields $\chi_J(\alpha+\epsilon_{i_0}+\dotsb\widehat{\epsilon_{i_{k'}}}\dotsb+\epsilon_{i_s})\chi_J(\alpha+\epsilon_{i_{k'}}+\epsilon_{j_2}+\dotsb+\epsilon_{j_s})$ as a second nonzero term in the Pl\"ucker relation. This shows that the characteristic function of an M-convex set satisfies the Pl\"ucker relations.

 Conversely, assume that $J\subseteq\Delta^r_n$ is a subset whose characteristic function $\chi_J$ satisfies the Pl\"ucker relations. Consider $\beta,\gamma\in J$ with $\beta_i<\gamma_i$ for some $i\in[n]$.
 We need to show that there exists $j\in[n]$ such that $\gamma_j<\beta_j$ and such that both $\beta-\epsilon_j+\epsilon_i$ and $\gamma-\epsilon_i+\epsilon_j$ are in $J$.
 Let $\alpha=\inf\{\beta,\gamma\}\geq\delta_J^-$. We have $\alpha\in\Delta^{r-s}_n$ for some $1 \leq s \leq r$. 
 If $s=1$, then $\gamma=\beta+\epsilon_i-\epsilon_j$ for some $j\in[n]$ and the exchange property we're trying to show is trivially satisfied. 
 Thus, we can assume that $s\in\{2,\ldots,r\}$. There are $i_1,\dotsc,i_s,j_1,\dotsc,j_s\in[n]$ (unique up to permutation) such that 
 \[
  \beta = \alpha+\epsilon_{i_1}+\dotsb+\epsilon_{i_s}, \ \ \gamma = \alpha+\epsilon_{j_1}+\dotsb+\epsilon_{j_s} \ \ \text{and} \ \ \{i_1,\dotsc,i_s\}\cap\{j_1,\dotsc,j_s\} = \emptyset,
 \]
 and therefore $\alpha+\epsilon_{i_1}+\dotsb+\epsilon_{i_s}+\epsilon_{j_1}+\dotsb+\epsilon_{j_s}  \leq  \sup\,\{\beta,\gamma\}  \leq  \delta^+_J$. Assume without loss of generality that $i=j_1$ and define $i_0=i$. Then the Pl\"ucker relation
 \[
 \sum_{k=0}^s \ \chi_J(\alpha+\epsilon_{i_0}+\dotsb\widehat{\epsilon_{i_k}}\dotsb+\epsilon_{i_s}) \ \cdot \ \chi_J(\alpha+\epsilon_{i_k}+\epsilon_{j_2}+\dotsb+\epsilon_{j_s}) \ \in \ N_\K
 \]
 contains the nonzero term $\chi_J(\alpha+\epsilon_{i_1}+\dotsb+\epsilon_{i_s})\chi_J(\alpha+\epsilon_{i_0}+\epsilon_{j_2}+\dotsb+\epsilon_{j_s})$, and therefore contains a second nonzero term $\chi_J(\alpha+\epsilon_{i_0}+\dotsb\widehat{\epsilon_{i_k}}\dotsb+\epsilon_{i_s}) \chi_J(\alpha+\epsilon_{i_k}+\epsilon_{j_2}+\dotsb+\epsilon_{j_s})$ for some $k\neq0$. Thus $J$ contains both
 \[
  \alpha+\epsilon_{i_0}+\dotsb\widehat{\epsilon_{i_k}}\dotsb+\epsilon_{i_s} \ = \ \beta-\epsilon_j+\epsilon_i \quad \text{and} \quad \alpha+\epsilon_{i_k}+\epsilon_{j_2}+\dotsb+\epsilon_{j_s} \ = \ \gamma-\epsilon_i+\epsilon_j
 \]
 for $j=i_k$. Since $i_k\notin\{j_1,\dotsc,j_s\}$, we have $\gamma_j<\beta_j$. This shows that $J$ is M-convex.
\end{proof}

\begin{rem}
 Not all Pl\"ucker relations are needed in the characterization of M-convex sets in \autoref{thm: Pluecker relations for polymatroids}. It is visible from the proof that we only need those Pl\"ucker relations for which $\{i_1,\dotsc,i_s\}\cap\{i_0,j_2,\dotsc,j_s\}=\emptyset$ in order to imply M-convexity.
\end{rem}

%%%%%%%%%%%%%%%%%%%%%%%%%%%%%%%%%%%%%%%%%%%%%%%%%%%%%%
\subsection{Polymatroid representations over tracts}
\label{subsection: representations over tracts}

The definition of polymatroid representations over arbitrary tracts, in which $-1$ might differ from $1$, requires a suitable sign for the Pl\"ucker relations, which in turn depends on the ordering of the coordinates. This is best formulated in terms of $r$-tuples $\balpha\in[n]^r$ instead of vectors $\alpha\in\Delta^r_n$. We can compare both viewpoints in terms of the surjection
\begin{equation} \label{eq:SigmaDefinition}
 \begin{array}{cccl}
  \Sigma\colon & [n]^r & \longrightarrow & \Delta^r_n \\
               & \balpha & \longmapsto   & \epsilon_{\balpha_1}+\dotsb+\epsilon_{\balpha_r}.
 \end{array} 
\end{equation}
Furthermore, we use the shorthand notation $\balpha i_{1}\dotsc i_s$ for $(\balpha_1,\dotsc,\balpha_r,i_{1},\dotsc,i_s)$ where $\balpha\in[n]^r$ and $i_{1},\dotsc,i_s\in[n]$.

Recall from \autoref{df:reduction} that the \emph{reduction} of an $M$-convex set $J$ is the M-convex set $\Jbar=J-\delta^-_J$, where $\delta^-_J=\inf\, J$.

\begin{df}\label{df: polymatroid representation}
 Let $J\subseteq\Delta^r_n$ be an M-convex set. The \emph{effective rank of $J$} is the rank $\rbar=r-\norm{\delta^-_J}$ of $\Jbar$. The \emph{width of $J$} is $\omega_J=\delta^+_J-\delta^-_J=\delta^+_{\Jbar}$. 
 
 Let $F$ be a tract. A \emph{strong $F$-representation of $J$} is a function $\brho\colon[n]^\rbar\to F$ that satisfies the following axioms:
 \begin{enumerate}[label={(SR\arabic*)}]
  \item\label{SR1} $\brho(\balpha)\in F^\times$ if and only if $\Sigma\balpha\in \Jbar$;
  \item\label{SR2} $\brho(i_{\sigma(1)},\dotsc,i_{\sigma(\rbar)})=\sign(\sigma)\cdot \brho(i_1,\dotsb,i_\rbar)$ for every $\sigma\in S_\rbar$;
  \item\label{SR3} $\brho$ satisfies the \emph{Pl\"ucker relation} %$\Pl(\alpha|i_0,\dotsc,i_s|j_2,\dotsc,j_s)$:
  \[
   \sum_{k=0}^s (-1)^k \cdot \brho(\balpha i_0\dotsc \widehat{i_k}\dotsc i_s) \cdot \brho(\balpha i_k j_2\dotsc j_s) \quad \in \quad N_F
  \]
  for all $2\leq s\leq \rbar$, $\balpha\in [n]^{\rbar-s}$ and $i_0,\dotsc,i_s,j_2,\dotsc,j_s\in [n]$ such that \[\Sigma\balpha i_0\dotsc i_sj_2\dotsc j_s\leq\omega_J.\]
 \end{enumerate} 
 A \emph{weak $F$-representation of $J$} is a function $\brho\colon[n]^\rbar\to F$ that satisfies the following axioms:
 \begin{enumerate}[label={(WR\arabic*)}]
  \item\label{WR1} $\brho(\balpha)\in F^\times$ if and only if $\Sigma\balpha\in \Jbar$;
  \item\label{WR2} $\brho(i_{\sigma(1)},\dotsc,i_{\sigma(\rbar)})=\sign(\sigma)\cdot \brho(i_1,\dotsb,i_\rbar)$ for every $\sigma\in S_\rbar$;
  \item\label{WR3} $\brho$ satisfies the \emph{$3$-term Pl\"ucker relations} %$\Pl(\alpha|ijk|l)$:
  \[
   \brho(\balpha jk) \cdot \brho(\balpha il) \ - \ \brho(\balpha ik) \cdot \brho(\balpha jl) \ + \ \brho(\balpha ij) \cdot \brho(\balpha kl) \quad \in \quad N_F
  \]
  for all $\balpha\in [n]^{\rbar-2}$, and $i,j,k,l\in [n]$ such that $\Sigma\balpha ijkl\leq\omega_J$.
 \end{enumerate}
 The tract $F$ is called \emph{excellent} if every weak $F$-representation of every M-convex set $J$ is strong.
\end{df}

\begin{rem}
 If $J$ is a matroid, then this definition agrees with the notion of strong (resp.\ weak) $F$-representations of matroids in \cite{Baker-Lorscheid21}, which are also called a strong (resp.\ weak) Grassmann-Pl\"ucker functions in \cite{Baker-Bowler19,Baker-Lorscheid20}.
\end{rem} 

\begin{rem}\label{remark: pluecker relations imply m-convex support}
 Consider a function $\brho\colon[n]^\rbar\to F$ that satisfies \ref{SR2} and \ref{SR3} with respect to the set
 \begin{equation*}
    J:=\{\Sigma\alpha\mid \brho(\alpha)\neq0\}.
 \end{equation*}
 Then $J$ is M-convex and $\brho$ is a strong $F$-representation of $J$.
 Indeed, the composition of $\brho$ with the unique tract morphism $F\to\K$ satisfies the assumptions of \autoref{thm: Pluecker relations for polymatroids} and has the same support as $\brho$. This shows that $J$ is M-convex, and then $\brho$ is an $F$-representation of $J$ by definition.
 
 This extends a known fact for matroid representations to polymatroids. It fails for weak $F$-representations (in fact, already in the matroid case; see\ \cite[Ex.\ 6.25]{Baker-Lorscheid21b}): for every tract $F$, there is a function $\brho\colon[6]^3\to F$ that satisfies \ref{WR2} and \ref{WR3}, but that is not a weak $F$-representation of any M-convex set $J$.
\end{rem}

%%%%%%%%%%%%%%%%%%%%%%%%%%%%
\subsubsection{The unique \texorpdfstring{$\K$}{K}-representation of a polymatroid}
\label{subsubsection: the unique K-representation of a polymatroid}

 Let $J$ be an M-convex set and let $\chi_J\colon\Delta^r_n\to \K$ be its characteristic function. Then the map $\brho_J\colon[n]^\rbar\to\K$, defined by $\brho_J(i_1,\dotsc,i_r)=\chi_J(\delta^-_J+\epsilon_{i_1}+\dotsc+\epsilon_{i_r})$, is a strong $\K$-representation of $J$. More precisely, $\brho$ is the unique strong (resp.\ weak) $\K$-representation of $J$, since it is entirely determined by axiom \ref{SR1} (resp.\ by \ref{WR1}). This shows, in particular, that $\K$ is excellent.

%%%%%%%%%%%%%%%%%%%%%%%%%%%%%%%%%%%%%%%%%%%%%%%%%%%%%%%%%%%%%%%%%%%%%%%%%%%%%%%%%%%%%%%%%%%%%%%%%
\subsection{The idempotency principle for proper polymatroids}
\label{subsection: the idempotency principle for proper polymatroids}

An M-convex set $J$ is \emph{a translate of a matroid} if $J=J'+\tau$ for a matroid $J'$ and $\tau\in\Z^n$. Otherwise, we call $J$ a \emph{proper polymatroid} or \emph{proper M-convex set}. 

Let $\omega_J=\delta^+_J-\delta_J^-$ be the width of $J$, $\Jbar=J-\delta^-_J$ its reduction, $\1=(1,\dotsc,1)\in\N^n$, and 
\[
 U_{2,3}^+ \ = \ \big\{ (2,0,0),\ (1,1,0),\ (1,0,1),\ (0,1,1) \big\} \ \subseteq \ \Delta^2_3.
\]

\begin{lemma}\label{lemma: characterizations of translates of matroids}
 The following are equivalent:
 \begin{enumerate}
  \item \label{proper1} $J$ is a translate of a matroid;
  \item \label{proper2} $\Jbar$ is a matroid;
  \item \label{proper3} $\omega_J\leq\1$;
  \item \label{proper4} $J$ has no embedded minor of type $\Delta^2_2$ or $U_{2,3}^+$.
 \end{enumerate}
\end{lemma}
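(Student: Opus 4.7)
My plan is to establish (1) $\Leftrightarrow$ (2) $\Leftrightarrow$ (3) by direct manipulation, (3) $\Rightarrow$ (4) from the minor bounds in \autoref{prop: bounds on the duality vectors of minors}, and finally (4) $\Rightarrow$ (3) by explicit construction of a forbidden minor.

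The equivalences between (1), (2), and (3) are formal. For (1) $\Rightarrow$ (2), if $J = J'+\tau$ with $J'$ a matroid, then $J'\subseteq\{0,1\}^n$ and $\delta^-_{J'}\in\{0,1\}^n$ (being $1$ on coloops and $0$ elsewhere), so $\Jbar = J'-\delta^-_{J'}\subseteq\{0,1\}^n$ is again a matroid; conversely, (2) $\Rightarrow$ (1) follows from $J = \Jbar+\delta^-_J$. For (2) $\Leftrightarrow$ (3), since $\Jbar\subseteq\N^n$ has constant coordinate sum, it is a matroid iff $\delta^+_{\Jbar} = \omega_J \leq \1$.

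For (3) $\Rightarrow$ (4), the bounds in \autoref{prop: bounds on the duality vectors of minors} give $\omega_{J/\mu}\leq\omega_J - \mu$ and $\omega_{J\setminus\nu}\leq\omega_J-\nu$, while translation preserves $\omega$, permutation permutes its coordinates, and extension or restriction of variables only adds or removes a zero entry. Hence the multiset of nonzero entries of $\omega$ for any embedded minor of $J$ is dominated (up to reordering) by that of $\omega_J$. Since $\omega_{\Delta^2_2} = (2,2)$ and $\omega_{U^+_{2,3}} = (2,1,1)$ both contain a $2$, no such minor can exist when $\omega_J\leq\1$.

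For the crucial direction (4) $\Rightarrow$ (3) I argue by contraposition. Assume $\omega_{J,i}\geq 2$ for some $i$; permuting to $i=1$ and translating so that $\delta^-_J = 0$, there exists $\beta\in J$ with $\beta_1\geq 2$. The vector $\mu := \beta - 2\epsilon_1$ is effectively independent (witnessed by $\beta$), and $J' := J/\mu$ is a rank-$2$ embedded minor with $\delta^-_{J'} = 0$ that contains $2\epsilon_1$. Since $\delta^-_{J',1} = 0$, there is some $\gamma\in J'$ with $\gamma_1 = 0$, hence $\gamma = \epsilon_j+\epsilon_k$ for some $j,k\in[n]\setminus\{1\}$. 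If $j = k$, the symmetric exchange axiom applied to $2\epsilon_1$ and $2\epsilon_j$ forces $\epsilon_1+\epsilon_j\in J'$; deleting all coordinates outside $\{1,j\}$ (using $2\epsilon_1$ as coindependence witness) restricts $J'$ to exactly $\{2\epsilon_1,\epsilon_1+\epsilon_j,2\epsilon_j\}$, combinatorially equivalent to $\Delta^2_2$. If $j\neq k$, exchange between $2\epsilon_1$ and $\epsilon_j+\epsilon_k$ along coordinate $j$ gives both $\epsilon_1+\epsilon_j,\epsilon_1+\epsilon_k\in J'$; if additionally $2\epsilon_j$ or $2\epsilon_k$ lies in $J'$ the previous case already delivers $\Delta^2_2$, while otherwise the analogous deletion restricts $J'$ to $\{2\epsilon_1,\epsilon_1+\epsilon_j,\epsilon_1+\epsilon_k,\epsilon_j+\epsilon_k\}$, combinatorially equivalent to $U^+_{2,3}$. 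The main technical obstacle is verifying that each deletion isolates exactly the vectors listed and no extraneous ones, which follows from the rank-$2$ constraint combined with the explicit sub-case hypothesis.
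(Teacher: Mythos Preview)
Your proof is correct and follows essentially the same strategy as the paper's. The paper runs the cycle (3)$\Rightarrow$(2)$\Rightarrow$(1)$\Rightarrow$(4)$\Rightarrow$(3), whereas you establish (1)$\Leftrightarrow$(2)$\Leftrightarrow$(3) directly and then handle (3)$\Rightarrow$(4) and (4)$\Rightarrow$(3); the substantive step (4)$\Rightarrow$(3) is the same in both---contract down to rank~$2$, use the exchange axiom to produce the four (or three) required elements, then delete to isolate them. Your use of the width bounds from \autoref{prop: bounds on the duality vectors of minors} for (3)$\Rightarrow$(4) is slightly more explicit than the paper's one-line observation that matroids have no proper-polymatroid minors, but the content is the same. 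One cosmetic remark: in the case $j\neq k$ your sub-case split on whether $2\epsilon_j$ or $2\epsilon_k$ lies in $J'$ is not actually needed, since the deletion bound $\alpha\leq 2\epsilon_1+\epsilon_j+\epsilon_k$ already excludes them.
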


\begin{proof}
 We establish the circle of implications \eqref{proper3}$\Rightarrow$\eqref{proper2}$\Rightarrow$\eqref{proper1}$\Rightarrow$\eqref{proper4}$\Rightarrow$\eqref{proper3} in the following. Assume \eqref{proper3}, i.e., $\omega_J\leq\1$. Since $\delta^-_{\Jbar}=0$ and $\omega_J$ is invariant under translates of $J$, we have $\delta^+_\Jbar=\omega_\Jbar=\omega_J\leq\1$, which shows that $\Jbar$ is a matroid and establishes \eqref{proper3}$\Rightarrow$\eqref{proper2}.
 
 Assume \eqref{proper2}, i.e., $\Jbar$ is a matroid. Then $J=\Jbar+\delta^-_J$ is a translate of a matroid. This establishes \eqref{proper2}$\Rightarrow$\eqref{proper1}.

 Assume \eqref{proper1}, i.e., $J=M+\tau$ is the translate of a matroid $M$. The matroid $M$ does not have embedded minors of type $\Delta^2_2$ or $U_{2,3}^+$, which are proper polymatroids. Since $J$ and $M$ have the same embedded minors, this establishes \eqref{proper1}$\Rightarrow$\eqref{proper4}.
 
 Assume \eqref{proper4} and let $i\in[n]$. Choose $\alpha,\beta\in J$ with $\alpha_i=\delta^+_{J,i}$ and $\beta_i=\delta^-_{J,i}$. If $\omega_{J,i}=\alpha_i-\beta_i\geq2$, then applying exchange axiom repeatedly to $\alpha$ and $\beta$ yields $k,l\in[n]\setminus\{i\}$ (not necessarily distinct) such that 
 \[
  \alpha-\epsilon_i+\epsilon_k, \qquad \alpha-\epsilon_i+\epsilon_l,\qquad \alpha-2\epsilon_i+\epsilon_k+\epsilon_l
 \]
 are in $J$. Thus
 \[
  J\,\minor{\,\delta^+_J-\alpha+\epsilon_k+\epsilon_l\,}{\,\alpha-2\epsilon_i} \ = \ \big\{\, 2\epsilon_i,\ \ \epsilon_i+\epsilon_k, \ \ \epsilon_i+\epsilon_l,\ \ \epsilon_k+\epsilon_l \,\big\},
 \]
 which is combinatorially equivalent to $\Delta_{2,2}$ or $U_{2,3}^+$, depending on whether $k=l$ or not. This contradicts our assumption \eqref{proper4}, which shows that $\omega_{J,i}\leq1$ and establishes \eqref{proper4}$\Rightarrow$\eqref{proper3}.
\end{proof}

A tract $F$ is \emph{idempotent}\footnote{This terminology stems from the fact that an idempotent semifield (commutative, with $0$ and $1$) is naturally a tract whose nullset is generated by all relations of the form $b+\sum a_i$ for which $\sum a_i=b$ holds in $F$. This tract is idempotent in the sense of this text. More concisely, this construction defines a fully faithful functor from idempotent semifields to idempotent tracts.} if $1=-1$ (i.e., $1+1\in N_F$) and $1+1+1\in N_F$. Equivalently, a tract $F$ is idempotent if and only if there exists a (necessarily unique) morphism $\K\to F$, i.e., if and only if $F$ is an algebra over the Krasner hyperfield.

A tract $F$ is \emph{near-idempotent} if $1=-1$ and if there is an $x\in F^\times$ with $1+1+x\in N_F$. Every idempotent tract is near-idempotent. A typical example of a near-idempotent tract that is not idempotent is $\F_2\otimes\D=\pastgen{\F_2(x)}{1+1+x}$.

\begin{prop}[Idempotency principle]\label{prop: idempotency principle}
 Let $F$ be a tract, $J\subseteq\Delta^r_n$ a proper M-convex set of effective rank $\rbar$, and $\brho\colon[n]^\rbar\to F$ a weak $F$-representation of $J$. Then $F$ is near-idempotent. If $\omega_{J,i}\geq3$ for some $i\in[n]$, then $F$ is idempotent.
\end{prop}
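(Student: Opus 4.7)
Since $J$ is proper, \autoref{lemma: characterizations of translates of matroids} yields some $i\in[n]$ with $\omega_{J,i}\geq 2$, and in particular $\rbar\geq 2$ (otherwise $\Jbar$ would be contained in $\{0,\epsilon_1,\ldots,\epsilon_n\}$ and hence be a matroid). The plan has three stages: first, extract $1=-1$ from the antisymmetry axiom \ref{WR2}; second, apply one three-term Pl\"ucker relation to produce some $x\in F^\times$ with $1+1+x\in N_F$; and third, when $\omega_{J,i}\geq 3$, apply a second three-term Pl\"ucker relation (now with three of the four free indices set equal to $i$) to upgrade this to $1+1+1\in N_F$. The role of M-convexity throughout is to guarantee that every $\brho$-value appearing in these relations is nonzero and that the support condition $\Sigma\balpha ijkl\leq\omega_J$ holds.

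To see $1=-1$, pick any $\bar\alpha\in\Jbar$ with $\bar\alpha_i\geq 2$ and any $\balpha\in[n]^\rbar$ with $\Sigma\balpha=\bar\alpha$; then $i$ occurs at least twice in $\balpha$, so \ref{WR1} gives $\brho(\balpha)\in F^\times$ while \ref{WR2} applied to a transposition of two such positions gives $\brho(\balpha)=-\brho(\balpha)$, whence $1=-1$. For the near-idempotency statement, invoke the construction in the proof of \autoref{lemma: characterizations of translates of matroids} to produce $\bar\alpha\in\Jbar$ with $\bar\alpha_i\geq 2$ and $k,l\in[n]\setminus\{i\}$ (possibly equal) such that the four elements $\bar\alpha$, $\bar\alpha-\epsilon_i+\epsilon_k$, $\bar\alpha-\epsilon_i+\epsilon_l$, $\bar\alpha-2\epsilon_i+\epsilon_k+\epsilon_l$ all lie in $\Jbar$ (this is precisely the $\Delta^2_2$ or $U^+_{2,3}$ minor). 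Choose $\balpha\in[n]^{\rbar-2}$ with $\Sigma\balpha=\bar\alpha-2\epsilon_i$; then all four of $\Sigma\balpha ii$, $\Sigma\balpha ik$, $\Sigma\balpha il$, $\Sigma\balpha kl$ lie in $\Jbar$, and $\Sigma\balpha iikl=\bar\alpha+\epsilon_k+\epsilon_l\leq\omega_J$. Applying \ref{WR3} to the four-tuple $(i,i,k,l)$ yields
\[
 \brho(\balpha ik)\brho(\balpha il)\ -\ \brho(\balpha ik)\brho(\balpha il)\ +\ \brho(\balpha ii)\brho(\balpha kl)\ \in\ N_F,
\]
in which all four factors lie in $F^\times$. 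Using $1=-1$, this reads $a+a+b\in N_F$ for $a=\brho(\balpha ik)\brho(\balpha il)$ and $b=\brho(\balpha ii)\brho(\balpha kl)$, and dividing by $a$ produces $1+1+(b/a)\in N_F$, proving near-idempotency.

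For the sharper conclusion, assume $\omega_{J,i}\geq 3$. Since repeated application of the symmetric exchange axiom between a point of $\Jbar$ of $i$-coordinate $0$ and a point of $i$-coordinate $\omega_{J,i}$ realizes every intermediate integer value, $\Jbar$ contains some $\bar\alpha$ with $\bar\alpha_i=2$; combining this $\bar\alpha$ with an element of $\Jbar$ of $i$-coordinate $0$ via one more exchange produces some $l\neq i$ with $\bar\alpha-\epsilon_i+\epsilon_l\in\Jbar$. Choose $\balpha\in[n]^{\rbar-2}$ with $\Sigma\balpha=\bar\alpha-2\epsilon_i$; the support condition $\Sigma\balpha iiil=\bar\alpha+\epsilon_i+\epsilon_l\leq\omega_J$ holds since its $i$-coordinate is $3\leq\omega_{J,i}$ and its $l$-coordinate is controlled by $\bar\alpha-\epsilon_i+\epsilon_l\in\Jbar$. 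Applying \ref{WR3} at $(\balpha,i,i,i,l)$ yields
\[
 \brho(\balpha ii)\brho(\balpha il)\ -\ \brho(\balpha ii)\brho(\balpha il)\ +\ \brho(\balpha ii)\brho(\balpha il)\ \in\ N_F,
\]
i.e.\ $a-a+a\in N_F$ with $a=\brho(\balpha ii)\brho(\balpha il)\in F^\times$; dividing and using $1=-1$ gives $1+1+1\in N_F$, so $F$ is idempotent.

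The only genuine difficulty in the whole argument is the combinatorial bookkeeping required to produce the four (respectively three) elements of $\Jbar$ used above and to verify the support condition $\Sigma\balpha ijkl\leq\omega_J$; both amount to careful invocations of the M-convex exchange axiom, so no deeper obstacle is expected.
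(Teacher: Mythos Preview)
Your proof is correct and follows essentially the same approach as the paper: extract $1=-1$ from \ref{WR2} at a tuple with a repeated index, then apply \ref{WR3} at $(i,i,k,l)$ to get $1+1+x\in N_F$, and finally apply \ref{WR3} at $(i,i,i,l)$ to get $1+1+1\in N_F$ when $\omega_{J,i}\geq 3$. The only cosmetic difference is in the third stage: the paper reuses the data $(\alpha,k,l)$ from the second stage and shifts to $\alpha'=\alpha-\epsilon_i+\epsilon_l$ (so that $\Sigma\balpha' ii=\alpha+\epsilon_i+\epsilon_l$ and $\Sigma\balpha' ik=\alpha+\epsilon_k+\epsilon_l$ are already known to lie in $\Jbar$), whereas you produce a fresh $\bar\alpha$ with $\bar\alpha_i=2$ via the intermediate-value argument and a single further exchange; both routes verify the support bound and the nonvanishing of the two relevant $\brho$-values in the same way.
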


\begin{proof}
 If $J$ is not the translate of a matroid, then it contains an element of the form $\delta^-_J+\alpha+\epsilon_i+\epsilon_i$ for some $\alpha\in\Delta^{\rbar-2}_n$ and $i\in[n]$. Choose $\balpha\in[n]^{\rbar-2}$ with $\Sigma\balpha=\alpha$. Then by axiom \ref{WR1}, $\brho(\balpha ii)\in F^\times$ and by axiom \ref{WR2}, $\brho(\balpha ii)=-\brho(\balpha ii)$. Thus $1=-1$ in $F$. 
 
 Let $\beta=\delta^-_J+\alpha+2\epsilon_i$ and $\gamma\in J$ with $\gamma_i=\delta^-_{J,i}$. Since $\gamma_i\leq\beta_i-2$, we can apply the exchange axiom twice to find $k,l\in[n]-\{i\}$ such that all of \[
  \beta-\epsilon_i+\epsilon_k, \qquad \beta-\epsilon_i+\epsilon_l, \qquad \beta-2\epsilon_i+\epsilon_k+\epsilon_l
 \]
 are in $J$. Thus, in particular, $\alpha+2\epsilon_i+\epsilon_k+\epsilon_l\leq\omega_J$. By axiom \ref{WR3}, we have the Pl\"ucker relation 
 \[
  \brho(\balpha ii)\cdot \brho(\balpha kl) \ + \ \brho(\balpha ik)\cdot \brho(\balpha il) \ + \ \brho(\balpha il)\cdot \brho(\balpha ik) \quad \in \quad N_F.
 \]
 Dividing all terms by $\brho(\balpha ik)\cdot\brho(\balpha il)$ yields $1+1+x\in N_F$ for $x=\frac{\brho(\balpha ii)\cdot \brho(\balpha kl)}{\brho(\balpha ik)\cdot\brho(\balpha il)}\in F^\times$, which shows that $F$ is near-idempotent.

 If $\omega_{J,i}\geq3$, then we can replace $\alpha$ as above by $\alpha-\epsilon_i+\epsilon_l$, which yields $\alpha+3\epsilon_i+\epsilon_k\leq\omega_J$. Thus by axiom \ref{WR3} we find the Pl\"ucker relation 
 \[
  \brho(\balpha ii)\cdot \brho(\balpha ik) \ + \ \brho(\balpha ii)\cdot \brho(\balpha ik) \ + \ \brho(\balpha ii)\cdot \brho(\balpha ik) \quad \in \quad N_F.
 \]
 Dividing all terms by $\brho(\balpha ii)\cdot\brho(\balpha ik)$ yields $1+1+1\in N_F$, which shows that $F$ is idempotent.
\end{proof}

\subsection{The up operator}\label{subsection: the up operator}

Let $N$ be the natural matroid of the reduction $\Jbar$. We denote by $E$ the ground set of $N$ and by $\theta\colon \Z^E \to \Z^n$ the associated projection. We also write $\theta$ for the map $E\to [n]$ obtained by identifying the standard basis vectors of $\Z^E$ and $\Z^n$ with the elements of $E$ and $[n]$, respectively.

The \emph{up operator} $\Up$ is given as follows. For a function $\brho\colon [n]^\rbar \to F$, we define the function $\Up \brho\colon E^\rbar \to F$ by taking 
\[
  \Up \brho (\balpha)
  =
  \begin{cases}
    \brho(\theta\balpha_1,\dots,\theta\balpha_{\rbar}) & \text{if } \balpha_1,\dots,\balpha_\rbar \text{ are pairwise distinct}, \\
    0 & \text{otherwise}. \\
  \end{cases}
\]

\begin{prop}\label{prop: up operator}
  Let $\brho\colon [n]^\rbar \to F$ be a function satisfying \ref{SR1} and \ref{SR2}. Then $\brho$ is a strong (resp.\ weak) $F$-representation of $J$ if and only if $\Up \brho$ is a strong (resp.\ weak) $F$-representation of $N$.
\end{prop}
\begin{proof}
  The support of $\Up \brho$ is the natural matroid $N$ of $\Jbar$. In addition, \ref{SR2} for $\rho$ implies \ref{SR2} for $\Up \brho$.

  Suppose that $\brho$ is a strong $F$-representation of $J$. The Pl\"ucker relations for $\Up\brho$ are \[ \sum_{k=0}^s (-1)^k \cdot \Up\brho(\balpha i_0\dotsc \widehat{i_k}\dotsc i_s) \cdot \Up\brho(\balpha i_k j_2\dotsc j_s) \quad \in \quad N_F, \] where $\balpha \in E^{\rbar-s}$ and $i_0,\dotsc,i_s,j_2,\dotsc,j_s \in E$ are chosen so that $\Sigma\balpha i_0\dotsc i_sj_2\dotsc j_s\leq\1$. Because $\theta(\Sigma \balpha i_0\dotsc i_sj_2\dotsc j_s) \leq \omega_J$, the Pl\"ucker relations for $\Up\brho$ are simply reformulations of the corresponding Pl\"ucker relations 
  \[ 
   \sum_{k=0}^s (-1)^k \cdot \brho(\theta(\balpha i_0\dotsc \widehat{i_k}\dotsc i_s)) \cdot \brho(\theta(\balpha i_k j_2\dotsc j_s)) \quad \in \quad N_F 
  \] 
  for $\brho$. Thus $\Up\brho$ is a strong $F$-representation of $N$. 
  
  Conversely, suppose that $\Up\brho$ is a strong $F$-representation of $N$. For any tuple $\balpha$ of elements in $[n]$, if $\Sigma\balpha \leq\omega_J$ then there is a tuple $\balpha'$ of distinct elements in $E$ so that $\theta\balpha'=\balpha$. Thus the Pl\"ucker relations for $\brho$ follow from those for $\Up\brho$, implying that $\brho$ is a strong $F$-representation of $J$.

  The same arugment works for weak $F$-representations.
\end{proof}

A tract $F$ is called \emph{perfect} if, for every strong $F$-representation $\brho$ of every matroid $M$, the $F$-vectors of $\brho$ are orthogonal to the $F$-covectors of $\brho$ (for details, see \cite[section 3.13]{Baker-Bowler19}). Examples of perfect tracts are fields, partial fields, $\K$, $\T$ and $\S$. The most important property of a perfect tract (from our perspective) is that every weak matroid representation over a perfect tract is strong \cite[Thm.\ 3.46]{Baker-Bowler19}. We extend this property to polymatroids in the following result.

\begin{cor}\label{cor: perfect tracts are excellent}
  Every perfect tract is excellent.
\end{cor}
\begin{proof}
  Every weak $F$-representation $\brho$ of a polymatroid $J$ lifts to a weak $F$-representation $\Up \brho$ of the natural matroid $N = N_{\Jbar}$. If $F$ is a perfect tract, then by \cite[Thm.\ 3.46]{Baker-Bowler19}, $\Up \brho$ is a strong $F$-representation of $N$. Hence, by \autoref{prop: up operator}, $\brho$ is a strong $F$-representation of $J$.
\end{proof}

%%%%%%%%%%%%%%%%%%%%%%%%%%%%%%%%%%%%%%%%%%%%%%%%%%%%%%%%%%%%%%%%%%%%%%%%%%%%%%%%%%%%%%%%%%%%%%%%%
\subsection{Simplified description of near-idempotent polymatroid representations}
\label{subsection: simplified description of idempotent polymatroid representations}

Due to \autoref{prop: idempotency principle}, only near-idempotent tracts $F$ possess proper polymatroid representations, in which case they can be described in an equivalent but simplified way. Namely, if $F$ is near-idempotent, we can identify a strong (or weak) $F$-representation $\brho\colon[n]^\rbar\to F$ of an M-convex set $J$ (with effective rank $\rbar$) with the function $\rho\colon\Delta^r_n\to F$ given by
\[
 \rho(\delta^-_J+\epsilon_{i_1}+\dotsc+\epsilon_{i_\rbar}) \ = \ \brho(i_1,\dotsc,i_\rbar),
\]
which does not depend on the ordering of $i_1,\dots,i_\rbar\in[n]$ due to \ref{SR2} (resp.\ \ref{WR2}) and the fact that $1=-1$ in $F$. Property \ref{SR1} (resp.\ \ref{WR1}) turns into the condition that the support of $\rho$ is $J$. More concisely, this formula identifies functions $\brho\colon[n]^\rbar\to F$ satisfying \ref{SR1} and \ref{SR2} (resp.\ \ref{WR1} and \ref{WR2}) with functions $\rho\colon\Delta^r_n\to F$ whose support is $J$.

The Pl\"ucker relations \ref{SR3} for $\brho$ turn into the relations
\[
 \sum_{k=0}^s \ \rho(\alpha -\epsilon_{i_k}+\epsilon_{i_0}+\dotsb+\epsilon_{i_s}) \cdot \rho(\alpha +\epsilon_{i_k}+\epsilon_{j_2}+\dotsb +\epsilon_{j_s}) \quad \in \quad N_F
\]
for all $2\leq s\leq r$, $\alpha\in\Delta^{r-s}_n$ with $\delta^-_J\leq\alpha$ and all $i_0,\dotsc,i_s,j_2,\dotsc,j_s\in [n]$ with $\alpha+\epsilon_{i_0}+\dotsc+\epsilon_{i_s}+\epsilon_{j_2}+\dotsc+\epsilon_{j_s}\leq\delta^+_J$. The $3$-term Pl\"ucker relations \ref{WR3} turn into the relations
\begin{multline*}
 \rho(\alpha+\epsilon_j+\epsilon_k) \cdot \rho(\alpha +\epsilon_i+\epsilon_l) \ + \ \rho(\alpha +\epsilon_i+\epsilon_k) \cdot \rho(\alpha +\epsilon_j+\epsilon_l) \\ + \ \rho(\alpha +\epsilon_i+\epsilon_j) \cdot \rho(\alpha +\epsilon_k+\epsilon_l) \quad \in \quad N_F
\end{multline*}
for all $\alpha\in\Delta^{r-2}_n$ with $\delta^-_J\leq\alpha$ and all $i,j,k,l\in [n]$ with $\alpha+\epsilon_i+\epsilon_j+\epsilon_k+\epsilon_l\leq\delta_J^+$.

We can extend this alternative perspective on polymatroid representations to arbitrary tracts in the following way.

\begin{lemma}\label{lemma: alternative description of polymatroid representations}
 Let $F$ be a tract, $J\subseteq\Delta_n^r$ an M-convex set of effective rank $\rbar=r-\norm{\delta^-_J}$, and $\brho\colon[n]^\rbar\to F$ a function that satisfies \ref{SR1} and \ref{SR2}. Let $\rho\colon\Delta^r_n\to F$ be the function with support $J$ given by $\rho(\delta^-_J+\epsilon_{i_1}+\dotsc+\epsilon_{i_\rbar})=\brho(i_1,\dotsc,i_\rbar)$ whenever $i_1\leq\dotsc\leq i_\rbar$. Then $\brho$ is a strong $F$-representation of $J$ if and only if $\rho$ satisfies the Pl\"ucker relations
\begin{equation*}
 \sum_{k=0}^s \ (-1)^{k+\sigma(k)}\cdot \rho(\alpha -\epsilon_{i_k}+\epsilon_{i_0}+\dotsb+\epsilon_{i_s}) \cdot \rho(\alpha +\epsilon_{i_k}+\epsilon_{j_2}+\dotsb +\epsilon_{j_s})  \in  N_{F}
\end{equation*}
for all $2\leq s\leq r$, $\alpha\in\Delta^{r-s}_n$, $1\leq i_0\leq \dotsc\leq i_s\leq n$, and $1\leq j_2\leq\dotsc\leq j_s\leq n$ such that $\delta^-_J\leq\alpha$ and $\alpha+\epsilon_{i_0}+\dotsc+\epsilon_{i_s}+\epsilon_{j_2}+\dotsc+\epsilon_{j_s}\leq\delta^+_J$, where $\sigma(k)$ is the number of $k\in\{2,\ldots,s\}$ with $i_k<j_s$.

The function $\brho$ is a weak $F$-representation of $J$ if and only if $\rho$ satisfies the $3$-term Pl\"ucker relations
\begin{multline*}
 \rho(\alpha+\epsilon_j+\epsilon_k) \cdot \rho(\alpha +\epsilon_i+\epsilon_l) \ - \ \rho(\alpha +\epsilon_i+\epsilon_k) \cdot \rho(\alpha +\epsilon_j+\epsilon_l) \\ + \ \rho(\alpha +\epsilon_i+\epsilon_j) \cdot \rho(\alpha +\epsilon_k+\epsilon_l)  \quad \in  \quad N_{F}
\end{multline*}
for all $\alpha\in\Delta^{r-2}_n$ and $1\leq i\leq j\leq k\leq l\leq n$ such that $\delta^-_J\leq\alpha$ and $\alpha+\epsilon_i+\epsilon_j+\epsilon_k+\epsilon_l\leq\delta^+_J$.
\end{lemma}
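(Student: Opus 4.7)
The plan is to unwind axiom~(SR2) to pass between the tuple-based formalism of~(SR3) (resp.\ (WR3)) and the multiset-based formalism of the lemma. The key identity I would establish first is that for any tuple $(k_1,\ldots,k_{\bar r})\in[n]^{\bar r}$ and any permutation $\tau$ sorting it into non-decreasing order,
\[
 \brho(k_1,\ldots,k_{\bar r})\;=\;\sign(\tau)\cdot \rho\bigl(\delta^-_J+\epsilon_{k_1}+\cdots+\epsilon_{k_{\bar r}}\bigr),
\]
which follows at once from~(SR2) together with the defining formula for $\rho$. If two entries of the tuple coincide, applying~(SR2) to the transposition swapping them yields $\brho=-\brho$; such terms either have $\Sigma$-image outside $\Jbar$ (and thus vanish by~(SR1) or (WR1)), or else correspond to situations in which $F$ is forced to be near-idempotent by the idempotency principle (\autoref{prop: idempotency principle}), so signs are irrelevant.

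For the strong case, I would fix $\alpha\in\Delta^{r-s}_n$ with $\delta^-_J\leq\alpha$ and sorted indices $i_0\leq\cdots\leq i_s$, $j_2\leq\cdots\leq j_s$ satisfying the hypothesis of the lemma, then choose a sorted tuple $\balpha=(a_1\leq\cdots\leq a_{\bar r-s})\in[n]^{\bar r-s}$ with $\Sigma\balpha=\alpha-\delta^-_J$. Setting $N(>x):=\#\{a\in\balpha:a>x\}$, a direct count shows that sorting $(\balpha,i_0,\ldots,\widehat{i_k},\ldots,i_s)$ requires $\sum_{\ell\ne k}N(>i_\ell)$ adjacent transpositions, while sorting $(\balpha,i_k,j_2,\ldots,j_s)$ requires $N(>i_k)+\sum_{\ell=2}^{s}N(>j_\ell)+\sigma(k)$, where the only $k$-dependent excess $\sigma(k)$ records the inversions between the slot holding $i_k$ and the $j$-block and matches the definition in the statement. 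Summing the two exponents, the sign of the $k$-th summand of~(SR3) equals $(-1)^{k+\sigma(k)+C}$ with $C:=\sum_{\ell=0}^{s}N(>i_\ell)+\sum_{\ell=2}^{s}N(>j_\ell)$ independent of $k$. Multiplication by $(-1)^C\in F^\times$ leaves $N_F$ invariant, so~(SR3) for $\brho$ is equivalent to the signed Pl\"ucker relation of the lemma for $\rho$.

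The weak case runs analogously and more cleanly. For $i\leq j\leq k\leq l$ and $(p,q)$ any sorted pair from $\{i,j,k,l\}$, the identity above gives $\brho(\balpha\,p\,q)=(-1)^{N(>p)+N(>q)}\cdot\rho\bigl(\delta^-_J+\Sigma\balpha+\epsilon_p+\epsilon_q\bigr)$. In the three products of~(WR3) each of $i,j,k,l$ appears as a factor exactly once, so every term carries the same total sign exponent $N(>i)+N(>j)+N(>k)+N(>l)$; factoring out this common unit converts~(WR3) precisely into the 3-term Pl\"ucker relation of the lemma, and conversely. The reverse directions in both cases are obtained by picking any sorted $\balpha$ representing $\alpha-\delta^-_J$ and reversing the above sign computation.

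I expect the main obstacle to be purely combinatorial: organizing the inversion count so as to isolate the $k$-dependent contribution $\sigma(k)$ in the strong case, and verifying that no such $k$-dependence arises in the weak case. The secondary concern --- handling tuples with repeated entries --- is disposed of cleanly by the (SR2)-based argument together with~\autoref{prop: idempotency principle}.
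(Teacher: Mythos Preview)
Your approach is correct and takes a genuinely different route from the paper. The paper argues by dichotomy: if $-1=1$ in $F$ (in particular if $F$ is near-idempotent), all signs disappear and the claim reduces to the sign-free discussion preceding the lemma; if $-1\neq 1$, the paper observes that $\Jbar$ must then be a matroid (so $\delta^+_J-\delta^-_J\leq\1$), simplifies accordingly, and appeals to the known equivalence for matroids between alternating functions on $[n]^r$ and functions on $\binom{[n]}{r}$. Your argument instead performs the inversion count uniformly and isolates the $k$-dependent contribution $\sigma(k)$ directly. This is more self-contained: it avoids the external reference to the matroid literature and gives an explicit reason for the particular sign $(-1)^{k+\sigma(k)}$.

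Two minor remarks. First, when handling tuples with repeated entries you invoke the idempotency principle, but that principle presupposes (WR3), which is exactly what is in question; fortunately you only need $-1=1$, and this already follows from (SR1)+(SR2) applied to any $\bbeta\in\bJ$ with a repeated coordinate, as you essentially note in the same sentence. Second, your reverse direction (``reversing the above sign computation'') is terse: to deduce (SR3) for \emph{arbitrary} $\balpha$, $i_\bullet$, $j_\bullet$ from the sorted $\rho$-relations you are implicitly using that permuting $\balpha$ leaves the (SR3) expression unchanged (it occurs in both factors, so the signs cancel) and that permuting the $i$'s or the $j$'s multiplies the whole expression by a global sign via (SR2). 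Both reductions are routine, but a sentence flagging them would make the argument airtight.
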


\begin{proof}
 If $F$ is near-idempotent, then $-1=1$ and the requirements on the ordering of the $i_k$ and $j_\ell$ become irrelevant. Thus the claim reduces to the discussion in the beginning of this section.

 If $F$ is not near-idempotent, then $J$ is a matroid by the idempotency principle (\autoref{prop: idempotency principle}) and thus $\delta^+_J-\delta^-_J\leq\1$ by \autoref{lemma: characterizations of translates of matroids}. Since the defining relations of $\brho$ and $\rho$ are translation invariant, we can assume that $\delta^-_J=0$ and thus $\delta^+_J\leq\1$. This means that $\brho(i'_1,\dotsc,i'_r)=0$ if the $i'_k$ are not pairwise distinct. It also means that the Pl\"ucker relations for $s$, $\alpha$, $1\leq i_0\leq \dotsc\leq i_s\leq n$, and $1\leq j_2\leq\dotsc\leq j_s\leq n$ are trivial (i.e., all terms are $0$ or it is of the form $a-a\in N_F$) unless $1\leq i_0< \dotsc<i_s\leq n$ and $1\leq j_2<\dotsc< j_s\leq n$ (resp.\ $i<j<k<l$ in the case of $3$-term Pl\"ucker relations). In the case of \ref{SR3}, we can further assume that $s=r$ and $\alpha=0$, since if $i_1=j_1,\dotsc,i_{r-s}=j_{r-s}$, the corresponding Pl\"ucker relation is equal to that for $\alpha=\epsilon_{i_1}+\dotsb+\epsilon_{i_{r-s}}$ (a fact which is particular to matroids and does not generalize to polymatroids).

 Thanks to these simplifications, the Pl\"ucker relations assume their usual shape (for instance, cf.\ \cite[Def.\ 3.1]{Baker-Lorscheid21b}\footnote{Note that the factor $(-1)^{\sigma(k)}$ stems from the permutation of $(i_k,j_2,\dotsc,j_s)$ that brings the coefficients into increasing order. This factor is missing in the Pl\"ucker relations in \cite{Baker-Lorscheid21b}---a mistake that requires correction.}), which reduces the lemma to the equivalence between (strong) matroid representations as alternating functions with domain $[n]^r$ and functions with domain $\binom{[n]}r$.
 The case of the $3$-term Pl\"ucker relations \ref{WR3} can be established in a similar vein.
\end{proof}

%%%%%%%%%%%%%%%%%%%%%%%%%%%%%%%%%%%%%%%%%%%%%%%%%%%%%%%%%%%%%%%%%%%%%%%%%%%%%%%%%%%%%%%%%%%%%%%%%%%%%%%%%%%%%
\subsection{M-convex functions as representations over the tropical hyperfield}
\label{subsection: M-convex functions as representations over the tropical hyperfield}

In this subsection, we show that a $\T_0$-representation is essentially the same thing as an \emph{M-convex function} in the sense of Murota, which is a function $f\colon\Z^n\to \R\cup\{\infty\}$ with nonempty support $J=\{\alpha\in\Z^n\mid f(\alpha)\neq\infty\}$ that satisfies the following \emph{exchange axiom}: for $\alpha,\beta\in J$ and $k\in[n]$ with $\alpha_k>\beta_k$, there is an $l\in[n]$ with $\alpha_l<\beta_l$ and
\begin{equation}\label{eq: Murota's exchange relations}
 f(\alpha) \ + \ f(\beta) \ \geq \ f(\alpha-\epsilon_k+\epsilon_l) \ + \ f(\beta+\epsilon_k-\epsilon_l).
\end{equation}
It follows from this exchange axiom that $J$ is an M-convex set (\cite[Prop.\ 6.1]{Murota03}). Note that an M-convex function whose support $J$ is a matroid is the same thing as a valuated matroid.

In the following, we identify $\T_0$-representations of $J$ with functions $\rho\colon\Delta^r_n\to\T_0$ that have support $J$ and satisfy the appropriate version of the Pl\"ucker relations; cf.\ \autoref{subsection: simplified description of idempotent polymatroid representations} for details.

\begin{prop}\label{prop: M-convex functions are strong T_0-representations}
 Let $J$ be an M-convex set and $\rho\colon\Delta^r_n\to\T_0=\R_{\geq0}$ a function with support $J$. Then $\rho$ is a strong $\T_0$-representation of $J$ if and only if $f=-\log(\rho)$ is M-convex.
\end{prop}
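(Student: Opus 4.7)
The plan is to leverage \autoref{prop: M-convex functions are weak T_0-representations}, which already settles the analogous equivalence at the level of \emph{weak} $\T_0$-representations. The ``only if'' direction is then essentially free: a strong $\T_0$-representation automatically satisfies the $s=2$ instance of the Pl\"ucker relations, which coincides with the $3$-term Pl\"ucker relations, so it is in particular a weak $\T_0$-representation, and the previous proposition yields M-convexity of $f=-\log\rho$.

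For the ``if'' direction, suppose $f$ is M-convex, so that by \autoref{prop: M-convex functions are weak T_0-representations}, $\rho$ is a weak $\T_0$-representation of $J$. What remains is to verify, for every $s\geq 3$, every admissible $\alpha\in\Delta^{r-s}_n$ and every admissible choice of indices, that the maximum among the $s+1$ products
\[
\rho(\alpha+\epsilon_{i_0}+\dotsb+\widehat{\epsilon_{i_k}}+\dotsb+\epsilon_{i_s})\cdot\rho(\alpha+\epsilon_{i_k}+\epsilon_{j_2}+\dotsb+\epsilon_{j_s}) \qquad (k=0,\dots,s)
\]
is attained at least twice. I would attack this by induction on $s$, with the base case $s=2$ handled by the weak case. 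For the inductive step, assume uniqueness of the maximum at some index $k_0$, apply the inductive hypothesis to a well-chosen family of shorter Pl\"ucker relations (obtained by condensing two of the $i_\ell$'s or reassigning one of the $j_t$'s to the other side), and combine the resulting ``max attained twice'' equalities to extract a second index $k_1\neq k_0$ at which the maximum must also be attained, contradicting uniqueness. This mirrors the standard tropical-geometric argument by which the $3$-term Pl\"ucker relations imply all higher Pl\"ucker relations in the classical valuated-matroid setting.

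The principal technical obstacle is that, in contrast to the matroid case, the indices $i_0,\dots,i_s,j_2,\dots,j_s$ may repeat in the polymatroid setting, so one must take care that the auxiliary Pl\"ucker relations invoked remain admissible (in particular that the constraints $\delta^-_J\leq\alpha'$ and $\alpha'+\dotsb\leq\delta^+_J$ continue to hold after each index manipulation). A more conceptual route, which is presumably the one actually taken in the paper (compare the assertion in \autoref{thmJ} that $\T_0$ is excellent), is to prove once and for all that the tropical hyperfield $\T_0$ is \emph{excellent}, i.e.,\ that every weak $\T_0$-representation of every polymatroid is strong. Combined with \autoref{prop: M-convex functions are weak T_0-representations}, this would immediately yield the proposition. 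Excellence of $\T_0$ for matroids is classical, and the polymatroid extension could plausibly be obtained by a ``coordinate duplication'' construction that replaces each $i\in[n]$ by $\omega_{J,i}$ copies, relating $\T_0$-representations of $J$ bijectively to symmetry-invariant $\T_0$-representations of an associated matroid on a larger ground set.
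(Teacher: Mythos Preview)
Your ``only if'' direction matches the paper. For the ``if'' direction, however, the paper takes a much more direct route than either of your proposals: it invokes the \emph{global} exchange axiom \eqref{eq: Murata's exchange relations} for $M$-convex functions, not just the local $3$-term version \eqref{eq: Murata's 3-term relations}. Given a Pl\"ucker relation of arbitrary length, one picks an index $k$ at which the product $\rho(\beta)\cdot\rho(\gamma)$ is maximal (with $\beta=\alpha+\epsilon_{i_0}+\dotsb+\widehat{\epsilon_{i_k}}+\dotsb+\epsilon_{i_s}$ and $\gamma=\alpha+\epsilon_{i_k}+\epsilon_{j_2}+\dotsb+\epsilon_{j_s}$). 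If $\beta_{i_k}\geq\gamma_{i_k}$ then $i_k$ already occurs among $i_0,\dots,\widehat{i_k},\dots,i_s$, giving a duplicate term. Otherwise $\beta_{i_k}<\gamma_{i_k}$, and a single application of the global exchange axiom to the pair $(\beta,\gamma)$ produces an $l\neq k$ with $\rho(\beta-\epsilon_{i_k}+\epsilon_{i_l})\cdot\rho(\gamma+\epsilon_{i_k}-\epsilon_{i_l})\geq\rho(\beta)\cdot\rho(\gamma)$, hence equality by maximality. No induction on $s$ is needed, and the issue of repeated indices that you flag is handled in one line by the case split above.

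Your second suggestion has the logic reversed: in the paper, excellence of $\T_0$ is the \emph{corollary} (\autoref{cor: T0 is excellent}) obtained by combining \autoref{prop: M-convex functions are weak T_0-representations} and \autoref{prop: M-convex functions are strong T_0-representations}, not a premise used to prove the latter. Your induction-on-$s$ strategy may well be salvageable, but as written it is only a sketch, and the paper's one-step use of the full exchange axiom makes it unnecessary.
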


\begin{proof}
 Assume that $\rho$ is a strong $\T_0$-representation of $J$. Consider $\beta,\gamma\in J$ with $\beta_k<\gamma_k$ for some $k\in[n]$. We need to show that there exists an $l\in[n]$ with $\beta_l>\gamma_l$ and $f(\beta)+f(\gamma)\geq f(\beta+\epsilon_k-\epsilon_l)+f(\gamma-\epsilon_k+\epsilon_l)$.
 
 Let $\alpha=\inf\{\beta,\gamma\}\geq\delta_J^-$. We have $\alpha\in\Delta^{r-s}_n$ for some $1 \leq s \leq r$. If $s=1$, then $\gamma=\beta+\epsilon_k-\epsilon_l$ for some $l\in[n]$ and the exchange property we are trying to show is trivially satisfied. Thus we can assume that $s\in\{2,\ldots,r\}$. There are $i_1,\dotsc,i_s,j_1,\dotsc,j_s\in[n]$ (unique up to permutation) such that 
 \[
  \beta = \alpha+\epsilon_{i_1}+\dotsb+\epsilon_{i_s}, \ \ \gamma = \alpha+\epsilon_{j_1}+\dotsb+\epsilon_{j_s} \ \ \text{and} \ \ \{i_1,\dotsc,i_s\}\cap\{j_1,\dotsc,j_s\} = \emptyset,
 \]
 and therefore $\alpha+\epsilon_{i_1}+\dotsb+\epsilon_{i_s}+\epsilon_{j_1}+\dotsb+\epsilon_{j_s}  \leq  \sup\,\{\beta,\gamma\}  \leq  \delta^+_J$. Assume without loss of generality that $k=j_1$ and define $i_0=k$. Then the Pl\"ucker relation
 \[
 \sum_{t=0}^s \ \rho(\alpha+\epsilon_{i_0}+\dotsb\widehat{\epsilon_{i_t}}\dotsb+\epsilon_{i_s}) \ \cdot \ \rho(\alpha+\epsilon_{i_t}+\epsilon_{j_2}+\dotsb+\epsilon_{j_s}) \ \in \ N_\T
 \]
 assumes its maximum twice. Thus there is an $m>0$ such that
 \begin{multline*}
  \rho(\beta)\rho(\gamma) \ = \ \rho_J(\alpha+\epsilon_{i_1}+\dotsb+\epsilon_{i_s})\rho(\alpha+\epsilon_{i_0}+\epsilon_{j_2}+\dotsb+\epsilon_{j_s}) \\
  \leq \ \rho(\alpha+\epsilon_{i_0}+\dotsb\widehat{\epsilon_{i_m}}\dotsb+\epsilon_{i_s}) \rho(\alpha+\epsilon_{i_m}+\epsilon_{j_2}+\dotsb+\epsilon_{j_s}) \ = \ \rho(\beta+\epsilon_k-\epsilon_l) \rho(\gamma-\epsilon_k+\epsilon_l).
 \end{multline*}
 for $l=i_m$. Since $i_m\notin\{j_1,\dotsc,j_s\}$, we have $\gamma_l<\beta_l$. Applying $-\log$ to both sides turns the products into sums and reverses the inequality, which establishes the desired exchange axiom and shows that $f=-\log(\rho)$ is M-convex.
 
 Conversely, assume that $f$ is M-convex. We show that the exchange axiom \eqref{eq: Murota's exchange relations} for M-convex functions implies the Pl\"ucker relations \ref{SR3} for all $\alpha\in\Delta^{\rbar-s}_n$ and $i_0,\dotsc,i_s,j_2,\dotsc,j_s\in[n]$, i.e., the maximum appears at least twice in the formal sum
 \[
  \sum_{k=0}^s \ \rho(\alpha+\epsilon_{i_0}+\dotsc\widehat{\epsilon_{i_k}}\dotsc+\epsilon_{i_s}) \cdot \rho(\alpha+\epsilon_{i_k}+\epsilon_{j_2}\dotsc+\epsilon_{j_s}).
 \]
 Fix a $k$ such that $\rho(\beta)\cdot\rho(\gamma)$ assumes the maximum among these terms where
 \[
  \beta \ = \ \alpha+\epsilon_{i_0}+\dotsc\widehat{\epsilon_{i_k}}\dotsc+\epsilon_{i_s} \qquad \text{and} \qquad \gamma \ = \ \alpha+\epsilon_{i_k}+\epsilon_{j_2}\dotsc+\epsilon_{j_s}.
 \]
 If $\beta_{i_k}\geq\gamma_{i_k}\geq1$, then there exists an $l\neq k$ with $i_l=i_k$, and therefore also 
 \begin{multline*}
  \rho (\alpha+\epsilon_{i_0}+\dotsc\widehat{\epsilon_{i_l}}\dotsc+\epsilon_{i_s}) \cdot \rho(\alpha+\epsilon_{i_l}+\epsilon_{j_2}\dotsc+\epsilon_{j_s}) \\ = \ \rho (\alpha+\epsilon_{i_0}+\dotsc\widehat{\epsilon_{i_k}}\dotsc+\epsilon_{i_s}) \cdot \rho(\alpha+\epsilon_{i_k}+\epsilon_{j_2}\dotsc+\epsilon_{j_s})
 \end{multline*}
 assumes the maximum. If $\beta_{i_k}<\gamma_{i_k}$, then the exchange axiom \eqref{eq: Murota's exchange relations} implies that there is an $l\neq k$ such that 
 \[
  f(\beta) \ + \ f(\gamma) \ \geq \ f(\beta-\epsilon_{i_k}+\epsilon_{i_l}) \ + \ f(\gamma+\epsilon_{i_k}-\epsilon_{i_l})
 \]
 or, equivalently, 
 \begin{multline*}
  \rho (\alpha+\epsilon_{i_0}+\dotsc\widehat{\epsilon_{i_l}}\dotsc+\epsilon_{i_s}) \cdot \rho(\alpha+\epsilon_{i_l}+\epsilon_{j_2}\dotsc+\epsilon_{j_s}) \\ \geq \ \rho (\alpha+\epsilon_{i_0}+\dotsc\widehat{\epsilon_{i_k}}\dotsc+\epsilon_{i_s}) \cdot \rho(\alpha+\epsilon_{i_k}+\epsilon_{j_2}\dotsc+\epsilon_{j_s}).
 \end{multline*}
 By the maximality of the latter term, this inequality must be an equality, which exhibits also in this case a second maximal term in the Pl\"ucker relation under consideration. This shows that $\rho$ satisfies \ref{SR3}, which concludes the proof.
\end{proof}

Murota shows in \cite[Thm.\ 6.4]{Murota03} that M-convex functions are characterized by the following \emph{local exchange axiom}: for $\alpha\in\Delta^{r-2}_n$ and all $i,j,k,l\in[n]$ such that $\{i,k\}\cap\{j,l\}=\emptyset$,
\begin{multline}\label{eq: Murota's 3-term relations}
 f(\alpha+\epsilon_i+\epsilon_k) + f(\alpha+\epsilon_j+\epsilon_l) \\ \geq \ \min\big\{ f(\alpha+\epsilon_i+\epsilon_j)+ f(\alpha+\epsilon_k+\epsilon_l), \ f(\alpha+\epsilon_i+\epsilon_l)+ f(\alpha+\epsilon_j+\epsilon_k) \big\}.
\end{multline}
We present a new proof of this fact based on our result that $\T_0$ is excellent (\autoref{cor: perfect tracts are excellent}).

\begin{prop}\label{prop: M-convex functions are characterized by the local exchange axiom}
 Let $J$ be an M-convex set and $\rho\colon\Delta^r_n\to\T_0=\R_{\geq0}$ a function with support $J$. If $f=-\log(\rho)$ satisfies the local exchange axiom \eqref{eq: Murota's 3-term relations}, then $f$ is M-convex.
\end{prop}

\begin{proof}
 Assume that $f$ satisfies the local exchange axiom \eqref{eq: Murota's 3-term relations}. We claim that this implies that $\rho\colon\Delta^r_n\to\T_0=\R_{\geq0}$ satisfies the $3$-term Pl\"ucker relations \ref{WR3}
 \begin{multline*}
  \rho(\alpha+\epsilon_i+\epsilon_j)\cdot \rho(\alpha+\epsilon_k+\epsilon_l) \ + \ \rho(\alpha+\epsilon_i+\epsilon_k)\cdot \rho(\alpha+\epsilon_j+\epsilon_l) \\
  + \ \rho(\alpha+\epsilon_i+\epsilon_l)\cdot \rho(\alpha+\epsilon_j+\epsilon_k) \ \in \ N_{\T_0}
 \end{multline*}
 for all $\alpha\in\Delta^{r-2}_n$ and $i,j,k,l\in[n]$. 
 
 Assume that $\rho(\alpha+\epsilon_i+\epsilon_k)\cdot \rho(\alpha+\epsilon_j+\epsilon_l)$ is the largest among the $3$ terms in this Pl\"ucker relation. If $\{i,k\}\cap\{j,l\}\neq\emptyset$, then exchanging the two equal indices does not change the argument of the functions, which means that there is a second term in this Pl\"ucker relation equal to $\rho(\alpha+\epsilon_i+\epsilon_k)\cdot \rho(\alpha+\epsilon_j+\epsilon_l)$, which means that the maximum is achieved twice and therefore the $3$-term sum is in $N_{\T_0}$, as desired.
 
 If $\{i,k\}\cap\{j,l\}=\emptyset$, then the local exchange axiom implies that 
 \begin{multline*}
  \rho(\alpha+\epsilon_i+\epsilon_k)\cdot \rho(\alpha+\epsilon_j+\epsilon_l) \\ \leq \ \max\big\{ \rho(\alpha+\epsilon_i+\epsilon_j)\cdot \rho(\alpha+\epsilon_k+\epsilon_l), \ \rho(\alpha+\epsilon_i+\epsilon_l)\cdot \rho(\alpha+\epsilon_j+\epsilon_k) \big\},
 \end{multline*}
 which once again implies that the maximum is achieved twice, and thus that the $3$-term sum is in $N_{\T_0}$, as desired.
 
 This verifies our claim and shows that $\rho$ is a weak $\T_0$-representation. Since $\T_0$ is perfect (\cite[Cor.\ 3.45]{Baker-Bowler19}) and thus excellent (\autoref{cor: perfect tracts are excellent}), $\rho$ is, in fact, a strong $\T_0$-representation. It follows from \autoref{prop: M-convex functions are strong T_0-representations} that $f$ is M-convex.
\end{proof}

%%%%%%%%%%%%%%%%%%%%%%%%%%%%%%%%%%%%%%%%%%%%%%%%%%%%%%%%%%%%%%%%%%%%%%%%%%%%%%%%%%%%%%%%%%%%%%%%%%%%%%%%%%%%%
\subsection{Hives}
\label{subsection: hives}

As mentioned in the introduction, hives are combinatorial gadgets that were introduced by Knutson and Tao in \cite{Knutson-Tao99}, and they are naturally in bijection with $\T_0$-representations of $\Delta^r_3$. We explain the concept of a hive here and give an illustrative example, mainly following \cite{Buch00}.
 
The $r^{\rm th}$ \emph{hive triangle} is the triangular array depicted in \autoref{fig: Hive Triangle}, consisting of $r+1$ ``hive vertices'' on each side and $r^2$ ``small triangles''.
 \begin{figure}[!ht] 
  \begin{tikzpicture}[line cap=round,line join=round,x=1.0cm,y=1.0cm]
\clip(-0.43,-0.25) rectangle (4.64,3.96);
\draw [line width=1.2pt] (0,0)-- (0.5,0.87);
\draw [line width=1.2pt] (0.5,0.87)-- (1,1.73);
\draw [line width=1.2pt] (1,1.73)-- (1.5,2.6);
\draw [line width=1.2pt] (1.5,2.6)-- (2,3.46);
\draw [line width=1.2pt] (2,3.46)-- (2.5,2.6);
\draw [line width=1.2pt] (2.5,2.6)-- (1.5,2.6);
\draw [line width=1.2pt] (1.5,2.6)-- (2,1.73);
\draw [line width=1.2pt] (2,1.73)-- (2.5,2.6);
\draw [line width=1.2pt] (2.5,2.6)-- (3,1.73);
\draw [line width=1.2pt] (3,1.73)-- (2,1.73);
\draw [line width=1.2pt] (2,1.73)-- (1,1.73);
\draw [line width=1.2pt] (1,1.73)-- (1.5,0.87);
\draw [line width=1.2pt] (1.5,0.87)-- (2,1.73);
\draw [line width=1.2pt] (2,1.73)-- (2.5,0.87);
\draw [line width=1.2pt] (2.5,0.87)-- (3,1.73);
\draw [line width=1.2pt] (3,1.73)-- (3.5,0.87);
\draw [line width=1.2pt] (3.5,0.87)-- (2.5,0.87);
\draw [line width=1.2pt] (2.5,0.87)-- (1.5,0.87);
\draw [line width=1.2pt] (1.5,0.87)-- (0.5,0.87);
\draw [line width=1.2pt] (0.5,0.87)-- (1,0);
\draw [line width=1.2pt] (1,0)-- (1.5,0.87);
\draw [line width=1.2pt] (1.5,0.87)-- (2,0);
\draw [line width=1.2pt] (2,0)-- (2.5,0.87);
\draw [line width=1.2pt] (2.5,0.87)-- (3,0);
\draw [line width=1.2pt] (3,0)-- (3.5,0.87);
\draw [line width=1.2pt] (3.5,0.87)-- (4,0);
\draw [line width=1.2pt] (4,0)-- (3,0);
\draw [line width=1.2pt] (3,0)-- (2,0);
\draw [line width=1.2pt] (2,0)-- (1,0);
\draw [line width=1.2pt] (1,0)-- (0,0);
\fill [color=black] (0,0) circle (2.5pt);
\fill [color=black] (1,0) circle (2.5pt);
\fill [color=black] (0.5,0.87) circle (2.5pt);
\fill [color=black] (2,0) circle (2.5pt);
\fill [color=black] (1.5,0.87) circle (2.5pt);
\fill [color=black] (3,0) circle (2.5pt);
\fill [color=black] (2.5,0.87) circle (2.5pt);
\fill [color=black] (4,0) circle (2.5pt);
\fill [color=black] (3.5,0.87) circle (2.5pt);
\fill [color=black] (1,1.73) circle (2.5pt);
\fill [color=black] (2,1.73) circle (2.5pt);
\fill [color=black] (3,1.73) circle (2.5pt);
\fill [color=black] (1.5,2.6) circle (2.5pt);
\fill [color=black] (2.5,2.6) circle (2.5pt);
\fill [color=black] (2,3.46) circle (2.5pt);
\draw [
    thick,
    decoration={
        brace,
        mirror,
        raise=0.5cm
    },
    decorate
] (4,0)-- (2,3.46);
\draw[color=black] (4.14,2.36) node {$r+1$};
\end{tikzpicture}
  \caption{The $r^{\rm th}$ hive triangle.}
  \label{fig: Hive Triangle}
 \end{figure}
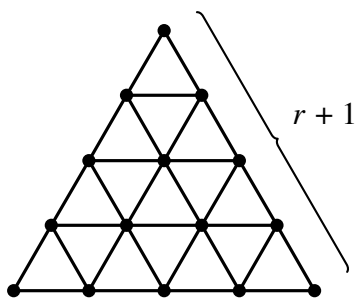

A \emph{rhombus} in the hive triangle is the union of two small triangles which share a common edge. 
There are three combinatorial types or orientations of rhombi, as depicted in \autoref{fig: Rhombi}.
 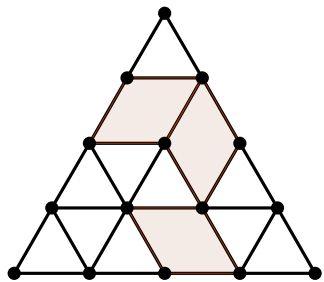
\begin{figure}[!ht] 
  \definecolor{zzttqq}{rgb}{0.6,0.2,0}
\begin{tikzpicture}[line cap=round,line join=round,x=1.0cm,y=1.0cm]
\clip(-0.43,-0.25) rectangle (4.44,3.96);
\fill[color=zzttqq,fill=zzttqq,fill opacity=0.1] (1,1.73) -- (2,1.73) -- (2.5,2.6) -- (1.5,2.6) -- cycle;
\fill[color=zzttqq,fill=zzttqq,fill opacity=0.1] (2,1.73) -- (2.5,0.87) -- (3,1.73) -- (2.5,2.6) -- cycle;
\fill[color=zzttqq,fill=zzttqq,fill opacity=0.1] (2,0) -- (3,0) -- (2.5,0.87) -- (1.5,0.87) -- cycle;
\draw [line width=1.2pt] (0,0)-- (0.5,0.87);
\draw [line width=1.2pt] (0.5,0.87)-- (1,1.73);
\draw [line width=1.2pt] (1,1.73)-- (1.5,2.6);
\draw [line width=1.2pt] (1.5,2.6)-- (2,3.46);
\draw [line width=1.2pt] (2,3.46)-- (2.5,2.6);
\draw [line width=1.2pt] (2.5,2.6)-- (1.5,2.6);
\draw [line width=1.2pt] (2,1.73)-- (2.5,2.6);
\draw [line width=1.2pt] (2.5,2.6)-- (3,1.73);
\draw [line width=1.2pt] (2,1.73)-- (1,1.73);
\draw [line width=1.2pt] (1,1.73)-- (1.5,0.87);
\draw [line width=1.2pt] (1.5,0.87)-- (2,1.73);
\draw [line width=1.2pt] (2,1.73)-- (2.5,0.87);
\draw [line width=1.2pt] (2.5,0.87)-- (3,1.73);
\draw [line width=1.2pt] (3,1.73)-- (3.5,0.87);
\draw [line width=1.2pt] (3.5,0.87)-- (2.5,0.87);
\draw [line width=1.2pt] (2.5,0.87)-- (1.5,0.87);
\draw [line width=1.2pt] (1.5,0.87)-- (0.5,0.87);
\draw [line width=1.2pt] (0.5,0.87)-- (1,0);
\draw [line width=1.2pt] (1,0)-- (1.5,0.87);
\draw [line width=1.2pt] (1.5,0.87)-- (2,0);
\draw [line width=1.2pt] (3,0)-- (2.5,0.87);
\draw [line width=1.2pt] (3,0)-- (3.5,0.87);
\draw [line width=1.2pt] (3.5,0.87)-- (4,0);
\draw [line width=1.2pt] (4,0)-- (3,0);
\draw [line width=1.2pt] (3,0)-- (2,0);
\draw [line width=1.2pt] (2,0)-- (1,0);
\draw [line width=1.2pt] (1,0)-- (0,0);
\draw [color=zzttqq] (1,1.73)-- (2,1.73);
\draw [color=zzttqq] (2,1.73)-- (2.5,2.6);
\draw [color=zzttqq] (2.5,2.6)-- (1.5,2.6);
\draw [color=zzttqq] (1.5,2.6)-- (1,1.73);
\draw [color=zzttqq] (2,1.73)-- (2.5,0.87);
\draw [color=zzttqq] (2.5,0.87)-- (3,1.73);
\draw [color=zzttqq] (3,1.73)-- (2.5,2.6);
\draw [color=zzttqq] (2.5,2.6)-- (2,1.73);
\draw [color=zzttqq] (2,0)-- (3,0);
\draw [color=zzttqq] (3,0)-- (2.5,0.87);
\draw [color=zzttqq] (1.5,0.87)-- (2.5,0.87);
\draw [color=zzttqq] (1.5,0.87)-- (2,0);
\fill [color=black] (0,0) circle (2.5pt);
\fill [color=black] (1,0) circle (2.5pt);
\fill [color=black] (0.5,0.87) circle (2.5pt);
\fill [color=black] (2,0) circle (2.5pt);
\fill [color=black] (1.5,0.87) circle (2.5pt);
\fill [color=black] (3,0) circle (2.5pt);
\fill [color=black] (2.5,0.87) circle (2.5pt);
\fill [color=black] (4,0) circle (2.5pt);
\fill [color=black] (3.5,0.87) circle (2.5pt);
\fill [color=black] (1,1.73) circle (2.5pt);
\fill [color=black] (2,1.73) circle (2.5pt);
\fill [color=black] (3,1.73) circle (2.5pt);
\fill [color=black] (1.5,2.6) circle (2.5pt);
\fill [color=black] (2.5,2.6) circle (2.5pt);
\fill [color=black] (2,3.46) circle (2.5pt);
\end{tikzpicture}
  \caption{The three combinatorial types of rhombi.}
  \label{fig: Rhombi}
 \end{figure}
 
Each rhombus has two acute angles and two obtuse angles.
Let $H$ be the set of hive vertices and $\R^H$ the set of labelings of $H$ by real numbers.
Each rhombus gives rise to an inequality on $\R^H$ saying that the sum of the labels
at the obtuse vertices must be greater than or equal to the sum of the labels at the
acute vertices. A \emph{hive} is a labeling in $\R^H$ that satisfies all rhombus inequalities. 
Of particular interest, in terms of the connection to the representation theory of ${\rm GL}_r$, are the \emph{integral hives}, which are hives for which all labels are integers. 

\begin{ex} \label{ex:hive}
The rhombus inequalities say that the labeling given in \autoref{fig: Hive Example} is a hive if and only if $4 \leq x \leq 5$.
 \begin{figure}[!ht]
  \begin{tikzpicture}[line cap=round,line join=round,x=1.0cm,y=1.0cm]
\clip(-0.8,-0.5) rectangle (3.7,3.2);
\draw [line width=1.2pt] (0,0)-- (0.5,0.87);
\draw [line width=1.2pt] (0.5,0.87)-- (1,1.73);
\draw [line width=1.2pt] (1,1.73)-- (1.5,2.6);
\draw [line width=1.2pt] (1.5,2.6)-- (2,1.73);
\draw [line width=1.2pt] (2,1.73)-- (1,1.73);
\draw [line width=1.2pt] (1,1.73)-- (1.5,0.87);
\draw [line width=1.2pt] (1.5,0.87)-- (2,1.73);
\draw [line width=1.2pt] (2,1.73)-- (2.5,0.87);
\draw [line width=1.2pt] (2.5,0.87)-- (1.5,0.87);
\draw [line width=1.2pt] (1.5,0.87)-- (0.5,0.87);
\draw [line width=1.2pt] (0.5,0.87)-- (1,0);
\draw [line width=1.2pt] (1,0)-- (1.5,0.87);
\draw [line width=1.2pt] (1.5,0.87)-- (2,0);
\draw [line width=1.2pt] (2,0)-- (2.5,0.87);
\draw [line width=1.2pt] (2.5,0.87)-- (3,0);
\draw [line width=1.2pt] (3,0)-- (2,0);
\draw [line width=1.2pt] (2,0)-- (1,0);
\draw [line width=1.2pt] (1,0)-- (0,0);
\fill [color=black] (0,0) circle (2.5pt);
\draw[color=black] (-0.32,0.27) node {6};
\fill [color=black] (1,0) circle (2.5pt);
\draw[color=black] (1,-0.35) node {6};
\fill [color=black] (0.5,0.87) circle (2.5pt);
\draw[color=black] (0.18,1.15) node {5};
\fill [color=black] (2,0) circle (2.5pt);
\draw[color=black] (2,-0.35) node {5};
\fill [color=black] (1.5,0.87) circle (2.5pt);
\draw[color=black] (1.85,1.1) node {$x$};
\fill [color=black] (3,0) circle (2.5pt);
\draw[color=black] (3.32,0.27) node {3};
\fill [color=black] (2.5,0.87) circle (2.5pt);
\draw[color=black] (2.82,1.15) node {3};
\fill [color=black] (1,1.73) circle (2.5pt);
\draw[color=black] (0.68,2.01) node {3};
\fill [color=black] (2,1.73) circle (2.5pt);
\draw[color=black] (2.32,2.01) node {2};
\fill [color=black] (1.5,2.6) circle (2.5pt);
\draw[color=black] (1.5,3) node {0};
\end{tikzpicture}
  \caption{A labeling of the $3^{\rm rd}$ hive triangle.}
  \label{fig: Hive Example}
 \end{figure}
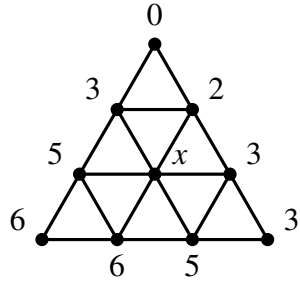
\end{ex}

We coordinatize the hive triangle by letting $(r,0,0)$ denote the lower-left corner, $(0,r,0)$ denote the lower-right corner, and $(0,0,r)$ denote the top corner; see \autoref{fig: Hive Coordinates}. This identifies $H$ with $\Delta^r_3$.

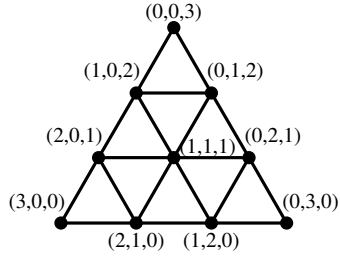
\begin{figure}[!ht]
 \begin{tikzpicture}[line cap=round,line join=round,x=1.0cm,y=1.0cm]
\clip(-0.8,-0.5) rectangle (3.7,3);
\draw [line width=1.2pt] (0,0)-- (0.5,0.87);
\draw [line width=1.2pt] (0.5,0.87)-- (1,1.73);
\draw [line width=1.2pt] (1,1.73)-- (1.5,2.6);
\draw [line width=1.2pt] (1.5,2.6)-- (2,1.73);
\draw [line width=1.2pt] (2,1.73)-- (1,1.73);
\draw [line width=1.2pt] (1,1.73)-- (1.5,0.87);
\draw [line width=1.2pt] (1.5,0.87)-- (2,1.73);
\draw [line width=1.2pt] (2,1.73)-- (2.5,0.87);
\draw [line width=1.2pt] (2.5,0.87)-- (1.5,0.87);
\draw [line width=1.2pt] (1.5,0.87)-- (0.5,0.87);
\draw [line width=1.2pt] (0.5,0.87)-- (1,0);
\draw [line width=1.2pt] (1,0)-- (1.5,0.87);
\draw [line width=1.2pt] (1.5,0.87)-- (2,0);
\draw [line width=1.2pt] (2,0)-- (2.5,0.87);
\draw [line width=1.2pt] (2.5,0.87)-- (3,0);
\draw [line width=1.2pt] (3,0)-- (2,0);
\draw [line width=1.2pt] (2,0)-- (1,0);
\draw [line width=1.2pt] (1,0)-- (0,0);
\begin{tiny}
\fill [color=black] (0,0) circle (2.5pt);
\draw[color=black] (-0.32,0.27) node {(3,0,0)};
\fill [color=black] (1,0) circle (2.5pt);
\draw[color=black] (1,-0.25) node {(2,1,0)};
\fill [color=black] (0.5,0.87) circle (2.5pt);
\draw[color=black] (0.18,1.15) node {(2,0,1)};
\fill [color=black] (2,0) circle (2.5pt);
\draw[color=black] (2,-0.25) node {(1,2,0)};
\fill [color=black] (1.5,0.87) circle (2.5pt);
\draw[color=black] (1.95,1) node {(1,1,1)};
\fill [color=black] (3,0) circle (2.5pt);
\draw[color=black] (3.32,0.27) node {(0,3,0)};
\fill [color=black] (2.5,0.87) circle (2.5pt);
\draw[color=black] (2.82,1.15) node {(0,2,1)};
\fill [color=black] (1,1.73) circle (2.5pt);
\draw[color=black] (0.68,2.01) node {(1,0,2)};
\fill [color=black] (2,1.73) circle (2.5pt);
\draw[color=black] (2.32,2.01) node {(0,1,2)};
\fill [color=black] (1.5,2.6) circle (2.5pt);
\draw[color=black] (1.5,2.8) node {(0,0,3)};
\end{tiny}
\end{tikzpicture}
 \caption{Coordinates on the $3^{\rm rd}$ hive triangle.}
 \label{fig: Hive Coordinates}
\end{figure}

With this coordinatization, the following observation of Br\"and\'en \cite[Section 4]{Branden10} becomes a direct translation of 
\autoref{prop: M-convex functions are strong T_0-representations} into the language of hives. Recall from \autoref{subsection: more examples of tracts} the definition of the discrete tropical hyperfield as the tract $\T^\Z_0=\{0\}\cup\{e^i\mid i\in\Z\}$ with null set 
\[
 N_{\T^\Z_0} \ = \ \big\{ a_1+\dotsb+a_n \, \big| \, a_1,\dotsc,a_n\text{ assumes its maximum at least twice}\big\}.
\]

\begin{prop} \label{prop:hive}
 A function $\rho : H \to \R_{>0}$ is a $\T_0$-representation of $\Delta^r_3$ if and only if $\log \rho : H \to \R$ is a hive. A function $\rho : H \to e^\Z$ is a $\T^\Z_0$-representation of $\Delta^r_3$ if and only if $\log \rho : H \to \R$ is an integral hive.
\end{prop}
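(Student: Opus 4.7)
The plan is to reduce the statement to \autoref{prop: M-convex functions are strong T_0-representations} and then verify that, for the specific polymatroid $J = \Delta^r_3$, Murota's local exchange axiom \eqref{eq: Murata's 3-term relations} is equivalent to the rhombus inequalities defining a hive.

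First, by \autoref{prop: M-convex functions are strong T_0-representations}, a function $\rho\colon \Delta^r_3 \to \R_{>0}$ is a $\T_0$-representation if and only if $f := -\log\rho$ is an M-convex function. Since $\Delta^r_3$ is itself an M-convex set, I would then invoke Murota's local characterization \eqref{eq: Murata's 3-term relations} to replace M-convexity by the requirement that $f$ satisfies, for every $\alpha \in \Delta^{r-2}_3$ and every $i,j,k,l \in [3]$ with $\{i,k\} \cap \{j,l\} = \emptyset$, the inequality
\begin{equation*}
 f(\alpha+\epsilon_i+\epsilon_k) + f(\alpha+\epsilon_j+\epsilon_l) \geq \min\bigl\{f(\alpha+\epsilon_i+\epsilon_j)+f(\alpha+\epsilon_k+\epsilon_l),\ f(\alpha+\epsilon_i+\epsilon_l)+f(\alpha+\epsilon_j+\epsilon_k)\bigr\}.
\end{equation*}
The remaining task is to match these, after negating to return to $\log\rho$, with the rhombus inequalities in the hive triangle under the coordinatization of \autoref{fig: Hive Coordinates}.

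Next I would enumerate the relevant index quadruples. Since two-element subsets of $[3]$ cannot be disjoint, at least one of $\{i,k\}$, $\{j,l\}$ is a singleton. The \emph{non-degenerate} case $\{i,k\} = \{b,c\}$, $\{j,l\} = \{a\}$ with $\{a,b,c\} = \{1,2,3\}$ collapses the $\min$ (both alternatives coincide) into
\begin{equation*}
 f(\alpha + \epsilon_b + \epsilon_c) + f(\alpha + 2\epsilon_a) \ \geq \ f(\alpha + \epsilon_a + \epsilon_b) + f(\alpha + \epsilon_a + \epsilon_c).
\end{equation*}
Setting $v = \alpha + \epsilon_b + \epsilon_c$, one checks that the four points $v$, $v + \epsilon_a - \epsilon_b$, $v + \epsilon_a - \epsilon_c$, $v + 2\epsilon_a - \epsilon_b - \epsilon_c$ are the vertices of a small rhombus in the hive triangle, with acute vertices $v = \alpha + \epsilon_b + \epsilon_c$ and $v + 2\epsilon_a - \epsilon_b - \epsilon_c = \alpha + 2\epsilon_a$ and obtuse vertices the remaining two; thus the displayed inequality, rewritten in terms of $-f = \log\rho$, is exactly the rhombus inequality that the sum of $\log\rho$ at the two obtuse vertices is at least the sum at the two acute vertices. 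As $a \in \{1,2,3\}$ and $\alpha \in \Delta^{r-2}_3$ vary, this produces every rhombus of the three types shown in \autoref{fig: Rhombi} exactly once.

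The main obstacle is the remaining \emph{degenerate} case $i = k = a$, $j = l = b$ with $a \neq b$, which yields the collinear-convexity relation $f(\alpha + 2\epsilon_a) + f(\alpha + 2\epsilon_b) \geq 2 f(\alpha + \epsilon_a + \epsilon_b)$, an inequality not corresponding to a single rhombus. Letting $c$ be the third index in $[3]$ and adding the two non-degenerate instances with common indices $a$ and $b$ respectively,
\begin{align*}
 f(\alpha + \epsilon_b + \epsilon_c) + f(\alpha + 2\epsilon_a) \ &\geq \ f(\alpha+\epsilon_a+\epsilon_b) + f(\alpha+\epsilon_a+\epsilon_c),\\
 f(\alpha + \epsilon_a + \epsilon_c) + f(\alpha + 2\epsilon_b) \ &\geq \ f(\alpha+\epsilon_a+\epsilon_b) + f(\alpha+\epsilon_b+\epsilon_c),
\end{align*}
and cancelling $f(\alpha+\epsilon_a+\epsilon_c) + f(\alpha+\epsilon_b+\epsilon_c)$ from both sides of the sum yields the degenerate relation. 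Hence the rhombus inequalities suffice to characterize M-convexity on $\Delta^r_3$, proving the first assertion. The second assertion is then immediate: a $\T_0$-representation $\rho$ takes values in the subtract $\T^\Z_0 = \{0\} \cup e^\Z$ if and only if $\log\rho$ is integer-valued, so $\rho$ is a $\T^\Z_0$-representation precisely when $\log\rho$ is an integral hive.
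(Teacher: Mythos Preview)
Your proof is correct and follows the approach the paper indicates: the paper states that \autoref{prop:hive} is ``a direct translation of \autoref{prop: M-convex functions are strong T_0-representations} into the language of hives'' (citing Br\"and\'en) and gives no further argument, so your reduction via \autoref{prop: M-convex functions are strong T_0-representations} and Murota's local exchange axiom \eqref{eq: Murata's 3-term relations}, followed by the explicit matching with the rhombus inequalities, is exactly what is needed to flesh this out. One small remark: in your case enumeration you only display the non-degenerate case $\{i,k\}=\{b,c\}$, $\{j,l\}=\{a\}$ and omit the symmetric possibility $\{i,k\}=\{a\}$, $\{j,l\}=\{b,c\}$; since the local exchange inequality is symmetric in the pairs $(i,k)$ and $(j,l)$ this yields the same rhombus inequality, so nothing is lost, but it would be cleaner to note this explicitly.
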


This allows us to translate the key results of \cite{Buch00} into the language of polymatroid representations.

Given an integer partition $\lambda$ with at most $r$ parts, we let $\lambda_1,\ldots,\lambda_k$ denote the parts in weakly decreasing order, i.e., $\lambda_1,\ldots,\lambda_k$ are integers with $k \leq r$ and $\lambda_1 \geq \lambda_2 \geq \cdots \geq \lambda_k \geq 1$, and we set $\lambda_{k+1}=\cdots = \lambda_r = 0$.
We denote by $|\lambda| := \lambda_1 + \cdots + \lambda_k$ the integer being partitioned by $\lambda$, 
and we let $V_\lambda$ denote the unique irreducible representation of ${\rm GL}_r$ with highest weight $\lambda$. 

Given three such partitions $\lambda,\mu, \nu$ with $|\nu| = |\lambda| + |\mu|$, we denote by $c^\nu_{\lambda \mu}$ the corresponding Littlewood--Richardson coefficient, i.e., the multiplicity of the representation $V_\nu$ in $V_{\lambda} \otimes V_{\mu}$.

The following result is, in a certain precise sense, equivalent to the celebrated Littlewood--Richardson rule, cf.~\cite[Appendix A]{Buch00}.

\begin{thm}[Knutson--Tao] \label{thm:Knutson-Tao}
Let $\lambda,\mu, \nu$ be integer partitions with at most $r$ parts such that $|\nu| = |\lambda| + |\mu|$. Then the Littlewood--Richardson coefficient $c^\nu_{\lambda \mu}$ is equal to the number of representations $\rho:\Delta^r_3\to\T^\Z_0$ with logarithmic values $(\lambda,\mu,\nu)$ on the ``border'' of $\Delta^r_3$ as in \autoref{fig: Littlewood-Richardson}.
 \begin{figure}[!ht]
  \begin{tikzpicture}[line cap=round,line join=round,x=1.0cm,y=1.0cm]
\clip(-2.4,-2.5) rectangle (5,4);
\draw [line width=1.2pt] (0,0)-- (0.5,0.87);
\draw [line width=1.2pt] (0.5,0.87)-- (1,1.73);
\draw [line width=1.2pt] (1,1.73)-- (1.5,2.6);
\draw [line width=1.2pt] (1.5,2.6)-- (2,3.46);
\draw [line width=1.2pt] (2,3.46)-- (2.5,2.6);
\draw [line width=1.2pt] (2.5,2.6)-- (1.5,2.6);
\draw [line width=1.2pt] (1.5,2.6)-- (2,1.73);
\draw [line width=1.2pt] (2,1.73)-- (2.5,2.6);
\draw [line width=1.2pt] (2.5,2.6)-- (3,1.73);
\draw [line width=1.2pt] (3,1.73)-- (2,1.73);
\draw [line width=1.2pt] (2,1.73)-- (1,1.73);
\draw [line width=1.2pt] (1,1.73)-- (1.5,0.87);
\draw [line width=1.2pt] (1.5,0.87)-- (2,1.73);
\draw [line width=1.2pt] (2,1.73)-- (2.5,0.87);
\draw [line width=1.2pt] (2.5,0.87)-- (3,1.73);
\draw [line width=1.2pt] (3,1.73)-- (3.5,0.87);
\draw [line width=1.2pt] (3.5,0.87)-- (2.5,0.87);
\draw [line width=1.2pt] (2.5,0.87)-- (1.5,0.87);
\draw [line width=1.2pt] (1.5,0.87)-- (0.5,0.87);
\draw [line width=1.2pt] (0.5,0.87)-- (1,0);
\draw [line width=1.2pt] (1,0)-- (1.5,0.87);
\draw [line width=1.2pt] (1.5,0.87)-- (2,0);
\draw [line width=1.2pt] (2,0)-- (2.5,0.87);
\draw [line width=1.2pt] (2.5,0.87)-- (3,0);
\draw [line width=1.2pt] (3,0)-- (3.5,0.87);
\draw [line width=1.2pt] (3.5,0.87)-- (4,0);
\draw [line width=1.2pt] (4,0)-- (3,0);
\draw [line width=1.2pt] (3,0)-- (2,0);
\draw [line width=1.2pt] (2,0)-- (1,0);
\draw [line width=1.2pt] (1,0)-- (0,0);
\fill [color=black] (0,0) circle (2.5pt);
\draw[color=black] (-1.2,0.24) node {$|\nu|=|\lambda|+|\mu|$};
\fill [color=black] (1,0) circle (2.5pt);
\draw[color=black] (0.94,-0.47) node {. . .};
\fill [color=black] (0.5,0.87) circle (2.5pt);
%\draw[color=black] (-0.14,1.18) node {. . .};
\node [rotate=60] at (0,1) {. . .};
\node [rotate=300] at (4,1) {. . .};
\fill [color=black] (2,0) circle (2.5pt);
%\draw[color=black] (2.48,-0.44) node {$|\lambda|+\mu_1+\mu_2$};
\node [rotate=300] at (2.48,-1.2) {$|\lambda|+\mu_1+\mu_2$};
\node [rotate=300] at (3.4,-0.8) {$|\lambda|+\mu_1$};
\fill [color=black] (1.5,0.87) circle (2.5pt);
\fill [color=black] (3,0) circle (2.5pt);
%\draw[color=black] (3.71,-0.5) node {$|\lambda|+\mu_1$};
\fill [color=black] (2.5,0.87) circle (2.5pt);
\fill [color=black] (4,0) circle (2.5pt);
\draw[color=black] (4.45,0.24) node {$|\lambda|$};
\fill [color=black] (3.5,0.87) circle (2.5pt);
\fill [color=black] (1,1.73) circle (2.5pt);
\draw[color=black] (0.4,2) node {$\nu_1+\nu_2$};
\fill [color=black] (2,1.73) circle (2.5pt);
\fill [color=black] (3,1.73) circle (2.5pt);
\draw[color=black] (3.6,2) node {$\lambda_1+\lambda_2$};
\fill [color=black] (1.5,2.6) circle (2.5pt);
\draw[color=black] (1,2.75) node {$\nu_1$};
\fill [color=black] (2.5,2.6) circle (2.5pt);
\draw[color=black] (3,2.75) node {$\lambda_1$};
\fill [color=black] (2,3.46) circle (2.5pt);
\draw[color=black] (2,3.85) node {$0$};
\end{tikzpicture}
  \caption{Border labels corresponding to a triple of integer partitions.}
  \label{fig: Littlewood-Richardson}
 \end{figure}
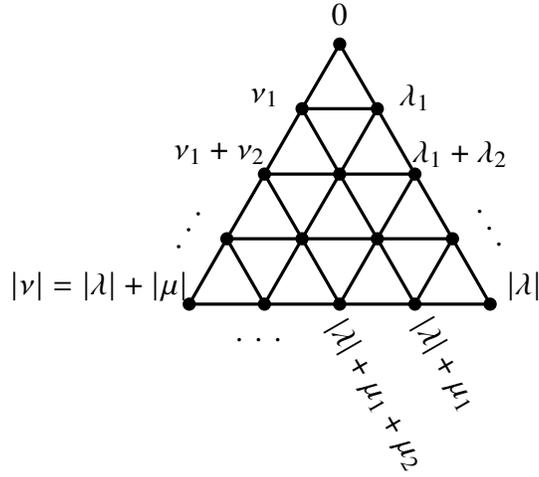
\end{thm}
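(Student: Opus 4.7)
The plan is to reduce the statement to the original Knutson--Tao theorem \cite{Knutson-Tao99} via the dictionary established in \autoref{prop:hive}. That proposition gives a bijection
\[
 \{\,\T_0^\Z\text{-representations }\rho:\Delta^r_3\to\T_0^\Z\,\} \;\longleftrightarrow\; \{\,\text{integral hives on the $r$-th hive triangle}\,\}
\]
sending $\rho\mapsto\log\rho$, using the coordinatization $H\simeq\Delta^r_3$ from \autoref{fig: Hive Coordinates}. In particular, this bijection identifies the logarithmic values of $\rho$ at the boundary points of $\Delta^r_3$ with the boundary labels of the corresponding integral hive.

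The next step is to check that the border labels prescribed in \autoref{fig: Littlewood-Richardson}, namely $0,\nu_1,\nu_1+\nu_2,\dotsc,|\nu|$ along the left side and the analogous partial sums of $\lambda$ along the right side and of $\mu$ along the bottom, are exactly the standard border labels used by Knutson and Tao to encode a triple of partitions $(\lambda,\mu,\nu)$ in the hive model. This is a direct inspection and uses the assumption $|\nu|=|\lambda|+|\mu|$ to ensure consistency at the three corners of the triangle.

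Once this identification is in place, one invokes the main theorem of \cite{Knutson-Tao99}, which states that the Littlewood--Richardson coefficient $c^\nu_{\lambda\mu}$ equals the number of integral hives with the above boundary labels. Composing with the bijection of \autoref{prop:hive} yields the count in terms of $\T_0^\Z$-representations, as claimed.

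There is no real mathematical obstacle here beyond matching conventions: the statement is a translation of the Knutson--Tao theorem into the language of polymatroid representations developed in this paper. The only point requiring care is bookkeeping the correspondence between the labels $\lambda$, $\mu$, $\nu$ on the three sides of the hive triangle and the values of $\log\rho$ at the vertices $(0,a,b)$, $(a,0,b)$, $(a,b,0)$ of $\Delta^r_3$, together with the sign convention that the bijection uses $\log\rho$ rather than $-\log\rho$ (so that the concavity inherent in the rhombus inequalities matches the direction of the $3$-term tropical Pl\"ucker relation).
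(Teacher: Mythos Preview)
Your proposal is correct and matches the paper's treatment: the theorem is stated as a result of Knutson--Tao with no proof given in the paper beyond the citation to \cite{Knutson-Tao99} and \cite[Appendix A]{Buch00}, and your argument spells out precisely the translation via \autoref{prop:hive} that the paper leaves implicit. There is nothing to add.
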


For example, if $\nu = (3,2,1)$, $\lambda = \mu = (2,1)$, then by \autoref{ex:hive} there are two integral hives with the corresponding border labels (corresponding to $x=4$ and $x=5$). Thus $c^\nu_{\lambda \mu} = 2$.

The saturation theorem proved by Knutson and Tao is equivalent to the following statement:

\begin{thm}[Knutson--Tao]
 If there exists a representation $\rho:\Delta^r_3\to\T_0$ with given border labels in $\T^\Z_0$, then there exists a representation $\rho:\Delta^r_3\to\T_0^\Z$ with these border labels.
\end{thm}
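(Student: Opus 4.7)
The plan is to reduce the statement to the Knutson--Tao saturation theorem for Littlewood--Richardson coefficients via \autoref{prop:hive}.

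First, I would use \autoref{prop:hive} to translate the statement into the language of hives: a $\T_0$-representation of $\Delta^r_3$ corresponds (under $\log$) to a hive on the $r^{\rm th}$ hive triangle, and a $\T_0^\Z$-representation corresponds to an integral hive. The hypothesis that the border labels lie in $\T_0^\Z$ means that the logarithmic boundary values of the corresponding real hive are integers, and the condition that these come from an actual $\T_0$-representation forces them to be governed by three integer partitions $\lambda,\mu,\nu$ with $|\nu|=|\lambda|+|\mu|$ as in \autoref{fig: Littlewood-Richardson}. So the statement becomes: if a real hive with integer border $(\lambda,\mu,\nu)$ exists, then an integral hive with the same border exists.

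Next, I would observe that the set of real hives with a prescribed border is a rational polytope in $\R^H$, since it is cut out by finitely many rhombus inequalities with integer coefficients together with the (integer) boundary equations. A nonempty rational polytope contains a rational point, so for some positive integer $N$ we obtain an \emph{integral} hive with border $(N\lambda,N\mu,N\nu)$ by clearing denominators (equivalently, scaling the hive by $N$ and using linearity of the rhombus inequalities). By \autoref{thm:Knutson-Tao}, this means that the Littlewood--Richardson coefficient $c^{N\nu}_{N\lambda,\,N\mu}$ is positive.

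The final step is to invoke the celebrated saturation theorem of Knutson--Tao \cite{Knutson-Tao99}, which asserts that $c^{N\nu}_{N\lambda,\,N\mu}>0$ for some $N\geq 1$ implies $c^{\nu}_{\lambda,\mu}>0$. Applying \autoref{thm:Knutson-Tao} again produces an integral hive with border $(\lambda,\mu,\nu)$, and translating back via \autoref{prop:hive} yields the desired $\T_0^\Z$-representation of $\Delta^r_3$ with the prescribed border labels. The genuine mathematical content---and the only nontrivial step---is the saturation theorem itself; the remaining ingredients are the dictionary of \autoref{prop:hive} and the elementary rational-polytope argument used to pass from a real hive to an integral hive at some integer stretching of the boundary.
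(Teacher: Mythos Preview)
The paper does not give a proof of this theorem: it simply states it as a reformulation of the Knutson--Tao saturation theorem and attributes it to \cite{Knutson-Tao99}. Your proposal correctly supplies the argument behind this reformulation, and the route you take---translate to hives via \autoref{prop:hive}, use the rational-polytope argument to pass from a real hive with integer border to an integral hive at some scaling $N$, and then invoke saturation---is the standard one.

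One point you gloss over deserves a sentence of justification: you assert that the integer border labels of a real hive are ``governed by three integer partitions $\lambda,\mu,\nu$''. This is true, but it is not purely a matter of reading off the border; the weak monotonicity of the successive differences along each edge is a consequence of the rhombus inequalities for rhombi touching the boundary (each of which involves one interior vertex). Moreover, the top corner need not be $0$ and the resulting sequences need not have nonnegative parts, so strictly speaking you obtain dominant $GL_r$-weights rather than partitions; one then reduces to genuine partitions by translating the hive by a constant and shifting all three weights by a common integer (tensoring by a power of the determinant). With that caveat noted, your argument is correct and is exactly the equivalence the paper leaves implicit.
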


As discussed in Fulton's survey \cite{Fulton98}, the work of Klyachko \cite{Klyachko98}, combined with the Knutson--Tao saturation theorem, implies the following result about eigenvalues of sums of Hermitian matrices which was previously known as ``Horn's Conjecture'':

\begin{thm}
 There are $r \times r$ Hermitian matrices $A,B,C$ with $A+B=C$ having respective eigenvalues $\lambda_1 \geq \lambda_2 \geq \cdots \geq \lambda_r$, $\mu_1 \geq \mu_2 \geq \cdots \geq \mu_r$, and $\nu_1 \geq \nu_2 \geq \cdots \geq \nu_r$ if and only if there is a representation $\rho:\Delta^r_3\to\T_0^\Z$ with logarithmic border labels $(\lambda,\mu,\nu)$.
\end{thm}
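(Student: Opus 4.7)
The plan is to assemble three ingredients from the literature and previously established results in the excerpt: Klyachko's theorem relating the Hermitian eigenvalue problem to Littlewood--Richardson coefficients, the Knutson--Tao saturation theorem, and \autoref{thm:Knutson-Tao} together with \autoref{prop:hive}, which identify Littlewood--Richardson coefficients with counts of $\T_0^\Z$-representations of $\Delta^r_3$ having prescribed logarithmic border labels.

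More precisely, I would argue as follows. Klyachko's theorem (\cite{Klyachko98}, as recalled in Fulton's survey \cite{Fulton98}) says that for integer partitions $\lambda,\mu,\nu$ with at most $r$ parts and $|\nu|=|\lambda|+|\mu|$, there exist $r\times r$ Hermitian matrices $A,B,C$ with $A+B=C$ and eigenvalue sequences $\lambda,\mu,\nu$ if and only if $c^{N\nu}_{N\lambda,N\mu}>0$ for some positive integer $N$. The saturation theorem of Knutson--Tao then asserts that $c^{N\nu}_{N\lambda,N\mu}>0$ for some $N\geq 1$ if and only if $c^\nu_{\lambda\mu}>0$. Chaining these equivalences, the existence of the desired Hermitian matrices is equivalent to $c^\nu_{\lambda\mu}>0$.

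By \autoref{thm:Knutson-Tao}, this in turn is equivalent to the existence of at least one $\T_0^\Z$-representation $\rho\colon\Delta^r_3\to\T_0^\Z$ whose logarithmic values on the boundary of the $r^{\rm th}$ hive triangle are precisely the border labels $(\lambda,\mu,\nu)$ depicted in \autoref{fig: Littlewood-Richardson}. This is exactly the statement of the theorem, concluding the proof.

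The main obstacle is not in our assembly but in the two deep inputs themselves (Klyachko and saturation), which we treat as black boxes. On our side, the only content that truly needs to be checked is that the coordinatization of the hive triangle used in \autoref{prop:hive} matches the placement of the partitions $\lambda,\mu,\nu$ around the boundary in \autoref{fig: Littlewood-Richardson}, so that the word ``logarithmic border labels $(\lambda,\mu,\nu)$'' means the same thing in both statements; this is a straightforward bookkeeping exercise once the identification $H\cong\Delta^r_3$ is fixed.
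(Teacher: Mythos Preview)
Your proposal is correct and follows essentially the same route as the paper, which does not give a self-contained proof but simply cites Fulton's survey and states that the result follows by combining Klyachko's theorem with the Knutson--Tao saturation theorem. Your sketch makes this assembly explicit, invoking \autoref{thm:Knutson-Tao} to pass between Littlewood--Richardson coefficients and $\T_0^\Z$-representations with the prescribed border labels.
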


%%%%%%%%%%%%%%%%%%%%%%%%%%%%%%%%%%%%%%%%%%%%%%%%%%%%%%%%%%%%%%%%%%%%%%%%%%%%%%%%%%%%%%%%%%%%%%%%%%%%%%%%%%%%%%%%%%%%%%%%%%%%%%%%%%%%%%%%%%%%%%%%%%%%%%%%%%%
%%%%%%%%%%%%%%%%%%%%%%%%%%%%%%%%%%%%%%%%%%%%%%%%%%%%%%%%%%%%%%%%%%%%%%%%%%%%%%%%%%%%%%%%%%%%%%%%%%%%%%%%%%%%%%%%%%%%%%%%%%%%%%%%%%%%%%%%%%%%%%%%%%%%%%%%%%%
\part{Foundations of polymatroids}
%%%%%%%%%%%%%%%%%%%%%%%%%%%%%%%%%%%%%%%%%%%%%%%%%%%%%%%%%%%%%%%%%%%%%%%%%%%%%%%%%%%%%%%%%%%%%%%%%%%%%%%%%%%%%%%%%%%%%%%%%%%%%%%%%%%%%%%%%%%%%%%%%%%%%%%%%%%
%%%%%%%%%%%%%%%%%%%%%%%%%%%%%%%%%%%%%%%%%%%%%%%%%%%%%%%%%%%%%%%%%%%%%%%%%%%%%%%%%%%%%%%%%%%%%%%%%%%%%%%%%%%%%%%%%%%%%%%%%%%%%%%%%%%%%%%%%%%%%%%%%%%%%%%%%%%
\section{The universal tract and the universal pasture}
\label{subsection: universal tract and universal pasture}

In this section, we extend the notions of ``universal tract'' and ``universal pasture'', as introduced in \cite{Baker-Lorscheid21b}, from matroids to polymatroids.

%%%%%%%%%%%%%%%%%%%%%%%%%%%%%%%%%%%%%%%%%%%%%%%%%%%%%%%
\subsection{Representation spaces and thin Schubert cells}
\label{subsection: representation spaces and thin Schubert cells}

Let $J\subseteq\Delta^r_n$ be an M-convex set and $F$ a tract. The \emph{(strong) representation space of $J$ over $F$} is the set $\upR_J(F)$ of all strong $F$-representations of $J$, and the \emph{weak representation space of $J$ over $F$} is the set $\upR_J^w(F)$ of all weak $F$-representations of $J$. 

The multiplicative group $F^\times$ of $F$ acts diagonally on both the weak and strong representation spaces of $J$ over $F$ (cf.\ \autoref{lemma: torus action on representations} for a generalization). The \emph{(strong) thin Schubert cell of $J$ over $F$} is $\Gr_J(F)=\upR_J(F)/F^\times$. The \emph{weak thin Schubert cell of $J$ over $F$} is $\Gr^w_J(F)=\upR^w_J(F)/F^\times$. 

\begin{rem}
\begin{enumerate}
\item[]
\item If $F$ is a field and $M$ is a matroid, then $\Gr^w_M(F)=\Gr_M(F)$ corresponds to the usual notion of the thin Schubert cell of $M$, which consists of all points $x$ of the Grassmannian $\Gr(r,n)(F)$ over $F$ for which the Pl\"ucker coordinate $x_\alpha$ is nonzero  precisely when $\alpha$ is a basis of $M$.
\item If $F=\T_0$ is the tropical hyperfield and $J$ is an $M$-convex set, we have $\Gr^w_J(\T_0)=\Gr_J(\T_0)$ since $\T_0$ is excellent (\autoref{cor: perfect tracts are excellent}). 
\item If $M$ is a matroid, then the association $\brho\mapsto-\log\brho$ (cf.\ \autoref{subsection: M-convex functions as representations over the tropical hyperfield}) identifies $\Gr_M(\T_0)$ with the \emph{local Dressian} $\Dr_M$ of all tropical linear spaces with underlying matroid $M$.
\end{enumerate}
\end{rem}

%%%%%%%%%%%%%%%%%%%%%%%%%%%%%%%%%%%%%%%%%%%%%%%%%%%%%%
\subsection{Functoriality}
\label{subsection: functoriality}

Let $f\colon F_1\to F_2$ be a tract morphism and $\brho\colon[n]^r\to F_1$ a strong (resp.\ weak) $F_1$-representation of an M-convex set $J\subseteq\Delta^r_n$. The \emph{push-forward of $\brho$ along $f$} is the function
\[
 \begin{array}{cccc}
  f_\ast(\brho)\colon & [n]^r  & \longrightarrow & F_2 \\
                & \balpha & \longmapsto     & f(\brho(\balpha)).
 \end{array} 
\]
Since tract morphisms preserve null sets as well as non-zero elements, $f_\ast(\brho)$ is a strong (resp.\ weak) $F_2$-representation of $J$. Thus $f\colon F_1\to F_2$ defines maps
\[
 f_\ast\colon \ \upR_J(F_1) \ \longrightarrow \ \upR_J(F_2) \quad \text{and} \quad
  f_\ast\colon \ \upR^w_J(F_1) \ \longrightarrow \ \upR^w_J(F_2).
\]
More precisely, taking the strong (resp.\ weak) representation space of $J$ defines a functor $\upR_J\colon\Tracts\to\Sets$ (resp.\ $\upR^w_J\colon\Tracts\to\Sets$). 

Similarly, the strong (resp.\ weak) thin Schubert cell of $J$ is functorial in $F$, i.e., a tract morphism $f\colon F_1\to F_2$ induces maps 
\[
 f_\ast\colon\Gr_J(F_1)\longrightarrow \Gr_J(F_2) \quad \text{and} \quad f_\ast\colon\Gr_J^w(F_1)\longrightarrow \Gr^w_J(F_2),
\]
yielding functors $\Gr_J$ and $\Gr^w_J$ from $\Tracts$ to $\Sets$.

%%%%%%%%%%%%%%%%%%%%%%%%%%%%%%%%%%%%%%%%%%%%%%%%%%%%%%%
\subsection{The universal tract}
\label{subsection: the universal tract}

Let $J\subseteq\Delta^r_n$ be an M-convex set of effective rank $\rbar=r-\norm{\delta^-_J}$ and define $\bJ:=\{\bbeta\in[n]^\rbar\mid \Sigma\bbeta\in \Jbar\}$, where $\Jbar=J-\delta^-_J$ is the reduction of $J$. (Here $\Sigma : [n]^\rbar \to \Delta^{\rbar}_n$ is the map defined in \autoref{eq:SigmaDefinition}.)

Let $\omega_J=\delta^+_J-\delta^-_J$ be the width of $J$. The \emph{extended universal tract of $J$} is the tract $\widehat T_J = \pastgen{\Funpm(x_\bbeta\mid\bbeta\in \bJ)}{S}$, where $S$ consists of the relations
\[
 x_{i_{\sigma(1)},\dotsc,i_{\sigma(\rbar)}} \ = \ \sign(\sigma) \cdot x_{i_1,\dotsc,i_\rbar}
\]
for all $(i_1,\dotsc,i_\rbar)\in\bJ$ and $\sigma\in S_\rbar$ together with the Pl\"ucker relations
\[
 \sum_{k=0}^s \ (-1)^k \cdot x_{\balpha{i_0}\dotsc\widehat{i_k}\dotsc{i_s}} \cdot x_{\balpha{i_k}{j_2}\dotsc{j_s}}
\]
for all $2\leq s\leq \rbar$, $\balpha\in[n]^{\rbar-s}$ and $i_0,\dotsc,i_s,j_2,\dotsc,j_s\in [n]$ with $\sum\balpha i_0\dotsc i_s j_2\dotsc j_s\leq\omega_J$, using the convention that $x_\bbeta=0$ if $\bbeta\notin\bJ$. 

The \emph{universal representation of $J$} is the representation
\[
 \hat\brho\colon \ [n]^\rbar \ \longrightarrow \ \widehat T_J
\]
defined by $\hat\brho(\balpha)=x_\balpha$. It is a strong $\widehat T_J$-representation by the very definition of $\widehat T_J$. 

The extended universal tract $\widehat T_J$ is graded by the multiplicative map
\[
 \deg\colon \ \widehat T_J \ \longrightarrow \ \Z
\]
with $\deg(x_\bbeta)=1$ for $\bbeta\in J$ and $\deg(0)=0$.

The \emph{universal tract of $J$} is the subtract
\[
 T_J \ = \ \big\{ a\in\widehat  T_J \, \big| \, \deg(a)=0 \big\}
\]
of $\widehat T_J$.

By the idempotency principle for proper polymatroids (\autoref{prop: idempotency principle}), the existence of the universal representation implies that both $\widehat T_J$ and $T_J$ are near-idempotent if $J$ is not the translate of a matroid. If $\omega_{J,i}\geq3$ for some $i\in[n]$, then both $\widehat T_J$ and $T_J$ are idempotent.

\begin{prop}\label{prop: universal property of the universal tract}
 Let $J\subseteq\Delta^r_n$ be an M-convex set with extended universal tract $\widehat T_J$ and universal tract $T_J$. Let $F$ be a tract. Then composing the universal representation $\hat\brho\colon[n]^\rbar\to\widehat T_J$ with a tract morphism $f\colon\widehat T_J\to F$ yields a bijection
 \[
  \Phi_{J,F}\colon  \ \Hom(\widehat T_J,\ F) \ \longrightarrow \ \upR_J(F),
 \]
 which descends to a bijection
 \[
  \overline\Phi_{J,F}\colon \ \Hom(T_J, \ F) \ \longrightarrow \ \Gr_J(F).
 \]
 Both bijections are functorial in $F$.
\end{prop}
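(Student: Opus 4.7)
The plan is to construct an explicit inverse to $\Phi_{J,F}$ by invoking the universal properties of free $\Funpm$-algebras (\autoref{subsubsection: free algebras}) and tract quotients (\autoref{subsubsection: quotients of tracts}), generalizing the matroid argument of \cite{Baker-Lorscheid21b}.

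First I would verify that $\Phi_{J,F}$ is well\hyph defined: given a tract morphism $f\colon\widehat T_J\to F$, the composition $f\circ\hat\brho$ satisfies \ref{SR3} because $\hat\brho$ does, by the very definition of $\widehat T_J$, and tract morphisms preserve null sets. Axiom \ref{SR2} is preserved because $f(-1)=-1$, while \ref{SR1} requires the a priori nontrivial observation that $\hat\brho(\balpha)=x_\balpha$ is a unit in $\widehat T_J$ for each $\balpha\in\bJ$. This follows from the existence, established in \autoref{subsubsection: the unique K-representation of a polymatroid}, of the canonical $\K$-representation $\chi_J$ of $J$: the reverse direction below applied to $\chi_J$ produces a tract morphism $\widehat T_J\to\K$ sending $x_\balpha$ to $1\in\K^\times$, so $x_\balpha$ cannot vanish in $\widehat T_J$, and as the image of a unit in the free algebra it remains a unit.

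For the inverse direction, given $\brho\in\upR_J(F)$, the universal property of the free $\Funpm$-algebra yields a unique tract morphism $\Funpm(x_\bbeta)\to F$ sending $x_\bbeta\mapsto\brho(\bbeta)$ (with the convention $x_\bbeta=0$ if $\bbeta\notin\bJ$, which is consistent by \ref{SR1}). Axiom \ref{SR3} together with \ref{SR2} ensures that every generator of the defining ideal $S$ of $\widehat T_J$ is sent to an element of $N_F$, so the universal property of the quotient yields a unique factorization $f_\brho\colon\widehat T_J\to F$. A direct verification gives $\Phi_{J,F}(f_\brho)=\brho$ and $f_{\Phi_{J,F}(f)}=f$, proving that $\Phi_{J,F}$ is a bijection. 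Functoriality in $F$ is then immediate from $g\circ(f\circ\hat\brho)=(g\circ f)\circ\hat\brho$ for any tract morphism $g\colon F\to F'$.

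For the descent to $T_J$, I would verify that the diagonal $F^\times$-action on $\upR_J(F)$ transports under $\Phi_{J,F}$ to the action on $\Hom(\widehat T_J,F)$ given by $(\lambda\cdot f)(a)=\lambda^{\deg(a)}f(a)$ on a homogeneous element $a\in\widehat T_J$; this is well\hyph defined since the Pl\"ucker relations generating $N_{\widehat T_J}$ are homogeneous of degree $2$. The subtract $T_J\subseteq\widehat T_J$ is precisely the degree-$0$ fixed part of this action, so restriction along $T_J\hookrightarrow\widehat T_J$ induces an $F^\times$-invariant map $\Hom(\widehat T_J,F)\to\Hom(T_J,F)$, which descends to the candidate $\overline\Phi_{J,F}$. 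To show bijectivity, I would fix any $\balpha_0\in\bJ$ and observe that every morphism $g\colon T_J\to F$ lifts to some morphism $\widehat T_J\to F$ by choosing any $\mu\in F^\times$ as the value of $f(x_{\balpha_0})$ (the Pl\"ucker relations factor through $T_J$ after dividing by $x_{\balpha_0}^2$), and that any two such lifts differ by the $F^\times$-action. The main technical obstacle is the careful book-keeping with the grading, together with the initial verification that the generators $x_\balpha$ survive as units in the quotient $\widehat T_J$; everything else reduces to routine applications of universal properties.
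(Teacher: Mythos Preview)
Your proposal is correct and follows essentially the same approach as the paper: both construct the inverse to $\Phi_{J,F}$ via the universal properties of free algebras and quotients, and both handle the descent to $T_J$ by analyzing the $F^\times$-action through the degree grading. You are somewhat more careful than the paper in explicitly verifying that the generators $x_\balpha$ survive as units in $\widehat T_J$ (via the existence of the $\K$-representation), a point the paper leaves implicit; your descent paragraph constructs what is really the inverse of $\overline\Phi_{J,F}$ rather than $\overline\Phi_{J,F}$ itself, but this is a harmless relabeling.
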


\begin{proof}
 This is proven exactly as for usual matroids, cf.\ \cite[Thm.\ 6.15 and Prop.\ 6.23]{Baker-Lorscheid20}. For completeness, we sketch the argument.

 The inverse bijection $\Psi_{J,P}\colon\upR_J(P)\to\Hom(\widehat T_J, P)$ to $\Phi_{J,P}$ is given as follows: an $F$-representation $\bsigma\colon[n]^\rbar\to F$ of $J$ is mapped to the tract morphism $f:\widehat T_J\to F$ determined by $f(x_\balpha)=\bsigma(\balpha)$. Since $\bsigma$ satisfies the Pl\"ucker relations as a representation of the M-convex set $J$, it follows from the universal properties of free algebras and quotients (cf.\ \autoref{subsubsection: free algebras} and \autoref{subsubsection: quotients of tracts}) that the assignment $x_\balpha\mapsto\bsigma(\balpha)$ defines a tract morphism $f\colon\widehat T_J\to F$. By construction, $\Phi_{P,J}(f)=\bsigma$. Since $\widehat T_J$ is generated by the $x_\balpha$ over $\Funpm$, $f$ is uniquely determined by the images of the $x_\balpha$, which completes the proof that $\Psi_{J,P}$ is the inverse bijection of $\Phi_{J,P}$.

 The bijection $\Phi_{J,P}$ descends to a bijection $\overline\Phi_{J,P}\colon \Hom(T_J, \ P) \to\Gr_J(P)$ for the following reason: two morphisms $f_i\colon\widehat T_J\to F$ (for $i=1,2$) have the same restriction to $T_J=\{c\in\widehat T_J\mid\deg c=0\}$ if and only if there exists $a\in P^\times$ such that $f_2(x_\balpha)=a f_1(x_\balpha)$ for all $\balpha\in[n]^\rbar$. This is the case if and only if $\bsigma_2=a\bsigma_1$ for the $F$-representations $\bsigma_i=f_i\circ\hat\brho$ (for $i=1,2$), which means, by definition, that $[\bsigma_1]=[\bsigma_2]$ in $\Gr_J(P)$.

 The functoriality of $\Phi_{J,P}$ in $F$ follows from the fact that both $\Hom(\widehat T_J,-)$ and $\upR_J(-)$ act on morphisms in terms of compositions of maps. Since a tract morphism $P\to Q$ restricts to a group homomorphism $P^\times\to Q^\times$, and since $\Hom(\widehat T_J,P)$ and $\upR_J(P)$ are sets of $P^\times$-orbits, $\overline\Phi_{J,P}$ is also functorial.
\end{proof}

%%%%%%%%%%%%%%%%%%%%%%%%%%%%%%%%%%%%%%%%%%%%%%%%%%%%%%%
\subsection{The universal pasture}
\label{subsection: the universal pasture}

Roughly speaking, the universal pasture of a polymatroid is the $3$-term truncation of the universal tract, which only captures the $3$-term Pl\"ucker relations. For simplicity, we refrain from introducing pastures in this text, and instead define the universal pasture as a tract. For more details on pastures, including their precise relationship to tracts, see \cite[Section 6.4]{Baker-Lorscheid21b} and \cite{Baker-Lorscheid20}.

Let $J\subseteq\Delta^r_n$ be an M-convex set of effective rank $\rbar$, and let $\bJ$ be defined as above. The \emph{extended universal pasture of $J$} is the tract $\widehat P_J = \pastgen{\Funpm(x_\bbeta\mid\bbeta\in \bJ)}{S}$, where $S$ consists of the relations
\[
 x_{i_{\sigma(1)},\dotsc,i_{\sigma(\rbar)}} \ = \ \sign(\sigma) \cdot x_{i_1,\dotsc,i_\rbar}
\]
for all $(i_1,\dotsc,i_\rbar)\in\bJ$ and $\sigma\in S_\rbar$ together with the $3$-term Pl\"ucker relations
\[
 x_{\balpha ij} \cdot x_{\balpha kl} \ - \ x_{\balpha ik} \cdot x_{\balpha jl} \ + \ x_{\balpha il} \cdot x_{\balpha jk}
\]
for all $\balpha\in[n]^{\rbar-2}$ and $i,j,k,l\in [n]$. The \emph{universal representation of $J$} is the weak $\widehat P_J$-representation $\hat\brho\colon [n]^\rbar \to \widehat P_J$ of $J$ defined by $\hat\brho(\bbeta)=x_\bbeta$. 

Analogous to the extended universal tract, the extended universal pasture is graded by the multiplicative map $\deg\colon \widehat P_J\to\Z$ with $\deg(x_\bbeta)=1$ for $\bbeta\in \bJ$ and $\deg(0)=0$. The \emph{universal pasture of $J$} is the subtract $P_J=\{ a\in\widehat  P_J \mid \deg(a)=0\}$ of $\widehat P_J$.

By \autoref{prop: idempotency principle}, $\widehat P_J$ and $P_J$ are near-idempotent if $J$ is not the translate of a matroid. If $\omega_{J,i}\geq3$ for some $i\in[n]$, then both $\widehat P_J$ and $P_J$ are idempotent.

\begin{prop}\label{prop: universal property of the universal pasture}
 Let $J\subseteq\Delta^r_n$ be an M-convex set with extended universal pasture $\widehat P_J$ and universal pasture $P_J$. Let $F$ be a tract. Then composing the universal representation $\hat\brho\colon[n]^\rbar\to\widehat P_J$ with a tract morphism $f\colon\widehat P_J\to F$ yields a bijection
 \[
  \Phi_{J,P}\colon \ \Hom(\widehat P_J,\ P) \ \longrightarrow \ \upR^w_J(P),
 \]
 which descends to a bijection
 \[
  \overline\Phi_{J,P}\colon \ \Hom(P_J, \ P) \ \longrightarrow \ \Gr^w_J(P).
 \]
 Both bijections are functorial in $F$.
\end{prop}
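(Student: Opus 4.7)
The proof will essentially transcribe the argument for \autoref{prop: universal property of the universal tract}, with the Pl\"ucker relations replaced by the $3$-term Pl\"ucker relations. So the plan is to construct an explicit inverse to $\Phi_{J,F}$ (note the indexing variable should be $F$ rather than $P$), verify bijectivity on generators, and then check that the construction descends to the degree-$0$ quotient $P_J$ and is natural in $F$.

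For the first bijection: given a weak $F$-representation $\bsigma\colon[n]^\rbar\to F$ of $J$, I would define a tract morphism $\Psi_{J,F}(\bsigma)\colon\widehat P_J\to F$ by the assignment $x_\bbeta\mapsto\bsigma(\bbeta)$ (and $0\mapsto 0$). Since the free algebra $\Funpm(x_\bbeta\mid\bbeta\in\bJ)$ has the universal property stated in \autoref{subsubsection: free algebras}, this extends uniquely to a tract morphism $g\colon\Funpm(x_\bbeta)\to F$. The definition of the weak representation (\ref{WR1}--\ref{WR3}) says exactly that $\bsigma$ sends each element of the generating set $S$ to $N_F$ (and sends $x_\bbeta$ for $\bbeta\notin\bJ$ to $0$, reflecting the convention in the definition of $\widehat P_J$). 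Hence by the universal property of quotients in \autoref{subsubsection: quotients of tracts}, $g$ descends to a tract morphism $\Psi_{J,F}(\bsigma)\colon\widehat P_J\to F$. By construction $\Phi_{J,F}\bigl(\Psi_{J,F}(\bsigma)\bigr)=\bsigma$, and conversely any $f\in\Hom(\widehat P_J,F)$ is uniquely determined by the images of the generators $x_\bbeta$, so $\Psi_{J,F}\bigl(\Phi_{J,F}(f)\bigr)=f$. This gives the first bijection, and functoriality is immediate because both $\Hom(\widehat P_J,-)$ and $\upR^w_J(-)$ act on morphisms by post-composition.

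For the descent to the second bijection: two morphisms $f_1,f_2\colon\widehat P_J\to F$ restrict to the same morphism on the degree-$0$ subtract $P_J$ if and only if there exists $a\in F^\times$ with $f_2(x_\bbeta)=a\cdot f_1(x_\bbeta)$ for every $\bbeta\in\bJ$. Indeed, the ``if'' direction is clear since products $x_{\bbeta_1}/x_{\bbeta_2}$ of generators of matching degree lie in $P_J$ and are invariant under uniform scaling. For the ``only if'' direction, fix $\bbeta_0\in\bJ$ and set $a=f_2(x_{\bbeta_0})/f_1(x_{\bbeta_0})$; for any other $\bbeta\in\bJ$, the degree-$0$ element $x_\bbeta/x_{\bbeta_0}$ forces $f_2(x_\bbeta)/f_2(x_{\bbeta_0})=f_1(x_\bbeta)/f_1(x_{\bbeta_0})$, whence $f_2(x_\bbeta)=a\cdot f_1(x_\bbeta)$. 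Under the bijection $\Phi_{J,F}$ this relation translates precisely to $\Phi_{J,F}(f_2)=a\cdot\Phi_{J,F}(f_1)$, i.e.\ the two associated weak representations represent the same class in the weak thin Schubert cell $\Gr^w_J(F)=\upR^w_J(F)/F^\times$. Therefore $\Phi_{J,F}$ factors through an injection $\overline\Phi_{J,F}\colon\Hom(P_J,F)\to\Gr^w_J(F)$, and surjectivity is immediate from the surjectivity of $\Phi_{J,F}$.

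There is no real obstacle here; the argument is a direct adaptation of the proof of \autoref{prop: universal property of the universal tract} and does not require the additional consistency between the full and $3$-term Pl\"ucker relations. The only mild care required is to ensure that the antisymmetry axiom \ref{WR2} is enforced by the defining relations of $\widehat P_J$ (it must be listed alongside the $3$-term Pl\"ucker relations in the ideal $S$, or else included implicitly via the convention on the generators $x_\bbeta$ indexed over tuples in $\bJ$); once this is in place, the verification of the universal property is formal.
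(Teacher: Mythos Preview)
Your proposal is correct and follows exactly the approach the paper takes: the paper's own proof simply states that the argument for \autoref{prop: universal property of the universal tract} applies \emph{mutatis mutandis}, and you have faithfully transcribed that argument with the $3$-term Pl\"ucker relations in place of the full Pl\"ucker relations. Your added remark about \ref{WR2} is a reasonable caution, though the antisymmetry is in fact forced by suitable specializations of the $3$-term relations (e.g.\ setting two indices equal), so no extra generator for $S$ is needed.
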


\begin{proof}
The proof of \autoref{prop: universal property of the universal tract} applies \emph{mutatis mutandis}; we omit the details.
\end{proof}

%%%%%%%%%%%%%%%%%%%%%%%%%%%%%%%%%%%%%%%%%%%%%%%%%%%%%%%
\subsection{The comparison map}
\label{subsection: comparison map}

Let $J\subseteq\Delta^r_n$ be an M-convex set, and let $\bJ$ be defined as above. Since the extended universal pasture $\widehat P_J=\pastgen{\Funpm(x_\bbeta\mid\bbeta\in \bJ)}{S'}$ of $J$ is defined by the set $S'$ of $3$-term Pl\"ucker relations, which are a subset of the set $S$ of all Pl\"ucker relations, which define the extended universal tract $\widehat T_J=\pastgen{\Funpm(x_\bbeta\mid\bbeta\in \bJ)}{S}$ of $J$, these two tracts come with a canonical morphism $\hat\pi_J\colon\widehat P_J\to\widehat T_J$, which is degree preserving and thus restricts to a morphism $\pi_J\colon P_J\to T_J$.

For M-convex sets $J\subseteq\Delta^r_n$ of rank $r\leq2$, we have $S'=S$, and thus the canonical morphisms $\hat\pi_J$ and $\pi_J$ are isomorphisms. This fails, in general, for M-convex sets of larger rank due to the presence of Pl\"ucker relations with $4$ or more terms.

However, as we prove below (\autoref{thm: bijection between the universal pasture and the universal tract}), the canonical maps $\hat\pi_J$ and $\pi_J$ are bijective. As a preparation for the proof, we call a Pl\"ucker relation
\[
 \sum_{k=0}^s \ (-1)^k \cdot x_{\balpha{i_0}\dotsc\widehat{i_k}\dotsc{i_s}} \cdot x_{\balpha{i_k}{j_2}\dotsc{j_s}}
\]
(with $2\leq s\leq \rbar$, $\balpha\in[n]^{\rbar-s}_n$ and $i_0,\dotsc,i_s,j_2,\dotsc,j_s\in [n]$) \emph{degenerate} if it has exactly two nonzero terms, i.e., there are exactly two indices $k$ in $\{0,\dotsc,s\}$ for which both $\balpha{i_0}\dotsc\widehat{i_k}\dotsc{i_s}$ and $\balpha{i_k}{j_2}\dotsc{j_s}$ are in $\bJ$.

Since $a-b\in N_{\widehat T_J}$ implies that $a=b$ in $\widehat T_J$ by the uniqueness of additive inverses, the degenerate Pl\"ucker relations enforce relations between the generators $x_\beta$ of $\widehat T_J$ and, in the case of degenerate $3$-term Pl\"ucker relations, of $\widehat P_J$. The following result implies that, in fact, the degenerate $3$-term Pl\"ucker relations generate all relations between the generators of $\widehat T_J$. (It does not, however, imply that the non-degenerate Pl\"ucker relations are generated by the $3$-term Pl\"ucker relations; in particular, the bijective morphism $P_J\to T_J$ is in general not an isomorphism.)

\begin{thm}\label{thm: bijection between the universal pasture and the universal tract}
 Let $J$ be a polymatroid. Then the canonical morphisms  $\hat\pi_J\colon\widehat P_J\to\widehat T_J$ and $\pi_J\colon P_J\to T_J$ are bijections.
\end{thm}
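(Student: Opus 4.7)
The morphism $\hat\pi_J$ is obviously surjective on underlying pointed monoids, as both $\widehat P_J$ and $\widehat T_J$ are multiplicatively generated over $\Funpm$ by the same family $\{x_\bbeta\}_{\bbeta\in\bJ}$, and $\hat\pi_J$ sends each generator to itself. Since $\hat\pi_J$ preserves the degree grading, bijectivity of $\hat\pi_J$ immediately restricts to bijectivity of $\pi_J$. Consequently, the whole content of the theorem is injectivity of $\hat\pi_J$ on underlying pointed monoids. (Note that bijectivity on underlying sets does \emph{not} require matching null sets, as the paper explicitly warns just before the statement.)

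The plan is to reduce injectivity to a combinatorial statement about degenerate Pl\"ucker relations. Inspecting the quotient construction from \autoref{subsubsection: quotients of tracts}, the pointed-monoid quotient of $\Funpm(x_\bbeta\mid \bbeta\in\bJ)$ that produces $\widehat T_J$ (resp.\ $\widehat P_J$) is generated by relations $cm_1\sim cm_2$ with $m_1,m_2$ monomials and $m_1-m_2\in S$ (resp.\ $m_1-m_2\in S'$). A formal sum $m_1+(-1)m_2$ can coincide with a Pl\"ucker relation only when that relation is \emph{degenerate}, i.e., has exactly two nonzero terms. Hence injectivity of $\hat\pi_J$ is equivalent to the following claim: every multiplicative identification $x_\beta x_\gamma = \pm x_{\beta'}x_{\gamma'}$ coming from a degenerate Pl\"ucker relation in $S$ already follows from a chain of identifications coming from degenerate $3$-term Pl\"ucker relations in $S'$.

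To establish the claim, consider a degenerate Pl\"ucker relation whose nonzero terms occur at indices $k=a$ and $k=b$. A direct computation shows that $(\beta_b,\gamma_b)$ is obtained from $(\beta_a,\gamma_a)$ by the single swap $i_a\leftrightarrow i_b$ (the index $i_a$ passes from $\gamma$ into $\beta$ and $i_b$ does the reverse). The strategy is then to realize this single swap as (a chain of) degenerate $3$-term Pl\"ucker identifications over an appropriate auxiliary base $\alpha'\in\N^n$. Concretely, one picks $\alpha'=\beta_a-\epsilon_{i_b}-\epsilon_y$ for a suitable index $y$ in the support of $\beta_a$ together with a companion index $z$, so that two of the three pairs of the $3$-term Pl\"ucker relation at $(\alpha';i_a,i_b,y,z)$ are exactly $(\beta_a,\gamma_a)$ and $(\beta_b,\gamma_b)$; then the symmetric exchange axiom for the M-convex set $J$, applied at the level of the third pair, forces that $3$-term relation to be itself degenerate and thus to deliver the desired identification in $\widehat P_J$. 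When no single auxiliary $\alpha'$ suffices---because the $3$-term relation at the natural choice of $\alpha'$ fails to be degenerate---one would instead perform an induction on $s$, interpolating between $(\beta_a,\gamma_a)$ and $(\beta_b,\gamma_b)$ via intermediate pairs $(\beta'',\gamma'')\in\bJ^2$ provided by repeated symmetric exchanges in $\Jbar$, each step of the chain being controlled by a degenerate $(s-1)$-term Pl\"ucker relation to which the inductive hypothesis applies.

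The main obstacle is the combinatorial bookkeeping behind this reduction, especially when the indices $i_0,\dots,i_s,j_2,\dots,j_s$ exhibit coincidences (which is forced to happen, by the idempotency principle (\autoref{prop: idempotency principle}), whenever $J$ is a proper polymatroid with $\omega_{J,i}\geq 2$). One must verify in all cases that the symmetric exchange axiom provides enough intermediate elements of $\bJ$ to carry out the interpolation while keeping each intermediate $3$-term Pl\"ucker relation degenerate. Once this combinatorial core is in place, the claim follows, giving injectivity of $\hat\pi_J$, hence bijectivity, and by grading the same for $\pi_J$.
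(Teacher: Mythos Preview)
Your plan is essentially the paper's own approach: reduce bijectivity to showing that every degenerate Pl\"ucker relation (the only ones that create monoid identifications) is already a consequence of degenerate $3$-term Pl\"ucker relations, and prove this by induction on $s$ using the symmetric exchange axiom of $\Jbar$.

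Two points where the paper's execution sharpens your sketch. First, the paper disposes of the matroid case separately (via perfect tracts and \cite{Baker-Zhang23}), which lets it invoke the idempotency principle to assume $-1=1$ throughout the induction and thereby avoid all sign bookkeeping; your plan leaves signs implicit. Second, the inductive step is not quite an ``interpolation via intermediate pairs'': rather, after arranging the two nonzero terms at $k=0,1$ and using exchange to locate an index $i_s$ with $\beta-\epsilon_{i_s}+\epsilon_{j_s}\in\Jbar$, the paper splits into two cases according to which factor of the vanishing term $\zeta^{\widehat s}\cdot\xi_s$ is zero, and in each case factors the target cross ratio as a product of one degenerate $3$-term cross ratio and one cross ratio coming from a degenerate Pl\"ucker relation of length $s-1$ (handled by induction). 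Verifying that the latter relation is indeed degenerate requires a further round of $3$-term Pl\"ucker arguments, which is the main combinatorial work you allude to but do not spell out.
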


\begin{proof}
 Since $\hat\pi_J(0)=0$, it suffices to show that the restriction of $\hat\pi_J$ to $\widehat P_J^\times\to\widehat T_J^\times$ is bijective for the first claim. The second claim (about $\pi_J$) follows from the first claim by taking the respective degree $0$ parts. 
 
 The groups $\widehat T_J^\times$ and $\widehat P_J$ are quotients of the free abelian group generated by $-1$ and the symbols $x_\bbeta$ with $\bbeta\in\bJ$ by certain respective subgroups $H_J$ and $H_J^w$ (defined below). We show by an elementary induction over the number of terms of a Pl\"ucker relation that $H_J^w=H_J$, which shows that $\widehat P_J^\times\to\widehat T_J^\times$ is a bijection. Note that since $\hat\pi_J$ is degree preserving, this result implies at once that $\pi_J$ is also a bijection.

 Before we begin with the induction, we note that if $J$ is a matroid, the claim follows from general results on perfect tracts. Namely, enriching the nullset of $\widehat P_J$ by the set $S$ of all relations $\sum a_i$ with at least $3$ nonzero terms yields a tract $F=\pastgen{\widehat P_J}{S}$, which is perfect since it satisfies the modified strong fusion rule; see \cite[Thm.\ 1.11]{Baker-Zhang23}. By \cite[Thm.\ 3.46]{Baker-Bowler19}, every weak $F$-representation of $J$ is strong. Thus, by \autoref{prop: universal property of the universal tract} and \autoref{prop: universal property of the universal pasture}, we have canonical identifications
 \[
  \Hom(\widehat T_J,\ F) \ = \ \upR_J(F) \ = \ \upR^w_J(F) \ = \ \Hom(\widehat P_J,\ F).
 \]
 This means that the canonical projection $\pi_S\colon\widehat P_J\to \pastgen{\widehat P_J}{S}=F$ factors through the surjection $\hat\pi_J\colon\widehat P_J\to \widehat T_J$. Since every relation in $S$ has at least $3$ nonzero terms, $\pi_S$ is injective, and so is $\hat\pi_J$. This shows that $\hat\pi_J$ is a bijection. Since the translation $J\to \Jbar=J-\delta^-_J$ induces isomorphisms between the respective universal tracts and universal pastures (cf.\ \autoref{thm: foundations of embedded minors}), this proof extends to all translates of matroids.
 
 Even though the following proof does not rely on the previous discussion, we can use it to simplify matters: if $J$ is not the translate of a matroid, then the idempotency principle for proper polymatroids (\autoref{prop: idempotency principle}) implies that $1=-1$ in $\widehat T_J$ and $\widehat P_J$. 
 
 We therefore assume that $-1=1$, which leads to a number of simplifications: 
 \begin{enumerate}
  \item 
  The group $\widehat T_J$ is generated by the symbols $x_\bbeta$ with $\bbeta\in\bJ$, i.e., we can remove the generator $-1$.
  Moreover, the generators $x_\bbeta$ of $\widehat T_J$ are invariant under the permutation of the coefficients of $\bbeta$, which allows us to define $x_{\beta}:=x_\bbeta$ for $\beta=\Sigma\bbeta\in \Jbar$, independently of the order of the coefficients of $\bbeta$. 
  \item 
  The Pl\"ucker relations for the $x_\beta$ turn into
  \[ \Pl(\alpha|i_0\dotsc i_s|j_2\dotsc j_s) :=
   \sum_{k=0}^s \quad x_{\alpha+\epsilon_{i_0}+\dotsb+\widehat{\epsilon_{i_k}}+\dotsb+\epsilon_{i_s}} \cdot x_{\alpha+\epsilon_{i_k}+\epsilon_{j_2}+\dotsb+\epsilon_{j_s}} \quad \in \quad N_{\widehat T_J}
  \]
  for $2\leq s\leq \rbar$, $\alpha\in\Delta^{\rbar-s}_n$ and $i_0,\dotsc,i_s,j_2,\dotsc,j_s\in[n]$ with $\delta^-_J\leq\alpha$ and $\alpha +\epsilon_{i_0}+\dotsc +\epsilon_{i_s}+\epsilon_{j_2}+\dotsc +\epsilon_{j_s}\leq\omega_J$. In particular, we can drop the sign $(-1)^{k+\sigma(k)}$. 
  \item 
  The group $\widehat T_J^\times$ is the quotient of the free abelian group generated by the symbols $x_\beta$ with $\beta\in J$ modulo the subgroup $H_J$ generated by the \emph{degenerate generalized cross ratios}
  \[
   \frac{x_{\alpha+\epsilon_{i_0}+\dotsb+\widehat{\epsilon_{i_k}}+\dotsb+\epsilon_{i_s}} \cdot x_{\alpha+\epsilon_{i_k}+\epsilon_{j_2}+\dotsb+\epsilon_{j_s}}}{x_{\alpha+\epsilon_{i_0}+\dotsb+\widehat{\epsilon_{i_l}}+\dotsb+\epsilon_{i_s}} \cdot x_{\alpha+\epsilon_{i_l}+\epsilon_{j_2}+\dotsb+\epsilon_{j_s}}},
  \]
  whose numerator and denominator are the two nonzero terms of a degenerate 
  Pl\"ucker relation $\Pl(\alpha|i_0\dotsc i_s|j_2\dotsc j_s)$.
 \end{enumerate}
 
 We show by induction over $s\geq2$ that the degenerate generalized cross ratios lie in the subgroup $H_J^w$ generated by the \emph{degenerate cross ratios} 
 \[
  \frac{x_{\alpha+\epsilon_i+\epsilon_l}\cdot x_{\alpha+\epsilon_j+\epsilon_k}}{x_{\alpha+\epsilon_i+\epsilon_k}\cdot x_{\alpha+\epsilon_j+\epsilon_l}}
 \]
 with $\alpha\in\Delta^{\rbar-2}_r$ (i.e., $s=2$), which stem from degenerate $3$-term Pl\"ucker relations $\Pl(\alpha|ikl|j)$. Since $\widehat P_J$ is the quotient of the free abelian group generated by the $x_\beta$ modulo $H_J^w$, this proves the claim of the theorem.
 
 The base case $s=2$ is tautologically true. Thus we assume that $s\geq3$, and we consider a degenerate Pl\"ucker relation $\Pl(\alpha|i_0\dotsc i_s|j_2\dotsc j_s)$. After permuting the indices, we can assume that the two nontrivial terms are indexed by $k=0$ and $l=1$, which means that we need to show that the generalized degenerate cross ratio
 \[
  \frac{x_{\alpha+\epsilon_{i_1}+\epsilon_{i_2}+\dotsb+\epsilon_{i_s}} \cdot x_{\alpha+\epsilon_{i_0}+\epsilon_{j_2}+\dotsb+\epsilon_{j_s}}}{x_{\alpha+\epsilon_{i_0}+\epsilon_{i_2}+\dotsb+\epsilon_{i_s}} \cdot x_{\alpha+\epsilon_{i_1}+\epsilon_{j_2}+\dotsb+\epsilon_{j_s}}}
 \]
 lies in $H_J^w$. If $\{i_2,\dotsc,i_s\}\cap\{j_2,\dotsc,j_s\}$ contains a common element, say $i_{s}=j_{s}$ (after rearranging indices), then the degenerate Pl\"ucker relation $\Pl(\alpha|i_0\dotsc i_s|j_2\dotsc j_s)$ is equal to the degenerate Pl\"ucker relation $\Pl(\alpha+\epsilon_{i_s}|i_0\dotsc i_{s-1}|j_2\dotsc j_{s-1})$, up to a zero term. Thus the generalized degenerate cross ratio in question appears already for a shorter Pl\"ucker relation and lies in $H_J^w$ by the inductive hypothesis. Therefore, we can assume that $\{i_2,\dotsc,i_s\}\cap\{j_2,\dotsc,j_s\}=\emptyset$ in the following.
 
 Define $\beta=\epsilon_{i_1}+\dotsb+\epsilon_{i_s}$ and $\gamma=\epsilon_{i_1}+\epsilon_{j_2}+\dotsb+\epsilon_{j_s}$. Since $\Pl(\alpha|i_0\dotsc i_s|j_2\dotsc j_s)$ has only two nonzero terms for $k=0$ and $l=1$, the indices $i_0$ and $i_1$ do not appear in $\{i_2,\dotsc,i_s\}$ and thus $\gamma_{i_1}\geq\beta_{i_1}=1$. 
 Since $\{i_2,\dotsc,i_s\}\cap\{j_2,\dotsc,j_s\}=\emptyset$, we have $\gamma_{j_s}>\beta_{j_s}$, and the exchange axiom for M-convex sets yields an index in $\{i_2,\dotsc,i_s\}$, say $i_s$, such that both $\beta-\epsilon_{i_s}+\epsilon_{j_s}$ and $\gamma+\epsilon_{i_s}-\epsilon_{j_s}$ are in $J$.
 
 For the ease of notation, we introduce the abbreviations $\zeta^{\widehat S}_R=x_\eta$ and $\xi^{\widehat R}_S=x_\vartheta$ for subsets $S\subseteq\{0,\dotsc,s\}$ and $R\subseteq\{2,\dotsc,s\}$, where
 \[
  \eta = \alpha+\epsilon_{i_0}+\dotsb+\epsilon_{i_s}-\sum_{k\in S} \epsilon_{i_k}+\sum_{k\in R} \epsilon_{j_k} \ \text{and} \ \vartheta = \alpha+\epsilon_{j_2}+\dotsb+\epsilon_{j_s}+\sum_{k\in S} \epsilon_{i_k}-\sum_{k\in R} \epsilon_{j_k}.
 \]
 So the Pl\"ucker relation {$\Pl(\alpha|i_0\dotsc i_s|j_2\dotsc j_s)$} reads
 \[
  \smallunderbrace{\zeta^{\widehat 0} \cdot \xi_0}_{\neq0} \ + \ \smallunderbrace{\zeta^{\widehat 1} \cdot \xi_1}_{\neq0} \ + \ \smallunderbrace{\zeta^{\widehat 2} \cdot \xi_2}_{=0} \ + \ \dotsb \ + \  \smallunderbrace{\zeta^{\widehat s} \cdot \xi_s}_{=0} \quad \in \quad N_{\widehat T_J},
 \]
 and the conclusion from the previous paragraph means that $\zeta^{\widehat{0s}}_s\cdot\xi^{\widehat s}_{1s}\neq0$. Our goal is to show that the generalized degenerate cross ratio
 \[
  \frac{\zeta^{\widehat 1} \cdot \xi_1}{\zeta^{\widehat 0} \cdot \xi_0} \ = \ \frac{x_{\alpha+\epsilon_{i_1}+\epsilon_{i_2}+\dotsb+\epsilon_{i_s}} \cdot x_{\alpha+\epsilon_{i_0}+\epsilon_{j_2}+\dotsb+\epsilon_{j_s}}}{x_{\alpha+\epsilon_{i_0}+\epsilon_{i_2}+\dotsb+\epsilon_{i_s}} \cdot x_{\alpha+\epsilon_{i_1}+\epsilon_{j_2}+\dotsb+\epsilon_{j_s}}}
 \]
 is in $H_J^w$. We divide the proof into two cases: either $\zeta^{\widehat s}=0$ or $\xi_s=0$.
 
 \medskip\noindent\textbf{Case 1:} $\zeta^{\widehat s}=0$. Consider the $3$-term Pl\"ucker relation 
 % \[\tag*{$\Pl(\zeta^{\widehat{01s}}|i_0i_1i_s|j_s)$}
 \[
 \Pl(\zeta^{\widehat{01s}}|i_0i_1i_s|j_s) := 
  \smallunderbrace{\zeta^{\widehat 0}}_{\neq0} \cdot \zeta^{\widehat{1s}}_s \ + \ 
  \smallunderbrace{\zeta^{\widehat 1}}_{\neq0} \cdot \smallunderbrace{\zeta^{\widehat{0s}}_s}_{\neq0} \ + \ 
  \smallunderbrace{\zeta^{\widehat s}}_{=0} \cdot \zeta^{\widehat{01}}_s \quad \in \quad N_{\widehat T_J}.
 \]
 Since a relation in $N_{\widehat T_J}$ cannot have exactly one nonzero term, we conclude that $\zeta^{\widehat{1s}}_s\neq0$, so that $\Pl(\zeta^{\widehat{01s}}|i_0i_1i_s|j_s)$ is degenerate and thus
 \[
  \frac{\zeta^{\widehat 1} \cdot \zeta^{\widehat{0s}}_s}{\zeta^{\widehat 0} \cdot \zeta^{\widehat{1s}}_s} \quad \in \quad H_J^w.
 \]
 
 Next we aim to show that the Pl\"ucker relation 
%  \[ \tag*{$\Pl(\alpha+\epsilon_{j_s}|i_0\dotsc i_{s-1}|j_2\dotsc j_{s-1})$}
\[ \Pl(\alpha+\epsilon_{j_s}|i_0\dotsc i_{s-1}|j_2\dotsc j_{s-1}) :=
  \smallunderbrace{\zeta^{\widehat{0s}}_s\cdot\xi_0}_{\neq0} \ + \ \smallunderbrace{\zeta^{\widehat{1s}}_s\cdot\xi_1}_{\neq0} \ + \ \zeta^{\widehat{2s}}_s\cdot\xi_2 \ + \ \dotsb \ + \ \zeta^{\widehat{(s-1)s}}_s\cdot\xi_{s-1} \quad \in \quad N_{\widehat T_J}
 \]
 is degenerate, i.e., $\zeta^{\widehat{ks}}_s\cdot\xi_k=0$ for $k=2,\dotsc,s-1$. We know that $\zeta^{\widehat k}\cdot\xi_k=0$. If $\xi_k=0$, then $\zeta^{\widehat{ks}}_s\cdot\xi_k=0$, as desired. If $\zeta^{\widehat k}=0$, then we consider the $3$-term Pl\"ucker relation 
%  \[\tag*{$\Pl(\zeta^{\widehat{0ks}}|i_0i_1i_s|j_s)$}
\[ \Pl(\zeta^{\widehat{0ks}}|i_0i_1i_s|j_s) :=
  \smallunderbrace{\zeta^{\widehat 0}}_{\neq0} \cdot \zeta^{\widehat{ks}}_s \ + \ 
  \smallunderbrace{\zeta^{\widehat k}}_{=0} \cdot \zeta^{\widehat{0k}}_s \ + \ 
  \smallunderbrace{\zeta^{\widehat s}}_{=0} \cdot \zeta^{\widehat{0s}}_s \quad \in \quad N_{\widehat T_J}.
 \]
 Since a relation in $N_{\widehat T_J}$ cannot have exactly one nonzero term, we conclude that $\zeta^{\widehat{ks}}_s=0$, which implies $\zeta^{\widehat{ks}}_s\cdot\xi_k=0$ that in this case as well.
 
 This shows that the Pl\"ucker relation {$\Pl(\alpha+\epsilon_{j_s}|i_0\dotsc i_{s-1}|j_2\dotsc j_{s-1})$} is degenerate. So the inductive hypothesis applies and shows that
 \[
  \frac{\zeta^{\widehat{1s}}_s\cdot\xi_1}{\zeta^{\widehat{0s}}_s\cdot\xi_0} \quad \in \quad H_J^w.
 \]
 Therefore
 \[
  \frac{\zeta^{\widehat 1} \cdot \xi_1}{\zeta^{\widehat 0} \cdot \xi_0} \ = \ \frac{\zeta^{\widehat 1} \cdot \zeta^{\widehat{0s}}_s}{\zeta^{\widehat 0} \cdot \zeta^{\widehat{1s}}_s} \ \cdot \ \frac{\zeta^{\widehat{1s}}_s\cdot\xi_1}{\zeta^{\widehat{0s}}_s\cdot\xi_0}
  \quad \in \quad H_J^w,
 \]
 which completes the inductive step in case 1.

\medskip\noindent\textbf{Case 2:} $\xi_s=0$. The proof is analogous to case 1. Consider the $3$-term Pl\"ucker relation 
 % \[\tag*{$\Pl(\xi^{\widehat{s}}|i_0i_1i_s|j_s)$}
 \[ \Pl(\xi^{\widehat{s}}|i_0i_1i_s|j_s) :=
  \smallunderbrace{\xi_0}_{\neq0} \cdot \smallunderbrace{\xi^{\widehat{s}}_{1s}}_{\neq0} \ + \ 
  \smallunderbrace{\xi_1}_{\neq0} \cdot \xi^{\widehat{s}}_{0s} \ + \ 
  \smallunderbrace{\xi_s}_{=0} \cdot \xi^{\widehat{s}}_{01} \quad \in \quad N_{\widehat T_J}.
 \]
 Then $\xi^{\widehat{1s}}_s\neq0$ and $\Pl(\xi^{\widehat{s}}|i_0i_1i_s|j_s)$ is degenerate, which shows that
 \[
  \frac{\xi_1 \cdot \xi^{\widehat{s}}_{0s}}{\xi_0\cdot\xi^{\widehat{s}}_{1s}} \quad \in \quad H_J^w.
 \]
 
 Next we aim to show that the Pl\"ucker relation 
%  \[\tag*{$\Pl(\alpha+\epsilon_{i_s}|i_0\dotsc i_{s-1}|j_2\dotsc j_{s-1})$} 
\[ \Pl(\alpha+\epsilon_{i_s}|i_0\dotsc i_{s-1}|j_2\dotsc j_{s-1}) :=
\smallunderbrace{\zeta^{\widehat{0}}\cdot\xi^{\widehat{s}}_{0s}}_{\neq0} \ + \ \smallunderbrace{\zeta^{\widehat{1}}\cdot\xi^{\widehat{s}}_{1s}}_{\neq0} \ + \ \zeta^{\widehat{2}}\cdot\xi^{\widehat{s}}_{2s} \ + \ \dotsb \ + \ \zeta^{\widehat{s-1}}\cdot\xi^{\widehat{s}}_{(s-1)s} \quad \in \quad N_{\widehat T_J}
 \]
 is degenerate, i.e., $\zeta^{\widehat{k}}\cdot\xi^{\widehat{s}}_{ks}=0$ for $k=2,\dotsc,s-1$. We know that $\zeta^{\widehat k}\cdot\xi_k=0$. If $\zeta^{\widehat k}=0$, then $\zeta^{\widehat{k}}\cdot\xi^{\widehat s}_{ks}=0$, as desired. If $\xi_k=0$, then we consider the $3$-term Pl\"ucker relation 
 % \[\tag*{$\Pl(\xi^{\widehat{s}}|i_0i_ki_s|j_s)$}
  \[ \Pl(\xi^{\widehat{s}}|i_0i_ki_s|j_s) :=
  \smallunderbrace{\xi_0}_{\neq0} \cdot \xi^{\widehat{s}}_{ks} \ + \ 
  \smallunderbrace{\xi_k}_{=0} \cdot \xi^{\widehat{s}}_{0s} \ + \ 
  \smallunderbrace{\xi_s}_{=0} \cdot \xi^{\widehat{s}}_{0k} \quad \in \quad N_{\widehat T_J}.
 \]
 Thus $\xi^{\widehat{s}}_{ks}=0$, which implies $\zeta^{\widehat{ks}}_s\cdot\xi_k=0$ also in this case.
 
 This shows that the Pl\"ucker relation {$\Pl(\alpha+\epsilon_{i_s}|i_0\dotsc i_{s-1}|j_2\dotsc j_{s-1})$} is degenerate. So the inductive hypothesis applies and shows that
 \[
  \frac{\zeta^{\widehat{1}}\cdot\xi^{\widehat{s}}_{1s}}{\zeta^{\widehat{0}}\cdot\xi^{\widehat{s}}_{0s}} \quad \in \quad H_J^w.
 \]
 Therefore in case 2 we also have 
 \[
  \frac{\zeta^{\widehat 1} \cdot \xi_1}{\zeta^{\widehat 0} \cdot \xi_0} \ = \ \frac{\zeta^{\widehat{1}}\cdot\xi^{\widehat{s}}_{1s}}{\zeta^{\widehat{0}}\cdot\xi^{\widehat{s}}_{0s}} \ \cdot \ \frac{\xi_1 \cdot \xi^{\widehat{s}}_{0s}}{\xi_0\cdot\xi^{\widehat{s}}_{1s}} \quad \in \quad H_J^w,
 \]
 which completes the proof.
\end{proof}

\autoref{thm: bijection between the universal pasture and the universal tract} has several consequences, some of which will appear later in the paper. Right away, however, we gain a series of new examples of excellent tracts. We call a tract $F$ \emph{degenerate} if every formal sum $\sum a_i\in F^+$ with at least $3$ nonzero terms is contained in $N_F$. An example of a degenerate tract is the degenerate triangular hyperfield $\T_\infty$.

\begin{cor}\label{cor: degenerate tracts are excellent}
 Every degenerate tract is excellent.
\end{cor}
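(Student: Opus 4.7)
The plan is to combine the universal properties \autoref{prop: universal property of the universal pasture} and \autoref{prop: universal property of the universal tract} with the comparison theorem \autoref{thm: bijection between the universal pasture and the universal tract}. Saying that $F$ is excellent is equivalent to saying that, for every polymatroid $J$, every tract morphism $f\colon\widehat P_J\to F$ factors through the canonical morphism $\hat\pi_J\colon\widehat P_J\to\widehat T_J$. Since $\hat\pi_J$ is a bijection on the underlying pointed groups, the candidate map $g\colon\widehat T_J\to F$ given by $g(a)=f(\hat\pi_J^{-1}(a))$ is automatically a multiplicative map of pointed groups sending $0$ to $0$ and $-1$ to $-1$. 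The only remaining task is to verify that $g$ sends $N_{\widehat T_J}$ into $N_F$, and since this ideal is generated by the Pl\"ucker relations, it will suffice to test $g$ on a single Pl\"ucker relation
\[
 P \ = \ \sum_{k=0}^s (-1)^k \cdot x_{\balpha i_0\dotsc\widehat{i_k}\dotsc i_s}\cdot x_{\balpha i_k j_2\dotsc j_s}.
\]

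Let $m$ denote the number of nonzero terms of $P$ viewed in $\widehat T_J^+$. Because the set $\bJ$ of nonzero generators is intrinsic to the pointed group shared with $\widehat P_J$, and because $f$ sends units to units, the image $g(P)$ has the same number $m$ of nonzero terms in $F^+$. The uniqueness of additive inverses in $\widehat T_J$ rules out $m=1$, leaving the three cases $m=0$, $m\geq 3$, and $m=2$. If $m=0$ then $g(P)=0\in N_F$ trivially, and if $m\geq 3$ then $g(P)\in N_F$ directly from the defining property of a degenerate tract.

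The substantive case is $m=2$: here $P$ reduces to an identity between its two nonzero terms in $\widehat T_J^\times$, i.e.,\ a degenerate generalized cross ratio equals $\pm 1$. The proof of \autoref{thm: bijection between the universal pasture and the universal tract} establishes precisely that every such identity is already a consequence of the degenerate $3$-term Pl\"ucker identities in $\widehat P_J^\times$ (equivalently, that the two subgroups $H_J$ and $H_J^w$ coincide inside the free abelian group on the generators). Pushing this identity forward through $f$ produces the corresponding identity between the two nonzero terms of $g(P)$ in $F^\times$, whence $g(P)\in N_F$ by the uniqueness of additive inverses in $F$. The potential obstacle is thus the case $m=2$, and it has already been overcome by the comparison theorem; no further hypothesis on $F$ beyond degeneracy is required.
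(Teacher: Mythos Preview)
Your proof is correct and follows essentially the same approach as the paper's: use the bijection $\hat\pi_J$ from \autoref{thm: bijection between the universal pasture and the universal tract} to transport a weak representation to a multiplicative map out of $\widehat T_J$, and then observe that degeneracy of $F$ takes care of all Pl\"ucker relations with at least three nonzero terms. One small simplification: for the $m=2$ case you do not need to reach into the proof of the comparison theorem and invoke $H_J=H_J^w$; the bare statement that $\hat\pi_J$ is a bijection already suffices. Indeed, if the two nonzero terms $a,b$ satisfy $a+b\in N_{\widehat T_J}$, then $b=-a$, and since $\hat\pi_J$ is an injective tract morphism with $\hat\pi_J(-\hat\pi_J^{-1}(a))=-a=b$, we get $\hat\pi_J^{-1}(b)=-\hat\pi_J^{-1}(a)$, so $g(a)+g(b)=f(\hat\pi_J^{-1}(a))-f(\hat\pi_J^{-1}(a))\in N_F$.
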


\begin{proof}
 Let $J$ be a polymatroid and $\hat\pi_J\colon\widehat P_J\to \widehat T_J$ the canonical morphism from the extended universal pasture to the extended universal tract of $J$. Let $\brho\colon[n]^\rbar\to F$ be a weak $F$-representation of $J$ with associated morphism $f_\brho\colon\widehat P_J\to F$ (using \autoref{prop: universal property of the universal pasture}). By \autoref{thm: bijection between the universal pasture and the universal tract}, $\hat\pi_J$ is a bijection, which means that $f_\brho$ defines a multiplicative map $\tilde f\colon\widehat T_J\to F$. Since $N_F$ contains all relations with more than two nonzero terms, $\tilde f$ is automatically a tract morphism, which in turn corresponds to a strong $F$-representation of $J$ by \autoref{prop: M-convex functions are strong T_0-representations}. This shows that $\brho$ is a strong $F$-representation of $J$. Thus $F$ is excellent.
\end{proof}

%%%%%%%%%%%%%%%%%%%%%%%%%%%%%%%%%%%%%%%%%%%%%%%%%%%%%%%%%%%%%%%%%%%%%%%%%%%%%%%%%%%%%%%%%%%%%%%%%%%%%
%%%%%%%%%%%%%%%%%%%%%%%%%%%%%%%%%%%%%%%%%%%%%%%%%%%%%%%%%%%%%%%%%%%%%%%%%%%%%%%%%%%%%%%%%%%%%%%%%%%%%
\section{The foundation}
\label{section: the foundation}

The foundation $F_M$ of a matroid $M$ was introduced in \cite{Baker-Lorscheid21b}. It is characterized by the fact that
$\Hom(F_M,-)$ represents the functor $\ulineGr_M\colon\Tracts\to\Sets$ that sends a tract $F$ to the set of $F$-rescaling classes of representations of $M$. In this section, we extend this concept from matroids to M-convex sets.

%%%%%%%%%%%%%%%%%%%%%%%%%%%%%%%%%%%%%%%%%%%%%%%%%%%%%%
\subsection{The realization space}
\label{subsection: the realization space}

The set $\upR^w_J(F)$ of all weak $F$-representations of an M-convex set $J\subseteq\Delta^r_n$ is invariant under scalar multiplication by $F^\times$ and under rescaling by elements of the torus $T(F):=(F^\times)^{n}$. In more detail, given a weak $F$-representation $\brho\colon[n]^r\to F$ of $J$ and $a\in F^\times$, we define $a.\brho\colon[n]^r\to F$ by the formula
\[
 (a.\brho)(\balpha) \ = \ a\cdot \brho(\balpha)
\]
for $\balpha\in\Delta^r_n$. Given $t=(t_1,\dotsc,t_n)\in T(F)$, we define
\[
 (t.\brho)(\balpha) \ = \ \Big(\prod_{i=1}^n \ t_{i}^{\balpha_i} \Big) \cdot \brho(\balpha).
\]

\begin{lemma}\label{lemma: torus action on representations}
 Both $a.\brho$ and $t.\brho$ are weak $F$-representations. If $\brho\in\upR_J(F)$, then both $a.\brho$ and $t.\brho$ are strong $F$-representations.
\end{lemma}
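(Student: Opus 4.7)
The plan is to verify the three representation axioms \ref{WR1}--\ref{WR3} (or \ref{SR1}--\ref{SR3}) directly for $a.\brho$ and $t.\brho$, using the fact that these actions only multiply values of $\brho$ by units of $F$, and in a manner compatible with the alternating and Pl\"ucker structure.

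First, I would handle the scalar action by $a\in F^\times$. Since every value of $a.\brho$ differs from the corresponding value of $\brho$ by the unit $a$, the supports agree, so \ref{WR1} (resp.\ \ref{SR1}) is immediate. The alternating property \ref{WR2} (resp.\ \ref{SR2}) is also immediate because multiplication by $a$ commutes with the sign. For the Pl\"ucker relations, observe that each term of a (3-term or general) Pl\"ucker expression is a product of two values of $\brho$; replacing $\brho$ by $a.\brho$ multiplies every such term by the \emph{same} unit $a^2\in F^\times$, so the relation is preserved under the ideal structure of $N_F$.

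Next I would treat the torus action. Writing $\alpha=\Sigma\balpha$, the scaling factor $\prod_i t_i^{\alpha_i} = \prod_{j=1}^{\rbar} t_{\balpha_j}$ depends only on the unordered multiset of entries of $\balpha$. This gives \ref{WR2}/\ref{SR2} for $t.\brho$ at once, and \ref{WR1}/\ref{SR1} holds because the scaling factor is a unit. For the Pl\"ucker relations, the key computation is that every term in
\[
\sum_{k=0}^s (-1)^k\, (t.\brho)(\balpha i_0\dotsc\widehat{i_k}\dotsc i_s)\cdot (t.\brho)(\balpha i_k j_2\dotsc j_s)
\]
acquires the scaling factor
\[
\Bigl(\prod_{j} t_{\balpha_j}\Bigr)^{\!2}\cdot t_{i_0}t_{i_1}\cdots t_{i_s}\cdot t_{j_2}\cdots t_{j_s},
\]
since the omitted $t_{i_k}$ from the first factor is exactly reinstated by the $t_{i_k}$ appearing in the second factor. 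This common unit factor does not depend on $k$, so it can be pulled out of the sum, and the resulting relation remains in $N_F$ because $N_F$ is an ideal of $F^+$. The same cancellation works for the $3$-term relation, since two of $i,j,k,l$ appear in each term and all four appear in the product.

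The only potential subtlety — and the place I would be most careful — is making sure the bookkeeping of which indices $i_k$ appear in each Pl\"ucker term leads to the \emph{same} unit multiplier on every summand; once that is verified, invariance of the null set under multiplication by an element of $F^\times$ (axiom for an ideal) finishes the proof. There is no interaction between the $a$-action and the $t$-action for the purposes of this lemma, and neither action enlarges or shrinks the support, so the promotion from weak to strong requires no extra argument: the same computation shows that $a.\brho,t.\brho\in \upR_J(F)$ whenever $\brho\in\upR_J(F)$.
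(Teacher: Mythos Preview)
Your proposal is correct and follows essentially the same approach as the paper: the paper's proof is a terse one-liner observing that each Pl\"ucker relation for $a.\brho$ (resp.\ $t.\brho$) is a unit multiple of the corresponding relation for $\brho$, by the factor $a^2$ (resp.\ $t_it_jt_kt_l\cdot\prod_m t_m^{2\balpha_m}$ in the $3$-term case), which is exactly your computation. You simply spell out \ref{WR1}/\ref{WR2} explicitly and write the general Pl\"ucker scaling factor, whereas the paper leaves these as understood.
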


\begin{proof}
 This follows from the fact that for all $\balpha\in[n]^{r-2}$ and $i,j,k,l\in[n]$, the corresponding $3$-term Pl\"ucker relation for $a.\brho$ (resp.\ for $t.\brho$) is a multiple of the Pl\"ucker relation for $\brho$ by a factor $a^{2}$ (resp.\ by a factor $(t_it_jt_kt_l\cdot\prod_{m=1}^n t_m^{2\balpha_m})$). The same holds for Pl\"ucker relations with more terms, which establishes the latter claims.
\end{proof}

We thus have actions of $F^\times$ and of $T(F)$ on the set $\upR^w_J(F)$ of all weak $F$-representations. If $F^\times$ is $n$-divisible, then the $F^\times$-orbit of an $F$-representation $\brho$ is contained in the $T(F)$-orbit of $\brho$. Since this is not always the case, it is useful to consider the action of $\widehat T(F):=F^\times\times T(F)$ on $\upR^w_J(F)$ defined by
\[
 (a,t).\brho(\balpha) \ = \ a\cdot \Big(\prod_{i=1}^n \ t_i^{\balpha_i}\Big) \cdot \brho(\balpha).
\]

\begin{df}
 An \emph{$F$-rescaling class of $J$} is the $\widehat T(F)$-orbit of a weak $F$-representation of $J$. The \emph{realization space of $J$} is the set $\ulineGr^w_J(F)=\upR^w_J(F)/\widehat T(F)$ of all $F$-rescaling classes of $J$.
\end{df}

The torus action $\widehat T(F)$ on $\upR^w_J(F)$ is functorial in $F$, and in particular the push-forward $f_\ast\colon\upR^w_J(F_1)\to\upR^w_J(F_2)$ for a tract morphism $f\colon F_1\to F_2$ induces a map $f_\ast\colon\ulineGr^w_J(F_1)\to\ulineGr^w_J(F_2)$ between the respective realization spaces. Thus we may consider the realization space of $J$ as a functor $\ulineGr^w_J\colon\Tracts\to\Sets$.

%%%%%%%%%%%%%%%%%%%%%%%%%%%%%%%%%%%%%%%%%%%%%%%%%%%%%%%
\subsection{The foundation of a polymatroid}
\label{subsection: the foundation of a polymatroid}

The foundation represents the realization space $\ulineGr^w_J$ of an M-convex set $J$ just as the universal pasture represents the weak thin Schubert cell $\Gr^w_J$ (considered as a functor from $\Tracts$ to $\Sets$). The advantage of the foundation over the universal tract and the universal pasture is that it is easier to compute, and it allows for several structural results that transfer the combinatorics of (poly)matroids into algebraic properties. At the same time, the foundation and the realization space still capture essential information about the thin Schubert cells (see \autoref{thm: decomposition of the representation space}). 

We note that there is an analogous tract that represents the strong realization space $\ulineGr_J$ of $J$, but for simplicity we omit a treatment of this theory.

The extended universal pasture $\widehat P_J$ of an M-convex set $J\subseteq\Delta^r_n$ is multi-graded by the group homomorphism
\[
 \deg_{[n]}\colon \ \widehat P_J \ \longrightarrow \ \Z^n
\]
defined by $\deg_{[n]}(x_\balpha)=\Sigma\balpha$.

\begin{df}
 The \emph{foundation of $J$} is the subtract
 \[
  F_J \ = \ \big\{ a\in \widehat P_J \, \big| \, \deg_{[n]}(a)=0 \big\}
 \]
 of $\widehat P_J$.
\end{df}

Note that $F_J\subseteq P_J$ since $\deg(a)=0$ if $\deg_{[n]}(a)=0$. Note further that the idempotency principle for proper polymatroids (\autoref{prop: idempotency principle}) implies that $F_J$ is near-idempotent if $J$ is not the translate of a matroid. If $\omega_{J,i}\geq3$ for some $i\in[n]$, then $F_J$ is idempotent.

\begin{prop}\label{prop: universal property of the foundation}
 Let $J\subseteq\Delta^r_n$ be an M-convex set of effective rank $\rbar$ with extended universal pasture $\widehat P_J$, weak universal representation $\hat\brho\colon[n]^\rbar\to\widehat P_J$, and foundation $F_J$. Let $F$ be a tract. Then there exists a unique bijection
 \[
  \underline{\Phi}_{J,F}\colon \ \Hom(F_J, \ F) \ \stackrel\sim\longrightarrow \ \ulineGr^w_J(F)
 \]
 that satisfies $\underline{\Phi}_{J,F}(f\vert_{F_J})=[f\circ\hat\brho]$ for every tract morphism $f\colon\widehat P_J\to F$. Moreover, this bijection is functorial in $F$.
\end{prop}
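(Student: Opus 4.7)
The plan is to leverage the universal property of the extended universal pasture $\widehat P_J$ established in \autoref{prop: universal property of the universal pasture} and show that restriction to $F_J$ realizes precisely the quotient of $\Hom(\widehat P_J,F)$ by the torus-rescaling action. Via the bijection $\Phi_{J,F}\colon\Hom(\widehat P_J,F)\to\upR^w_J(F)$, I would transport the $\widehat T(F)$-action on weak representations to an action on $\Hom(\widehat P_J,F)$, given by $((a,t).f)(x_\balpha)=a\prod_i t_i^{(\Sigma\balpha)_i}\cdot f(x_\balpha)$. The proposition then reduces to the claim that the restriction map $r\colon\Hom(\widehat P_J,F)\to\Hom(F_J,F)$ is surjective with fibers equal to the $\widehat T(F)$-orbits; given this, $\underline\Phi_{J,F}$ is the induced bijection, and the defining formula together with uniqueness and functoriality in $F$ are immediate.

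For the fiber description, one direction is a direct computation: any element of $F_J^\times$ can be written as a monomial $\prod_j x_{\balpha_j}^{\epsilon_j}$ with $\sum_j\epsilon_j\Sigma\balpha_j=0$, and taking coordinate sums (together with the fact that each $\Sigma\balpha_j$ has coordinate sum $\bar r$) forces $\sum_j\epsilon_j=0$, so the character $\chi_{(a,t)}$ evaluates to $a^0\prod_i t_i^0=1$. Conversely, if $f_1$ and $f_2$ agree on $F_J$, then $\chi:=f_2/f_1$ is a character of $\widehat P_J^\times$ trivial on $F_J^\times$ and hence descends along the short exact sequence
\[
1\to F_J^\times\to\widehat P_J^\times\xrightarrow{\deg_{[n]}}L\to 0
\]
(with $L\subseteq\Z^n$ the image of $\deg_{[n]}$) to a character $\tilde\chi\colon L\to F^\times$. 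Splitting the further sequence $0\to L_0\to L\to\bar r\Z\to 0$ (with $L_0:=L\cap\{|v|=0\}$, split via $\bar r\mapsto\Sigma\balpha_0$ for some fixed $\balpha_0\in\bar J$), producing $(a,t)\in\widehat T(F)$ realizing $\tilde\chi$ amounts to extending $\tilde\chi|_{L_0}$ to a character of $\Z^n$ (which gives $t$) and solving a single equation for $a$.

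For surjectivity of $r$, I would exploit that $L$ is a subgroup of $\Z^n$, hence free abelian, so the first sequence above splits by a group homomorphism $s\colon L\to\widehat P_J^\times$. Given $g\colon F_J\to F$, set $f(0):=0$ and $f(s(\ell)u):=g(u)$ for $\ell\in L$ and $u\in F_J^\times$; multiplicativity is immediate. The key observation is that $N_{\widehat P_J}$ is \emph{multi-graded}: since it is generated as an ideal of $\widehat P_J^+$ by multi-homogeneous elements (the $3$-term Pl\"ucker relations and the nullset of $\Funpm(x_\bbeta)$), every $A\in N_{\widehat P_J}$ decomposes as $A=\sum_\ell A_\ell$ with each multi-homogeneous part $A_\ell$ also in $N_{\widehat P_J}$. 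Writing $A_\ell=s(\ell)\cdot U_\ell$ with $U_\ell\in F_J^+$, the invertibility of $s(\ell)$ yields $U_\ell\in F_J^+\cap N_{\widehat P_J}=N_{F_J}$, and applying $g$ gives $\sum f(a_i)=\sum_\ell g(U_\ell)\in N_F$, so $f$ is a well-defined tract morphism extending $g$.

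The main obstacle is to show that every character $L_0\to F^\times$ extends to $\Z^n\to F^\times$ for arbitrary $F$, equivalently that $\Z^n/L_0$ is torsion-free. For this I would decompose $\bar J=\bar J_1\oplus\cdots\oplus\bar J_m$ into indecomposable summands via \autoref{prop: unique decompistion into indecomposable polymatroids}. For indecomposable $\bar J$ on ground set $[n]$, the exchange axiom produces elements $\epsilon_j-\epsilon_i\in\bar J-\bar J\subseteq L_0$ for various pairs $(i,j)$; the corresponding ``exchange graph'' on $[n]$ must be connected, since a proper connected component would yield a direct-summand decomposition of $\bar J$ via \autoref{lemma: characterization of indecomposable polymatroids}, contradicting indecomposability. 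Connectedness forces $L_0$ to contain all $\epsilon_j-\epsilon_i$ and hence to equal the full coordinate-sum-zero sublattice of $\Z^n$. Assembling across summands, $L_0=\bigoplus_k\{v\in\Z^{S_k}:\sum v=0\}$ where $S_k$ is the ground set of $\bar J_k$, so $\Z^n/L_0\cong\Z^m$ is torsion-free, completing the proof.
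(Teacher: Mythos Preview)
Your proof is correct and follows the same strategy as the paper's: identify $\Hom(F_J,F)$ with the $\widehat T(F)$-orbit space of $\Hom(\widehat P_J,F)$ via restriction, then transport through the equivariant bijection $\Phi_{J,F}$. The paper's proof is a two-sentence sketch that simply asserts this descent; your argument supplies precisely the two nontrivial points the paper leaves implicit, namely surjectivity of the restriction map (via multi-homogeneity of $N_{\widehat P_J}$ and a splitting of $\deg_{[n]}$) and the fiber--orbit identification (via torsion-freeness of $\Z^n/L_0$, which you extract from the indecomposable decomposition and the exchange axiom). Both ingredients reappear elsewhere in the paper---surjectivity is immediate from \autoref{lemma: the universal tract is freely generated over the foundation}, and the stabilizer computation is the content of \autoref{thm: decomposition of the representation space}---but neither is invoked in the paper's own proof of this proposition, so your self-contained treatment is a genuine improvement in rigor.
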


\begin{proof}
 The proof is similar to that of the bijection $\overline\Phi_{J,F}$ in \autoref{prop: universal property of the universal tract}. In the present case, the group $\widehat T(F)=(F^\times)^{n+1}$ acts on both $\Hom(\widehat P_J,F)$ and $\upR^w_J(F)$, and $\Phi_{J,F}$ is $\widehat T(F)$-equivariant. So $\Phi_{J,F}$ descends to a functorial bijection $\underline{\Phi}_{J,F}: \Hom(F_J,F) \to \ulineGr^w_J(F)$ between the respective sets of $\widehat T(F)$-orbits.
\end{proof}

\begin{lemma}\label{lemma: the universal tract is freely generated over the foundation}
 Let $J\subseteq\Delta^r_n$ be an M-convex set with foundation $F_J$, universal pasture $P_J$, and extended universal pasture $\widehat P_J$. Then 
 \[
  P_J \ \simeq \ F_J(x_1,\dotsc,x_s) \qquad \text{and} \qquad \widehat P_J \ \simeq \ F_J(x_0,x_1\dotsc,x_s)
 \]
 for some $0\leq s\leq n$.
\end{lemma}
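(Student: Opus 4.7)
The plan is to exploit the multi-grading $\deg_{[n]}\colon \widehat P_J^\times \to \Z^n$, whose kernel is, by definition of the foundation, exactly $F_J^\times$. Setting $L := \deg_{[n]}(\widehat P_J^\times)$, this gives a short exact sequence
\[
 1 \ \longrightarrow \ F_J^\times \ \longrightarrow \ \widehat P_J^\times \ \longrightarrow \ L \ \longrightarrow \ 1,
\]
and since $L$ is a subgroup of the free abelian group $\Z^n$, it is itself free of some rank $k \leq n$, so the sequence splits. To adapt the splitting to the grading $\deg$ as well, I would use that $\deg$ factors through a surjection $\bar\deg\colon L \to \Z$, which is surjective because $\bar\deg(\Sigma\bbeta) = \deg(x_\bbeta) = 1$ for any $\bbeta \in \bJ$. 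Letting $L' := \ker(\bar\deg)$, the split sequence $0 \to L' \to L \to \Z \to 0$ gives $L = L' \oplus \Z z_0$ with $\bar\deg(z_0) = 1$. Choose lifts $y_0 \in \widehat P_J^\times$ with $\deg_{[n]}(y_0) = z_0$ and $y_1, \dots, y_s \in P_J^\times$ whose multi-degrees form a basis of $L'$, where $s = k-1 \leq n-1$. Every unit of $\widehat P_J$ is then uniquely of the form $a \cdot y_0^{m_0} y_1^{m_1} \cdots y_s^{m_s}$ with $a \in F_J^\times$ and $m_i \in \Z$, and $P_J^\times$ is precisely the subgroup where $m_0 = 0$.

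By the universal property of free $F_J$-algebras (\autoref{subsubsection: free algebras}), the assignment $x_i \mapsto y_i$ determines a unique $F_J$-algebra morphism $\phi\colon F_J(x_0, x_1, \dots, x_s) \to \widehat P_J$, which by the first paragraph is a bijection on unit groups. The remaining task is to verify that $\phi$ identifies the two null sets; one inclusion is automatic from $\phi$ being a tract morphism. For the reverse, I would observe that each $3$-term Pl\"ucker relation
\[
 x_{\balpha ij}\, x_{\balpha kl} \ - \ x_{\balpha ik}\, x_{\balpha jl} \ + \ x_{\balpha il}\, x_{\balpha jk} \ \in \ N_{\widehat P_J}
\]
is $\deg_{[n]}$-homogeneous, since all three of its terms share the multi-degree $2\Sigma\balpha + \epsilon_i + \epsilon_j + \epsilon_k + \epsilon_l$. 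Dividing by any nonzero term expresses such a relation as a unit multiple of an element of the form $1 - u + v$ with $u, v \in F_J^\times$, which necessarily lies in $N_{\widehat P_J}\cap F_J^+ = N_{F_J}$. Consequently $N_{\widehat P_J}$ coincides with the ideal of $\widehat P_J^+$ generated by the image of $N_{F_J}$, which pulls back under $\phi$ to exactly $N_{F_J(x_0,\dots,x_s)}$, proving that $\phi$ is a tract isomorphism.

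The second isomorphism follows by restriction: $\phi$ maps the subalgebra $F_J(x_1, \dots, x_s)$ bijectively onto $P_J$ at the level of units by the analysis above, and the null sets agree because, using the $\Z^{s+1}$-grading $\widehat P_J^+ = \bigoplus_\bm F_J^+ \cdot y^\bm$ induced by $\phi$, the ideal $N_{\widehat P_J}$ is generated componentwise by $N_{F_J}$; intersecting with the sub-semiring $P_J^+ = \bigoplus_{m_0 = 0} F_J^+ \cdot y^\bm$ yields exactly the ideal of $P_J^+$ generated by $N_{F_J}$. The principal obstacle in this proof is the null-set identification in paragraph two, which collapses once one notices the $\deg_{[n]}$-homogeneity of the Pl\"ucker relations; the only caveat is that in the matroid case (the only situation in which $-1\neq 1$ can occur in $F_J$, by the idempotency principle), the sign factors must be tracked, but they lie in $\Funpm^\times \subseteq F_J^\times$ and hence do not disrupt the argument. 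The argument actually yields the bound $s \leq n-1$, which is stronger than the asserted $s \leq n$.
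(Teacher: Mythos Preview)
Your proof is correct and follows essentially the same approach as the paper's: both use that the multi-degree map embeds $\widehat P_J^\times/F_J^\times$ into a free abelian group (hence the quotient is free and the sequence splits), and both reduce the null-set identification to the observation that dividing a $3$-term Pl\"ucker relation by a nonzero term yields an element of $N_{F_J}$. Your write-up is more explicit about constructing the splitting compatible with $\deg$ and about why the null sets match, and your observation that in fact $s\leq n-1$ is correct (and consistent with the later \autoref{thm: decomposition of the representation space}, which gives $s=n-c(J)$).
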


\begin{proof}
 The proof is analogous to \cite[Cor.\ 7.14]{Baker-Lorscheid20}. We sketch the argument for completeness. If we multiply a $3$-term Pl\"ucker relation
 \[
  x_{\balpha ij} \cdot x_{\balpha kl} \ - \ x_{\balpha ik} \cdot x_{\balpha jl} \ + \ x_{\balpha il} \cdot x_{\balpha jk} \quad \in \quad N_{\widehat P_J}
 \]
 (where $\balpha\in[n]^{\rbar-2}$ and $i,j,k,l\in[n]$ with $\sum\balpha ijkl\leq\omega_J$) by $(x_{\balpha ik} \cdot x_{\balpha jl})^{-1}$, we obtain
 \[
  \frac{x_{\balpha ij} \cdot x_{\balpha kl}}{x_{\balpha ik} \cdot x_{\balpha jl}} \quad - \quad 1 \quad + \quad \frac{x_{\balpha il} \cdot x_{\balpha jk}}{x_{\balpha ik} \cdot x_{\balpha jl}},
 \]
 which is contained in $N_{F_J}$. Therefore, the null sets of $P_J$ and of $\widehat P_J$ are generated by the null set of $N_{F_J}$. The groups $P_J^\times/F_J^\times$ and $\widehat P_J^\times/F_J^\times$ are isomorphic to subgroups of the free abelian groups $\Z^n$ and $\Z\times\Z^n$, respectively, via the multi-degree map. Thus both $P_J^\times/F_J^\times$ and $\widehat P_J^\times/F_J^\times$ are themselves free abelian groups, and the lemma follows easily from this.
\end{proof}

We show in \autoref{thm: decomposition of the representation space} that $s=n-c(J)$, where $c(J)$ is the number of indecomposable components of $J$.

%%%%%%%%%%%%%%%%%%%%%%%%%%%%%%%%%%%%%%%%%%%%%%%%%%%%%%%
\subsection{First examples}
\label{subsection: first examples}

We present here some examples of foundations of matroids. We postpone examples of foundations of proper polymatroids to \autoref{subsection: more examples of foundations}, since they are based on further theory. For a more comprehensive list of foundations of matroids, see \cite[Appendix A]{Baker-Lorscheid-Zhang24}.

\subsubsection{Regular matroids}
\label{subsubsection: representation space of regular matroids}
 The foundation of a regular matroid $M$ is $F_M=\Funpm$ (\cite[Thm.\ 7.35]{Baker-Lorscheid21b}). In this case, $\ulineGr^w_M(F)=\Hom(\Funpm,F)$ is a point for every tract $F$. Thus $\upR^w_M(F)$ consists of a single $\widehat T(F)$-orbit, which is in bijection with $(F^\times)^s$ for some $s\leq n+1$ by \autoref{lemma: the universal tract is freely generated over the foundation}.
 In particular, we have
 \[
  \upR^w_{U_{2,2}}(F) \ \simeq \ F^\times \qquad \text{and} \qquad \upR^w_{U_{2,3}}(F) \ \simeq \ (F^\times)^2
 \]
 for the uniform rank $2$ matroids $U_{2,2}$ and $U_{2,3}$ with $2$ and $3$ elements, respectively.
 
\subsubsection{The uniform rank $2$ matroid on $4$ elements}
\label{subsubsection: representation space of U24}
 The smallest matroid with a nontrivial realization space is $U_{2,4}$, whose foundation is the near-regular partial field $\U=\pastgen{\Funpm(x,y)}{x+y-1}$ (\cite[Prop.\ 4.11]{Baker-Lorscheid20}). Thus
 \[
  \ulineGr^w_{U_{2,4}}(F) \ = \ \Hom(\pastgen{\Funpm(x,y)}{x+y-1},\ F) \ = \ \big\{ (a,b)\in(F^\times)^2 \, \big| \, a+b-1\in N_{F} \big\}.
 \]
 The torus orbit $\widehat T(F).\brho$ of an $F$-representation $\brho$ of $U_{2,4}$ is in bijection with $(F^\times)^3$. 
 
\subsubsection{Binary matroids}
\label{subsubsection: representation space of binary matroids}
 The foundation of a binary matroid $M$ is $\Funpm$ if $M$ is regular and $\F_2$ otherwise (\cite[Thm.\ 7.32]{Baker-Lorscheid21b}). 
 In the latter case, $\ulineGr^w_M(F)$ is a singleton if $-1=1$ in $F$ and empty otherwise.
 
\subsubsection{Ternary matroids}
\label{subsubsection: representation space of ternary matroids}
 By \cite[Thm.\ 6.28]{Baker-Lorscheid20}, the foundation $F_M$ of a ternary matroid $M$ is isomorphic to the coproduct, or \emph{tensor product}, $F_1\otimes\dotsb\otimes F_r$ of tracts $F_1,\dotsc,F_r\in\{\F_3,\ \H,\ \D,\ \U\}$ (see \autoref{subsection: tensor products} and \autoref{subsection: more examples of tracts} for definitions), which is the tract characterized by the functorial bijection
 \[
  \ulineGr^w_M(F) \ = \ \Hom(F_M,\ F) \ \simeq \ \Hom(F_1,\ F) \ \times \ \dotsb \ \times \ \Hom(F_r,\ F).
 \]
 The terms $\Hom(F_i,F)$ are of the following forms: $\Hom(\F_3,F)$ is a singleton if $1+1+1\in N_F$ and empty otherwise,
 \begin{align*}
  \Hom(\H,F) \ &= \ \{a\in F^\times\mid a^2-a+1\in N_F\}, \\
  \Hom(\D,F) \ &= \ \{a\in F^\times\mid a+a-1\in N_F\}, 
 \end{align*}
 and $\Hom(\U,F)$ is as described in \autoref{subsubsection: representation space of U24}.

%%%%%%%%%%%%%%%%%%%%%%%%%%%%%%%%%%%%%%%%%%%%%%%%%%%%%%%%%%%%%%%%%%%%%%%%%%%%%%%%%%%%%%%%%%%%%%%%%%%%%%%%%%%%%%
%%%%%%%%%%%%%%%%%%%%%%%%%%%%%%%%%%%%%%%%%%%%%%%%%%%%%%%%%%%%%%%%%%%%%%%%%%%%%%%%%%%%%%%%%%%%%%%%%%%%%%%%%%%%%%
\section{Representations of embedded minors and duals}
\label{section: representations of embedded minors and duals}

In this section, we compare representations of embedded minors $J\minor\nu\mu+\tau$ of $J$ in terms of induced morphisms between the corresponding universal tracts, universal pastures, and foundations.

We fix an M-convex set $J\subseteq\Delta^r_n$ for the rest of this section and let $\bJ=\{\balpha\in[n]^r\mid \Sigma\balpha\in J\}$. As usual, we denote its duality vector by $\delta_J=\delta^-_J+\delta^+_J$ with $\delta^-_J=\inf\, J$ and $\delta^+_J=\sup\, J$. We denote its effective rank by $\rbar=r-\norm{\delta^-_J}$. We write $T_J$ for its universal tract, $P_J$ for its universal pasture, and $F_J$ for its foundation. 

Since the canonical maps $F_J\to T_J$ and $P_J \to T_J$ are both injective (cf.\ \autoref{thm: bijection between the universal pasture and the universal tract}), a morphism $T_J\to T_{J'}$ between the universal tracts of two M-convex sets $J$ and $J'$ restricts to at most one morphism $P_J\to P_{J'}$ between the respective universal pastures and to at most one morphism $F_J\to F_{J'}$ between the respective foundations. For this reason, we begin with the description of the morphism $T_J\to T_{J'}$ in the following results.

%%%%%%%%%%%%%%%%%%%%%%%%%%%%%%%%%%%%%%%%%%%%%%%%%%%%%%%%%%%%%%%%%%%%%%%%%%%%%%%%%%%%%%%%%%%%%%%%%%%%%%%%%%%%%%
\subsection{Minor embeddings}
\label{subsection: representations of minor embeddings}

When we say that $J\minor\nu\mu+\tau$ is an embedded minor of $J$ in the following, we assume that $\nu,\mu\in\N^n$ and $\tau\in\Z^n$ with $\tau\geq-\delta^-_J$, and that there is an $\alpha\in J$ such that $\mu+\delta^-_J\leq\alpha\leq\delta_{J/\mu}-\nu$.

Recall from \autoref{subsection: embedded minors} that the embedded minor $J\minor\nu\mu+\tau$ comes with the minor embedding
\[
 \begin{array}{cccc}
  \iota_{J\minor\nu\mu+\tau}\colon & J\minor{\nu}{\mu}+\tau & \longrightarrow & J \\
                              & \alpha                 & \longmapsto     & \alpha+\mu-\tau.
 \end{array}
\]

\begin{thm}\label{thm: foundations of embedded minors}
 Let $J\minor\nu\mu+\tau$ be an embedded minor of $J$. Let $\rbar'=r-\norm\mu-\norm{\delta^-_{J\minor\nu\mu}}$ be the effective rank of $J\minor\nu\mu$. Fix $\bgamma\in[n]^{\rbar'-\rbar}$ with $\sum\bgamma=\delta^-_{J\minor\nu\mu}+\mu-\delta^-_J$. Then the association $x_\bbeta\mapsto x_{\bgamma\bbeta}$ defines a morphism
 \[
  \psi_{J\minor\nu\mu+\tau}\colon \ T_{J\minor\nu\mu+\tau} \ \longrightarrow \ T_J, 
 \]
 which does not depend on the choice of $\bgamma$ and which restricts (uniquely) to morphisms 
 \[
  \psi^w_{J\minor\nu\mu+\tau}\colon \ P_{J\minor\nu\mu+\tau} \ \longrightarrow \ P_J \qquad \text{and} \qquad \varphi_{J\minor\nu\mu+\tau}\colon \ F_{J\minor\nu\mu+\tau} \ \longrightarrow \ F_J.
 \]
 In the case of a translation (i.e., if $\nu=\mu=0$), all three morphisms are isomorphisms.
\end{thm}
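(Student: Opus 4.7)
The plan is to construct the morphism $\psi_{J\minor\nu\mu+\tau}$ via a Yoneda argument. Given a tract $F$ and a strong $F$-representation $\brho\colon[n]^\rbar\to F$ of $J$, I define
\[
 \brho'\colon [n]^{\rbar'} \ \longrightarrow \ F, \qquad \brho'(\bbeta) \ = \ \brho(\bgamma\bbeta).
\]
First I would verify that $\brho'$ is a strong $F$-representation of $J\minor\nu\mu+\tau$. The support axiom \ref{SR1} follows from the identity $\Sigma(\bgamma\bbeta)+\delta^-_J = \Sigma\bbeta + \delta^-_{J\minor\nu\mu} + \mu$, which shows that $\Sigma(\bgamma\bbeta)\in\Jbar$ if and only if $\Sigma\bbeta\in\overline{J\minor\nu\mu}$; antisymmetry \ref{SR2} is inherited directly from $\brho$.

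The main technical obstacle is the Pl\"ucker axiom \ref{SR3}: each Pl\"ucker relation for $J\minor\nu\mu+\tau$ at parameters $(\balpha,i_0,\dotsc,i_s,j_2,\dotsc,j_s)$ must pull back under $\brho'$ to a valid Pl\"ucker relation for $\brho$ at the prepended parameters $(\bgamma\balpha,i_0,\dotsc,i_s,j_2,\dotsc,j_s)$. Since translation preserves widths, this reduces to the inequality $\Sigma\bgamma + \omega_{J\minor\nu\mu} \leq \omega_J$, equivalently $\delta^+_{J\minor\nu\mu} \leq \delta^+_J - \mu$, which follows by combining the contraction bound $\delta^+_{J/\mu} + \mu \leq \delta^+_J$ and the deletion bound $\delta^+_{J\setminus\nu} = \delta^+_J - \nu$ from \autoref{prop: bounds on the duality vectors of minors}, invoked sequentially along the factorization of \autoref{prop: contraction and deletion commute}. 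Having verified that $\brho\mapsto\brho'$ is natural in $F$, \autoref{prop: universal property of the universal tract} and the Yoneda lemma yield a tract morphism $\hat\psi\colon\widehat T_{J\minor\nu\mu+\tau}\to\widehat T_J$ sending $x_\bbeta\mapsto x_{\bgamma\bbeta}$. A different choice $\bgamma'$ with $\Sigma\bgamma' = \Sigma\bgamma$ differs from $\bgamma$ by a permutation $\sigma$ of entries, and by \ref{SR2} the two maps differ by the sign $\sign(\sigma)$ on each generator; this sign cancels on degree-$0$ ratios $x_{\bbeta_1}/x_{\bbeta_2}$, so the restriction $\psi_{J\minor\nu\mu+\tau}\colon T_{J\minor\nu\mu+\tau}\to T_J$ to the degree-$0$ subtract is independent of $\bgamma$.

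Applying the same construction to weak representations (the $3$-term Pl\"ucker relations are a special case of \ref{SR3}, handled by the same width argument) yields a morphism $\hat\psi^w\colon\widehat P_{J\minor\nu\mu+\tau}\to\widehat P_J$; its restrictions to the degree-$0$ and multi-degree-$0$ subtracts give the desired $\psi^w_{J\minor\nu\mu+\tau}$ and $\varphi_{J\minor\nu\mu+\tau}$. Uniqueness of these restrictions follows from the injections $F_J\hookrightarrow P_J\hookrightarrow T_J$ provided by \autoref{thm: bijection between the universal pasture and the universal tract}. In the translation case $\nu=\mu=0$, one has $\overline{J+\tau}=\Jbar$, $\rbar'=\rbar$, $\omega_{J+\tau}=\omega_J$, and $\bgamma$ is the empty tuple, so the generators and defining relations of all three universal objects coincide literally with those of $J$; hence $\psi$, $\psi^w$, and $\varphi$ are identity maps. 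The main difficulty, as indicated, is the bookkeeping of the width inequality across combined minor and translation operations, but this is a direct application of \autoref{prop: bounds on the duality vectors of minors} and \autoref{prop: contraction and deletion commute}.
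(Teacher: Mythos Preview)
Your Yoneda approach is sound in spirit, but the verification of \ref{SR1} contains a genuine error. You claim that the identity $\Sigma(\bgamma\bbeta)+\delta^-_J=\Sigma\bbeta+\delta^-_{J\minor\nu\mu}+\mu$ implies $\Sigma(\bgamma\bbeta)\in\Jbar$ if and only if $\Sigma\bbeta\in\overline{J\minor\nu\mu}$. Only the backward direction holds in general: the minor embedding $\iota_{J\minor\nu\mu}\colon\alpha\mapsto\alpha+\mu$ is injective into $J$, but its image need not contain every $\beta\in J$ with $\beta\geq\delta^-_{J\minor\nu\mu}+\mu$, because the deletion constraint $\beta\leq\delta^+_J-\nu$ is not enforced. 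Concretely, take $J=\Delta^2_3$, $\nu=\epsilon_3$, $\mu=0$, $\tau=0$. Then $\delta^-_{J\setminus\epsilon_3}=0=\delta^-_J$, so $\bgamma$ is empty and your $\brho'$ equals $\brho$; but $\brho$ has support $\Delta^2_3$, which strictly contains $\overline{J\setminus\epsilon_3}=\Delta^2_3\setminus\{(0,0,2)\}$. Thus $\brho'$ fails \ref{SR1} for the minor and is not an $F$-representation of $J\setminus\epsilon_3$.

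The fix is to truncate: set $\brho'(\bbeta)=\brho(\bgamma\bbeta)$ when $\Sigma\bbeta\in\overline{J\minor\nu\mu}$ and $\brho'(\bbeta)=0$ otherwise. Then \ref{SR1} holds by fiat, and your width argument for \ref{SR3} now does the real work: for $\bbeta$ appearing in a Pl\"ucker relation one has $\Sigma\bbeta\leq\omega_{J\minor\nu\mu}$, and under this bound the ``iff'' you wanted does hold (since $\Sigma\bbeta+\delta^-_{J\minor\nu\mu}+\mu\leq\delta^+_{J\minor\nu\mu}+\mu\leq\delta^+_J-\nu$ forces membership in $J\setminus\nu$, and $\Sigma\bbeta\geq0$ gives the lower bound), so truncation does not alter any term of the relation. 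The paper sidesteps this issue by working directly with the presentation of $\widehat T_J$: it only needs generators to map to units and each Pl\"ucker relation of the minor to map to a Pl\"ucker relation of $J$, and it verifies the support ``iff'' only for tuples subject to the width constraint---precisely where it is true.
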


\begin{proof}
 We can separate the formation of an embedded minor into deletion, contraction, and translation, which allows us to prove the claim for these cases separately. This is easiest for translations: we have $\delta_{J+\tau}^-=\delta^-_J+\tau$ and $\rbar'=\rbar$. Thus the association of the theorem is the identity map $x_\bbeta\mapsto x_\bbeta$. From the shape of the Pl\"ucker relations, it is evident that this map identifies $\widehat T_{J+\tau}$ with $\widehat T_J$ as tautologically isomorphic tracts. This isomorphism preserves (multi)degrees and the $3$-term Pl\"ucker relations, and thus restricts to isomorphisms $T_{J+\tau}\to T_J$, \ $P_{J+\tau}\to P_J$, and $F_{J+\tau}\to F_J$.
 
 As the next case we consider contractions $J/\mu$. By \autoref{prop: bounds on the duality vectors of minors}, $\delta_{J/\mu}=\delta^-_J$, and thus $\Sigma\bgamma=\mu$. We claim that the association $x_\bbeta\mapsto x_{\bgamma\bbeta}$ defines a multiplicative map 
 \[
  \hat\psi \ = \ \hat\psi_{J/\mu}\colon \ \widehat T_{J/\mu} \ \longrightarrow \ \widehat T_J.
 \]

 Firstly note that $x_{\bgamma\bbeta}\neq0$ for $\Sigma\bbeta\in J/\mu$ since $\beta\mapsto\beta+\mu$ defines an injection $J/\mu\to J$. We need to show that $\hat\psi^+\colon\widehat T_{J/\mu}^+\to \widehat T_J^+$ restricts to the respective nullsets, which can be tested on generators, i.e.,\ elements of $T_{J/\mu}^+$ of the form
 \begin{equation} \label{Pl-x}
% \tag*{$\Pl(\balpha|i_0\dotsc i_s|j_2\dotsc j_s)$} \label{Pl-x}
\Pl(\balpha|i_0\dotsc i_s|j_2\dotsc j_s) :=
 \sum_{k=0}^s \ (-1)^k \cdot x_{\balpha i_0\dotsc \widehat{i_k}\dotsc i_s} \cdot x_{\balpha i_k j_2\dotsc j_s}
 \end{equation}
 for $2\leq s\leq \rbar'$, $\balpha\in\Delta^{\rbar'-s}_n$ and $i_0,\dotsc,i_s,j_2,\dotsc,j_s\in[n]$ such that $\Sigma\balpha i_0\dotsc i_s j_2\dotsc j_s\leq\omega_{J/\mu}$. 
 
 Note that every $\alpha\in J$ with $\alpha\geq\delta^-_J+\mu$ is in the image of $\iota_{J/\mu}\to J$. Thus a variable $x_\bbeta$ that appears in \eqref{Pl-x} is nonzero if and only if $x_{\bgamma\bbeta}$ is nonzero in $\widehat T_J$. Further, we have
 \[
  \Sigma \bgamma\balpha i_0\dotsc i_s j_2\dotsc j_s \ \leq \ \omega_{J/\mu} +\mu \ \leq \ \omega_J
 \]
 by \autoref{prop: bounds on the duality vectors of minors}. 
 So the image of \eqref{Pl-x} under $\hat\psi$ is the Pl\"ucker relation
 \begin{equation*}
 \sum_{k=0}^s \ (-1)^k \cdot x_{\bgamma\balpha i_0\dotsc \widehat{i_k}\dotsc i_s} \cdot x_{\bgamma\balpha i_k j_2\dotsc j_s},
 \end{equation*}
 which is in $N_{\widehat T_J}$.  This shows that the map $\hat\psi\colon\widehat T_{J/\mu}\to \widehat T_J$ is a morphism of tracts.
 
 Note that $\psi$ depends on the choice of $\bgamma$ up to a sign, due to axiom \ref{SR2}. Since $\psi$ is degree preserving, it restricts to the tract morphism $\psi_{J/\mu}\colon T_{J/\mu}\to T_J$, which does not depend on the choice of $\bgamma$. From the above argument, it is clear that $\hat\psi^+$ sends $3$-term Pl\"ucker relations to $3$-term Pl\"ucker relations, giving a tract morphism $\psi^w\colon P_{J/\mu}\to P_J$ between the respective universal pastures.
 
 Since $P_{J/\mu}$ is generated by elements of the form $x_{\bbeta}/x_{\bbeta'}$ and the association $x_{\bbeta}/x_{\bbeta'}\mapsto x_{\bgamma\bbeta}/x_{\bgamma\bbeta'}$ is invariant under the rescaling action by $T(F)=(F^\times)^n$, the morphism $\psi^w\colon P_{J/\mu}\to P_J$ restricts to a morphism $\phi\colon F_{J/\mu}\to F_J$ between the respective foundations.

Finally, for deletions $J\setminus\nu$, the argument is analogous to that of contractions. In this case, $\Sigma\bgamma=\delta^-_{J\setminus\nu}-\delta^-_J$. We claim that the association $x_\bbeta\mapsto x_{\bgamma\bbeta}$ defines a tract morphism $\hat\psi\colon\widehat T_{J\setminus\nu}\to\widehat T_J$. 
  
 Since $J\setminus\nu\subseteq J$, $\hat\psi$ maps nonzero elements $x_\bbeta$ of $\widehat{T}_{J\setminus\nu}$ to nonzero elements $x_{\bgamma\bbeta}$ of $\widehat T_J$. We are left with verifying that $\hat\psi^+$ maps the generators
 \begin{equation} \label{Pl-y}
%   \tag*{$\Pl(\balpha|i_0\dotsc i_s|j_2\dotsc j_s)$} \label{Pl-y}
\Pl(\balpha|i_0\dotsc i_s|j_2\dotsc j_s) :=
  \sum_{k=0}^s \ (-1)^k \cdot x_{\balpha i_0\dotsc \widehat{i_k}\dotsc i_s} \cdot x_{\balpha i_k j_2\dotsc j_s}
 \end{equation}
 of the nullset of $\widehat T_{J\setminus\nu}$ to the nullset of $\widehat T_J$. Since 
 \[
  \Sigma\bgamma\balpha i_0\dotsc i_s j_2\dotsc j_s \ \leq \ \delta^-_{J\setminus\nu}-\delta^-_J+\delta^+_{J\setminus\nu}-\delta^-_{J\setminus\nu} \ = \ \delta^+_J-\delta^-_J-\nu
 \]
 by \autoref{prop: bounds on the duality vectors of minors}, we conclude that an element $x_\bbeta$ in \eqref{Pl-y} is nonzero in $\widehat T_{J\setminus\nu}$ if and only if  $x_{\bgamma\bbeta}$ is nonzero in $\widehat T_J$, and that 
 \[\Sigma\bgamma\balpha i_0\dotsc i_s j_2\dotsc j_s\leq\omega_J.\] This shows that $\hat\psi^+$ maps \eqref{Pl-y} to the Pl\"ucker relation
 \begin{equation*}
 \sum_{k=0}^s \ (-1)^k \cdot x_{\bgamma\balpha i_0\dotsc \widehat{i_k}\dotsc i_s} \cdot x_{\bgamma\balpha i_k j_2\dotsc j_s}
 \end{equation*}
 in $N_{\widehat T_J}$. This shows that the map $\hat\psi\colon\widehat T_{J\setminus\nu}\to \widehat T_J$ is a tract morphism. 
 
 The same arguments as in the case of contractions show that $\hat\psi$ induces tract morphisms $\psi\colon T_{J\setminus\nu}\to T_J$, $\psi^w\colon P_{J\setminus\nu}\to P_J$, and $\varphi\colon F_{J\setminus\nu}\to F_J$, which are independent of the choice of $\bgamma$. 
\end{proof}

Let $J\minor\nu\mu+\tau$ be an embedded minor of $J$ and let $F$ be a tract. Applying $\Hom(-,F)$ to the morphisms from \autoref{thm: foundations of embedded minors} yields maps between the respective (strong and weak) thin Schubert cells and realization spaces by \autoref{prop: universal property of the universal tract}, \autoref{prop: universal property of the universal pasture}, and \autoref{prop: universal property of the foundation}, respectively:
\begin{align*}
 \psi_{J\minor\nu\mu+\tau}\colon \ & T_{J\minor\nu\mu+\tau} \ \longrightarrow \ T_J & \text{yields} \quad \psi^\ast_{J\minor\nu\mu+\tau}\colon & \Gr_J(F) \ \longrightarrow \ \Gr_{J\minor\nu\mu+\tau}(F);\\
 \psi^w_{J\minor\nu\mu+\tau}\colon \ & P_{J\minor\nu\mu+\tau} \ \longrightarrow \ P_J & \text{yields} \quad \psi^{w,\ast}_{J\minor\nu\mu+\tau}\colon & \Gr^w_J(F) \ \longrightarrow \ \Gr^w_{J\minor\nu\mu+\tau}(F);\\
 \varphi_{J\minor\nu\mu+\tau}\colon \ & F_{J\minor\nu\mu+\tau} \ \longrightarrow \ F_J & \text{yields} \quad \varphi^\ast_{J\minor\nu\mu+\tau}\colon & \ulineGr^w_J(F) \ \longrightarrow \ \ulineGr^w_{J\minor\nu\mu+\tau}(F).\\
\end{align*}
We denote the image of a class $[\brho]$ in a (strong or weak) thin Schubert cell or in the realization space under the corresponding map by $[\brho]\minor\nu\mu+\tau$, and call it an \emph{embedded minor of $[\brho]$}.

%%%%%%%%%%%%%%%%%%%%%%%%%%%%%%%%%%%%%%%%%%%%%%%%%%%%%%%%%%%%%%%%%%%%%%%%%%%%%%%%%%%%%%%%%%%%%%%%%%%%%%%%%%%%%%
\subsection{Change of coordinates}

In this section, we show that combinatorially equivalent M-convex sets have isomorphic universal tracts and foundations. We have established this already for translations in \autoref{thm: foundations of embedded minors}. 

\begin{prop}\label{prop: foundations of coordinate inclusions}
 Let $\iota_n\colon\N^n\to\N^{n+1}$ be the embedding into the first $n$ coordinates, $J'=\iota_n(J)$ and $\biota_n\colon[n]^r\to[n+1]^r$ the tautological embedding. Then the association $x_\balpha\mapsto x_{\biota_n(\balpha)}$ defines an isomorphism $T_J\to T_{J'}$, which restricts (uniquely) to isomorphisms $P_J\to P_{J'}$ and $F_J\to F_{J'}$.
\end{prop}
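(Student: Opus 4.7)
The plan is to lift the displayed formula on generators to an isomorphism $\widehat T_J \to \widehat T_{J'}$ of extended universal tracts and then descend to the three subtracts $T_J$, $P_J$, and $F_J$. The key observation is that since $J' = \iota_n(J)$, every element of $J'$ has vanishing $(n+1)$-th coordinate, so $\delta^\pm_{J',n+1}=0$, $\omega_{J',n+1}=0$, and $\Jbar' = \iota_n(\Jbar)$; in particular, the effective ranks agree: $\rbar' = \rbar$. Therefore a tuple $\bbeta \in [n+1]^\rbar$ lies in $\bJ'$ if and only if none of its coordinates equals $n+1$ (otherwise $(\Sigma\bbeta)_{n+1} \geq 1 > 0 = \delta^+_{\Jbar',n+1}$), so $\biota_n$ induces a bijection $\bJ \to \bJ'$.

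Next, I would verify that this bijection on generators identifies the defining Pl\"ucker relations. A generator of $N_{\widehat T_{J'}}$ has the form
\[
\sum_{k=0}^s (-1)^k \cdot x_{\balpha' i_0\dotsc\widehat{i_k}\dotsc i_s}\cdot x_{\balpha' i_k j_2\dotsc j_s}
\]
for $\balpha' \in [n+1]^{\rbar-s}$ and $i_0,\dotsc,i_s, j_2,\dotsc,j_s \in [n+1]$ subject to $\Sigma\balpha' i_0\dotsc i_s j_2\dotsc j_s \leq \omega_{J'}$. Since $\omega_{J',n+1} = 0$, this bound forces every index of $\balpha'$ and every $i_k, j_k$ to lie in $[n]$, and since $\omega_{J'} = \iota_n(\omega_J)$, the bound itself reduces to the corresponding bound for $\omega_J$ under $\biota_n^{-1}$. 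Thus the Pl\"ucker relations for $\widehat T_{J'}$ correspond bijectively to those for $\widehat T_J$, which yields the isomorphism $\widehat T_J \to \widehat T_{J'}$. The same argument, restricted to the case $s=2$, shows that the $3$-term Pl\"ucker relations transport, giving an analogous isomorphism $\widehat P_J \to \widehat P_{J'}$.

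Finally, I would descend to the subtracts. Since $\biota_n$ preserves tuple length, the isomorphism $\widehat T_J \to \widehat T_{J'}$ preserves the grading $\deg(x_\bbeta) = 1$ and thus restricts to an isomorphism $T_J \to T_{J'}$; similarly for $P_J \to P_{J'}$. For the foundation, note that under the canonical inclusion $\iota_n \colon \Z^n \hookrightarrow \Z^{n+1}$ (whose image is $\Z^n \times \{0\}$), one has $\deg_{[n+1]}(x_{\biota_n(\bbeta)}) = \iota_n(\Sigma\bbeta) = \iota_n(\deg_{[n]}(x_\bbeta))$, so the multidegree of an element of $\widehat P_J$ vanishes if and only if the multidegree of its image in $\widehat P_{J'}$ vanishes, yielding the isomorphism $F_J \to F_{J'}$. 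Uniqueness of the restrictions follows from the inclusions $F_J \hookrightarrow P_J$ and the bijection $P_J \to T_J$ of \autoref{thm: bijection between the universal pasture and the universal tract}. The main ``obstacle'' is purely bookkeeping: adding an inert coordinate (which never appears in any $\beta \in J'$) does not introduce any new Pl\"ucker relations, and the bounds $\leq \omega_{J'}$ automatically restrict attention to the original indices in $[n]$.
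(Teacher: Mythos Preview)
Your proof is correct and follows essentially the same approach as the paper: lift the assignment on generators to an isomorphism of extended universal tracts by matching Pl\"ucker relations (using $\omega_{J'}=\iota_n(\omega_J)$), then descend to $T_J$, $P_J$, and $F_J$ via preservation of degree, $3$-term relations, and multidegree respectively. Your explicit observation that $\omega_{J',n+1}=0$ forces all indices in a $\widehat T_{J'}$-relation to lie in $[n]$ makes the bijectivity of the relation correspondence a bit more transparent than the paper's version.
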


\begin{proof}
 We claim that the association $x_\balpha\mapsto x_{\biota_n(\balpha)}$ defines an isomorphism of tracts $\hat\eta\colon\widehat T_J\to\widehat T_{J'}$. Since $\iota_n\colon J\to J'$ is a bijection, $x_\balpha$ is nonzero in $\widehat T_J$ if and only if $x_{\biota_n(\balpha)}$ is nonzero in $\widehat T_{J'}$. The image of the Pl\"ucker relation
 \[
  \sum_{k=0}^s \ (-1)^k \cdot x_{\balpha i_0\dotsc \widehat{i_k}\dotsc i_s} \cdot x_{\balpha i_k j_2\dotsc j_s}
 \]
 in $N_{\widehat T_J}$ under $\hat\eta^+$ is the Pl\"ucker relation
 \[
  \sum_{k=0}^s \ (-1)^k \cdot x_{\biota_n(\balpha i_0\dotsc \widehat{i_k}\dotsc i_s)} \cdot x_{\biota_n(\balpha i_k j_2\dotsc j_s)}
 \]
 in $N_{\widehat T_{J'}}$, where we note that $\omega_{J'}=\iota_n(\omega_J)$ and thus $\Sigma\balpha i_0\dotsc i_sj_2\dotsc j_s\leq\omega_J$ if and only if $\Sigma\biota_n(\balpha i_0\dotsc i_sj_2\dotsc j_s)\leq\omega_{J'}$. This verifies that $\hat\eta:\widehat T_J\to \widehat T_{J'}$ is an isomorphism of tracts. It is clearly degree preserving and thus restricts to an isomorphism $T_J\to T_{J'}$, as well as isomorphisms $P_J\to P_{J'}$ (since it preserves 3-term Pl\"ucker relations) and $F_J\to F_{J'}$ (since it preserves the multi degree).
\end{proof}

\begin{prop}\label{prop: foundations of permutations}
 Let $\sigma\colon\N^n\to\N^n$ be a coordinate permutation, $\bsigma\colon[n]^r\to[n]^r$ the induced bijection, and $J'=\sigma(J)$. Then the association $x_\balpha\mapsto x_{\bsigma(\balpha)}$ defines an isomorphism $T_J\to T_{J'}$, which restricts (uniquely) to isomorphisms $P_J\to P_{J'}$ and $F_J\to F_{J'}$.
\end{prop}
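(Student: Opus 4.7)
The plan is to mimic the proof of \autoref{prop: foundations of coordinate inclusions} essentially verbatim, since the coordinate permutation $\sigma$ plays the same role as the coordinate inclusion $\iota_n$: it is a bijection on $[n]$ that transports all the combinatorial data (supports, widths, Pl\"ucker relations) between $J$ and $J'$.

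First, I would define a candidate tract morphism $\hat\eta\colon \widehat T_J\to\widehat T_{J'}$ by $x_\balpha\mapsto x_{\bsigma(\balpha)}$ and verify that it is a well-defined multiplicative map with $\hat\eta(0)=0$ and $\hat\eta(1)=1$. Since $\sigma\colon J\to J'$ is a bijection, we have $\Sigma\balpha\in\Jbar$ if and only if $\Sigma\bsigma(\balpha)\in\overline{J'}$ (using $\delta^\pm_{J'}=\sigma(\delta^\pm_J)$, so $\overline{J'}=\sigma(\Jbar)$), which shows $\hat\eta$ sends nonzero generators to nonzero generators.

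Next, I would check that $\hat\eta^+$ maps the nullset $N_{\widehat T_J}$ into $N_{\widehat T_{J'}}$. It suffices to do this on the Pl\"ucker generators. Applying $\hat\eta^+$ to
\[
 \sum_{k=0}^s (-1)^k \cdot x_{\balpha i_0\dotsc \widehat{i_k}\dotsc i_s} \cdot x_{\balpha i_k j_2\dotsc j_s}
\]
yields the formal sum
\[
 \sum_{k=0}^s (-1)^k \cdot x_{\bsigma(\balpha)\sigma(i_0)\dotsc \widehat{\sigma(i_k)}\dotsc \sigma(i_s)} \cdot x_{\bsigma(\balpha)\sigma(i_k)\sigma(j_2)\dotsc\sigma(j_s)},
\]
which is a Pl\"ucker relation for $J'$ indexed by $(\bsigma(\balpha),\sigma(i_0),\dotsc,\sigma(i_s),\sigma(j_2),\dotsc,\sigma(j_s))$. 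The side conditions translate directly: since $\omega_{J'}=\sigma(\omega_J)$, the inequality $\Sigma\balpha i_0\dotsc i_sj_2\dotsc j_s\leq\omega_J$ holds if and only if $\Sigma\bsigma(\balpha i_0\dotsc i_sj_2\dotsc j_s)\leq\omega_{J'}$. Note that since $\bsigma$ acts entrywise and does not reorder the tuple, there is no sign issue coming from \ref{SR2}. Applying the same argument to $\sigma^{-1}$ yields an inverse morphism $\widehat T_{J'}\to\widehat T_J$, so $\hat\eta$ is an isomorphism of tracts.

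Finally, since $\hat\eta$ sends $x_\balpha$ to $x_{\bsigma(\balpha)}$, it preserves both the $\Z$-grading $\deg$ (each generator has degree $1$) and the multi-grading $\deg_{[n]}$ after identifying $\N^n$ with itself via $\sigma$; in fact, $\deg_{[n]}\circ\hat\eta=\sigma\circ\deg_{[n]}$, where we view $\sigma$ as a permutation of $\Z^n$, so elements of multi-degree $0$ are sent to elements of multi-degree $0$. Moreover, $\hat\eta^+$ clearly sends $3$-term Pl\"ucker relations to $3$-term Pl\"ucker relations. Hence $\hat\eta$ restricts to isomorphisms $T_J\to T_{J'}$, $P_J\to P_{J'}$, and $F_J\to F_{J'}$, and these restrictions are unique because the canonical maps $F_J\hookrightarrow P_J\hookrightarrow T_J$ are injective (\autoref{thm: bijection between the universal pasture and the universal tract}). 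No serious obstacle is expected; the only thing to be careful about is keeping track of the definition of $\bsigma$ so that one does not accidentally incur a sign from \ref{SR2}.
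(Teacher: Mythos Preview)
Your proposal is correct and follows essentially the same approach as the paper's proof: both define the map $\hat\eta$ (the paper calls it $\hat\tau$) on the extended universal tract via $x_\balpha\mapsto x_{\bsigma(\balpha)}$, verify it is a tract isomorphism by checking that Pl\"ucker relations map to Pl\"ucker relations using $\omega_{J'}=\sigma(\omega_J)$, and then restrict to $T_J$, $P_J$, and $F_J$ via preservation of degree, $3$-term relations, and multi-degree.
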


\begin{proof}
 We claim that the association $x_\balpha\mapsto x_{\bsigma(\balpha)}$ defines an isomorphism of tracts $\hat\tau\colon\widehat T_J\to\widehat T_{J'}$. Since $\sigma\colon J\to J'$ is a bijection, $x_\balpha$ is nonzero in $\widehat T_J$ if and only if $x_{\bsigma(\balpha)}$ is nonzero in $\widehat T_{J'}$. The image of the Pl\"ucker relation
 \[
  \sum_{k=0}^s \ (-1)^k \cdot x_{\balpha i_0\dotsc \widehat{i_k}\dotsc i_s} \cdot x_{\balpha i_k j_2\dotsc j_s}
 \]
 in $N_{\widehat T_J}$ under $\hat\tau^+$ is the Pl\"ucker relation
 \[
  \sum_{k=0}^s \ (-1)^k \cdot x_{\bsigma(\balpha i_0\dotsc \widehat{i_k}\dotsc i_s)} \cdot x_{\bsigma(\balpha i_k j_2\dotsc j_s)}
 \]
 in $N_{\widehat T_{J'}}$, where we note that $\omega_{J'}=\sigma(\omega_J)$ and thus $\Sigma\balpha i_0\dotsc i_sj_2\dotsc j_s\leq\omega_J$ if and only if $\Sigma\bsigma(\balpha i_0\dotsc i_sj_2\dotsc j_s)\leq\omega_{J'}$. This verifies that $\hat\tau\colon\widehat T_J\to \widehat T_{J'}$ is an isomorphism of tracts. It is clearly degree preserving and thus restricts to an isomorphism $T_J\to T_{J'}$, as well as isomorphisms $P_J\to P_{J'}$ (since it preserves 3-term Pl\"ucker relations) and $F_J\to F_{J'}$ (since it preserves the multi degree). 
\end{proof}

Applying $\Hom(-,F)$ to these isomorphisms yields due to \autoref{prop: universal property of the universal tract}, \autoref{prop: universal property of the universal pasture} and \autoref{prop: universal property of the foundation} canonical bijections
\begin{align*}
 \Gr_{\iota_n(J)}(F) \ &\simeq \ \Gr_{J}(F), & \Gr_{\sigma(J)}(F) \ &\simeq \ \Gr_{J}(F), \\
 \Gr^w_{\iota_n(J)}(F) \ &\simeq \ \Gr^w_{J}(F), & \Gr^w_{\sigma(J)}(F) \ &\simeq \ \Gr^w_{J}(F), \\
 \ulineGr^w_{\iota_n(J)}(F) \ &\simeq \ \ulineGr^w_{J}(F), & \ulineGr^w_{\sigma(J)}(F) \ &\simeq \ \ulineGr^w_{J}(F)
\end{align*}
between the respective (strong and weak) thin Schubert cells and realization spaces.

\begin{cor}\label{cor: functoriality of representations space in polymatroid embeddings}
 The spaces and maps
 \[
  \begin{tikzcd}[column sep=40pt]
   \Gr_J(F) \ar[>->,r] & \Gr^w_J(F) \ar[->>,r] & \ulineGr^w_J(F)
  \end{tikzcd}
 \]
 are functorial in tract morphisms $F\to F'$ and polymatroid embeddings $J\to J'$.
\end{cor}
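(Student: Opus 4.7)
The plan is to package the individual results on minor embeddings, coordinate inclusions, and permutations of variables into a single functorial statement. Concretely, for each class of elementary polymatroid embedding $\iota \colon J \to J'$ (minor embedding, permutation, extension or restriction of variables), we have already exhibited, in \autoref{thm: foundations of embedded minors}, \autoref{prop: foundations of coordinate inclusions}, and \autoref{prop: foundations of permutations}, a triple of morphisms
\[
 \psi_\iota \colon T_{J} \to T_{J'}, \qquad \psi^w_\iota \colon P_{J} \to P_{J'}, \qquad \varphi_\iota \colon F_{J} \to F_{J'},
\]
where $\psi^w_\iota$ is the restriction of $\psi_\iota$ to $P_J \subseteq T_J$ (via the canonical embeddings supplied by \autoref{thm: bijection between the universal pasture and the universal tract}) and $\varphi_\iota$ is the further restriction to the degree-zero subtract $F_J$. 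The first task is to verify that each $\psi_\iota$ is uniquely determined on generators $x_\bbeta$ by an explicit rule (substitution $x_\bbeta \mapsto x_{\bgamma\bbeta}$ for minors, $x_\bbeta \mapsto x_{\biota_n(\bbeta)}$ for coordinate extension, $x_\bbeta \mapsto x_{\bsigma(\bbeta)}$ for permutation), so that the composition of any two such maps is again of the same form and no ambiguity arises from the choice of auxiliary data (such as the tuple $\bgamma$ in the minor case).

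Next, I would define $\psi_\iota$, $\psi^w_\iota$, $\varphi_\iota$ for a general polymatroid embedding $\iota = \iota_k \circ \cdots \circ \iota_1$ by composing the elementary morphisms in the opposite order. One must check this is well-defined, i.e.~independent of the chosen factorization of $\iota$ into elementary embeddings. Since each elementary morphism is characterized by how it acts on the generators $x_\bbeta$, and this action is dictated by the underlying set-theoretic embedding $\iota \colon J \to J'$ (together, in the minor case, with a translation that is uniquely pinned down by $\delta^-_J$, $\delta^-_{J'}$, and $\mu$), well-definedness reduces to the observation that two different factorizations produce the same substitution rule on generators $x_\bbeta \mapsto x_{\bgamma_\iota\bbeta}$, where $\Sigma\bgamma_\iota$ equals the global translation vector $\delta^-_{J'} - \iota(\delta^-_J) \in \N^{n'}$ induced by $\iota$.

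Having established the contravariant assignment $\iota \mapsto (\psi_\iota,\psi^w_\iota,\varphi_\iota)$, applying $\Hom(-,F)$ and invoking the universal properties from \autoref{prop: universal property of the universal tract}, \autoref{prop: universal property of the universal pasture}, and \autoref{prop: universal property of the foundation} produces covariant-in-$F$, contravariant-in-$\iota$ maps
\[
 \iota^\ast \colon \Gr_{J'}(F) \to \Gr_{J}(F), \qquad \iota^{w,\ast} \colon \Gr^w_{J'}(F) \to \Gr^w_{J}(F), \qquad \underline\iota^\ast \colon \ulineGr^w_{J'}(F) \to \ulineGr^w_{J}(F).
\]
Functoriality in $F$ is automatic since $\Hom(-,F)$ is functorial and the universal bijections $\Phi_{J,F}$, $\overline\Phi_{J,F}$, $\underline\Phi_{J,F}$ are themselves functorial in $F$. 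The compatibility of the horizontal arrows $\Gr_J(F) \hookrightarrow \Gr^w_J(F) \twoheadrightarrow \ulineGr^w_J(F)$ with $\iota^\ast$ follows because the defining embeddings $F_J \hookrightarrow P_J \hookrightarrow T_J$ commute strictly with the morphisms $\varphi_\iota$, $\psi^w_\iota$, $\psi_\iota$ by construction.

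The only genuine obstacle is the well-definedness under composition in the minor case: the auxiliary tuple $\bgamma$ fixing the substitution $x_\bbeta \mapsto x_{\bgamma\bbeta}$ is not canonical, but \autoref{thm: foundations of embedded minors} asserts that the resulting morphism does not depend on the choice of $\bgamma$ (because different choices of $\bgamma$ differ by a sign from axiom \ref{SR2}, which is killed upon restricting to the degree-preserving subtract, and the whole construction is degree-preserving anyway). It then follows from bookkeeping of the translation vectors via \autoref{prop: contraction and deletion commute} and \autoref{lemma: translations commute with contractions and deletions} that two factorizations of the same composite polymatroid embedding produce the same morphism on universal tracts, completing the proof.
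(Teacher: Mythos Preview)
Your proposal is correct and follows the same approach as the paper, citing precisely \autoref{thm: foundations of embedded minors}, \autoref{prop: foundations of coordinate inclusions}, and \autoref{prop: foundations of permutations} for the polymatroid-embedding direction and the earlier subsections for functoriality in $F$. The paper's own proof is two sentences long and simply invokes these references without further comment.

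Where you go beyond the paper is in worrying about well-definedness under different factorizations of a general polymatroid embedding into elementary ones. The paper does not address this point at all; it implicitly treats the corollary as a direct repackaging of the three cited results. Your extra care here is reasonable, since the definition of polymatroid embedding is as a \emph{map} that admits some factorization, not as a map equipped with one. Your argument that the induced morphism on universal tracts is determined by the substitution $x_\bbeta \mapsto x_{\bgamma_\iota\bbeta}$ with $\Sigma\bgamma_\iota$ recoverable from $\iota$ itself is the right idea, though the bookkeeping via \autoref{prop: contraction and deletion commute} and \autoref{lemma: translations commute with contractions and deletions} would need to be spelled out if one wanted a fully rigorous treatment. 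In short: you and the paper agree on the substance, and you are being more scrupulous about a point the paper leaves implicit.
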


\begin{proof}
 The functoriality in $F$ has been established in \autoref{subsection: functoriality} and \autoref{subsection: the realization space}. The functoriality in polymatroid embeddings follows from \autoref{thm: foundations of embedded minors}, \autoref{prop: foundations of coordinate inclusions}, and \autoref{prop: foundations of permutations}.
\end{proof}

\begin{cor}\label{cor: combinatorially equivalent polymatroids have isomorphic foundations}
 Two combinatorially equivalent M-convex sets have isomorphic universal tract, universal pastures, and foundations.
\end{cor}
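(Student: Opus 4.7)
The plan is to reduce the statement to the case of a single elementary equivalence and then invoke the isomorphism results already established for each type. Recall from \autoref{df: type of a polymatroid} that two M-convex sets $J$ and $J'$ are combinatorially equivalent precisely when there is a finite chain
\[
 J \ = \ J_0 \ \sim \ J_1 \ \sim \ \dotsb \ \sim \ J_\ell \ = \ J',
\]
where each $\sim$ is one of four elementary equivalences: a translation $J_{i+1}=J_i+\tau$ (with $\tau\geq-\delta^-_{J_i}$), a coordinate permutation $J_{i+1}=\iota_\sigma(J_i)$, an extension $J_{i+1}=\iota_n(J_i)$, or a restriction (the inverse of an extension). Since isomorphisms of tracts compose and invert, it suffices to produce, for each elementary equivalence $J_i\sim J_{i+1}$, isomorphisms $T_{J_i}\simeq T_{J_{i+1}}$, $P_{J_i}\simeq P_{J_{i+1}}$, and $F_{J_i}\simeq F_{J_{i+1}}$ that are compatible with the inclusions $F_J\hookrightarrow P_J\hookrightarrow T_J$.

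For translations, \autoref{thm: foundations of embedded minors} applied with $\nu=\mu=0$ provides exactly this: the induced morphisms $\psi_{J+\tau}$, $\psi^w_{J+\tau}$, and $\varphi_{J+\tau}$ are isomorphisms, and by construction the first restricts uniquely to the second, and the second to the third. For coordinate permutations, \autoref{prop: foundations of permutations} yields an isomorphism $T_{J_i}\to T_{\sigma(J_i)}$ which restricts to isomorphisms of universal pastures and foundations. For extensions of variables, \autoref{prop: foundations of coordinate inclusions} gives the analogous statement; since the bijective polymatroid embedding $\iota_n$ has inverse given by restriction, the same isomorphism (read backwards) handles the restriction case.

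Composing these isomorphisms along the chain $J_0\sim\dotsb\sim J_\ell$ produces isomorphisms $T_J\simeq T_{J'}$, $P_J\simeq P_{J'}$, and $F_J\simeq F_{J'}$, proving the corollary. There is no real obstacle here; the work has been done in the preceding results, and what remains is the purely formal observation that a chain of isomorphisms composes to an isomorphism. The only mild subtlety to flag is the uniqueness of the restrictions $T_J\to T_{J'}\rightsquigarrow P_J\to P_{J'}\rightsquigarrow F_J\to F_{J'}$, which follows from the injectivity of the canonical maps $F_J\hookrightarrow P_J\hookrightarrow T_J$ (using \autoref{thm: bijection between the universal pasture and the universal tract}); this guarantees that the compositions of restrictions agree with the restrictions of the compositions, so the three levels of isomorphism are produced coherently.
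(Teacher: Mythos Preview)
Your proof is correct and follows essentially the same approach as the paper: reduce to elementary equivalences, invoke \autoref{thm: foundations of embedded minors}, \autoref{prop: foundations of coordinate inclusions}, and \autoref{prop: foundations of permutations} for each type, and compose. The paper's proof is just a two-sentence version of what you wrote; your additional remarks on coherence of the restrictions are not needed for the bare statement but do no harm.
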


\begin{proof}
 By \autoref{thm: foundations of embedded minors}, \autoref{prop: foundations of coordinate inclusions}, and \autoref{prop: foundations of permutations}, two elementary equivalent M-convex sets have isomorphic universal tract, universal pastures, and foundations. The result follows by composing such isomorphisms.
\end{proof}

%%%%%%%%%%%%%%%%%%%%%%%%%%%%%%%%%%%%%%%%%%%%%%%%%%%%%%%%%%%%%%%%%%%%%%%%%%%%%%%%%%%%%%%%%%%%%%%%%%%%%%%%%%%%%%
\subsection{Duality}

Let $\omega_J=\delta^-_J-\delta^+_J$ be the width of $J$ and $d=\norm{\omega_J}$. For $\bbeta\in[n]^d$, we define the \emph{signature of $\bbeta$} as
\[
 \sign\bbeta \ = \ \sign \sigma \ \in \ T_J,
\]
where $\sigma\in S_{d}$ is a permutation such that $\bbeta_{\sigma(1)}\leq\dotsc\leq\bbeta_{\sigma(d)}$. The signature of $\bbeta$ is well-defined, since:

\begin{itemize}
\item $\sigma$ is uniquely determined by strict inequalities between the $\bbeta_i$ if $J$ is the translate of a matroid;
\item otherwise, $1=-1$ by the idempotency principle (\autoref{prop: idempotency principle}), and thus $\sign\bbeta=1$, independently of the choice of $\sigma$. 
\end{itemize}

\begin{thm}\label{thm: foundations of duals}
 Let $J$ be M-convex, $\rbar$ its effective rank, and $d=\norm{\omega_J}$. The association $x_{\bbeta}\mapsto \sign(\bbeta\bbeta^\ast)\cdot x_{\bbeta^\ast}$, where $\bbeta^\ast\in[n]^{d-\rbar}$ satisfies $\Sigma\bbeta^\ast=\delta_J-\Sigma\bbeta$, defines an isomorphism $T_J\to T_{J^\ast}$, which is independent of the choices of the $\bbeta$'s and which restricts (uniquely) to isomorphisms $P_J\to P_{J^\ast}$ and $F_J\to F_{J^\ast}$.
\end{thm}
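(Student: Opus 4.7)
The plan is to extend the given formula to a tract morphism $\hat\psi \colon \widehat T_J \to \widehat T_{J^\ast}$ between the extended universal tracts, verify that it is an isomorphism, and then show that it is compatible with the gradings so that it restricts to $T_J \to T_{J^\ast}$, $P_J \to P_{J^\ast}$, and $F_J \to F_{J^\ast}$.

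First I would verify the existence and uniqueness (up to permutation) of $\bbeta^\ast$. Since $\Sigma\bbeta \in \Jbar$, the vector $\omega_J - \Sigma\bbeta$ lies in $\omega_J - \Jbar$, which is the reduction of $J^\ast$ (using $J^\ast = \delta_J - J$ and $\delta^-_{J^\ast} = \delta^-_J$ from \autoref{lemma: dual}); hence a tuple $\bbeta^\ast \in [n]^{d-\rbar}$ with $\Sigma(\bbeta\bbeta^\ast) = \omega_J$ indexes a generator of $\widehat T_{J^\ast}$. Two such choices of $\bbeta^\ast$ differ by a permutation whose sign is absorbed into $\sign(\bbeta\bbeta^\ast)$, and a permutation of the entries of $\bbeta$ is absorbed via axiom \ref{SR2}. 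Thus $\hat\psi(x_\bbeta) = \sign(\bbeta\bbeta^\ast)\, x_{\bbeta^\ast}$ depends only on $\bbeta$ and extends to a well-defined multiplicative map of pointed groups.

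The main obstacle is showing that $\hat\psi^+$ maps the null set of $\widehat T_J$ into that of $\widehat T_{J^\ast}$, and it suffices to verify this on the generating Plucker relations
\[
\Pl(\balpha|i_0\dotsc i_s|j_2\dotsc j_s) \;=\; \sum_{k=0}^s (-1)^k\, x_{\balpha i_0\dotsc \widehat{i_k}\dotsc i_s} \cdot x_{\balpha i_k j_2\dotsc j_s}.
\]
The key observation is that, when one takes duals, the two factors of each term share a common dual prefix $\balpha^\ast$ of length $(d - \rbar) - s$, and the remaining $s+1$ indices of each factor form another Plucker tuple for $J^\ast$. A direct calculation shows that after dividing by a common unit factor, the image of $\Pl(\balpha|i_0\dotsc i_s|j_2\dotsc j_s)$ under $\hat\psi^+$ is, up to sign, the Plucker relation $\Pl(\balpha^\ast|j_2\dotsc j_s i_0|i_1\dotsc i_s)$ for $J^\ast$, where $\Sigma\balpha^\ast = \omega_J - \Sigma\balpha - \sum_\ell \epsilon_{i_\ell} - \sum_\ell \epsilon_{j_\ell}$. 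The delicate step is the sign bookkeeping: in the proper polymatroid case \autoref{prop: idempotency principle} makes all signs irrelevant, and in the matroid-translate case (reducing via \autoref{thm: foundations of embedded minors} to a genuine matroid) the identity matches the classical sign identity for complementary Plucker coordinates on the Grassmannian.

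Applying the same construction with $J$ and $J^\ast$ interchanged, which is legitimate since $J^{\ast\ast} = J$ by \autoref{lemma: dual}, produces a tract morphism $\hat\psi^\ast \colon \widehat T_{J^\ast} \to \widehat T_J$; a direct sign computation using $\sign(\bbeta\bbeta^\ast) \cdot \sign(\bbeta^\ast\bbeta) = (-1)^{\rbar(d-\rbar)}$ shows that $\hat\psi^\ast \circ \hat\psi$ is the identity, so $\hat\psi$ is an isomorphism. Because every generator $x_\bbeta$ maps to $\pm x_{\bbeta^\ast}$ and both have degree one, $\hat\psi$ is grading preserving and restricts to the isomorphism $\psi \colon T_J \to T_{J^\ast}$; the specialization $s = 2$ of the Plucker argument gives that $3$-term Plucker relations go to $3$-term Plucker relations, hence the restriction $\psi^w \colon P_J \to P_{J^\ast}$. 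For the foundation: for any monomial $a = \prod x_{\bbeta_i}^{a_i}$, one has $\deg_{[n]}(\hat\psi(a)) = \deg(a)\cdot \omega_J - \deg_{[n]}(a)$; summing coordinates gives $\rbar \cdot \deg(a) = \sum_k \deg_{[n]}(a)_k$, so $\deg_{[n]}(a) = 0$ forces $\deg(a) = 0$ (when $\rbar > 0$; the case $\rbar = 0$ is trivial) and hence $\deg_{[n]}(\hat\psi(a)) = 0$. This confirms the desired restriction $\varphi \colon F_J \to F_{J^\ast}$.
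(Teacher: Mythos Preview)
Your approach is essentially the paper's: extend to a map $\hat\psi\colon\widehat T_J\to\widehat T_{J^\ast}$ on generators, check that Pl\"ucker relations map to Pl\"ucker relations, and then restrict via the gradings. One slip worth fixing: the image relation you write as $\Pl(\balpha^\ast\,|\,j_2\dotsc j_s i_0\,|\,i_1\dotsc i_s)$ has index groups of sizes $s$ and $s$, which does not match the Pl\"ucker format (sizes $s+1$ and $s-1$). The cleaner observation, which is what the paper does, is this: choose $\balpha'\in[n]^{(d-\rbar)-s}$ with $\Sigma\balpha'=\omega_J-\Sigma(\balpha i_0\dotsc i_s j_2\dotsc j_s)$; then for each $k$ the tuples $(\balpha i_0\dotsc\widehat{i_k}\dotsc i_s)(i_k j_2\dotsc j_s\balpha')$ and $(\balpha i_k j_2\dotsc j_s)(i_0\dotsc\widehat{i_k}\dotsc i_s\balpha')$ are both reorderings of $\balpha i_0\dotsc i_s j_2\dotsc j_s\balpha'$, and the product of their two signs is independent of $k$. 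Hence the image of $\Pl(\balpha\,|\,i_0\dotsc i_s\,|\,j_2\dotsc j_s)$ is, up to a global unit, exactly $\Pl(\balpha'\,|\,i_0\dotsc i_s\,|\,j_2\dotsc j_s)$ for $J^\ast$ with the \emph{same} index lists and the dual prefix $\balpha'$. This handles the signs uniformly, so the case split into ``proper polymatroid'' versus ``matroid translate'' is unnecessary. Your explicit multidegree computation $\deg_{[n]}(\hat\psi(a))=\deg(a)\cdot\omega_J-\deg_{[n]}(a)$ for the restriction to $F_J$ is correct and a bit more detailed than the paper's version of that step.
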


\begin{proof}
 Since $\alpha\mapsto\delta_J-\alpha$ defines a bijection $J\to J^\ast$, the association $x_{\bbeta}\mapsto \sign(\bbeta\bbeta^\ast)\cdot x_{\bbeta^\ast}$ with $\Sigma\bbeta^\ast=\delta_J-\Sigma\balpha$ defines a bijection between the variables of $\widehat T_J$ and $\widehat T_{J^\ast}$, where we note that axiom \ref{SR2} identifies $\sign(\bbeta\bbeta^\ast)\cdot x_{\bbeta^\ast}$ with $\sign(\bbeta\tilde\bbeta^\ast)\cdot x_{\tilde\bbeta^\ast}$ if $\tilde\bbeta^\ast$ is another choice of element in $[n]^{d-\rbar}$ with $\Sigma\tilde\bbeta^\ast=\delta_J-\Sigma\balpha$.
 
 Consider the Pl\"ucker relation
 \[
  \sum_{k=0}^s \ (-1)^k \cdot x_{\balpha i_0\dotsc \widehat{i_k}\dotsc i_s} \cdot x_{\balpha i_k j_2\dotsc j_s} \quad \in \quad \widehat T_J
 \]
 with $2\leq s\leq \rbar$, $\balpha\in[n]^{\rbar-s}$, $i_0,\dotsc,i_s,j_,\dotsc,j_s\in[n]$ with $\Sigma\balpha i_0\dotsc i_sj_2\dotsc j_s\leq\omega_J$. Let $\balpha'\in[n]^{d-\rbar-s}$ with $\Sigma\balpha'=\omega_J-\Sigma\balpha i_0\dotsc i_sj_2\dotsc j_s$. Then, evidently, $\Sigma\balpha' i_0\dotsc i_sj_2\dotsc j_s\leq\omega_J$ and the association $x_\bbeta\mapsto \sign(\bbeta\bbeta^\ast)\cdot x_{\bbeta^\ast}$ sends $x_{\balpha i_0\dotsc\widehat{i_k}\dotsc i_s}$ to $\eta\cdot x_{i_kj_2\dotsc j_s\balpha'}$ and $x_{\balpha i_kj_2\dotsc j_s}$ to $\eta\cdot x_{i_0\dotsc\widehat{i_k}\dotsc i_s\balpha'}$, where $\eta=\sign(\balpha i_0\dotsc i_sj_2\dotsc j_s\balpha')$. Thus the above Pl\"ucker relation corresponds to the Pl\"ucker relation
 \[
  \sum_{k=0}^s \ (-1)^k \cdot x_{\balpha' i_k j_2\dotsc j_s} \cdot x_{\balpha' i_0\dotsc \widehat{i_k}\dotsc i_s} \quad \in \quad \widehat T_{J^\ast}.
 \]
 This shows that the null sets of $\widehat T_J$ and $\widehat T_{J^\ast}$ agree, which establishes the desired isomorphism $\widehat T_J\simeq\widehat T_{J^\ast}$.
 
 Since this isomorphism is degree preserving, it restricts to an isomorphism $T_J\simeq T_{J^\ast}$. Since the $3$-term Pl\"ucker relations of $T_J$ correspond to the $3$-term Pl\"ucker relations of $T_{J^\ast}$, this isomorphism restricts further to an isomorphism $P_J\simeq P_{J^\ast}$. Since an element $\prod x_{\bbeta_j}^{e_j}$ of $P_J$ has multidegree zero if and only if its image $\prod x_{\bbeta_j^\ast}^{e_j}$ in $P_{J^\ast}$ has multidegree zero, $P_J\simeq P_{J^\ast}$ restricts to an isomorphism $F_J\simeq F_{J^\ast}$, which concludes the proof.
\end{proof}

Applying $\Hom(-,F)$ to these isomorphisms yields, by \autoref{prop: universal property of the universal tract}, \autoref{prop: universal property of the universal pasture} and \autoref{prop: universal property of the foundation}, canonical bijections
\[
 \Gr_{J^\ast}(F) \ \simeq \ \Gr_J(F), \qquad \Gr^w_{J^\ast}(F) \ \simeq \ \Gr^w_J(F), \qquad \ulineGr^w_{J^\ast}(F) \ \simeq \ \ulineGr^w_J(F).
\]

%%%%%%%%%%%%%%%%%%%%%%%%%%%%%%%%%%%%%%%%%%%%%%%%%%%%%%%%%%%%%%%%%%%%%%%%%%%%%%%%%%%%%%%%%%%%%%%%%%%%%%%%%%%%%%
\subsection{Direct sums}
\label{subsection: direct sums of representations}

Thin Schubert cells and realization spaces of direct sums of M-convex sets decompose into products, as detailed in the following result.

\begin{thm}\label{thm: representations of direct sums}
 Let $J_1\subseteq\Delta^{r_1}_{n_1}$ and $J_2\subseteq\Delta^{r_2}_{n_2}$ be M-convex and $J=J_1\oplus J_2\subseteq\Delta^r_n$. Then there are canonical bijections
 \begin{align*}
  T_J \ &\simeq \ T_{J_1}\otimes T_{J_2}, & \Gr_J(F)\ &\simeq \ \Gr_{J_1}(F) \times \Gr_{J_2}(F), \\
  P_J \ &\simeq \ P_{J_1}\otimes P_{J_2}, & \Gr^w_J(F)\ &\simeq \ \Gr^w_{J_1}(F) \times \Gr^w_{J_2}(F), \\
  F_J \ &\simeq \ F_{J_1}\otimes F_{J_2}, & \ulineGr^w_J(F)\ &\simeq \ \ulineGr^w_{J_1}(F) \times \ulineGr^w_{J_2}(F),
 \end{align*}
 which are tract isomorphisms (left column) and functorial in the tract $F$ (right column), respectively.
\end{thm}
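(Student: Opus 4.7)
The plan is to construct, in a manner functorial in $F$, bijections $\Gr_J(F) \simeq \Gr_{J_1}(F) \times \Gr_{J_2}(F)$, $\Gr^w_J(F) \simeq \Gr^w_{J_1}(F) \times \Gr^w_{J_2}(F)$, and $\ulineGr^w_J(F) \simeq \ulineGr^w_{J_1}(F) \times \ulineGr^w_{J_2}(F)$, and then obtain the tract isomorphisms $T_J \simeq T_{J_1} \otimes T_{J_2}$, $P_J \simeq P_{J_1} \otimes P_{J_2}$, and $F_J \simeq F_{J_1} \otimes F_{J_2}$ by applying Yoneda's lemma to the universal properties from \autoref{prop: universal property of the universal tract}, \autoref{prop: universal property of the universal pasture}, and \autoref{prop: universal property of the foundation} (the tensor product is defined via its universal property in \autoref{subsection: tensor products}).

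The central technical input is the \emph{factorization principle}: every $\rho \in \upR^w_J(F)$ satisfies
\[
 \rho(\beta_1 \oplus \beta_2) \cdot \rho(\beta_1' \oplus \beta_2') \ = \ \rho(\beta_1 \oplus \beta_2') \cdot \rho(\beta_1' \oplus \beta_2)
\]
for all $\beta_1, \beta_1' \in J_1$ and $\beta_2, \beta_2' \in J_2$. I first establish it in the elementary case $\beta_1' = \beta_1 - \epsilon_j + \epsilon_i$ (with $i, j \in [n_1]$) and $\beta_2' = \beta_2 - \epsilon_k + \epsilon_l$ (with $k, l \in [n_1+1, n]$) using the $3$-term Pl\"ucker relation at $\alpha = (\beta_1 - \epsilon_j) \oplus (\beta_2 - \epsilon_k)$: its third summand $\rho(\alpha + \epsilon_i + \epsilon_j) \cdot \rho(\alpha + \epsilon_k + \epsilon_l)$ vanishes, since the rank decomposition of $J = J_1 \oplus J_2$ renders the two factors' supports incompatible (the first requires $|\alpha_1| = r_1 - 2$ and the second $|\alpha_1| = r_1$, while in fact $|\alpha_1| = r_1 - 1$); by uniqueness of additive inverses the remaining two summands are equal, yielding the elementary identity. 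The general identity then follows by chaining elementary identities through the M-convexity exchange axiom, which connects any two bases of $J_i$ by a sequence of single swaps. Fix base points $\beta_i^0 \in J_i$ and set $\rho_1(\beta_1) = \rho(\beta_1 \oplus \beta_2^0)/\rho(\beta_1^0 \oplus \beta_2^0)$ and $\rho_2(\beta_2) = \rho(\beta_1^0 \oplus \beta_2)$; the factorization principle yields $\rho(\beta_1 \oplus \beta_2) = \rho_1(\beta_1) \cdot \rho_2(\beta_2)$.

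To verify that $\rho_1$ satisfies the Pl\"ucker relations of $J_1$, I lift any such relation at $(\alpha_1, I_1, K_1)$ with indices in $[n_1]$ to the Pl\"ucker relation of $J$ at $(\alpha_1 \oplus \beta_2^0, I_1, K_1)$: each summand factors as $\rho_1(\cdot) \cdot \rho_1(\cdot) \cdot \rho_2(\beta_2^0)^2$, and cancelling the common nonzero factor $\rho_2(\beta_2^0)^2$ gives the desired relation; analogously for $\rho_2$. Conversely, setting $\rho = \rho_1 \rho_2$ and classifying the Pl\"ucker relations of $J$ by the counts $s_1, k_1$ of indices of $I, K$ lying in $[n_1]$, every relation either reduces to a $J_1$- or $J_2$-relation scaled by a nonzero factor (when $s_1 - k_1 \in \{0, 2\}$) or has every term equal to zero or tautologically cancelling in pairs (otherwise). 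The same argument applies verbatim in the weak setting at $s = 2$. The map $\rho \mapsto (\rho_1, \rho_2)$ is well-defined up to the scalar ambiguity $(\rho_1, \rho_2) \sim (c\rho_1, c^{-1}\rho_2)$ for $c \in F^\times$, so quotienting by $F^\times$ yields the bijection of thin Schubert cells. For the realization space, the torus $T(F) = (F^\times)^{n_1+n_2}$ splits as $T_1(F) \times T_2(F)$ acting compatibly with the factorization; the extra $F^\times$ factor in $[F^\times \times T_1(F)] \times [F^\times \times T_2(F)]$ relative to $\widehat T(F) = F^\times \times T_1(F) \times T_2(F)$ accounts exactly for the scalar ambiguity, so the quotients match. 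Functoriality in $F$ is immediate, and Yoneda's lemma concludes. The main obstacle is the factorization principle: each step is elementary, but the chaining argument requires careful bookkeeping through the M-convexity axiom.
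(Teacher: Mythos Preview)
Your proof is correct and takes essentially the same approach as the paper: construct the bijection $\Gr_J(F)\simeq\Gr_{J_1}(F)\times\Gr_{J_2}(F)$ directly from the product formula $\rho(\beta_1\oplus\beta_2)=\rho_1(\beta_1)\cdot\rho_2(\beta_2)$, verify the Pl\"ucker relations factor through those of $J_1$ and $J_2$, pass to the weak and torus-quotient versions, and deduce the tract isomorphisms via Yoneda and the universal properties. Your explicit ``factorization principle'' (via degenerate $3$-term Pl\"ucker relations and chaining through the exchange axiom) supplies the inverse direction that the paper simply asserts (``every class in $\Gr_J(F)$ stems from a unique pair of classes''), so your argument is in fact more complete on that point.
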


\begin{proof}
 The canonical isomorphism $T_J\simeq T_{J_1}\otimes T_{J_2}$ is equivalent to the functorial bijection $\Gr_J(F)\simeq \Gr_{J_1}(F) \times \Gr_{J_2}(F)$, since $\Gr_J(F)=\Hom(T_J,F)$ and
 \[
  \Gr_{J_1}(F) \times \Gr_{J_2}(F) \ = \ \Hom(T_{J_1},F) \times \Hom(T_{J_2},F) \ = \ \Hom(T_{J_1}\otimes T_{J_2},F)
 \]
 by \autoref{prop: universal property of the universal tract}. The analogous equivalence holds for the other claims of the proposition.
 
 We now establish the claims in the right-hand column.
We begin with the bijection $\Gr_J(F)\simeq \Gr_{J_1}(F) \times \Gr_{J_2}(F)$. Consider representations $\brho_1\colon[n_1]^{r_1}\to F$ and $\brho_2\colon[n_2]^{r_2}\to F$ of $J_1$ and $J_2$, respectively, and define $\brho\colon[n]^r\to F$ as the function that satisfies axiom \ref{SR2},
 \[
  \brho(\balpha\bbeta') \ = \ \brho_1(\balpha)\cdot\brho_2(\bbeta)
 \]
 whenever $\balpha\in[n_1]^{r_1}$, $\bbeta\in[n_2]^{r_2}$ and $\bbeta'_\ell=\bbeta_\ell+n_1$ for all $\ell\in[r_2]$, and $\brho(\bgamma)=0$ whenever $\#\{\ell\in[r]\mid \bgamma_\ell\in[n_1]\}\neq r_1$. It is immediate from the definitions that the function $\brho$ satisfies axioms \ref{SR1} and \ref{SR2} of a strong $F$-representation of $J$. Consider the Pl\"ucker relation
 \[
  \sum_{k=0}^s \ (-1)^k \cdot \brho(\balpha i_0\dotsc\widehat{i_k}\dotsc i_s) \cdot \brho(\balpha i_kj_2\dotsc j_s) \in N_F,
 \]
 which contains a nontrivial term only if there is a $k$ such that
 \[
  \#\big\{\ell\in[r] \, \big| \, \balpha i_0\dotsc\widehat{i_k}\dotsc i_s)_\ell\in[n_1] \big\} \ = \ \# \big\{ \ell\in[r] \, \big| \, (\balpha i_kj_2\dotsc j_s)_\ell\in[n_1] \big\} \ = \ r_1.
 \]
 Depending on whether $i_k\leq n_1$ or $i_k>n_1$, this Pl\"ucker relation is equivalent to a corresponding Pl\"ucker relation for $\brho_1$ or $\brho_2$, respectively. Carrying out this comparison carefully leads to the conclusion that the Pl\"ucker relations for $\brho$ are equivalent to the Pl\"ucker relations for $\brho_1$ and $\brho_2$.

 Therefore, the $F^\times$-class $[\brho]$ is in $\Gr_J(F)$, and every class in $\Gr_J(F)$ stems from a unique pair of classes $[\brho_1]\in\Gr_{J_1}(F)$ and $[\brho_2]\in \Gr_{J_2}(F)$, which establishes the bijection $\Gr_J(F)\simeq \Gr_{J_1}(F) \times \Gr_{J_2}(F)$. It is evident that this bijection is functorial in $F$.

 The canonical bijection $\Gr_J(F)^w\simeq \Gr^w_{J_1}(F) \times \Gr^w_{J_2}(F)$ can be established analogously (one only considers the $3$-term Pl\"ucker relations for $s=2$). The canonical bijection  $\ulineGr^w_J(F)\simeq \ulineGr^w_{J_1}(F) \times \ulineGr^w_{J_2}(F)$ follows from this, since it is invariant under the action of the torus $(F^\times)^n\simeq(F^\times)^{n_1}\times(F^\times)^{n_2}$. 
\end{proof}

%%%%%%%%%%%%%%%%%%%%%%%%%%%%%%%%%%%%%%%%%%%%%%%%%%%%%%%%%%%%%%%%%%%%%%%%%%%%%%%%%%%%%%%%%%%%%%
%%%%%%%%%%%%%%%%%%%%%%%%%%%%%%%%%%%%%%%%%%%%%%%%%%%%%%%%%%%%%%%%%%%%%%%%%%%%%%%%%%%%%%%%%%%%%%
\section{Generators and relations for the foundation}
\label{section: generators and relations for the foundation}

A fundamental result about the foundation $F_M$ of a matroid $M$ is that it is generated as a tract over $\Funpm$ by the cross ratios of $M$. In this section we generalize this result to polymatroids. Moreover,  in the matroid case, we know a complete system of relations between the cross ratios, which determines the foundation. We show that these relations extend to the polymatroid case.

%%%%%%%%%%%%%%%%%%%%%%%%%%%%%%%%%%%%%%%%%%%%%%%%%%%%%%%%%%%%%%%%%%%%%%%%%%%%%%%%%%%%%%%%%%%%%%
\subsection{Cross ratios}
\label{subsection: cross ratios}

Let $J\subseteq\Delta^r_n$ be an M-convex set and $\bJ=\{\bbeta\in[n]^\rbar\mid\sum\bbeta\in \Jbar\}$. Consider $\balpha\in[n]^{\rbar-2}$ and $i,j,k,l\in[n]$ such that $\balpha ik,\ \balpha jk,\ \balpha il,\ \balpha jl\in\bJ$. Then the element
\[
 \cross ijkl\balpha \ = \ \frac{x_{\balpha ik}\cdot x_{\balpha jl}}{x_{\balpha il}\cdot x_{\balpha jk}}
\]
of $\widehat P_J$ is invertible and has multidegree $\deg_{[n]}\big(\cross ijkl\balpha\big)=0$; thus it is contained in $F_J^\times$. Note that a permutation of the coefficients of $\alpha$ leads to a simultaneous sign change of all 4 terms in the definition of $\cross ijkl\balpha$, which shows that this element only depends on $\alpha=\sum\balpha\in\Delta^{\rbar-2}_n$.

\begin{df}
 Let $\Omega_J$ be the collection of all tuples $(\alpha,i,j,k,l)$ with $\alpha\in\Delta_n^{\rbar-2}$ and $i,j,k,l\in[n]$ such that all of 
 \[
  \alpha+\epsilon_i+\epsilon_k, \qquad \alpha+\epsilon_i+\epsilon_l, \qquad \alpha+\epsilon_j+\epsilon_k, \qquad \alpha+\epsilon_j+\epsilon_l
 \]
  are in $J$. We call $(\alpha,i,j,k,l)\in\Omega_J$ \emph{non-degenerate} if also $\alpha+\epsilon_i+\epsilon_j$ and $\alpha+\epsilon_k+\epsilon_l$ are in $J$; otherwise we call $(\alpha,i,j,k,l)$ \emph{degenerate}. We define $\Omega_J^\diamond\subseteq\Omega_J$ as the subset of all non-degenerate elements. 
  
  Let $(\alpha,i,j,k,l)\in\Omega_J$ and $\balpha\in[n]^{\rbar-2}$ with $\sum\balpha=\alpha$. The \emph{cross ratio for $(\alpha,i,j,k,l)$} is the element
  \[
   \cross ijkl\balpha \ = \ \frac{x_{\balpha ik}\cdot x_{\balpha jl}}{x_{\balpha il}\cdot x_{\balpha jk}} \quad \in \quad F_J^\times.
  \]
\end{df}

\subsection{Generators and relations}\label{subsection: generators and relations for the foundation}

A deep structural result, based on Tutte's homotopy theorem, shows that the foundation of a matroid is generated by cross ratios and exhibits a complete system of relations between them (cf. \cite[Thm.\ 4.21]{Baker-Lorscheid20}). We generalize this result to polymatroids.

\begin{thm}\label{thm: generators and relations for the foundation}
 Let $J\subseteq\Delta^r_n$ be an M-convex set. Then the foundation of $J$ is generated by the cross ratios $\cross ijkl\alpha$ with $(\alpha,i,j,k,l)\in\Omega_J^\diamond$ over $\Funpm$, and the following relations between the cross ratios generate all additive and multiplicative relations between these generators in the foundation of $J$:
 \begin{enumerate}
  \item[\mylabel{CRs}{(CR$\sigma$)}] 
  If $(\alpha,i,j,k,l)\in\Omega_J^\diamond$, then $\big(\alpha,\sigma(i),\sigma(j),\sigma(k),\sigma(l)\big)\in\Omega_J^\diamond$ for every permutation $\sigma\in S_4$ and 
  \[
   \cross ijkl\alpha \ = \ \cross klij\alpha \ = \ \cross jilk\alpha \ = \ \cross lkji\alpha.
  \]
  \item[\mylabel{CR-}{(CR-)}] 
  If $J$ has an embedded minor that is isomorphic or dual to the Fano matroid, then
  \[
   1 \ = \ -1.
  \]
  If $J$ is a proper polymatroid, then
  \[
   1 \ = \ -1.
  \]
  \item[\mylabel{CR+}{(CR+)}] 
  If $(\alpha,i,j,k,l)\in\Omega_J^\diamond$, then 
  \[
   \cross ijkl\alpha \ + \ \cross ikjl\alpha \ - \ 1 \quad \in \quad N_{F_J}.
  \]
  \item[\mylabel{CR0}{(CR0)}] 
  If $(\alpha,i,j,k,l)\in\Omega_J$ is degenerate, then 
  \[
   \cross ijkl\alpha \ = \ 1. 
  \]
  \item[\mylabel{CR1}{(CR1)}] 
  If $(\alpha,i,j,k,l)\in\Omega_J^\diamond$, then
  \[
   \cross ijkl\alpha \ \cdot \ \cross ijlk\alpha \ = \ 1.
  \]
  \item[\mylabel{CR2}{(CR2)}] 
  If $(\alpha,i,j,k,l)\in\Omega_J^\diamond$, then
  \[
   \cross ijkl\alpha \ \cdot \ \cross iklj\alpha \ \cdot \ \cross iljk\alpha \ = \ -1.
  \]
  \item[\mylabel{CR3}{(CR3)}] 
  If $(\alpha,i,j,k,l),(\alpha,i,j,l,m),(\alpha,i,j,m,k)\in\Omega_J$, then
  \[
   \cross ijkl\alpha \ \cdot \ \cross ijlm\alpha \ \cdot \ \cross ijmk\alpha \ = \ 1.
  \]
  \item[\mylabel{CR4}{(CR4)}] 
  If $(\alpha +\epsilon_m,i,j,k,l),(\alpha +\epsilon_k,i,j,l,m),(\alpha +\epsilon_l,i,j,m,k)\in\Omega_J$ for $\alpha\in\Delta_n^{\rbar-3}$, then
  \[
   \cross ijkl{\alpha +\epsilon_m}\ \cdot \ \cross ijlm{\alpha +\epsilon_k} \ \cdot \ \cross ijmk{\alpha +\epsilon_l} \ = \ 1.
  \]
  \item[\mylabel{CR5}{(CR5)}] 
  If $(\alpha +\epsilon_p,i,j,k,l),(\alpha +\epsilon_q,i,j,k,l)\in\Omega_J^\diamond$ for $\alpha\in\Delta_n^{\rbar-3}$ and if both $(\alpha +\epsilon_i,k,l,p,q)$, $(\alpha +\epsilon_j,k,l,p,q)\in\Omega_J$ are degenerate, then
  \[
   \cross ijkl{\alpha +\epsilon_p} \ = \ \cross ijkl{\alpha +\epsilon_q}.
  \]
 \end{enumerate}
\end{thm}

Throughout this subsection, we write $N$ for the natural matroid of the reduction $\Jbar$. We prove \autoref{thm: generators and relations for the foundation} by making use of a canonical tract morphism $F_N \to F_J$. 

We write $E$ for the ground set of $N$ and denote the associated projection by $\theta\colon\Z^E\to\Z^n$. We also use $\theta$ for the map $E\to[n]$ obtained by identifying the standard basis vectors with the corresponding coordinates. Let $\bN:=\{\bbeta\in E^\rbar\mid \Sigma\bbeta\in N\}$.

Recall that the extended universal pasture of $J$ is
\[ 
 \widehat P_J = \pastgen{\Funpm(x_\balpha\mid\balpha\in \bJ)}{S_J}, 
\] 
where $\bJ = \{\balpha\in[n]^\rbar\mid \Sigma\balpha\in \Jbar\}$ and the set $S_J$ consists of the $3$-term Pl\"ucker relations along with the relations $\sign(\sigma) x_{i_{\sigma(1)} \dots i_{\sigma(\rbar)}} = x_{i_1 \dots i_\rbar}$ for $(i_1,\dots,i_\rbar)\in\bJ$ and $\sigma\in S_\rbar$.

The canonical morphism $F_N \to F_J$ is constructed as follows. We define a tract morphism $\Funpm(y_\bbeta\mid\bbeta\in\bN) \to \Funpm(x_\balpha\mid\balpha\in \bJ)$ given by $-1 \mapsto -1$ and $y_\balpha \mapsto x_{\theta(\balpha)}$ for each $\balpha\in\bN$. Then we obtain a morphism $\Funpm(y_\bbeta\mid\bbeta\in\bN) \to \widehat P_J = \pastgen{\Funpm(x_\balpha\mid\balpha\in \bJ)}{S_J}$. Each element in the preimage $\theta^{-1}(S_J)$ is generated by $S_N \cup R$, where $R$ is the set of all relations $y_{\bbeta} = y_{\bbeta'}$ with $\theta(\bbeta) = \theta(\bbeta')$. Thus we deduce an isomorphism $\pastgen{\Funpm(y_\bbeta\mid\bbeta\in\bN)}{S_N \cup R} \simeq \widehat P_J$ and the following commutative diagram:
\[
  \begin{tikzcd}[column sep=50, row sep=25]
  F_N \ar[r,hook] \ar[d] & \widehat P_N \ar[r,"\deg_{E}"] \ar[d] & \Z^E \ar[d] \\
  F_J \ar[r,hook] & \widehat P_J \ar[r,"\deg_{[n]}"] & \Z^n
  \end{tikzcd}
\]

\begin{lemma}\label{lemma: surjectivity of tract morphisms from N to J}
  The tract morphisms $\widehat P_N \to \widehat P_J$ and $F_N \to F_J$ are surjective.
\end{lemma}
\begin{proof}
  The morphism $\widehat P_N \to \widehat P_J$ is surjective since the pre-composition with the quotient map $\Funpm(y_\bbeta\mid\bbeta\in\bN) \to \widehat P_N$ is surjective.  

  For $\prod x_{\balpha_i} \in F_J$, let $\bbeta_i\in \bN$ be such that $\theta(\bbeta_i) = \balpha_i$. Then $\sum_{j\in E_k} \deg_E (\prod y_{\bbeta_i})_j = 0$ for each $k\in[n]$. By multiplying elements of the form $y_{\bgamma a} / y_{\bgamma b}$ with $\theta(a) = \theta(b)$ together with $\prod y_{\bbeta_i}$, we obtain an element in $F_N$ that maps to $\prod x_{\balpha_i}$. Thus $F_N \to F_J$ is surjective.
\end{proof}

Note that $F_N\to F_J$ is an isomorphism if $J$ is the translate of a matroid. Thus we may assume in the following that $J$ is a proper polymatroid, and thus $1=-1$ in $F_J$ and $\widehat P_J$. This means that the kernel of $\widehat P_N^\times \to \widehat P_J^\times$ contains $-1$. More specifically, the kernel, denoted by $\fR$, is generated by $-1$ and the elements of the form $y_{\bbeta i} / y_{\bbeta j}$ with $\bbeta \in E^{\rbar-1}$ and $\theta(i) = \theta(j)$. By \cite[Thm.\ 4.21]{Baker-Lorscheid20}, $F_N^\times$ is generated by $-1$ and the cross ratios.

\begin{lemma}\label{lemma: specific cross ratios for N}
  If $J$ is proper, the intersection $F_N^\times \cap \fR$ is generated by $-1$ and the cross ratios of the form $\cross{i}{j}{k}{l}{\alpha}$ with $\theta(k)=\theta(l)$.
\end{lemma}
\begin{proof}
  First note that the cross ratio \[ \cross{i}{j}{k}{l}{\alpha} = \frac{y_{\alpha i k} \cdot y_{\alpha j l}}{y_{\alpha i l} \cdot y_{\alpha j k}} \] is indeed in $\fR$ if $\theta(k) = \theta(l)$. 
  
  Let $Y$ be an element of $\fR$ whose multidegree is $0$. Up to multiplication by $-1$, the element $Y$ can be written as a product of $m$ elements of the form $y_{\bbeta i} / y_{\bbeta j}$ with $\theta(i) = \theta(j)$, along with cross ratios of the form $\cross{i}{j}{k}{l}{\alpha}$ with $\theta(k) = \theta(l)$. We proceed by induction on $m$ to show that $Y$ is generated by cross ratios $\cross{i}{j}{k}{l}{\alpha}$ with $\theta(k) = \theta(l)$. 
  
  We may assume that $m$ is positive. 
  Pick a multiplicative factor $y_{\bbeta i} / y_{\bbeta j}$ with $\theta(i)=\theta(j)$ from such an expression for $Y$. 
  If $i=j$, then we can remove this factor and apply the induction hypothesis. Thus we may assume that $i\ne j$. Then there is another factor $y_{\bgamma j} / y_{\bgamma k}$ because $\deg_E (Y) = 0$. We may assume that $\Sigma \beta \ne \Sigma \gamma$, since otherwise $(y_{\bbeta i} / y_{\bbeta j}) \cdot (y_{\bgamma j} / y_{\bgamma k}) = \pm y_{\bbeta i} / y_{\bbeta k}$. By the exchange axiom, there are $p, q\in [n]$ such that $(\Sigma\bbeta)_p > (\Sigma\bgamma)_p$, $(\Sigma\bgamma)_q > (\Sigma\bbeta)_q$, and $(\Sigma\bbeta - \epsilon_p + \epsilon_q) + \epsilon_j \in N$. Thus \[ \frac{y_{\bbeta i}}{y_{\bbeta j}} = \cross pqij{\Sigma\bbeta - \epsilon_p} \cdot \frac{y_{\bbeta' i}}{y_{\bbeta' j}}, \] where $\bbeta' \in E^{\rbar-1}$ with $\Sigma \bbeta' = \Sigma\bbeta - \epsilon_p + \epsilon_q$. By applying such relations inductively, we deduce that \[ \frac{y_{\bbeta i}}{y_{\bbeta j}} \cdot \frac{y_{\bgamma j}}{y_{\bgamma k}} = \delta \cdot \prod \cross {p_s}{q_s}{i}{j}{\alpha_s} \cdot \frac{y_{\bgamma i}}{y_{\bgamma k}} \]
  for some $\delta\in\{1,-1\}$, $p_s$ and $q_s \in E$, and $\alpha_s \in \Delta_n^{\rbar-2}$. We obtain another expression for $Y$ having only $m-1$ factors of the form $y_{\bbeta i} / y_{\bbeta j}$ with $\theta(i) = \theta(j)$. Thus the induction hypothesis applies, which completes the inductive step.
\end{proof}

\begin{proof}[Proof of \autoref{thm: generators and relations for the foundation}]
  We may assume that $J$ is a proper polymatroid, as the matroid case is covered by \cite[Thm.\ 4.21]{Baker-Lorscheid20}.

  By \cite[Thm.\ 4.21]{Baker-Lorscheid20}, the foundation $F_N$ of the natural matroid $N$ is generated over $\Funpm$ by the non-degenerate cross ratios $\cross ijkl\beta \in \widehat P_N$ with $(\alpha,i,j,k,l) \in\Omega_N^\diamond$. Because the canonical morphism $F_N \to F_J$ is surjective by \autoref{lemma: surjectivity of tract morphisms from N to J} and each non-degenerate cross ratio for $N$ maps to a non-degenerate cross ratio for $J$, we conclude that $F_J$ is generated by the non-degenerate cross ratios over $\Funpm$.

  % These relations between cross ratios are known if $J$ is a matroid; cf.\ \cite[Thm.\ 4.21]{Baker-Lorscheid20}. The proof for polymatroids is similar. For completeness we sketch the argument.
 
  The relations \ref{CRs} and \ref{CR1}--\ref{CR4} follow from a direct verification. Relation \ref{CR-} is proven for matroids in \cite[Thm.\ 4.21]{Baker-Lorscheid20}; for proper polymatroids it follows from the idempotency principle \autoref{prop: idempotency principle}. In order to show \ref{CR+} and \ref{CR0}, we divide the $3$-term Pl\"ucker relation
  \[
    x_{\balpha ik} \cdot x_{\balpha jl} \ - \ x_{\balpha il} \cdot x_{\balpha jk} \ + \ x_{\balpha ij} \cdot x_{\balpha kl} \quad \in \quad N_{\widehat P_J}
  \]
  by $x_{\balpha il} \cdot x_{\balpha jk}$ (where we assume that $i\leq j\leq l\leq k$; the other cases are similar). This yields the relation \ref{CR+} if $(\alpha,i,j,k,l)$ is non-degenerate (where $\alpha=\sum\balpha$) and the relation
  \[
    \cross ijkl\alpha \ - \ 1 \quad \in \quad N_{F_J}
  \]
  if $(\alpha,i,j,k,l)$ is degenerate, which establishes\ref{CR0}. 
  
  Conversely, the $3$-term relations \ref{CR+} generate the null set of the foundation, which follows at once from the definition of the extended universal pasture in terms of $3$-term Pl\"ucker relations and generalizes to polymatroids; cf.\ the proof of \autoref{lemma: the universal tract is freely generated over the foundation}. 
  
  Relation \ref{CR5} follows from the direct computation
  \[
    \cross ijkl{\alpha +\epsilon_p} \cdot \crossinv ijkl{\alpha +\epsilon_q} \ = \ \frac{x_{\balpha ikp} \cdot x_{\balpha jlp} \cdot x_{\balpha ilq} \cdot x_{\balpha jkq}}{x_{\balpha ilp} \cdot x_{\balpha jkp} \cdot x_{\balpha ikq} \cdot x_{\balpha jlq}} \ = \ \cross klpq{\alpha +\epsilon_i} \cdot \cross klqp{\alpha +\epsilon_j} \ = \ 1,
  \]
  where we apply \ref{CR1} to express the inverse of $\cross ijkl{\alpha +\epsilon_q}$ and \ref{CR0} to identify the latter two degenerate cross ratios with $1$.

  It remains to show that the relations \ref{CRs}, \ref{CR-}, and \ref{CR0}--\ref{CR5} generate all multiplicative relations between cross-ratios. More precisely, we claim that the kernel of the natural embedding $f_J\colon \Funpm\big(\cross ijkl\alpha \mid (\alpha,i,j,k,l) \in \Omega_J^\diamond\big)^\times \to F_J^\times$ is generated by the elements corresponding to the relations \ref{CRs}, \ref{CR-}, and \ref{CR0}--\ref{CR5}, where we regard the cross ratios in the domain as symbols. For instance, $\cross ijkl\alpha \cdot \crossinv klij\alpha$ is one of the six elements corresponding to relation \ref{CRs}.
  
  By mapping a non-degenerate cross ratio $\cross ijkl\beta$ for $N$ to a non-degenerate cross ratio $\Theta\big(\cross ijkl\beta\big) := \cross {\theta(i)}{\theta(j)}{\theta(k)}{\theta(l)}{\theta(\beta)}$ for $J$, we deduce the following commutative diagram:
  \[
    \begin{tikzcd}[column sep=50, row sep=25]
    \ker f_N \ar[r,hook] \ar[d] & \Funpm\big(\cross ijkl\beta \mid (\beta,i,j,k,l) \in \Omega_N^\diamond\big)^\times \ar[r,"f_N"] \ar[d,"\Theta"] & F_N^\times \ar[d] \\
    \ker f_J \ar[r,hook] & \Funpm\big(\cross ijkl\alpha \mid (\alpha,i,j,k,l) \in \Omega_J^\diamond\big)^\times \ar[r,"f_J"] & F_J^\times
    \end{tikzcd}
  \]
  Notice that the middle and right vertical maps are surjective, but that the left vertical map $\ker f_N \to \ker f_J$ does not need to be surjective.

  Let $\prod X_i$ be an element in $\ker f_J$, where each $X_i$ is a non-degenerate cross ratio for $J$. There are non-degenerate cross ratios $Y_i$ for $N$ such that $\Theta(Y_i) = X_i$ for each $i$. Since $f_J(\prod X_i) = 1$, we have $f_N(\prod Y_i) \in \fR$. Therefore, by \autoref{lemma: specific cross ratios for N}, $f_N(\prod Y_i)$ can be written as a product of cross ratios of the form $\cross{a}{b}{c}{c'}{\beta}$ with $\theta(c) = \theta(c')$, possibly up to a factor $-1$. Thus there are elements $Z_1,\dots,Z_l$ such that each $Z_i$ is either $-1$ or a cross ratio of the form $\cross{a}{b}{c}{c'}{\beta}$ with $\theta(c) = \theta(c')$ and $\prod Y_i \cdot \prod Z_j \in \ker f_N$. By \cite[Thm.\ 4.21]{Baker-Lorscheid20}, $\prod Y_i \cdot \prod Z_j \in \ker f_N$ is generated by the elements for the corresponding relations \ref{CRs}, \ref{CR-}, and \ref{CR0}-\ref{CR5} for $N$. It is straightforward to see that such elements for $N$ map to the elements standing for multiplicative relations \ref{CRs}, \ref{CR-}, and \ref{CR0}-\ref{CR5} for $J$. Furthermore, $\Theta(Z_i)$ is either $-1$ or a cross ratio of the form $\cross {a}{b}{c}{c}{\alpha}$ for $J$. The element $-1$ corresponds to relation \ref{CR-} because $J$ is proper. If $\cross {a}{b}{c}{c}{\alpha}$ is degenerate, then it is an element corresponding to relation \ref{CR0}. If $\cross {a}{b}{c}{c}{\alpha}$ is non-degenerate, then it is a product of elements corresponding to relations \ref{CR-}, \ref{CR2}, and \ref{CR1} as follows:
  \[
    \cross abcc\alpha = (-1) \cdot \left( - \cross abcc\alpha \cdot \cross accb\alpha \cdot \cross acbc\alpha \right) \cdot \left( \cross accb \alpha \cdot \cross acbc\alpha \right)^{-1}.
  \]
  Therefore $\prod X_i$ is generated by the elements corresponding to the relations \ref{CRs}, \ref{CR-}, and \ref{CR0}-\ref{CR5} for $J$, which completes the proof.
\end{proof}

%%%%%%%%%%%%%%%%%%%%%%%%%%%%%%%%%%%%%%%%%%%%%%%%%%%%%%%
\subsection{Non-degenerate cross ratios}
\label{subsection: non-degenerate cross ratios}

Since degenerate cross ratios are equal to $1$, the foundation is generated by the non-degenerate cross ratios (\autoref{thm: generators and relations for the foundation}). Non-degenerate cross ratios stem from certain types of embedded minors of $J$, which we classify in this section. This has been done in the matroid case (cf.\ \cite{Baker-Lorscheid20}): every non-degenerate cross is in the image of the canonical map $F_{J\minor\nu\mu}\to F_J$ for a minor $J\minor\nu\mu$ of type $U_{2,4}$.
In the polymatroid case, we find two additional types. Recall that
\[
 U_{2,3}^+ \ = \ \big\{ (2,0,0),\ (1,1,0),\ (1,0,1),\ (0,1,1) \big\}.
\]
In \autoref{subsubsection: foundation of U23+ and D22}, we determine the foundation of both $U_{2,3}^+$ and $\Delta^2_2$ as $\pastgen{\K(x)}{x+1+1}$.

\begin{prop}\label{prop: nondegenerate cross ratios}
 A cross ratio $\cross ijkl\alpha$ in $F_J$ that is not equal to $1$ is in the image of the canonical map $F_{J'}\to F_J$ for an embedded minor $J'=J\minor\nu\mu+\tau$ of $J$ that is of type $U_{2,4}$, $U_{2,3}^+$, or $\Delta^2_2$.
\end{prop}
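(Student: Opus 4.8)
The plan is to take a non-degenerate cross ratio $\cross ijkl\alpha$ in $F_J$ and exhibit a small embedded minor that already ``sees'' it. First I would fix a tuple $(\alpha,i,j,k,l)\in\Omega_J^\diamond$ with $\cross ijkl\alpha\neq1$, and set $\beta_0=\alpha+\epsilon_i+\epsilon_k$, so that all six of
\[
 \alpha+\epsilon_i+\epsilon_k,\quad \alpha+\epsilon_i+\epsilon_l,\quad \alpha+\epsilon_j+\epsilon_k,\quad \alpha+\epsilon_j+\epsilon_l,\quad \alpha+\epsilon_i+\epsilon_j,\quad \alpha+\epsilon_k+\epsilon_l
\]
lie in $J$. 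The natural candidate minor is $J'=J\cap I_{\beta,\gamma}$, the intersection of $J$ with the cube $I_{\beta,\gamma}$ where $\gamma=\alpha+\epsilon_i+\epsilon_j+\epsilon_k+\epsilon_l$ and $\beta=\alpha$; by \autoref{rem: matroid truncations} this is an embedded minor of $J$ (of the form $J\minor\nu\mu+\mu$), and after translating by $-\alpha$ (using that $F_{J'+\tau}\simeq F_{J'}$ from \autoref{thm: foundations of embedded minors}) we reduce to the case where $\alpha=0$ and $J'\subseteq\Delta^2_n$ consists of degree-$2$ vectors supported on $\{i,j,k,l\}$ and bounded above by $\epsilon_i+\epsilon_j+\epsilon_k+\epsilon_l$. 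By \autoref{prop: foundations of coordinate inclusions} we may further restrict variables and assume $[n]=\{i,j,k,l\}$, or smaller if $i,j,k,l$ are not all distinct.

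The second step is a finite case analysis on the coincidence pattern among $i,j,k,l$ and on which further vectors of $\Delta^2_{\{i,j,k,l\}}$ happen to lie in $J'$. Since $(\alpha,i,j,k,l)$ is non-degenerate, $i\neq k$, $i\neq l$, $j\neq k$, $j\neq l$; the remaining freedom is whether $i=j$ and whether $k=l$. If all four are distinct, then $J'$ contains $\{\epsilon_i+\epsilon_k,\ \epsilon_i+\epsilon_l,\ \epsilon_j+\epsilon_k,\ \epsilon_j+\epsilon_l,\ \epsilon_i+\epsilon_j,\ \epsilon_k+\epsilon_l\}\subseteq\Delta^2_4$, and $J'$ is then a submatroid of $U_{2,4}$ on $4$ elements containing these six bases; since $U_{2,4}$ has exactly those six bases, $J'=U_{2,4}$ and we are in the classical matroid case. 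If exactly one coincidence occurs, say $i=j$ (the case $k=l$ is symmetric, and one can reduce to it by the symmetry relations \ref{CRs}), then $\epsilon_i+\epsilon_j=2\epsilon_i$ forces $J'\subseteq\Delta^2_{\{i,k,l\}}$ to contain $\{2\epsilon_i,\ \epsilon_i+\epsilon_k,\ \epsilon_i+\epsilon_l,\ \epsilon_k+\epsilon_l\}$, which is exactly $U_{2,3}^+$ (up to relabelling), so $J'$ is of type $U_{2,3}^+$. If both coincidences occur, $i=j$ and $k=l$, then $J'$ contains $\{2\epsilon_i,\ \epsilon_i+\epsilon_k,\ \epsilon_k+\epsilon_i,\ 2\epsilon_k\}=\{2\epsilon_i,\ \epsilon_i+\epsilon_k,\ 2\epsilon_k\}=\Delta^2_2$ on $\{i,k\}$, so $J'$ is of type $\Delta^2_2$. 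In all three cases the cross ratio $\cross ijkl0$ makes sense already in $F_{J'}$ (the four vectors $\epsilon_i+\epsilon_k$, $\epsilon_i+\epsilon_l$, $\epsilon_j+\epsilon_k$, $\epsilon_j+\epsilon_l$ lie in $J'$ by construction), and the functorial morphism $F_{J'}\to F_J$ of \autoref{thm: foundations of embedded minors} sends the symbol $x_\beta$ to $x_{\bgamma\beta}$, hence carries the cross ratio for $J'$ to $\cross ijkl\alpha$ in $F_J$. (In the two degenerate-support cases one must also check non-degeneracy of the relevant cross-ratio tuple inside $J'$, which is immediate: the pair $\{2\epsilon_i,\ \epsilon_k+\epsilon_l\}$, resp.\ $\{2\epsilon_i,\ 2\epsilon_k\}$, is present, so the tuple is in $\Omega_{J'}^\diamond$.)

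The main obstacle I anticipate is bookkeeping with the \emph{ordering conventions and signs} in \autoref{thm: foundations of embedded minors} together with the precise form of the cube-minor identity in \autoref{rem: matroid truncations}: one must verify that after the translation and restriction of variables the induced map on foundations really does identify the two cross ratios, and in particular that the choice of $\bgamma$ (which only affects things up to a sign) is harmless here. Since $\cross ijkl\alpha\neq1$ forces, via the idempotency principle \autoref{prop: idempotency principle} applied to the proper polymatroid cases $U_{2,3}^+$ and $\Delta^2_2$ (or trivially in the $U_{2,4}$ case where signs are handled as in \cite{Baker-Lorscheid20}), no genuine sign ambiguity in the relevant identifications, this should go through; but laying out the three coincidence cases cleanly and matching labels with $U_{2,4}$, $U_{2,3}^+$, $\Delta^2_2$ is where the care is needed. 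A secondary point worth a sentence is that one should double-check that the minor $J'$ produced is genuinely of one of the listed \emph{types} and not merely contained in one — i.e.\ that $J'$ cannot be a proper M-convex subset with strictly fewer bases; this follows because the six (resp.\ four) vectors forced to lie in $J'$ already exhaust $\Delta^2_m$ for $m\in\{4,3,2\}$ in the respective cases.
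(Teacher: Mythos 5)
Your reduction step is fine and essentially the same as the paper's: cutting $J$ with the cube $I_{\alpha,\alpha+\epsilon_i+\epsilon_j+\epsilon_k+\epsilon_l}$ from \autoref{rem: matroid truncations}, translating, and restricting variables is just a packaged form of the paper's ``contract by $\alpha$, delete down to $\sup S$, then restrict to $\{i,j,k,l\}$.'' Your sanity checks (that the forced vectors exhaust the relevant $\Delta^2_m$, that the induced map on foundations carries cross ratios to cross ratios, that the idempotency principle makes signs harmless in the proper-polymatroid cases) are all sound.

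However, the case analysis on index coincidences is exactly backwards, and this is a genuine gap. You write ``Since $(\alpha,i,j,k,l)$ is non-degenerate, $i\neq k$, $i\neq l$, $j\neq k$, $j\neq l$; the remaining freedom is whether $i=j$ and whether $k=l$.'' Non-degeneracy (membership of the six vectors $\alpha+\epsilon_i+\epsilon_k,\dots,\alpha+\epsilon_k+\epsilon_l$ in $J$) imposes no disjointness whatsoever on the indices: for a polymatroid, $\alpha+2\epsilon_i\in J$ is perfectly allowed, so $i=k$ etc.\ are not excluded. What \emph{is} excluded is $i=j$ and $k=l$: if $i=j$, the numerator and denominator of $\cross ijkl\alpha=\frac{x_{\alpha ik}x_{\alpha jl}}{x_{\alpha il}x_{\alpha jk}}$ become identical and the cross ratio is $1$, and similarly if $k=l$. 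So the hypothesis $\cross ijkl\alpha\neq 1$ forces $i\neq j$ and $k\neq l$, and the coincidences you must handle are the ``cross'' ones $i=k$, $i=l$, $j=k$, $j=l$ — precisely the ones you have ruled out. Concretely: in $U_{2,3}^+$ the nontrivial cross ratio has $j=k$ (three distinct indices), and in $\Delta^2_2$ it has $j=k$ and $i=l$ (two distinct indices); your cases $i=j$ and $i=j,k=l$ are vacuous (cross ratio $=1$) and the actual cases are never reached. The fix is to (a) derive $i\neq j$, $k\neq l$ from $\cross ijkl\alpha\neq1$, (b) use \ref{CRs} and \ref{CR1} to normalize a coincidence to $j=k$, and (c) split by whether $i=l$, as in the paper. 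With that correction, the rest of your argument (cube minor, bound on the support, exhaustion of $\Delta^2_m$, functoriality of the cross ratio under $F_{J'}\to F_J$) goes through.
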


\begin{proof}
 Let $\cross ijkl\alpha$ be a cross ratio of $F_J$ that is not equal to $1$. Whenever we find an embedded minor $J'$ of $J$ such that $\cross ijkl\alpha$ is in the image of the canonical map $F_{J'}\to F_J$, we can replace $J$ by $J'$ until we have arrived at one of the three M-convex sets in the claim of  the proposition.
 
 To begin with, the cross ratio $\cross ijkl\alpha$ in $F_J$ is the image of the cross ratio $\cross ijkl{}\in F_{J/\alpha}$ under the canonical map $\varphi_{J/\alpha}\colon F_{J/\alpha}\to F_J$, which allows us to assume that $\alpha=0$ and that $J$ is of rank $2$. 

 By \ref{CR0}, $\cross ijkl{}$ has to be non-degenerate, i.e., $J$ contains 
 \[
  S \ = \ \big\{ \ \epsilon_i+\epsilon_j, \ \ \epsilon_i+\epsilon_k, \ \ \epsilon_i+\epsilon_l, \ \ \epsilon_j+\epsilon_k, \ \ \epsilon_j+\epsilon_l, \ \ \epsilon_k+\epsilon_l \ \big\}.
 \]
 Any other element of $J$ does not matter, which means that $\cross ijkl{}$ lies in the image of $F_{J\setminus\nu}\to F_J$ for $\nu=\delta^+_J-\sup S$. This allows us to assume that $\delta^+_J=\sup S$ and, after permuting $[n]$ and restricting the support suitably, that $[n]=\{i,j,k,l\}$.

 Since 
 \[
  \cross ijkl \ = \ \frac{x_{ik}\cdot x_{jl}}{x_{il}\cdot x_{jk}}
 \]
 is not equal to $1$, we can assume that $i\neq j$ and that $k\neq l$. If $i$, $j$, $k$ and $l$ are pairwise distinct, then $J=U_{2,4}$.% and $\F_J=\U$, which is the first case of the proposition.
 
 If two of $i$, $j$, $k$ and $l$ are equal, then we can apply \ref{CRs} and \ref{CR1} (i.e., passing to the multiplicative inverse) and assume that $i\neq j=k\neq l$. In this case, $J$ is one of the last two M-convex sets of the proposition, depending on whether $i\neq l$ or $i=l$.
\end{proof}

%%%%%%%%%%%%%%%%%%%%%%%%%%%%%%%%%%%%%%%%%%%%%%%%%%%%%%%
\subsection{More examples of foundations}
\label{subsection: more examples of foundations}

In this section we compute the foundations of some proper polymatroids. 

\subsubsection{The foundation of \texorpdfstring{$U_{2,3}^+$}{U23+} and of \texorpdfstring{$\Delta^2_2$}{Delta22}}
\label{subsubsection: foundation of U23+ and D22}

Consider $J=U_{2,3}^+$ or $J=\Delta^2_2$, which is a proper polymatroid in either case. Thus its foundation $F_J$ is near-idempotent by \autoref{prop: idempotency principle}. Consider a cross ratio 
\[
 \cross ijkl{} \ = \ \frac{x_{ik}\cdot x_{jl}}{x_{il}\cdot x_{jk}} \quad \in \quad F_J
\]
of $J$ that is not equal to $1$. Then we have $i\neq j$ and $k\neq l$ and it is non-degenerate by \ref{CR0}, and so are all cross-ratios obtained by permuting $i,j,k,l$ by \ref{CRs}. After reordering the rows (using \ref{CRs}) and possibly exchanging $k$ and $l$ (which exchanges $\cross ijkl{}$ by its inverse by \ref{CR1}), we can assume that $i\neq j=k\neq l$. This implies that $i\neq l$ if $J=U_{2,3}$ and $i=l$ if $J=\Delta^2_2$. 
Finally, since
 \[
  \cross ijjl{}= \cross ljji{}
 \]
by \ref{CRs}, we find that the foundation is generated by $\cross ijjl{}$, or, equivalently, by its multiplicative inverse $x=\cross ijlj{}$. Note that the degree of $x_{jj}$ in $\cross ijjl{}=\frac{x_{ij}x_{jl}}{x_{il}x_{jj}}$ maps the powers of $x$ bijectively to $\Z$, which shows that there are no further multiplicative relations.
 
 The unique Pl\"ucker relation of $J$ is
 \[
  x_{il}\cdot x_{jj} \ + \ x_{ij}\cdot x_{jl} \ + \ x_{ij}\cdot x_{jl} \quad \in \quad N_{\widehat P_J},
 \]
 which is equivalent to $1+1+x\in N_{F_J}$, after dividing by $x_{ij}\cdot x_{jl}$. We therefore find that
 \[
  F_J \ = \ \pastgen{\F_2(x)}{1+1+x} \ \simeq \ \F_2\otimes\D
 \]
 for both $J=U_{2,3}^+$ and $J=\Delta^2_2$.
 
\subsubsection{The foundation of \texorpdfstring{$\Delta^2_3\setminus \epsilon_2$}{the one element deletion of Delta23}}

Consider the proper polymatroid
\[
 J \ = \ \Delta^2_3\setminus \epsilon_2 \ = \ \big\{ (2,0,0),\ (1,1,0),\ (1,0,1),\ (0,1,1),\ (0,0,2) \big\},
\]
whose foundation is near-idempotent. By \autoref{prop: nondegenerate cross ratios}, the nontrivial cross ratios stem from the embedded minors of $J$ of types $U_{2,4}$, $U_{2,3}^+$, and $\Delta^2_2$, which are
\begin{align*}
 J\setminus \epsilon_1 \ &= \ \big\{ (1,1,0),\ (1,0,1),\ (0,1,1),\ (0,0,2) \big\}  && \text{(type $U_{2,3}^+$);}  \\
 J\setminus \epsilon_2 \ &= \ \big\{ (2,0,0),\ (1,0,1),\ (0,0,2) \big\}            && \text{(type $\Delta^2_2$);} \\
 J\setminus \epsilon_3 \ &= \ \big\{ (2,0,0),\ (1,1,0),\ (1,0,1),\ (0,1,1) \big\}  && \text{(type $U_{2,3}^+$).}
\end{align*}
The corresponding cross ratios are $x=\cross 2313{}$, $y=\cross 1313{}$, $z=\cross 2131{}$, which satisfy $x+1+1,\ y+1+1,\ z+1+1 \in N_{F_J}$ by \autoref{subsubsection: foundation of U23+ and D22}. By \ref{CR3}, we have
\[
 \cross 1231{} \ \cdot \ \cross 2331{} \ \cdot \ \cross 3131{} \ = \ 1,
\]
and thus (using \ref{CRs} and \ref{CR1}), $y=xz$. There is no further relation between $x$ (which involves $x_{11}$) and $z$ (which involves $x_{33}$). Thus we find that the foundation of $J=\Delta^2_3\setminus \epsilon_2$ is
\[
 F_J \ = \ \pastgen{\F_2(x,z)}{1+1+x,\ 1+1+xz,\ 1+1+z}.
\]

\subsubsection{The foundation of \texorpdfstring{$\Delta^2_3$}{Delta23}}

The polymatroid $J=\Delta^2_3$ has $3$ embedded minors $J\setminus(\epsilon_i+\epsilon_j)$ of type $U_{2,3}^+$ and $3$ embedded minors $J\setminus2\epsilon_k$ of type $\Delta^2_2$, where $\{i,j,k\}=\{1,2,3\}$. We denote the corresponding cross ratios by $x_k=\cross ikjk{}$ and $y_k=\cross ijij{}$. For the same reasons as in the previous example, they satisfy the relations $y_k=x_ix_j$, $x_k+1+1\in N_{F_J}$, and $y_k+1+1\in N_{F_J}$, which determines the foundation $F_J$ of $\Delta^2_3$ as
\[
 \pastgen{\F_2(x_1,x_2,x_3)}{x_1+1+1,\ x_2+1+1,\ x_3+1+1,\ x_1x_2+1+1,\ x_1x_3+1+1,\ x_2x_3+1+1}.
\]

\subsubsection{The foundation of \texorpdfstring{$\Delta^3_2$}{Delta32}}

The polymatroid $J=\Delta^3_2$ has $2$ embedded minors of type $\Delta^2_2$, which are $J/\epsilon_1$ and $J/\epsilon_2$, and none of types $U_{2,3}^+$ or $U_{2,4}$. The corresponding cross ratios are $x=\cross1122{\epsilon_1}$ and $y=\cross1122{\epsilon_2}$, and their respective inverses. 

By the idempotency principle, the foundation $F_J$ of $J$ is idempotent and, in particular, $-1=1$. Since the variables $x_{111}$ and $x_{222}$ only occur in one of $x$ and $y$, taking the multidegree in $x_{111}$ and $x_{222}$ defines an group isomorphism $F_J^\times\to\Z^2$. 

The Pl\"ucker relations are parametrized by $\alpha\in\{\epsilon_1,\epsilon_2\}$ and $i,j,k,l\in\{1,2\}$ such that $\alpha+\epsilon_i+\epsilon_j+\epsilon_k+\epsilon_l\leq\omega_J=(3,3)$. If $\alpha=\epsilon_1$, then up to permutation of $i,j,k,l$, we have either $i=1$ and $j=k=l=2$ or $i=j=1$ and $k=l=2$. In the former case, the Pl\"ucker relation is equivalent to $1+1+1\in N_{F_J}$ (which we know already from the idempotency principle), and the latter relation is equivalent to $1+1+x\in N_{F_J}$. Similarly the Pl\"ucker relations for $\alpha=\epsilon_2$ yield $1+1+1\in N_{F_J}$ and $1+1+y\in N_{F_J}$. Thus we find
\[
 F_{\Delta^3_2} \ = \ \pastgen{\K(x,y)}{1+1+x,\ 1+1+y}.
\]

%%%%%%%%%%%%%%%%%%%%%%%%%%%%%%%%%%%%%%%%%%%%%%%%%%%%%%%%%%%%%%%%%%%%%%%%%%%%%%%%%%%%%%%%%%%%%%%%%%%%%%%%%%%%%%
%%%%%%%%%%%%%%%%%%%%%%%%%%%%%%%%%%%%%%%%%%%%%%%%%%%%%%%%%%%%%%%%%%%%%%%%%%%%%%%%%%%%%%%%%%%%%%%%%%%%%%%%%%%%%%
\part{Canonical embeddings}
%%%%%%%%%%%%%%%%%%%%%%%%%%%%%%%%%%%%%%%%%%%%%%%%%%%%%%%%%%%%%%%%%%%%%%%%%%%%%%%%%%%%%%%%%%%%%%%%%%%%%%%%%%%%%%
%%%%%%%%%%%%%%%%%%%%%%%%%%%%%%%%%%%%%%%%%%%%%%%%%%%%%%%%%%%%%%%%%%%%%%%%%%%%%%%%%%%%%%%%%%%%%%%%%%%%%%%%%%%%%%

In this last part of the paper, we describe canonical embeddings of the representation space $\upR^w_J(F)$ and of the realization space $\ulineGr^w_J(F)$ into tori, by which we mean sets of the form $(F^\times)^N$ for some $N$. This is of particular interest if $F$ (and thus $F^\times$) carries a topology, since the torus embeddings then endow the representation / realization spaces with a topology (cf.\ \cite{BHKL1} and \cite{BHKL2}, where such a study is carried out in the case of triangular hyperfields).

Further, we discuss the Pl\"ucker embedding $\pl_J\colon\Gr^w_J(F)\to\P^N(F)$ (for $N=\#\Delta^r_n-1$) and a decomposition of $\upR^w_J(F)$ into the product of $\ulineGr^w_J(F)$ with a torus.

%%%%%%%%%%%%%%%%%%%%%%%%%%%%%%%%%%%%%%%%%%%%%%%%%%%%%%%%%%%%%%%%%%%%%%%%%%%%%%%%%%%%%%%%%%%%%%%%%%%%%%%%%%%%%%
\section{The torus embedding of the representation space}
\label{section: the torus embedding of the representation space}

We call a group isomorphic to $(F^\times)^s$ (for some $s\geq0$) a \emph{torus over $F$}. %In this 
Let $J\subseteq\Delta^r_n$ be an M-convex set, $\Jbar$ its reduction, and $\rbar$ its effective rank. By the definition of a strong (resp.\ weak) $F$\hyph representation of $J$ as a function $\brho\colon[n]^\rbar\to F$, the representation space $\upR_J(F)$ (resp.\ $\upR^w_J(F)$) can be considered as a subspace of $F^{n^\rbar}$. Axiom \ref{SR1} (or, equivalently, \ref{WR1}) determines the non-vanishing coordinates of $\brho$ as those whose indices $\alpha$ lie in $\bJ=\{\alpha\in[n]^\rbar\mid \alpha_1+\dotsb+\alpha_r\in \Jbar\}$. This results in the canonical embedding 
\[
 \upR_J(F) \ \subseteq \ \upR^w_J(F) \ \longrightarrow \ (F^\times)^\bJ
\]
of the (strong resp.\ weak) representation space into the torus $(F^\times)^\bJ$. Axiom \ref{SR2} (or, equivalently, \ref{WR2}) implies that $\upR_J(F)$ (resp.\ $\upR^w_J(F)$) is contained in the subgroup $C_\bJ(F)$ of $(F^\times)^\bJ$ that consists of all $\brho\in(F^\times)^\bJ$ that satisfy\[\brho({i_{\sigma(1)},\dotsc,i_{\sigma(r)}})=\sign(\sigma)\cdot\brho({i_1,\dotsc,i_r}),\] which is a subgroup of $(F^\times)^\bJ$.
Choosing an $\balpha\in\bJ$ with $\sum\balpha=\alpha$ for each $\alpha\in \Jbar$, e.g.\ the unique such $\balpha$ with $\balpha_1\leq\dotsc\leq\balpha_r$, yields the torus embedding $\upR^w_J(F)\hookrightarrow (F^\times)^\Jbar$. %whose image is contained in a suitable subtorus $C_\Jbar(F)$.
Note that if $1\neq-1$ in $F$, then the signs of the coordinates depend on the choices of the $\balpha$.

%%%%%%%%%%%%%%%%%%%%%%%%%%%%%%%%%%%%%%%%%%%%%%%%%%%%%%%
\subsection{The degeneracy locus}
\label{subsection: the degeneracy locus}

There is yet a smaller subgroup of $\upD_J(F)\subseteq(F^\times)^\bJ$ that contains the representation space. It is cut out by the degenerate $3$-term Pl\"ucker relations, which force the two nonzero terms to be additive inverses of each other. These relations are of the form
\[
 \brho({\alpha ik}) \cdot \brho({\alpha jl}) \ = \ \brho({\alpha il}) \cdot \brho({\alpha jk})
\]
for $\alpha\in\Delta^{\rbar-2}_n$, assuming that $\brho({\alpha ij}) \cdot \brho({\alpha kl})=0$.
The \emph{degeneracy locus of $J$ over $F$} is defined as the subgroup 
\[
 \upD_J(F) \ = \ \Big\{ \brho\in(F^\times)^\bJ \ \Big| \ \begin{array}{c}\brho\text{ satisfies \ref{WR2} and all}\\\text{degenerate $3$-term Pl\"ucker relations}\end{array}\Big\}
\]
of $(F^\times)^\bJ$. Summing up, this yields a chain of inclusions
\[
 \upR_J(F) \ \subseteq \ \upR^w_J(F) \ \subseteq \ \upD_J(F) \ \subseteq \ (F^\times)^\bJ \ \subseteq \ F^{n^r}.
\]
The following fact verifies that the degeneracy locus does not get smaller if we require \emph{all} degenerate Pl\"ucker relations to hold.

\begin{cor}\label{cor: the degeneracy locus is defined by the degenerate 3-term Plucker relations}
 Let $J$ be an M-convex set of $F$ a tract. Then
 \[
  \upD_J(F) \ = \ \Big\{ \brho\in(F^\times)^\bJ \ \Big| \ \brho\text{ satisfies \ref{WR2} and all degenerate Pl\"ucker relations}\Big\}.
 \]
\end{cor}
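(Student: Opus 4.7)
The plan is to reduce the corollary to the algebraic identity $H_J = H_J^w$ already established inside the proof of \autoref{thm: bijection between the universal pasture and the universal tract}.

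The inclusion $\supseteq$ is immediate, since every degenerate $3$-term Pl\"ucker relation is, by definition, a degenerate Pl\"ucker relation. For the reverse inclusion, I would reformulate the problem multiplicatively. Let $G_\bJ$ be the free abelian group on symbols $y_\bbeta$ indexed by $\bbeta \in \bJ$, modulo the sign relations coming from \ref{WR2}. A function $\brho \in (F^\times)^\bJ$ satisfying \ref{WR2} is the same data as a homomorphism $\tilde\brho\colon G_\bJ \to F^\times$. By the uniqueness of additive inverses in $F$, each degenerate Pl\"ucker relation (with nonzero terms at positions $k$ and $\ell$) is equivalent to the single multiplicative equation saying that a specific ``degenerate generalized cross ratio'' $c \in G_\bJ$ satisfies $\tilde\brho(c) = (-1)^{k-\ell+1}$. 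Hence $\brho$ lies in the degeneracy locus iff $\tilde\brho$ kills the subgroup $H_J^w \subseteq G_\bJ$ generated by all (signed) degenerate $3$-term cross ratios, and $\brho$ satisfies every degenerate Pl\"ucker relation iff $\tilde\brho$ kills the a priori larger subgroup $H_J \supseteq H_J^w$ generated by all (signed) degenerate generalized cross ratios.

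Now the proof of \autoref{thm: bijection between the universal pasture and the universal tract} proves exactly the identity $H_J = H_J^w$ as subgroups of the free abelian group generated by the $x_\bbeta$ (together with $-1$). Two cases arise there: if $J$ is a translate of a matroid, the identity is deduced from \cite[Thm.~3.46]{Baker-Bowler19} via the modified strong fusion rule, and if $J$ is a proper polymatroid, the idempotency principle (\autoref{prop: idempotency principle}) forces $-1 = 1$, whereupon an explicit induction on the length of the Pl\"ucker relation (cases 1 and 2 of that proof) expresses each degenerate generalized cross ratio as a product of degenerate $3$-term cross ratios. Transporting this identity back to $G_\bJ$ and applying $\tilde\brho$ yields the desired implication $\upD_J(F) \subseteq \{\brho : \brho \text{ satisfies all degenerate Pl\"ucker relations}\}$.

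The only real bookkeeping issue is sign-matching. When $J$ is a proper polymatroid we have $1 = -1$ in $F$ and the signs are vacuous, exactly as in the proof of \autoref{thm: bijection between the universal pasture and the universal tract}. When $J$ is a translate of a matroid, the signs $(-1)^{k-\ell+1}$ attached to a degenerate generalized cross ratio are determined by the same conventions used in \autoref{subsection: simplified description of idempotent polymatroid representations} to normalize the Pl\"ucker relations; they behave multiplicatively under the product decomposition of a generalized cross ratio into $3$-term cross ratios, so that the sign on the left agrees with the product of signs on the right. Verifying this compatibility is the main (but purely mechanical) obstacle, and it proceeds by tracking the permutation signs in axiom \ref{SR2} through each step of the inductive decomposition.
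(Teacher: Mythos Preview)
Your approach is correct and is essentially the same as the paper's: both reduce the corollary to the identity $H_J=H_J^w$ (equivalently, the bijectivity of $\hat\pi_J\colon\widehat P_J\to\widehat T_J$) established in \autoref{thm: bijection between the universal pasture and the universal tract}. The only difference is that you propose to verify sign compatibility in the matroid case by explicitly tracking permutation signs through the inductive decomposition, whereas this is unnecessary: the bijectivity of $\hat\pi_J$ (proved in the matroid case via the perfect-tract argument, not the induction) already says that any multiplicative relation forced in $\widehat T_J$ is forced in $\widehat P_J$, and the only multiplicative relations arising from the quotient construction are those coming from \emph{degenerate} Pl\"ucker relations (since a relation with three or more nonzero terms identifies no elements of the underlying monoid). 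So the sign bookkeeping you flag as the ``main obstacle'' is in fact absorbed by the abstract bijection and requires no separate verification.
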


\begin{proof}
 This follows at once from \autoref{thm: bijection between the universal pasture and the universal tract}: every degenerate Pl\"ucker relation is contained in the ideal generated by the degenerate $3$-term Pl\"ucker relations.
\end{proof}

\begin{prop}\label{prop: characterization of the degeneracy locus as group homomorphisms from the extended universal pasture}
 Let $J$ be an M-convex set with extended universal pasture $\widehat P_J$ and let $F$ be a tract. Then there is a canonical bijection
 \[
  \upD_J(F) \ \longrightarrow \ \big\{ \text{group homomorphisms }f\colon\widehat P_J^\times\to F^\times\text{ with }f(-1)=-1\big\}.
 \]
 If $F$ is a degenerate tract, then $\upD_J(F)=\upR_J^w(F)=\upR_J(F)$.
\end{prop}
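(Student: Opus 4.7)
The plan has two parts: establishing the bijection in general, and then verifying the simplification in the degenerate case.

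\textbf{The bijection.} The extended universal pasture $\widehat P_J$ is defined as the quotient of the free $\Funpm$-algebra on symbols $x_\bbeta$ (for $\bbeta \in \bJ$) by the $3$-term Pl\"ucker relations. By the proof of \autoref{thm: bijection between the universal pasture and the universal tract}, its unit group $\widehat P_J^\times$ admits a presentation as the free abelian group on $-1$ and the $x_\bbeta$ ($\bbeta \in \bJ$) modulo the subgroup $H_J^w$ generated by the \emph{degenerate} $3$-term Pl\"ucker relations read multiplicatively, namely the degenerate cross ratios $\frac{x_{\balpha ik}\,x_{\balpha jl}}{x_{\balpha il}\,x_{\balpha jk}}$ arising when the third term of a $3$-term Pl\"ucker relation vanishes, together with the antisymmetry relations $x_{\bsigma\bbeta} = \sign(\sigma)\cdot x_\bbeta$ corresponding to degenerate Pl\"ucker relations in which two indices coincide. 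Now an element $\brho \in \upD_J(F)$ is by definition a function $\bbeta \mapsto \brho(\bbeta) \in F^\times$ on $\bJ$ satisfying exactly these relations, so the assignment $x_\bbeta \mapsto \brho(\bbeta)$ and $-1 \mapsto -1$ extends (uniquely) to a group homomorphism $f_\brho \colon \widehat P_J^\times \to F^\times$ with $f_\brho(-1) = -1$. Conversely, any such homomorphism $f$ returns an element $\brho_f \in \upD_J(F)$ via $\brho_f(\bbeta) := f(x_\bbeta)$ for $\bbeta \in \bJ$ and $\brho_f(\bbeta) := 0$ otherwise. These constructions are mutually inverse, yielding the desired canonical bijection.

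\textbf{The degenerate case.} Suppose $F$ is degenerate, so every formal sum $\sum a_i \in F^+$ with at least three nonzero terms lies in $N_F$. Fix $\brho \in \upD_J(F)$. By \autoref{cor: the degeneracy locus is defined by the degenerate 3-term Plucker relations}, $\brho$ satisfies every \emph{degenerate} Pl\"ucker relation (of arbitrary length). For a general (possibly non-degenerate) Pl\"ucker relation $\Pl(\balpha | i_0 \cdots i_s | j_2 \cdots j_s)$, the argument in the proof of \autoref{thm: Pluecker relations for polymatroids} shows, using the M-convexity of $\Jbar$, that whenever one term of the relation is nonzero a second must also be nonzero; hence each such relation has exactly $0$, $2$, or at least $3$ nonzero terms. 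The first case is trivial, the second (degenerate) case is handled by $\brho \in \upD_J(F)$, and the third case lies in $N_F$ by degeneracy of $F$. Therefore $\brho$ is a strong $F$-representation of $J$, and we obtain the chain of equalities $\upD_J(F) = \upR^w_J(F) = \upR_J(F)$. (The equality $\upR^w_J(F) = \upR_J(F)$ also follows directly from \autoref{cor: degenerate tracts are excellent}.)

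\textbf{Main obstacle.} The genuinely delicate step is the first one: one must confirm that the presentation of $\widehat P_J^\times$ extracted from \autoref{thm: bijection between the universal pasture and the universal tract} really does correspond, with correct sign conventions, to the defining conditions of $\upD_J(F)$. Specifically, one must check that the antisymmetry axiom \ref{WR2} is already encoded in the degenerate $3$-term Pl\"ucker relations (those with two coincident indices), so that no further relations need to be imposed beyond those cut out by membership in $\upD_J(F)$. In the near-idempotent case this is automatic since $-1 = 1$, but in the matroid case (where signs are nontrivial) one must appeal to the standard analysis of alternating $r$-forms versus functions on $\binom{[n]}{r}$ as in \autoref{lemma: alternative description of polymatroid representations}.
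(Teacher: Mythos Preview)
Your proof is correct and follows essentially the same approach as the paper's: both establish the bijection by using the presentation of $\widehat P_J^\times$ as a free abelian group on $-1$ and the $x_\bbeta$ modulo the subgroup generated by the degenerate cross ratios, and both handle the degenerate case by observing that non-degenerate Pl\"ucker relations are automatic when $F$ is degenerate. The only minor difference is that the paper first shows $\upD_J(F)=\upR_J^w(F)$ (using only $3$-term relations) and then invokes excellence, whereas you argue directly that all Pl\"ucker relations hold and conclude $\upD_J(F)=\upR_J(F)$; your ``main obstacle'' paragraph correctly flags the sign/antisymmetry issue, though the paper treats this somewhat tersely since \ref{WR2} is built into the definition of $\upD_J(F)$ and is likewise encoded in the relations defining $\widehat P_J^\times$.
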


\begin{proof}
 Let $G$ be the (multiplicatively written) free abelian group generated by symbols $-1$ and $x_\beta$ for $\beta\in\bJ$. An element $\brho\in \upD_J(F)$ defines a group homomorphism $f_\brho\colon G\to F^\times$ with $f_\brho(-1)=-1$ and $f_\brho(x_\beta)=\brho(\beta)$. Since $(f_\brho(-1))^2=(-1)^2=1$ and
 \[
  f_\brho\bigg(\frac{x_{\alpha ik} \cdot x_{\alpha jl}}{x_{\alpha il} \cdot x_{\alpha jk}}\bigg) \ = \ \frac{\brho({\alpha ik}) \cdot \brho({\alpha jl})}{\brho({\alpha il}) \cdot \brho({\alpha jk})} \ = \ 1
 \]
 whenever $\brho({\alpha ik}) \cdot \brho({\alpha jl})$ and $\brho({\alpha il}) \cdot \brho({\alpha jk})$ are the two nonzero terms of a degenerate $3$-term Pl\"ucker relation, the group homomorphism $f_\brho$ factors through a uniquely determined group homomorphism
 \[
  \bar f_\brho\colon \widehat P_J^\times \ = \ G/H_J^w \ \longrightarrow \ F
 \]
 with $\bar f_\brho(-1)=-1$, where $H_J^w$ is the subgroup of $G$ generated by the elements $(-1)^2$ and the degenerate cross ratios $\frac{x_{\alpha ik} \cdot x_{\alpha jl}}{x_{\alpha il} \cdot x_{\alpha jk}}$. 

 This defines the canonical map given in the statement of the proposition. It is injective since $\brho$ can be recovered from $\bar f_\brho$ via $\brho_\beta=\bar f_\brho(x_\beta)$. It is surjective since all defining relations between the generators $x_\beta$ of $\widehat P_J^\times$ hold for the coefficients $\brho_\beta$ of $\brho\in \upD_J(F)$.
 
 The claim $\upD_J(F)=\upR^w_J(F)$ follows from the fact that the non-degenerate $3$-term Pl\"ucker relations are vacuous if $F$ is degenerate. By \autoref{cor: degenerate tracts are excellent}, $F$ is excellent, i.e., $\upR^w_J(F)=\upR_J(F)$.
\end{proof}

%%%%%%%%%%%%%%%%%%%%%%%%%%%%%%%%%%%%%%%%%%%%%%%%%%%%%%%%%%%%%%%%%%%%%%%%%%%%%%%%%%%%%%%%%%%%%%%%%%%%%%%%%%%%%%
\subsection{The lineality space}
\label{subsection: the lineality space}

If $F$ is idempotent, then $\upR_J(F)$ contains a certain subgroup of the ambient torus $(F^\times)^\bJ$, which we call the lineality space.

Recall from \autoref{subsection: the realization space} that the torus $\widehat{T}(F)=F^\times\times (F^\times)^n$ acts on $\upR_J(F)$ by the formula $(a,t).\brho(\balpha)=a\cdot\big(\prod_{i=1}^\rbar t_{\balpha_i}\big)\cdot\rho(\balpha)$.

If $F$ is idempotent, then there is a (necessarily unique) morphism $i_F\colon\K\to F$. The composition of the unique $\K$-representation $\bchi_J\colon[n]^\rbar\to\K$ of $J$ with $i_F$ yields the \emph{trivial $F$-representation} $\bchi_{J,F}\colon[n]^\rbar\to F$ given by $\bchi_{J,F}(\balpha)=1$ if $\sum\balpha\in\Jbar$ and $\bchi_{J,F}(\balpha)=0$ otherwise.

\begin{df}
 The \emph{lineality space of $\upR_J(F)$} is the orbit $\upLin_J(F)=\widehat T(F).\bchi_{J,F}$, which is a subgroup of $(F^\times)^\bJ$ that is contained in $\upR_J(F)$. The thin Schubert cell $\Gr_J(F)=\upR_J(F)/F^\times$ contains the quotient torus $\widehat{T}(F).\bchi_{J,F}/F^\times$, which we call the \emph{lineality space of $\Gr_J(F)$}.
\end{df}

\begin{ex}\label{ex: lineality space of the Dressian}
 If $J$ is a matroid, then the bijection $-\log\colon\Gr_J(F)\to\Dr_J$ (cf.\ \autoref{subsection: M-convex functions as representations over the tropical hyperfield}) identifies the lineality space of $\Gr_J(\T_0)$ with the lineality space of the local Dressian $\Dr_J$, which consists of all valuated matroids with underlying matroid $J$ (cf.\ \cite{Brandt-Speyer22} for details on local Dressians and their lineality spaces).
\end{ex}

Whether or not a weak $F$-representation $\brho\colon[n]^\rbar\to F$ belongs to the lineality space $\upLin_J(F)$ can be evaluated in terms of the triviality of the \emph{cross ratios of $\brho$}, which are the elements
\[
 \cross ijkl{\alpha,\brho} \ = \ \frac{\brho(\balpha ik)\cdot \brho(\balpha jl)}{\brho(\balpha il)\cdot \brho(\balpha jk)} \quad \in \quad F^\times
\]
for $(\alpha,i,j,k,l)\in\Omega_J$ and $\balpha\in\Delta^{\rbar-2}_n$ with $\sum\balpha=\alpha$.
If $f_\brho\colon\widehat P_J\to F$ is the tract morphism associated with $\brho$ (see \autoref{prop: universal property of the universal pasture}), which is given by $f_\brho(x_\bbeta)=\brho(\beta)$, then
\[
 f_\brho\Big(\cross ijkl\alpha \Big) \ = \ f_\brho \Big(\frac{x_{\balpha ik}\cdot x_{\balpha jl}}{x_{\balpha il}\cdot x_{\balpha jk}}\Big) \ = \ \frac{\brho(\balpha ik)\cdot \brho(\balpha jl)}{\brho(\balpha il)\cdot \brho(\balpha jk)} \ = \ \cross ijkl{\alpha,\brho}.
\]

\begin{prop}\label{prop: characterization of the lineality space in terms of cross ratios}
 Let $F$ be idempotent, let $\brho\in\upR^w_J(F)$, and let $f_\brho\colon\widehat P_J\to F$ the associated tract morphism. Then $\brho\in \upLin_J(F)$ if and only if $\cross ijkl{\alpha,\brho}=1$ for all $(\alpha,i,j,k,l)\in\Omega_J$.
\end{prop}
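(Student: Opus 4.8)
The plan is to use the universal property of the foundation together with the presentation of $F_J$ by cross ratios established in \autoref{thm: cross ratios generate the foundation}. The key observation is that $\brho\in\upLin_J(F)$ if and only if the associated tract morphism $f_\brho\colon\widehat P_J\to F$ factors through the trivial $F$-representation up to the action of $\widehat T(F)$; since the foundation records exactly the $\widehat T(F)$-orbit of a representation (\autoref{prop: universal property of the foundation}), membership in the lineality space is detected by whether $\underline\Phi_{J,F}^{-1}([\brho])$ equals the class corresponding to $\bchi_{J,F}$. Both sides of the claimed equivalence will be shown to be equivalent to the statement that the restriction of $f_\brho$ to the foundation $F_J$ factors through the unique morphism $\K\to F$.

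\textbf{Key steps.}
First I would recall that $\upLin_J(F)=\widehat T(F).\bchi_{J,F}$ by definition, and that $\bchi_{J,F}$ is the push-forward of the unique $\K$-representation along $i_F\colon\K\to F$. Since $\widehat T(F)$ acts on $\upR^w_J(F)$ with $\ulineGr^w_J(F)=\upR^w_J(F)/\widehat T(F)$, we have $\brho\in\upLin_J(F)$ if and only if $[\brho]=[\bchi_{J,F}]$ in $\ulineGr^w_J(F)$, i.e.\ if and only if $\underline\Phi_{J,F}^{-1}([\brho])=\underline\Phi_{J,F}^{-1}([\bchi_{J,F}])$ as elements of $\Hom(F_J,F)$. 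The class $[\bchi_{J,F}]$ corresponds to the morphism $F_J\to F$ that sends every cross ratio to $1$: indeed $\bchi_{J,F}$ is obtained from the $\K$-representation, every cross ratio of the $\K$-representation is $1$ (all the $x_\beta$ equal $1$ in $\K$), and these are preserved by $i_F$. Call this morphism $\iota_{\mathrm{triv}}\colon F_J\to F$; note that since $F_J$ is generated over $\Funpm$ by its cross ratios (\autoref{thm: cross ratios generate the foundation}), $\iota_{\mathrm{triv}}$ is the unique morphism $F_J\to F$ sending all cross ratios to $1$ — in particular it is the composite $F_J\to\K\to F$ when $F_J$ admits a map to $\K$, which it does since $F$ being idempotent forces (via \autoref{prop: idempotency principle} when $J$ is proper, and trivially otherwise) that $F_J$ maps to $\K$.

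Next I would compute, for an arbitrary $\brho\in\upR^w_J(F)$ with associated $f_\brho\colon\widehat P_J\to F$, the restriction $f_\brho|_{F_J}\colon F_J\to F$. By the formula displayed just before the statement, $f_\brho\big(\cross ijkl\alpha\big)=\cross ijkl{\alpha,\brho}$ for every $(\alpha,i,j,k,l)\in\Omega_J$. Since $F_J$ is generated by the cross ratios $\cross ijkl\alpha$ over $\Funpm$, the morphism $f_\brho|_{F_J}$ is completely determined by its values on these generators. Therefore $f_\brho|_{F_J}=\iota_{\mathrm{triv}}$ — equivalently $\underline\Phi_{J,F}^{-1}([\brho])=\underline\Phi_{J,F}^{-1}([\bchi_{J,F}])$ — if and only if $\cross ijkl{\alpha,\brho}=f_\brho\big(\cross ijkl\alpha\big)=1=\iota_{\mathrm{triv}}\big(\cross ijkl\alpha\big)$ for all $(\alpha,i,j,k,l)\in\Omega_J$. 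Combining this with the previous step yields $\brho\in\upLin_J(F)\iff\cross ijkl{\alpha,\brho}=1$ for all $(\alpha,i,j,k,l)\in\Omega_J$, which is the claim.

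\textbf{Main obstacle.}
The substantive point — and the step I expect to require the most care — is verifying that equality of the two morphisms $F_J\to F$ is genuinely equivalent to equality of the corresponding classes in $\ulineGr^w_J(F)$, i.e.\ that no information is lost in passing through the foundation. This is exactly the content of the bijectivity of $\underline\Phi_{J,F}$ in \autoref{prop: universal property of the foundation}, so it can be invoked directly, but one must be careful that the cross ratios $\cross ijkl\alpha$ are well-defined elements of $F_J$ (they have multidegree zero and are invertible in $\widehat P_J$, as recorded in \autoref{subsection: cross ratios}) and that $\Omega_J$ indexes precisely the cross ratios of $\brho$ appearing in the statement. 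A secondary subtlety is that when $J$ is the translate of a matroid, $F_J$ need not be idempotent, so one should not claim $F_J$ maps to $\K$ via idempotency of $F_J$; rather, since $F$ is idempotent and every cross ratio of the trivial representation is $1$, the morphism $\iota_{\mathrm{triv}}$ is still well-defined and unique by the generation statement. Once these bookkeeping points are settled, the argument is a straightforward chain of equivalences with no hard computation.
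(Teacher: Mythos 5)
Your proposal is correct and follows essentially the same route as the paper's proof: invoke the bijection $\underline\Phi_{J,F}$ from \autoref{prop: universal property of the foundation} to reduce membership in $\upLin_J(F)$ to equality of the restricted morphisms $F_J\to F$, then use \autoref{thm: cross ratios generate the foundation} (together with the fact that $-1\mapsto 1$ because $F$ is idempotent) to reduce equality of morphisms to agreement on cross ratios. The only cosmetic difference is that the paper handles the easy direction by directly observing that cross ratios are $\widehat T(F)$-invariant and are all $1$ for the trivial representation, whereas you fold both directions into a single chain of equivalences; these are the same idea.
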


\begin{proof}
    It is clear from the definition that the cross ratios of $\brho$ are invariant under the action of $\widehat T(F)$. The ``only if'' direction now follows from the obvious fact that the cross ratios of the trivial $F$-representation are all equal to one.

  Conversely, we have $\brho\in \upLin_J(F)=\widehat T(F).\bchi_{J,F}$ if and only if $[\brho]=[\bchi_{J,F}]$ as classes of the realization space $\ulineGr^w_J(F)$, i.e., if and only if the restriction of $f_\brho$ to $F_J\to F$ agrees with the tract morphism $f_{\bchi_{J,F}}\colon F_J\to F$, using \autoref{prop: universal property of the foundation}. 
 Now if $\cross ijkl{\alpha,\brho}=1$ for all $(\alpha,i,j,k,l)\in\Omega_J$, then $f_\brho|_{F^\times_J}\colon F_J^\times\to F$ is indeed trivial because $F_J^\times$ is generated by $-1$, whose image in $F$ is $-1=1$, and all cross ratios by \autoref{thm: generators and relations for the foundation}.
\end{proof}

%%%%%%%%%%%%%%%%%%%%%%%%%%%%%%%%%%%%%%%%%%%%%%%%%%%%%%%%%%%%%%%%%%%%%%%%%%%%%%%%%%%%%%%%%%%%%%%%%%%%%%%%%%%%%%
%%%%%%%%%%%%%%%%%%%%%%%%%%%%%%%%%%%%%%%%%%%%%%%%%%%%%%%%%%%%%%%%%%%%%%%%%%%%%%%%%%%%%%%%%%%%%%%%%%%%%%%%%%%%%%
\section{The Pl\"ucker embedding for thin Schubert cells}
\label{section: the Plucker embedding for thin Schubert cells}

We define the \emph{projective $N$-space over $F$} as the quotient 
\[
 \P^N(F) \ = \ \big\{ (a_0,\dotsc,a_N)\in F^{N+1} \, \big| \, a_i\in F^\times\text{ for some }0\leq i\leq N \big\} \ \big/ \ F^\times.
\]
We denote elements of $\P^N(F)$ by 
\[
 [a_1:\dotsc:a_N] \ := \ F^\times\cdot(a_0,\dotsc,a_N).
\]

Quotienting out the domain and the codomain of the canonical embedding $\upR^w_J(F)\to F^{n^r}$ (cf.\ \autoref{section: the torus embedding of the representation space}) by the diagonal action of $F^\times$ yields the \emph{effective Pl\"ucker embedding}
\[
 \Gr_J(F) \ \subseteq \ \Gr^w_J(F) \ \longrightarrow \ \P^{n^\rbar-1}(F) \ = \ \big\{ [x_\balpha] \, \big| \, \balpha\in[n]^\rbar \big\},
\]
which sends $\big[\brho\colon[n]^\rbar\to F\big]\in\Gr^w_J(F)$ to $[\brho(\balpha)]_{\balpha\in[n]^\rbar}$.

%%%%%%%%%%%%%%%%%%%%%%%%%%%%%%%%%%%%%%%%%%%%%%%%%%%%%%%
\subsection{The Polygrassmannian for idempotent fusion tracts}
\label{subsection: the Polygrassmannian for idempotent fusion tracts}

The thin Schubert cells $\Gr_J(F)$ for various M-convex subsets $J\subseteq\Delta^r_n$ (with fixed $r$ and $n$) glue to a subvariety of $\P^{n^r-1}$, which we call the Polygrassmannian. If $F$ is not near-idempotent, then the idempotency principle (\autoref{prop: idempotency principle}) implies that $\Gr_J(F)$ is nonempty only if $J$ is the translate of a matroid. So in this case, the Polygrassmannian is a union of ``translates'' of usual Grassmannians. If $F$ is near-idempotent, then the Polygrassmannian is larger than (a union of translates of) usual Grassmannians.

In order to realize the embedding of $\Gr_J(F)$ into $\P^{n^r-1}$ (as opposed to $\P^{n^\rbar-1}$, as in the effective Pl\"ucker embedding), we need to ``translate'' $[\brho]\in\Gr_J(F)$ to a function $\brho\colon[n]^r\to F$ by adding coefficients that correspond to $\delta^-_J$ in a certain sense, as explained below. Unless $J$ is a matroid, the signs of the coordinates of the ``translate'' of $\brho$ depends on the ordering of $[n]$ if $1\neq -1$ in $F$. To avoid these complications, we concentrate on the near-idempotent case in the following, which is the only genuinely interesting case as explained above.

Thus, let us assume for the remainder of this discussion that $F$ is near-idempotent. By \autoref{lemma: alternative description of polymatroid representations}, a function $\brho\colon[n]^\rbar\to F$ that satisfies axioms \ref{SR1} and \ref{SR2} is a strong $F$-representation if and only if the function $\rho\colon\Delta^r_n\to F$ with support $J$, given by $\rho(\delta^-_J+\sum\balpha)=\brho(\balpha)$ for $\balpha\in\bJ$, satisfies the Pl\"ucker relations
\begin{equation*}
 \sum_{k=0}^s \ \rho(\alpha -\epsilon_{i_k}+\epsilon_{i_0}+\dotsb+\epsilon_{i_s}) \cdot \rho(\alpha +\epsilon_{i_k}+\epsilon_{j_2}+\dotsb +\epsilon_{j_s})  \in  N_{F}
\end{equation*}
for all $2\leq s\leq r$, $\alpha\in\Delta^{r-s}_n$, and $i_0,\dotsb,i_s,j_2,\dotsc,j_s\in[n]$ such that $\delta^-_J\leq\alpha$ and $\alpha+\epsilon_{i_0}+\dotsc+\epsilon_{i_s}+\epsilon_{j_2}+\dotsc+\epsilon_{j_s}\leq\delta^+_J$.

This identifies $F$-representations of different M-convex subsets $J\subseteq\Delta^r_n$ (for fixed $r$ and $n$) with functions $\rho\colon\Delta^r_n\to F$ on the same domain. However, the defining relations depend on $J$ because of the bounds that involve $\delta^-_J$ and $\delta^+_J$. The following result resolves this issue by removing these bounds. It comes with the price that we need to assume the \emph{fusion rule} for $F$, which is
\begin{enumerate}[label=(FR)]
 \item \label{FR} If $\sum a_i-c,\ c+\sum b_j\in N_F$, then $\sum a_i+\sum b_j\in N_F$.
\end{enumerate}
We call a tract that satisfies the fusion rule \ref{FR} a \emph{fusion tract}. Note that the most interesting examples of tracts are fusion tracts, which include all hyperfields and partial fields and, in particular, all examples mentioned in this text. Note that we also need to assume that $F$ is idempotent.

\begin{lemma}\label{lemma: extended Plucker relations over idempotent F}
 Let $F$ be an idempotent fusion tract. Let $J\subseteq\Delta^r_n$ be an M-convex set, $\Jbar$ its reduction, and $\rbar=r-\norm{\delta^-_J}$ its effective rank. Let $\brho\colon[n]^\rbar\to F$ a function that satisfies axioms \ref{SR1} and \ref{SR2} and $\rho\colon\Delta^r_n\to F$ the function with support $J$ given by $\rho(\delta^-_J+\sum\balpha)=\brho(\balpha)$ for $\balpha\in[n]^\rbar$. Then $\brho$ is a strong $F$-representation of $J$ if and only if $\rho$ satisfies the Pl\"ucker relations
 \begin{equation*}
  \sum_{k=0}^s \ \rho(\alpha -\epsilon_{i_k}+\epsilon_{i_0}+\dotsb+\epsilon_{i_s}) \cdot \rho(\alpha +\epsilon_{i_k}+\epsilon_{j_2}+\dotsb +\epsilon_{j_s})  \in  N_{F}
 \end{equation*}
 for all $2\leq s\leq r$, $\alpha\in\Delta^{r-s}_n$, and $i_0,\dotsb,i_s,j_2,\dotsc,j_s\in[n]$.
\end{lemma}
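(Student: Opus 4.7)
The ``if'' direction is immediate from \autoref{lemma: alternative description of polymatroid representations}: the extended Pl\"ucker relations specialize to the bounded ones, so if $\rho$ satisfies the extended relations then $\brho$ is a strong $F$-representation of $J$.

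For the ``only if'' direction, fix unrestricted parameters $(\alpha, i_0, \dotsc, i_s, j_2, \dotsc, j_s)$ and set $v := \epsilon_{i_0} + \dotsb + \epsilon_{i_s}$, $w := \epsilon_{j_2} + \dotsb + \epsilon_{j_s}$, $\beta := \alpha+v$, $\gamma := \alpha+w$, and $c_m := \rho(\beta - \epsilon_m)\rho(\gamma + \epsilon_m)$. The Pl\"ucker relation to prove reads
\[
 \sum_{m=1}^n v_m \cdot c_m \ \in \ N_F.
\]
My plan is to induct on the out-of-bounds gap
\[
 g(\alpha,v,w) \ := \ \sum_m \max\bigl(0,\delta^-_{J,m}-\alpha_m\bigr) \ + \ \sum_m \max\bigl(0,\alpha_m+v_m+w_m - \delta^+_{J,m}\bigr),
\]
the base case $g=0$ being a bounded Pl\"ucker relation, which holds by hypothesis and \autoref{lemma: alternative description of polymatroid representations}.

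For the inductive step, pick a coordinate $m_0$ where a bound is violated, say $\alpha_{m_0} < \delta^-_{J,m_0}$ with $d_0 := \delta^-_{J,m_0} - \alpha_{m_0} \geq 1$ (upper-bound violations are handled by a structurally parallel argument, which can also be seen via the duality isomorphism of \autoref{thm: foundations of duals}). A direct support analysis shows that $c_m = 0$ for every $m$ unless $v_{m_0}, w_{m_0} \geq d_0$, and that $c_{m_0} = 0$ unless additionally $v_{m_0} \geq d_0 + 1$. When $v_{m_0}, w_{m_0} \geq d_0$, the substitution $(\alpha', v', w') := (\alpha + d_0 \epsilon_{m_0},\, v - d_0 \epsilon_{m_0},\, w - d_0 \epsilon_{m_0})$ preserves $\beta$ and $\gamma$ (and hence every $c_m$), strictly reduces the gap, and changes the Pl\"ucker sum precisely by the term $d_0 \cdot c_{m_0}$. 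Since $F$ is idempotent, $1+1, 1+1+1 \in N_F$ and, as $N_F$ is an ideal, $n \cdot c \in N_F$ for every $c \in F$ and $n \geq 2$. Consequently $d_0 \cdot c_{m_0} \in N_F$ whenever $d_0 \geq 2$, and when $d_0 = 1$ with $v_{m_0}, w_{m_0} \geq 2$ the analogous step-$2$ substitution yields a controlled error $2 \cdot c_{m_0} \in N_F$; in the remaining extremal subcases either $v_{m_0} = 1$ or $w_{m_0} = 1$ forces $c_{m_0} = 0$ by the support analysis.

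The main obstacle is the residual delicate case $d_0 = 1$, $v_{m_0} \geq 2$, $w_{m_0} = 1$ (together with its mirror), where no single shift eliminates the violation while keeping the error inside $N_F$. Here the fusion rule \ref{FR} for $F$ enters essentially: the plan is to exhibit two carefully chosen bounded Pl\"ucker relations at nearby parameter tuples whose term-sets share a common element $c$, so that applying fusion to cancel $c$ yields exactly the desired extended Pl\"ucker relation modulo terms already in $N_F$. Iterating the reduction across all bound-violating coordinates returns us to the bounded case and completes the proof of both directions.
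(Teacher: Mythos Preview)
Your overall architecture matches the paper's: induct on a lower/upper-bound gap, use idempotency to absorb multiplicities, and invoke fusion in the one case where a single shift leaves a bare term $c_{m_0}$. But the proof has a genuine gap precisely at that point.

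First, a small error: the claim that ``$w_{m_0}=1$ forces $c_{m_0}=0$'' is false. With $d_0=1$ the support condition for $c_{m_0}\neq 0$ is $v_{m_0}\geq d_0+1=2$ and $w_{m_0}\geq d_0-1=0$, so $w_{m_0}=1$ is perfectly compatible with $c_{m_0}\neq 0$. You seem to realize this immediately afterwards, but the sentence should be corrected.

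More importantly, in the ``residual delicate case'' $d_0=1$, $v_{m_0}\geq 2$, $w_{m_0}=1$ you only announce a plan (``exhibit two carefully chosen bounded Pl\"ucker relations\dots'') without carrying it out. This is the heart of the argument and cannot be left at the level of intent. What actually works (and is what the paper does) is not two Pl\"ucker relations but one Pl\"ucker relation and one idempotency relation: arrange the indices so that $i_0=i_1=m_0$ and $j_2=m_0$ (possible since $v_{m_0}\geq 2$ and $w_{m_0}\geq 1$); then the original sum reads $2c_{m_0}+\sum_{k\geq 2}(\cdots)$. Now fuse the relation $c_{m_0}+c_{m_0}+c_{m_0}\in N_F$ (idempotency) with the relation $c_{m_0}+\sum_{k\geq 2}(\cdots)\in N_F$, the latter being exactly the Pl\"ucker expression $\Pl(\alpha+\epsilon_{m_0}\mid i_1\dotsc i_s\mid j_3\dotsc j_s)$ with strictly smaller gap, hence in $N_F$ by induction. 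The common term $c_{m_0}$ cancels under \ref{FR}, yielding $2c_{m_0}+\sum_{k\geq 2}(\cdots)\in N_F$ as required. Without this concrete step your argument is incomplete.
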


\begin{proof}
 By \autoref{lemma: alternative description of polymatroid representations}, we only need to verify that $\rho$ satisfies the Pl\"ucker relations in question if $\delta^-_J\not\leq\alpha$ or if $\alpha+\epsilon_{i_0}+\dotsc+\epsilon_{i_s}+\epsilon_{j_2}+\dotsc+\epsilon_{j_s}\not\leq\delta^+_J$. These two cases can be proven analogously (and are, more concisely, related by polymatroid duality). We only explain the proof for $\delta^-_J\not\leq\alpha$, proceeding by induction on $d^+=d^+(\alpha,\delta^-_J)=\sum_{i\in[n]}\max\{0,\delta^-_{J,i}-\alpha_i\}$. 
 
 If $d^+=0$, then $\delta^-_J\leq\alpha$, so the corresponding Pl\"ucker relation holds by assumption. If $d^+>0$, then $\alpha_\ell<\delta^-_{J,\ell}$ for some $\ell\in[n]$ and the expression
 \begin{equation}  \label{Pl-rho}
 % \tag*{$\Pl(\alpha|i_0\dotsc i_s|j_2\dotsc j_s)$} \label{Pl-rho}
 \Pl(\alpha|i_0\dotsc i_s|j_2\dotsc j_s) :=
 \sum_{k=0}^s \ \rho(\alpha -\epsilon_{i_k}+\epsilon_{i_0}+\dotsb+\epsilon_{i_s}) \cdot \rho(\alpha +\epsilon_{i_k}+\epsilon_{j_2}+\dotsb +\epsilon_{j_s})
 \end{equation}
 is either identically $0$, and thus in $N_F$, or 
 \[
  \beta \ = \ \alpha+\epsilon_{i_0}+\dotsb+\epsilon_{i_s} \qquad \text{and} \qquad \gamma \ = \ \alpha+\epsilon_{j_2}+\dotsb+\epsilon_{j_s}
 \]
 satisfy $\beta_\ell\geq\delta^-_{J,\ell}$, \ $\gamma_\ell\geq\delta^-_{J,\ell}-1$, and $\beta_\ell+\gamma_\ell\geq 2\delta^-_{J,\ell}$.
 
 We begin with the case where $\gamma_\ell=\delta^-_{J,\ell}-1$, and thus $\beta_\ell\geq\delta^-_{J,\ell}+1\geq\alpha_\ell+2$. Then \eqref{Pl-rho} is of the form
 \begin{multline*}
  \sum_{k\text{ s.t.\ }i_k=\ell} \rho(\alpha -\epsilon_{i_k}+\epsilon_{i_0}+\dotsb+\epsilon_{i_s}) \cdot \rho(\alpha +\epsilon_{i_k}+\epsilon_{j_2}+\dotsb +\epsilon_{j_s}) \\
  = \ \big(\underbrace{1+\dotsb+1}_{m\text{-times}}\big) \cdot \rho(\alpha +\epsilon_{i_0}+\dotsb+\epsilon_{i_s}-\epsilon_\ell) \cdot \rho(\alpha +\epsilon_{\ell}+\epsilon_{j_2}+\dotsb +\epsilon_{j_s})
 \end{multline*}
 for $m=\beta_\ell-\alpha_{\ell}\geq2$, which is in $N_F$ since $F$ is idempotent and thus $m.1=1+\dotsc+1\in N_F$.
 
 In the other case, $\gamma_\ell\geq\delta^-_{J,\ell}\geq\alpha_\ell+1$, we have $\beta_\ell\geq\delta^-_{J,\ell}\geq\alpha_\ell+1$ and thus $\ell$ appears in both $\{i_0,\dotsc,i_s\}$ and $\{j_2,\dotsc,j_s\}$, say $i_0=j_2=\ell$. If $\beta_\ell=\alpha_\ell+1$, then
 \[
  \rho(\alpha+\epsilon_{i_1}+\dotsb +\epsilon_{i_s}) \cdot \rho(\alpha +\epsilon_{i_0}+\epsilon_{j_2}+\dotsb +\epsilon_{j_s}) \ = \ 0
 \]
 and thus \eqref{Pl-rho} equals $\Pl(\alpha+\epsilon_\ell|i_1\dotsc i_s|j_3\dotsc j_s)$, which is in $N_F$ by the inductive hypothesis since $d^+(\alpha+\epsilon_\ell,\delta^-_J)=d^+(\alpha,\delta_J)-1<d^+$. If $\beta_\ell\geq\alpha_\ell+2$, then $\ell$ appears at least twice in $\{i_0,\dotsc,i_s\}$, say $i_0=i_1=\ell$, and 
 \eqref{Pl-rho} is of the form
 \begin{multline*}
  \rho(\beta-\epsilon_\ell) \cdot \rho(\gamma+\epsilon_{\ell}) \ + \ \rho(\beta-\epsilon_\ell) \cdot \rho(\gamma+\epsilon_{\ell}) \\
  \ + \ \sum_{k=2}^s \ \rho(\alpha -\epsilon_{i_k}+\epsilon_{i_0}+\dotsb+\epsilon_{i_s}) \cdot \rho(\alpha +\epsilon_{i_k}+\epsilon_{j_2}+\dotsb +\epsilon_{j_s}),
 \end{multline*}
 which is in $N_F$ by the fusion rule \ref{FR} applied to the two terms 
 \[ 
  \rho(\beta-\epsilon_\ell) \cdot \rho(\gamma+\epsilon_{\ell}) \ + \ \rho(\beta-\epsilon_\ell) \cdot \rho(\gamma+\epsilon_{\ell}) \ + \ \rho(\beta-\epsilon_\ell) \cdot \rho(\gamma+\epsilon_{\ell}), 
 \] 
 which is in $N_F$ since $F$ is idempotent, and
 \[
  \rho(\beta-\epsilon_\ell) \cdot \rho(\gamma+\epsilon_{\ell}) \ + \ \sum_{k=2}^s \ \rho(\alpha -\epsilon_{i_k}+\epsilon_{i_0}+\dotsb+\epsilon_{i_s}) \cdot \rho(\alpha +\epsilon_{i_k}+\epsilon_{j_2}+\dotsb +\epsilon_{j_s}),
 \]
 which is $\Pl(\alpha+\epsilon_\ell|i_1\dotsc i_s|j_3\dotsc j_s)$ and thus in $N_F$ by the inductive hypothesis.
\end{proof}

\begin{df}\label{definition:polygrassmannian}
 Let $F$ be an idempotent fusion tract. The \emph{Polygrassmannian of rank $r$ on $[n]$ over $F$} is the subset $\PolyGr(r,n)(F)$ of $\P^{N}(F)$ (for $N=\#\Delta^r_n-1=\binom{n+r}r-1$) that consists of all classes $[\rho\colon\Delta^r_n\to F]$ of functions that satisfy the Pl\"ucker relations
 \begin{equation*}
  \sum_{k=0}^s \ \rho(\alpha -\epsilon_{i_k}+\epsilon_{i_0}+\dotsb+\epsilon_{i_s}) \cdot \rho(\alpha +\epsilon_{i_k}+\epsilon_{j_2}+\dotsb +\epsilon_{j_s})  \in  N_{F}
 \end{equation*}
 for all $2\leq s\leq r$, $\alpha\in\Delta^{r-s}_n$, and $i_0,\dotsb,i_s,j_2,\dotsc,j_s\in[n]$.
\end{df}
As a consequence of \autoref{lemma: extended Plucker relations over idempotent F} and \autoref{remark: pluecker relations imply m-convex support}, $\PolyGr(r,n)(F)$ is the union of the thin Schubert cells $\Gr_J(F)$ for the various M-convex subsets $J\subseteq\Delta^r_n$. More precisely, the association $\brho\mapsto\rho$ established by \autoref{lemma: extended Plucker relations over idempotent F} defines a bijection
\[
 \coprod_{\substack{J\subseteq\Delta^r_n\\ \text{M-convex}}} \Gr_J(F) \ \longrightarrow \ \PolyGr(r,n)(F).
\]
In particular, $\PolyGr(r,n)(\K)$ is canonically in bijection with the collection of all M-convex subsets $J\subseteq\Delta^r_n$.

\begin{rem}
 In fact, the Pl\"ucker relations endow the Polygrassmannian with the structure of a band scheme $\PolyGr(r,n)$ whose $F$-rational points are naturally identified with $\PolyGr(r,n)(F)$. This perspective can be extended to an interpretation of $\PolyGr(r,n)$ as the fine moduli space of polymatroids, in the vein of \cite{Baker-Lorscheid21b}, in terms of a suitable theory of polymatroid bundles for band schemes.
 
 Note that we refrain from making an analogous definition for the \emph{weak} Polygrassmannian, because the weak Pl\"ucker relations \ref{WR3} do not by themselves imply that the support of $\rho$ is a polymatroid. This leads to a less satisfactory picture from an algebro-geometric point of view: the space of weak $F$-representations of M-convex subsets of $\Delta^r_n$ is not represented by a band scheme. (A similar phenomenon occurs already for matroids, so there is nothing unique in this regard about the case of polymatroids.)
\end{rem}

Let $\theta\colon \Z^{rn} \to \Z^n$ be the function given by $\theta(\alpha)_i = \sum_{j=1}^r \alpha_{r(i-1)+j}$ with $i\in[n]$. For a function $\rho\colon \Delta^r_n \to F$, we define the associated function $\Up\rho\colon \Delta^r_{rn} \to F$ by 
\[
  \Up\rho(\alpha) =
  \begin{cases}
    \rho(\theta\alpha) & \text{if } \alpha \leq \1, \\
    0 & \text{otherwise}.
  \end{cases}
\]
This is a direct analogue of the up operator defined in \autoref{subsection: the up operator} when $F$ is a near-idempotent tract. If the support $J$ of $\rho$ is a polymatroid, the support $N = \theta^{-1}(J) \cap \{0,1\}^{[rn]}$ of $\Up\rho$ is a matroid. Note that $N$ is the natural matroid of $J$ iff the rank of each element $i$ of $J$ is $r$.

\begin{lemma}\label{lemma: extended Plucker relations are preserved under the up operator}
 Let $F$ be an idempotent fusion tract. A function $\rho\colon \Delta^r_n \to F$ satisfies the Pl\"ucker relations  
 \begin{equation*}
  \sum_{k=0}^s \ \rho(\alpha -\epsilon_{i_k}+\epsilon_{i_0}+\dotsb+\epsilon_{i_s}) \cdot \rho(\alpha +\epsilon_{i_k}+\epsilon_{j_2}+\dotsb +\epsilon_{j_s})  \in  N_{F}
 \end{equation*}
 for all $2\leq s\leq r$, $\alpha\in\Delta^{r-s}_n$, and $i_0,\dotsb,i_s,j_2,\dotsc,j_s\in[n]$ if and only if $\Up \rho\colon \Delta^r_{rn} \to F$ satisfies the Pl\"ucker relations 
 \begin{equation*}
  \sum_{k=0}^s \ \Up\rho(\beta -\epsilon_{i_k}+\epsilon_{i_0}+\dotsb+\epsilon_{i_s}) \cdot \Up\rho(\beta +\epsilon_{i_k}+\epsilon_{j_2}+\dotsb +\epsilon_{j_s})  \in  N_{F}
 \end{equation*}
 for all $2\leq s\leq r$, $\beta\in\Delta^{r-s}_{rn}$, and $i_0,\dotsb,i_s,j_2,\dotsc,j_s\in[rn]$.
\end{lemma}
\begin{proof}
  We denote the support of $\rho$ by $J$ and the support of $\Up\rho$ by $N$. 

  Assume that $\rho$ satisfies all Pl\"ucker relations. By \autoref{lemma: extended Plucker relations over idempotent F}, it suffices to show that $\Up\rho$ satisfies $\sum_{k=0}^s \ \Up\rho(\beta -\epsilon_{i_k}+\epsilon_{i_0}+\dotsb+\epsilon_{i_s}) \cdot \Up\rho(\beta +\epsilon_{i_k}+\epsilon_{j_2}+\dotsb +\epsilon_{j_s})  \in  N_{F}$
  for all $2\leq s\leq r$, $\beta\in\Delta^{r-s}_{rn}$, and $i_0,\dotsb,i_s,j_2,\dotsc,j_s\in[rn]$ such that $\beta + \epsilon_{i_0} + \dotsb + \epsilon_{i_s}+ \epsilon_{j_2} + \dotsb + \epsilon_{j_s} \leq \delta^+_N = \1$. In this case, $\Up\rho(\beta - \epsilon_{i_k}+\epsilon_{i_0}+\dotsb+\epsilon_{i_s}) = \rho(\theta(\beta - \epsilon_{i_k}+\epsilon_{i_0}+\dotsb+\epsilon_{i_s}))$ and $\Up\rho(\beta + \epsilon_{i_k}+\epsilon_{j_2}+\dotsb +\epsilon_{j_s}) = \rho(\theta(\beta + \epsilon_{i_k}+\epsilon_{j_2}+\dotsb +\epsilon_{j_s}))$. Therefore, the above Pl\"ucker relations for $\Up\rho$ follow from the Pl\"ucker relations for $\rho$.

  Conversely, assume that $\Up\rho$ satisfies all Pl\"ucker relations. By \autoref{lemma: extended Plucker relations over idempotent F}, it suffices to show that $\rho$ satisfies $\sum_{k=0}^s \ \rho(\alpha -\epsilon_{i_k}+\epsilon_{i_0}+\dotsb+\epsilon_{i_s}) \cdot \rho(\alpha +\epsilon_{i_k}+\epsilon_{j_2}+\dotsb +\epsilon_{j_s})  \in  N_{F}$ for all $2\leq s\leq r$, $\alpha\in\Delta^{r-s}_{n}$, and $i_0,\dotsb,i_s,j_2,\dotsc,j_s\in[n]$ such that $\alpha + \epsilon_{i_0} + \dotsb + \epsilon_{i_s}+ \epsilon_{j_2} + \dotsb + \epsilon_{j_s} \leq \delta^+_J \leq r\1$. There are $\beta \in \Delta^{r-s}_{rn}$ and $i'_0,\dotsb,i'_s,j'_2,\dotsc,j'_s\in[rn]$ such that $\theta(\beta) = \alpha$, $\theta(i'_k) = i_k$ for all $k$, $\theta(j'_l) = j_l$ for all $l$, and $\beta + \epsilon_{i'_0} + \dotsb + \epsilon_{i'_s}+ \epsilon_{j'_2} + \dotsb + \epsilon_{j'_s} \leq \1$. Thus the above Pl\"ucker relations for $\rho$ follow from the Pl\"ucker relations for $\Up\rho$.
\end{proof}

As a consequence of \autoref{lemma: extended Plucker relations are preserved under the up operator}, we obtain an embedding
\[
 \PolyGr(r,n)(F) \ \longrightarrow \ \Gr(r,rn)(F)
\]
provided that $F$ is an idempotent fusion tract.

\begin{rem}
  Let \begin{equation}\label{eq:binomialbasis}
  g(w_1,w_2,\dots,w_n) = \sum_{\alpha \in \Delta^r_n} \ \binom{r}{\alpha} \ c_\alpha w^\alpha \end{equation} be a degree $r$ homogeneous polynomial in $n$ variables with nonnegative coefficients, where $w^\alpha = \prod_{i=1}^n w_i^{\alpha_i}$ and $\binom{r}{\alpha} = \prod_{i=1}^n \binom{r}{\alpha_i}$. The \emph{polarization} of $g$ is the degree $r$ homogeneous polynomial in $rn$ variables $w_{ij}$ with $i\in[n]$ and $j\in[r]$ given by \[ \Up g := \sum_{\alpha \in \Delta^r_n} c_\alpha \prod_{i=1}^n e_{\alpha_i}(w_{i1},w_{i2},\dots,w_{ir}), \] where $e_k(z_1,\dots,z_r)$ denotes the elementary symmetric polynomial of degree $k$. Br\"and\'en and Huh \cite[Prop.\ 3.1 and Thm.\ 3.14]{Branden-Huh20} proved that the polarization operator preserves the Lorentzian property of polynomials and that $(c_\alpha)_{\alpha\in\Delta^r_n}$ is a $\T_0$-representation of a polymatroid if and only if the polynomial $\sum_{\alpha \in \Delta^r_n} \binom{r}{\alpha} c_\alpha^q w^\alpha$ is Lorentzian for all $q \ge 0$. 
  We remark that for $q\geq0$, if we assign to a polynomial $g$ written as in 
  (\ref{eq:binomialbasis}) the function $\tilde\rho_g\colon \Delta^r_n \to \T_q$ given by $\tilde\rho_g(\alpha)=c_\alpha$, then we have $\Up\tilde\rho_g=\tilde\rho_{\Up g}$.
  However, note that $\tilde\rho_g$ differs from the  function  $\rho_g\colon \Delta^r_n \to \T_q$ that is assigned to a polynomial $g$ in \cite{BHKL1}, for which in general $\Up\rho_g\neq\rho_{\Up g}$.
\end{rem}

\subsubsection{The Polydressian} 

 Let $\overline{\R}=\R\cup\{\infty\}$ be the min-plus-algebra. Taking coefficientwise inverse logarithms defines the bijection $-\log\colon\P^N(\T_0)\to\overline{\R}^N/\overline{\R}$, where $N=\#\Delta^r_n-1$. We define the \emph{Polydressian} as the image $\PolyDr(r,n)=-\log(\PolyGr(r,n)(\T_0))$ of the Polygrassmannian over $\T_0$ under this map. The Polydressian $\PolyDr(r,n)$ contains the Dressian $\Dr(r,n)$, which is the union of the local Dressians $\Dr_J=-\log\big(\Gr_J(\T_0)\big)$ for which $J$ is a matroid. It follows from the results in \autoref{subsection: M-convex functions as representations over the tropical hyperfield} that the Polydressian $\PolyDr(r,n)$ is the set of M-convex functions $\Delta_n^r\to\R\cup\{\infty\}$ modulo constant functions.
 
 In rank $r=1$, every polymatroid is a matroid and thus $\Dr(1,n)=\PolyDr(1,n)$. For rank $r\geq2$ and $n\geq1$, the Polydressian is strictly larger. For example, $\Dr(r,1)$ is empty for $r\geq2$ since there is no matroid of rank $r\geq2$ on $[1]$, but $\PolyDr(r,1)$ is a singleton that contains the class of the trivial valuation $v\colon\Delta^r_1\to\overline\R$ with $v(r,0)=0$.
 
 A richer example is $\PolyDr(2,2)$. The Dressian $\Dr(2,2)$ is a singleton that consists of the class of the trivial valuation $v:U_{2,2}\to\overline\R$ with $v(1,1)=0$, where $U_{2,2}=\{(1,1)\}$ is the uniform matroid of rank $2$ on $[2]$. The Polydressian $\PolyDr$ has six strata $\Dr_J=-\log(\Gr_J(\T_0))$, labeled by the six M-convex subsets 
 \[
  \{(2,0)\}, \quad \{(1,1)\}, \quad \{(0,2)\}, \quad \{(2,0),(1,1)\}, \quad \{(2,0),(1,1)\}, \quad \Delta^2_2
 \]
 of $\Delta^2_2$. The (logarithmic) Pl\"ucker relations are trivially satisfied for all valuations of these M-convex sets, with the unique exception of $\Delta^2_2$, whose valuations must satisfy the condition that the minimum among
 \[
  v(2,0)+v(0,2), \qquad v(1,1)+v(1,1), \qquad \text{and} \qquad v(1,1)+v(1,1) 
 \]
 occurs at least twice, which means that $2v(1,1)\leq v(2,0)+v(0,2)\neq\infty$. See \autoref{fig: PolyDr22} for an illustration of $\PolyDr(2,2)$.

\begin{figure}[ht]
 \includegraphics{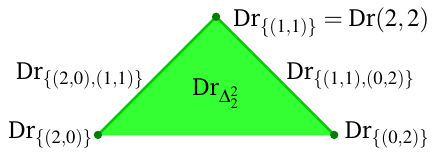}
 \caption{The Polydressian $\PolyDr(2,2)$}
 \label{fig: PolyDr22}
\end{figure}

%%%%%%%%%%%%%%%%%%%%%%%%%%%%%%%%%%%%%%%%%%%%%%%%%%%%%%%
\subsection{The Tutte rank}
\label{subsection: the Tutte rank}

If $M$ is a matroid, then $P_M^\times$ is canonically isomorphic to the Tutte group of $M$; cf.\ \cite[Thm.\ 6.27]{Baker-Lorscheid21b}. This justifies the following definition of the Tutte group of a polymatroid.

\begin{df}
 Let $J$ be an M-convex set. The \emph{Tutte group of $J$} is the abelian group $P_J^\times$, and the \emph{Tutte rank of $J$} is its free rank $\tau(J)=\rk P_J^\times$.
\end{df}

As an immediate consequence of \autoref{thm: bijection between the universal pasture and the universal tract}, the Tutte group is also canonically isomorphic to $T_J^\times$. The Tutte rank $\tau(J)$ is finite since $P_J^\times$ is finitely generated. 

 \begin{cor}\label{cor: Tutte rank as dimension of the thin Schubert cell over the degenerate triangular hyperfield}
 Let $J\subseteq\Delta^r_n$ be an M-convex set. Then $-\log\big(\Gr^w_J(\T_\infty)\big)$ is a real vector space whose dimension equals the Tutte rank $\tau(J)$ of $J$.
\end{cor}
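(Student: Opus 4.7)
The plan is to identify $\Gr^w_J(\T_\infty)$ with a character group of $P_J^\times$ and then apply the standard fact that $\Hom(-,\R_{>0})$ is trivial on torsion and free of rank equal to the free rank on a finitely generated abelian group.

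First I would invoke \autoref{cor: degenerate tracts are excellent} to replace $\Gr^w_J(\T_\infty)$ by $\Gr_J(\T_\infty)$ throughout, so we may work with strong representations. Next, since $\T_\infty$ is a degenerate tract, \autoref{prop: characterization of the degeneracy locus as group homomorphisms from the extended universal pasture} gives $\upR^w_J(\T_\infty) = \upD_J(\T_\infty)$, which is in canonical bijection with the set of group homomorphisms $f\colon \widehat P_J^\times \to \T_\infty^\times = \R_{>0}$ satisfying $f(-1) = -1$. Because $-1 = 1$ in $\T_\infty$, this last condition is vacuous, so $\upR^w_J(\T_\infty) \cong \Hom(\widehat P_J^\times, \R_{>0})$ (group homomorphisms of abelian groups).

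The next step is to pass to the quotient by the diagonal $\R_{>0}$-action. The degree map $\deg\colon \widehat P_J^\times \to \Z$ is surjective (each $x_\bbeta$ has degree $1$) with kernel $P_J^\times$, and since $\Z$ is free the short exact sequence $1\to P_J^\times\to \widehat P_J^\times\to \Z\to 0$ splits, giving $\widehat P_J^\times \cong P_J^\times \oplus \Z$. Under the bijection above, scalar multiplication of a representation $\brho$ by $a\in \R_{>0}$ corresponds to multiplying $f_\brho$ by the character $a^{\deg(-)}$. Thus the $\R_{>0}$-action is free and transitive on the $\Z$-direction and trivial on the $P_J^\times$-direction, so quotienting gives a bijection
\[
 \Gr^w_J(\T_\infty) \ \cong \ \Hom(P_J^\times,\ \R_{>0}).
\]
This bijection is functorial in the standard sense and is compatible with the natural group structure on the right.

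Finally, write the finitely generated abelian group $P_J^\times$ as $\Z^{\tau(J)}\oplus T$ with $T$ finite torsion. Since $\R_{>0}$ is torsion-free (and divisible), $\Hom(T,\R_{>0})=0$ and $\Hom(\Z^{\tau(J)},\R_{>0})\cong \R_{>0}^{\tau(J)}$. Composing with $-\log\colon \R_{>0}\xrightarrow{\sim}\R$ turns this into an isomorphism of real vector spaces $-\log\bigl(\Gr^w_J(\T_\infty)\bigr)\cong \R^{\tau(J)}$, of dimension $\tau(J)$ as required.

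The only delicate point is the passage from the $\R_{>0}$-action on representations to translation by the degree character after choosing the splitting $\widehat P_J^\times\cong P_J^\times\oplus \Z$; I would verify it directly on generators $x_\bbeta$, where it amounts to the tautology $(a.\brho)(\bbeta) = a\cdot \brho(\bbeta)$. All other ingredients are standard once the identifications are in place.
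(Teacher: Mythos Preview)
Your proof is correct and follows essentially the same route as the paper: identify $\upR^w_J(\T_\infty)$ with $\Hom(\widehat P_J^\times,\R_{>0})$ via \autoref{prop: characterization of the degeneracy locus as group homomorphisms from the extended universal pasture}, then pass to the quotient to get $\Hom(P_J^\times,\R_{>0})$ and read off the dimension. The paper's proof is terser, simply asserting the passage from $\widehat P_J^\times$ to $P_J^\times$ after quotienting by $F^\times$, whereas you spell out the splitting of $\deg$ and the structure of the $\R_{>0}$-action; your extra detail is sound. The initial invocation of excellence is harmless but unnecessary, since the statement and your argument are already about weak representations.
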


\begin{proof}
 Since $\T_\infty$ is degenerate and every group homomorphism $f\colon\widehat P_J\to\T_\infty^\times$ maps the torsion element $-1$ to $1=-1$ in $\T_\infty^\times=\R_{>0}$, \autoref{prop: characterization of the degeneracy locus as group homomorphisms from the extended universal pasture} implies that 
 \[
  \upR^w_J(\T_\infty) \ = \ \upD_J(\T_\infty) \ = \ \Hom_\Z(\widehat P_J^\times,\ \R_{>0}).
 \]
 In conclusion $-\log\big(\Gr^w_J(\T_\infty)\big)=\Hom_\Z(P_J^\times,\ \R)$ is a real vector space of dimension $\tau(J)=\rk(P_J^\times)$. 
\end{proof}

\begin{rem}\label{rem:lordim}
It is shown in \cite{BHKL1} that for every M-convex set $J$, the dimension of the space of Lorentzian polynomials with support $J$ is equal to $\tau(J)$ + 1.
\end{rem}

For nonnegative integers $\kappa_1,\dots,\kappa_n$, we define the polymatroid \[ \Delta^r_{(\kappa_1,\dots,\kappa_n)} := \ \big\{ \alpha \in \Delta^r_n \mid \alpha_i \leq \kappa_i \text{ for all } i \big\}. \] Note that $\Delta^r_{(1,1,\dots,1)}$ is the uniform matroid $U_{r,n}$ and $\Delta^r_{(r,r,\ldots,r)} = \Delta^r_n$.
It was shown in \cite{Branden21} that the  space of Lorentzian polynomials with support $\Delta^r_{(\kappa_1,\dots,\kappa_n)}$ is full-dimensional. Thus, by \autoref{rem:lordim}, its  Tutte rank is equal to the number of bases minus one. 
We give a direct proof of this formula.

\begin{lemma}
  The Tutte rank of $\Delta^r_{(\kappa_1,\dots,\kappa_n)}$ is equal to $b-1$, where $b$ denotes the number of bases of $\Delta^r_{(\kappa_1,\dots,\kappa_n)}$.
\end{lemma}

\begin{proof}
  We write $J$ for $\Delta^r_{(\kappa_1,\dots,\kappa_n)}$. Every $3$-term Pl\"ucker relation in the universal pasture $\widehat P_J$ is non-degenerate. Thus the unit group $\widehat P^\times_J$ is the free abelian group generated by symbols $-1$ and $x_{\alpha}$ with $\alpha \in J$ modulo the relation $(-1)^2 = 1$ if $J$ is a translate of a matroid. Otherwise, $\widehat P^\times_J$ is the same free abelian group modulo the relation $-1=1$, which follows from the idempotency principle (\autoref{prop: idempotency principle}). Thus $P^\times_J = \ker(\deg) \simeq (\Z/2\Z) \times \Z^{b-1}$ or $\Z^{b-1}$.
\end{proof}

\begin{ex}
  The Tutte rank of $\Delta^2_2$ is two, and the Tutte rank of $U^+_{2,3} = \Delta^2_{(2,1,1)}$ is three.
\end{ex}

%%%%%%%%%%%%%%%%%%%%%%%%%%%%%%%%%%%%%%%%%%%%%%%%%%%%%%%%%%%%%%%%%%%%%%%%%%%%%%%%%%%%%%%%%%%%%%%%%%%%%%%%%%%%%%
%%%%%%%%%%%%%%%%%%%%%%%%%%%%%%%%%%%%%%%%%%%%%%%%%%%%%%%%%%%%%%%%%%%%%%%%%%%%%%%%%%%%%%%%%%%%%%%%%%%%%%%%%%%%%%
\section{The canonical torus embedding for realization spaces}
\label{section: the canonical torus embedding for realization spaces}

Let $J\subseteq\Delta^r_n$ be M-convex, $\Jbar$ its reduction, $\rbar$ its effective rank, and $\bJ=\{\balpha\in[n]^\rbar\mid\sum\balpha\in\Jbar\}$. The non-degenerate cross ratios of an $F$-representation $\brho\colon[n]^\rbar\to F$ of $J$ determine the map
\[
 \begin{array}{cccl}
  \varpi_J: & \ulineGr^w_J(F) & \longrightarrow & (F^\times)^{\Omega^\diamond_J} \\[3pt]
            & [\brho]       & \longmapsto     & \big[(\alpha,i,j,k,l)\mapsto\cross ijkl{\alpha,\brho}\big].
 \end{array}
\]
Let $\eta_J\colon(F^\times)^\bJ\to(F^\times)^{\Omega_J^\diamond}$ be the group homomorphism that maps a function $\brho\colon\bJ\to F^\times$ to the function $\eta_J(\brho)\colon\Omega_J\to F^\times$ given by
\[
 (\alpha,i,j,k,l) \ \longmapsto \ \cross ijkl{\alpha,\brho} \ = \ \frac{\brho(\balpha ik)\cdot\brho(\balpha jl)}{\brho(\balpha il)\cdot\brho(\balpha jk)},
\]
where $\balpha\in[n]^{\rbar-2}$ such that $\sum\balpha=\alpha$. Let $\pl_J\colon\upR^w_J\to (F^\times)^\bJ$ be the canonical torus embedding (cf.\ \autoref{section: the torus embedding of the representation space}) and $\pi_J\colon\upR^w_J(F)\to\ulineGr^w_J(F)$ the quotient map.

\begin{prop}\label{prop: torus embedding of the realization space}
 The map $\varpi_J$ is injective and 
 \[
  \begin{tikzcd}[column sep=60, row sep=20]
   \upR^w_J(F) \ar[r,"\pl_J"] \ar[d,"\pi_J"'] & (F^\times)^\bJ \ar[d,"\eta_J"] \\
   \ulineGr^w_J(F) \ar[r,"\varpi_J"] & (F^\times)^{\Omega_J^\diamond}
  \end{tikzcd}
 \]
 commutes.
\end{prop}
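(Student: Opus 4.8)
The statement has two parts: the commutativity of the square, and the injectivity of $\varpi_J$. The commutativity is essentially a definition-chase, and the injectivity is the substantive part; I would organize the proof so that the commutativity does most of the work for the injectivity.

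\textbf{Commutativity of the square.} First I would unwind the two composites applied to a weak representation $\brho \in \upR^w_J(F)$. Going right-then-down, $\pl_J$ sends $\brho$ to the tuple $(\brho(\balpha))_{\balpha \in \bJ} \in (F^\times)^\bJ$, and then $\eta_J$ sends this tuple to the function $(\alpha,i,j,k,l) \mapsto \frac{\brho(\balpha ik)\brho(\balpha jl)}{\brho(\balpha il)\brho(\balpha jk)}$ for any $\balpha$ with $\Sigma\balpha = \alpha$ — note this is well-defined independently of the choice of $\balpha$ because a permutation of the entries of $\balpha$ introduces four sign changes which cancel in the ratio, exactly as observed in \autoref{subsection: cross ratios}. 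Going down-then-right, $\pi_J$ sends $\brho$ to its rescaling class $[\brho]$, and $\varpi_J$ sends $[\brho]$ to the function $(\alpha,i,j,k,l) \mapsto \cross ijkl{\alpha,\brho}$, which is by the very definition at the end of \autoref{subsection: the lineality space} the same ratio $\frac{\brho(\balpha ik)\brho(\balpha jl)}{\brho(\balpha il)\brho(\balpha jk)}$. Hence the two composites agree on the nose; the only thing to remark is that $\varpi_J$ is well-defined on rescaling classes, which follows because the cross ratios are manifestly invariant under the $\widehat T(F)$-action (a factor $a \prod t_i^{(\balpha ik)_i}$ etc. appears in numerator and denominator and cancels), a fact already used implicitly in \autoref{prop: characterization of the lineality space in terms of cross ratios}.

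\textbf{Injectivity of $\varpi_J$.} This is the heart of the matter, and the key input is \autoref{thm: cross ratios generate the foundation}: the foundation $F_J$ is generated as a tract over $\Funpm$ by the non-degenerate cross ratios $\cross ijkl\alpha$ with $(\alpha,i,j,k,l)\in\Omega_J^\diamond$. Translating through the universal property of the foundation (\autoref{prop: universal property of the foundation}), a class $[\brho] \in \ulineGr^w_J(F)$ corresponds to a tract morphism $\underline{f}_\brho \colon F_J \to F$, and two classes coincide iff the corresponding morphisms coincide. Since $F_J^\times$ is generated over $\Funpm$ by $-1$ together with the non-degenerate cross ratios, and since every tract morphism is forced to send $-1 \mapsto -1$, a morphism out of $F_J$ is determined by its values on the non-degenerate cross ratios. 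But $\underline{f}_\brho\big(\cross ijkl\alpha\big) = \cross ijkl{\alpha,\brho}$ (this is the computation displayed right before \autoref{prop: characterization of the lineality space in terms of cross ratios}, applied after restricting $f_\brho \colon \widehat P_J \to F$ to $F_J$), which is precisely the $(\alpha,i,j,k,l)$-component of $\varpi_J([\brho])$. Therefore $\varpi_J([\brho])$ determines $\underline{f}_\brho$ on a generating set of $F_J$, hence determines $\underline{f}_\brho$ entirely, hence determines $[\brho]$. This proves injectivity. I would also note that one must check the cross ratios generate $F_J^\times$ as a \emph{group} together with $-1$, not just $F_J$ as a tract — but this is also part of \autoref{thm: cross ratios generate the foundation} combined with the structure $\widehat P_J^\times/F_J^\times$ being free (\autoref{lemma: the universal tract is freely generated over the foundation}), so the group generated by $-1$ and the cross ratios is all of $F_J^\times$.

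\textbf{Main obstacle.} The only delicate point is making sure that "generated as a tract over $\Funpm$" really gives us that a morphism is pinned down by its values on cross ratios: a tract morphism must respect both the multiplicative and null-set structure, but since $F_J^\times$ is generated multiplicatively by $-1$ and the cross ratios (the group-theoretic strengthening of \autoref{thm: cross ratios generate the foundation}, valid because of the freeness in \autoref{lemma: the universal tract is freely generated over the foundation}), any multiplicative map — a fortiori any tract morphism — is determined by those values. So the obstacle is really just assembling the right combination of \autoref{thm: cross ratios generate the foundation}, \autoref{lemma: the universal tract is freely generated over the foundation}, and \autoref{prop: universal property of the foundation}; no new estimate or construction is needed. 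I would write the injectivity argument in three crisp sentences after recording the commutativity, keeping the routine cancellations implicit.
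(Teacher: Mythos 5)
Your proposal is correct and follows essentially the same route as the paper: commutativity by unwinding the definitions of $\pl_J$, $\eta_J$, $\pi_J$, $\varpi_J$; injectivity by combining the universal property of the foundation (\autoref{prop: universal property of the foundation}) with the fact that $F_J$ is generated by cross ratios (\autoref{thm: cross ratios generate the foundation}). You are somewhat more explicit than the paper about two minor points — the $\widehat T(F)$-invariance that makes $\varpi_J$ well-defined, and the passage from ``generated as a tract'' to ``$F_J^\times$ generated as a group by $-1$ and the cross ratios'' — but these are elaborations of the same argument rather than a different approach.
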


\begin{proof}
 By \autoref{prop: universal property of the foundation}, a class $[\brho]\in\upR^w_J(F)$ is uniquely determined by the associated morphism $f_\brho\colon F_J\to F$ (cf.\ \autoref{thm: generators and relations for the foundation}), which in turn is uniquely determined by the image of the non-degenerate cross ratios $f_\rho(\cross ijkl\alpha)=\cross ijkl{\alpha,\brho}$ since $F_J$ is generated by the non-degenerate cross ratios (\autoref{thm: generators and relations for the foundation}). Thus $\varpi_J$ is injective.
 
 The diagram commutes since 
 \begin{multline*}\textstyle
  (\varpi_J\circ\pi_J)(\brho) \ = \ \varpi_J([\brho]) \ = \ \big\{\cross ijkl{\alpha,\brho} \, \big| \, (\alpha,i,j,k,l)\in\Omega_J^\diamond\big\} \\
  = \ \eta_J\big(\{\brho(\bbeta)\mid\bbeta\in\bJ\}\big) \ = \ (\eta_J\circ\pl_J)(\brho)
 \end{multline*}
 for all $\brho\in\upR^w_J(F)$.
\end{proof}

%%%%%%%%%%%%%%%%%%%%%%%%%%%%%%%%%%%%%%%%%%%%%%%%%%%%%%%%%%%%%%%%%%%%%%%%%%%%%%%%%%%%%%%%%%%%%%%%%%%%%%%%%%%%%%
\subsection{A decomposition of the representation space}
\label{subsection: decomposition of the representation space}

By \autoref{lemma: the universal tract is freely generated over the foundation}, the extended universal pasture is a free algebra over the foundation. The choice of an isomorphism $\widehat P_J\simeq F_J(x_1,\dotsc,x_s)$ yields a bijection
\begin{multline*}
 \Gr^w_J(F) \ = \ \Hom(P_J,F) \ \simeq \ \Hom(F_J(x_1,\dotsc,x_s),F) \\
 = \ \Hom(F_J,F) \ \times \ \Maps\big(\{x_1,\dotsc,x_s\},F^\times\big) \ = \ \ulineGr^w_J(F)\times (F^\times)^s
\end{multline*}
which is functorial in $F$ and which commutes with the projections onto $\ulineGr^w_J(F)$. More precisely, the following holds.

\begin{thm}\label{thm: decomposition of the representation space}
 Let $J\subseteq\Delta^r_n$ be M-convex and $J=\bigoplus_{i=1}^{c(J)} J_i$ a decomposition into indecomposable M-convex sets $J_i\subseteq\Delta^{r_i}_{n_i}$. Let $\brho\colon[n]^r\to F$ be an $F$-representation of $J$. Then the stabilizer of $[\brho]$ under the action of $T(F)=(F^\times)^n$ on $\Gr^w_J(F)$ is
 \[
  \Stab_{T(F)}([\brho]) \ = \ \big\{ t\in T(F) \, \big| \, t_{n_{i-1}+1}=\dotsc=t_{n_i}\text{ for all }i=1,\dotsc,c(J) \big\},
 \]
 where $n_0=0$, and the orbit $T(F).[\brho]$ is in bijective correspondence with $(F^\times)^{n-c(J)}$. 
 
 In particular, $P_J\simeq F_J(x_1,\dotsc,x_{n-c(J)})$. If $F$ is idempotent, then $\upLin_J(F)\simeq(F^\times)^{n-c(J)}$.
\end{thm}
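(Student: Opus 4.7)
The plan is to reduce to the indecomposable case via \autoref{thm: representations of direct sums}, then identify the stabilizer explicitly through an exchange-graph argument, and finally bootstrap to the decomposition of $P_J$ and the lineality space statement.

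First, I would invoke \autoref{thm: representations of direct sums} to decompose $\Gr^w_J(F) \simeq \prod_{i=1}^{c(J)} \Gr^w_{J_i}(F)$ compatibly with the block decomposition $T(F) = \prod_{i=1}^{c(J)} T_i(F)$, where $T_i(F) = (F^\times)^{n_i-n_{i-1}}$ and the factor $T_i(F)$ acts on the $i$-th component alone. The stabilizer and orbit of $[\brho]$ then decompose as products over the indecomposable summands, so it suffices to show that for indecomposable $J$ on $[n]$, $\Stab_{T(F)}([\brho])$ is the diagonal subgroup $\{(b,\ldots,b): b\in F^\times\}\cong F^\times$.

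For the indecomposable case, I would identify $\brho$ with its translate $\rho\colon\Delta^r_n\to F$ supported on $J$ via \autoref{lemma: alternative description of polymatroid representations}. A torus element $t$ fixes $[\brho]$ in $\Gr^w_J(F)$ if and only if $\prod_i t_i^{\alpha_i}$ is constant on $J$, i.e.\ the character $\chi_t\colon \Z^n\to F^\times$, $\beta\mapsto \prod t_i^{\beta_i}$, vanishes on the sublattice $L_J\subseteq\Z^n$ spanned by differences $\alpha-\alpha'$ for $\alpha,\alpha'\in J$. The core combinatorial step is to show that, for indecomposable $J$, $L_J$ equals the hyperplane $\{\beta: \sum\beta_i=0\}$. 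To this end I would define a graph $G$ on $[n]$ whose edges are the pairs $\{i,j\}$ admitting an exchange move $\alpha'=\alpha-\epsilon_i+\epsilon_j$ between elements of $J$. Each such edge contributes $\epsilon_j-\epsilon_i\in L_J$. Iterating the M-convexity axiom shows that any two elements of $J$ are connected by single-step exchanges, so these generators span $L_J$. If the connected components $S_1,\ldots,S_k$ of $G$ satisfied $k\geq 2$, then no exchange move could transfer mass between blocks, hence $\alpha_{S_\ell}$ would be constant on $J$ and, by \autoref{lemma: characterization of indecomposable polymatroids}, this would yield a direct sum decomposition, contradicting indecomposability. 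Therefore $G$ is connected and $L_J=\{\beta:\sum\beta_i=0\}$; the condition $\chi_t|_{L_J}=1$ is then immediately equivalent to $t_1=\cdots=t_n$.

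Reassembling, for general $J$ the stabilizer is exactly the block-diagonal subgroup stated in the theorem, of rank $c(J)$, so $T(F)/\Stab_{T(F)}([\brho])$ has rank $n-c(J)$; since the inclusion $\Stab\hookrightarrow T(F)$ splits (e.g.\ by the retraction $(t_j)\mapsto(t_{n_{i-1}+1})_{i=1}^{c(J)}$), the orbit is isomorphic to $(F^\times)^{n-c(J)}$ as groups. For the identification $P_J\simeq F_J(x_1,\ldots,x_{n-c(J)})$, I would combine \autoref{lemma: the universal tract is freely generated over the foundation}, which gives $P_J\simeq F_J(x_1,\ldots,x_s)$ for some $s$, with the resulting functorial decomposition $\Gr^w_J(F)\simeq\ulineGr^w_J(F)\times(F^\times)^s$; since the $T(F)$-action fixes $\ulineGr^w_J(F)$ pointwise (by definition of the realization space as a $T(F)$-quotient) and its orbits have size $(F^\times)^{n-c(J)}$ by the previous step, the torus factor must be $(F^\times)^{n-c(J)}$, forcing $s=n-c(J)$. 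Finally, if $F$ is idempotent the trivial representation $\chi_{J,F}$ lies in $\upR^w_J(F)$, and its $T(F)$-orbit in $\Gr^w_J(F)$ is exactly $(F^\times)^{n-c(J)}$ by the stabilizer formula, which yields the claimed description of $\upLin_J(F)$. The main obstacle is the combinatorial identification of $L_J$ in the indecomposable case, which requires carefully coupling the exchange graph on $J$ with the connectivity graph on $[n]$ through \autoref{lemma: characterization of indecomposable polymatroids}; everything else is bookkeeping with universal-property manipulations.
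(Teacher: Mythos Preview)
Your approach is essentially the same as the paper's. The exchange-graph argument you give for the indecomposable case is a clean repackaging of the paper's inductive argument: the paper builds up a set $S\subseteq[n]$ one element at a time by finding an exchange step $\beta\mapsto\beta+\epsilon_k-\epsilon_\ell$ with $k\in S$, $\ell\notin S$ (using M-convexity and \autoref{lemma: characterization of indecomposable polymatroids}), which is precisely the connectedness of your graph $G$. The only place where you are less explicit than the paper is the deduction of $s=n-c(J)$: from the functorial identification of each fiber of $\Gr^w_J(F)\to\ulineGr^w_J(F)$ with both $(F^\times)^s$ and $(F^\times)^{n-c(J)}$, one still needs to instantiate at a specific tract to read off $s$; the paper does this by taking a finite idempotent tract with nontrivial unit group (namely $F=\H\otimes\K$) and counting, while your argument leaves this step implicit.
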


\begin{proof}
 We begin with the first claim. An element $t\in T(F)$ is in the stabilizer of $[\brho]$ iff $[t.\brho]=[\brho]$, i.e., iff there is an element $c\in F^\times$ such that $(t.\brho)(\bbeta)=c\cdot\brho(\bbeta)$ for all $\bbeta\in J$, which means that $\prod_{j=1}^n t_j^{\bbeta_j}=c$.
 
 By \autoref{thm: representations of direct sums}, $\brho(\bbeta)=\prod_{i=1}^{c(J)}\brho_i(\bbeta\vert_{J_i})$ for certain $F$-representations $\brho_i\colon[n_i]^{r_i}\to F$ of $J_i$ and $\bbeta\vert_{J_i}=(\bbeta_{n_{i-1}+1},\dotsc,\bbeta_{n_i})$.
 
 Assume that $t_{n_{i-1}+1}=\dotsc=t_{n_i}$ for all $i=1,\dotsc,c(J)$ and define $c=\prod_{i=1}^{c(J)} t_{n_i}^{r_i}$. Then $\prod_{j=1}^n t_j^{\bbeta_j}=c$ for all $\bbeta\in J$ since $\norm{\bbeta\vert_{J_i}}=r_i$, and thus $t\in\Stab_{T(F)}([\brho])$.
 
 Conversely, assume that $t\in\Stab_{T(F)}([\brho])$. We want to show that $t_{n_{i-1}+1}=\dotsc=t_{n_i}$ for each $i=1,\dotsc,c(J)$, which can be established separately for each component $J_i$. After replacing $J$ by $J_i$, this allows us to assume that $J$ is indecomposable for simplicity. We establish the claim by showing through an induction on $s=1,\dotsc,n$ that there is a subset $S\subseteq[n]$ of cardinality $s$ that contains $t_n$ and such that $t_i=t_n$ for all $i\in S$.
 
 If $s=1$, then $S=\{t_n\}$ satisfies the claim. In order to establish the inductive step, assume that $S$ is a proper subset of $[n]$ that contains $t_n$ and such that $t_i=t_n$ for all $i\in S$. By \autoref{lemma: characterization of indecomposable polymatroids}, there are $\beta,\gamma\in J$ with $\beta_S<\gamma_S$. Applying the exchange axiom repeatedly yields a sequence $\beta=\beta^{(0)},\dotsc,\beta^{(m)}=\gamma$ of elements in $J$ with $\sum_{i\in[n]}\norm{\beta^{(j)}_i-\beta^{(j-1)}_i}=2$ for all $j=1,\dotsc,m$. In particular, there is a $j$ for which $\beta^{(j)}_S=\beta^{(j-1)}_S+1$. 
 
 After replacing $\beta$ by $\beta^{(j-1)}$ and $\gamma$ by $\beta^{(j)}$, this means that there are $k\in S$ and $\ell\in[n]-S$ such that $\gamma_k=\beta_k+1$, $\gamma_\ell=\beta_\ell-1$, and $\gamma_i=\beta_i$ for all $i\neq k,\ell$. Since $t\in\Stab_{T(F)}(\brho)$, we have $\prod_{i\in[n]} t_i^{\beta_i}=\prod_{i\in[n]} t_i^{\gamma_i}$, and thus $t_\ell=t_k$. This shows that $S\cup\{\ell\}$ satisfies the claim of the induction, which establishes the inductive step. This completes the proof of the first claim of the proposition.

 The orbit $T(F).[\brho]$ is in bijection with $T(F)/\Stab_{T(F)}(\brho)$, which is isomorphic (as a group) to $(F^\times)^{n-c(J)}$, which establishes the second claim. For a finite proper extension of $\K$ (e.g.\ $F=\H\otimes\K$), counting the elements of $\ulineGr^w_J(F)\times(F^\times)^{n-c(J)}$$=\ulineGr^w_J(F)\times(F^\times)^{s}$ implies that $s=n-c(J)$, where $s\in\N$ is chosen so that $\widehat P_J\simeq F_J(x_1,\dotsc,x_s)$. Thus $P_J\simeq F_J(x_1,\dotsc,x_{n-c(J)})$. If $F$ is idempotent, we can choose $\brho$ to be the trivial $F$-representation $\bchi_{J,F}\colon[n]^r\to F$, which yields $\upLin_J(F)=T(F).[\bchi_{J,F}]\simeq(F^\times)^{n-c(J)}$. This establishes the last claim.
\end{proof}

%%%%%%%%%%%%%%%%%%%%%%%%%%%%%%%%%%%%%%%%%%%%%%%%%%%%%%%
\section{Topologies for representation spaces and realization spaces}
\label{subsection: topologies for representation spaces and realization spaces}

If a tract $F$ comes with a topology, then this induces a topology on the (weak) representation and realization spaces. If the topology of $F$ is sufficiently nice, then this topology has several equivalent characterizations. 
The following definition is analogous to the notion of a topological idyll (cf.\ \cite{Baker-Jin-Lorscheid24}).

\begin{df}
 A \emph{topological tract} is a tract $F$ together with a topology such that the following holds:
 \begin{enumerate}[label = (TP\arabic*)]
  \item\label{TP1} $\{0\}\subseteq F$ is a closed subset.
  \item\label{TP2} The multiplication $m\colon F\times F\to F$ is continuous (where $F\times F$ carries the product topology).
  \item\label{TP3} The inversion $i\colon F^\times\to F^\times$ is continuous (where $F^\times\subseteq F$ carries the subspace topology).
  \item\label{TP4} For all $n\in\N$, the set $\{(a_1,\ldots,a_n)\in F^n\mid a_1+\ldots+a_n\in N_F\}$ is a closed subset of $F^n$ (where $F^n$ carries the product topology).
 \end{enumerate}
\end{df}

Note that \ref{TP1} is equivalent to $F^\times$ being an open subset of $F$. Axioms \ref{TP2} and \ref{TP3} imply that $F^\times$ is a topological group. Examples of topological tracts are topological fields and the triangular hyperfields $\T_q$ (for all $0\leq q<\infty$) with the euclidean topology for $\T_q=\R_{\geq0}$. Every tract is a topological tract with respect to the discrete topology.

If $F$ is a topological tract and $F'$ an arbitrary tract, we endow
$\Hom(F',F)$
with the compact-open topology where we equip $F'$ with the discrete topology. 
In other words, $\Hom(F',F)$ is endowed with the coarsest topology such that for every $a\in F'$, the map
\[
 \begin{array}{cccc}
 \ev_a\colon & \Hom(F',\ F) & \longrightarrow & F \\
        & [f\colon F'\to F]  & \longmapsto     & f(a)
 \end{array}
\]
is continuous. In this way, we obtain a topology on $\ulineGr^w_J(F)=\Hom(F_J,F)$, and similarly on (weak) representation spaces and (weak) thin Schubert cells. 
(For the sake of brevity, we restrict our discussion to the weak setup.)

\begin{prop}\label{prop: representation spaces for topological tracts}
  Let $F$ be a topological tract. Then $\upR^w_J(F)$ is a closed subspace of $(F^\times)^J$ (considered with the product topology).
\end{prop}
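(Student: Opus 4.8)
The goal is to realize $\upR^w_J(F)$ as the intersection inside $(F^\times)^\bJ$ of two kinds of closed conditions: the alternating-symmetry conditions from \ref{WR2}, and the $3$-term Pl\"ucker relations from \ref{WR3}. First I would recall from \autoref{section: the torus embedding of the representation space} that axiom \ref{WR1} already pins down the ambient torus $(F^\times)^\bJ$: a weak $F$-representation is a function $\brho\colon[n]^\rbar\to F$ whose nonvanishing coordinates are exactly those indexed by $\bJ$, so $\upR^w_J(F)\subseteq(F^\times)^\bJ$ and it suffices to show this subset is closed in the product topology on $(F^\times)^\bJ$. (Strictly speaking the statement says ``$(F^\times)^J$''; after choosing, for each $\alpha\in\Jbar$, the increasing representative $\balpha\in\bJ$ with $\balpha_1\leq\dotsc\leq\balpha_\rbar$, one identifies $(F^\times)^\bJ$ equivariantly with $(F^\times)^{\Jbar}\cong(F^\times)^J$ up to translation, so the two formulations agree; I would note this identification briefly.)

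\textbf{Key steps.} (1) For the symmetry axiom \ref{WR2}: for each pair $(\bbeta,\sigma)$ with $\bbeta\in[n]^\rbar$ and $\sigma\in S_\rbar$, the set of $\brho$ satisfying $\brho(\sigma\bbeta)=\sign(\sigma)\cdot\brho(\bbeta)$ is the preimage of $\{0\}$ (equivalently, of the diagonal $N_F$-condition for two terms) under the continuous map $(F^\times)^\bJ\to F$, $\brho\mapsto\brho(\sigma\bbeta)-\sign(\sigma)\brho(\bbeta)$; continuity follows from \ref{TP2} (multiplication, hence the $F^\times$-action by $\sign(\sigma)\in\{\pm1\}$) together with the fact that each coordinate projection is continuous, and closedness from \ref{TP4} with $n=2$. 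Intersecting over the finitely many pairs $(\bbeta,\sigma)$ — indeed over a finite set since $[n]^\rbar$ and $S_\rbar$ are finite — gives a closed subset $C_\bJ(F)\subseteq(F^\times)^\bJ$. (2) For the $3$-term Pl\"ucker relations \ref{WR3}: for each admissible datum $(\balpha;i,j,k,l)$ with $\balpha\in[n]^{\rbar-2}$, $i,j,k,l\in[n]$, $\Sigma\balpha ijkl\leq\omega_J$, consider the map $(F^\times)^\bJ\to F^3$ sending $\brho$ to the triple of monomials $\bigl(\brho(\balpha jk)\brho(\balpha il),\ -\brho(\balpha ik)\brho(\balpha jl),\ \brho(\balpha ij)\brho(\balpha kl)\bigr)$, where we extend $\brho$ by $0$ on indices outside $\bJ$. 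Each component is a product of coordinate projections and a sign, hence continuous by \ref{TP2}; the ambient target $F^3$ gets the product topology, and the Pl\"ucker condition is exactly that this triple lands in $\{(a_1,a_2,a_3)\in F^3\mid a_1+a_2+a_3\in N_F\}$, which is closed by \ref{TP4} with $n=3$. So each individual Pl\"ucker relation cuts out a closed subset of $(F^\times)^\bJ$, and since there are only finitely many admissible data $(\balpha;i,j,k,l)$, the intersection of all of them with $C_\bJ(F)$ is closed. (3) By \autoref{df: polymatroid representation}, a function $\brho\colon[n]^\rbar\to F$ with support $\bJ$ lies in $\upR^w_J(F)$ precisely when it satisfies \ref{WR2} and \ref{WR3}; therefore $\upR^w_J(F)$ equals this finite intersection of closed sets inside $(F^\times)^\bJ$, hence is closed.

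\textbf{Main obstacle.} The only genuinely delicate point is bookkeeping about which ambient torus we use and whether extending $\brho$ by zero causes trouble: when some of $\balpha jk,\balpha il,\dots$ lie outside $\bJ$, the corresponding monomial is identically zero on all of $(F^\times)^\bJ$, which is the constant map to $0\in F$ and hence continuous, so \ref{TP4} still applies; I would spell this out so the reader sees that the ``partial'' Pl\"ucker relations (with fewer than three nonzero terms) are handled uniformly. A secondary, purely expository obstacle is matching the index set: the statement writes $(F^\times)^J$ while the construction naturally produces $(F^\times)^\bJ$; I would point out that choosing increasing representatives gives a homeomorphism $(F^\times)^\bJ\cong(F^\times)^J$ (a product of identity maps up to the sign bijection, which is a homeomorphism of $F^\times$ by \ref{TP2}), so closedness transports across. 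No new topological input beyond \ref{TP1}--\ref{TP4} and the finiteness of $\Delta^r_n$ is required.
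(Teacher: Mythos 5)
Your argument for closedness is correct and, in fact, spells out the details that the paper dispatches in a single sentence ("follows directly from the definition of a topological tract"). Cutting out $\upR^w_J(F)$ inside $(F^\times)^{\bJ}$ by the alternating conditions \ref{WR2} and the $3$-term Pl\"ucker conditions \ref{WR3}, and observing that each is a closed condition via \ref{TP2} and \ref{TP4} and there are only finitely many of them, is a valid and complete argument for that part. Your remark about extending $\brho$ by zero on indices outside $\bJ$ is also handled correctly.

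However, you have omitted what is actually the substantive portion of the paper's proof. Recall that earlier in the same section the paper \emph{defines} the topology on $\upR^w_J(F)$ not as a subspace topology, but via the identification $\upR^w_J(F)\cong\Hom(\widehat P_J,F)$ with the compact--open topology — the coarsest topology making every evaluation map $\ev_a$ continuous, for every $a\in\widehat P_J$. The proposition's phrase ``closed subspace of $(F^\times)^J$'' therefore carries two assertions: that the canonical injection is a \emph{topological} embedding (i.e.\ the intrinsic compact--open topology coincides with the subspace topology inherited from the torus), and that the image is closed. You proved only the second. The paper devotes most of the proof to the first: the map $\varphi\mapsto(\ev_{x_\alpha}(\varphi))_{\alpha\in J}$ is continuous since each coordinate is an evaluation, and its inverse is continuous because $-1$ together with the $x_\alpha$ generate $\widehat P_J^\times$, so every $\ev_a$ is a finite product of integer powers of the coordinate evaluations, which is continuous by \ref{TP2} and \ref{TP3}. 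Without this step, the phrase ``closed subspace'' is ambiguous relative to the paper's definitions, and the subsequent corollaries (quotient topologies, the product decomposition, closedness and identification of the orbits) — which all work with the compact--open topology on $\Hom(\widehat P_J,F)$ — do not follow from the set-theoretic closedness alone. You should add this homeomorphism argument; the generation statement for $\widehat P_J^\times$ and topological-group continuity of $F^\times$ are exactly the ingredients you need.
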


\begin{proof}
    For every $\alpha\in J$, let $x_\alpha$ be the generator of $\widehat P_J$ indexed by the element $\balpha\in[n]^r$ with $\Sigma\balpha=\alpha$ whose entries are ordered increasingly. Then the bijection $\Hom(\widehat P_J,F)\to\upR^w_J(F)$ is given by $\varphi\mapsto(\ev_{x_\alpha}(\varphi))_{\alpha\in J}$, and thus is continuous. Because $-1$ and the $x_\alpha$ generate the unit group of $\widehat P_J$, every value of $\varphi\in\Hom(\widehat P_J,F)$ can be expressed in terms of products and inverses of constants and the $\ev_{x_\alpha}(\varphi)$. Since $F^\times$ is a topological group, this shows that the map inverse to $\Hom(\widehat P_J,F)\to\upR^w_J(F)$ is also continuous.
    The subspace $\upR^w_J(F)$ being closed in $(F^\times)^J$ follows directly from the definition of a topological tract.
\end{proof}

\begin{rem}\label{rem: representation space for the degenerate triangular hyperfield}
 Note that also $\upR^w_J(\T_\infty)=\upD_J(\T_\infty)$ is a closed subset of $(\T_\infty^\times)^J=\R_{>0}^J$, even though the order topology for $\T_\infty=\R_{\geq0}$ fails to satisfy \ref{TP4}.
\end{rem}

\begin{prop}
 Let $T$ be a topological tract and $F=\colim\cF$ for a finite diagram $\cF$ of tracts, then the canonical bijection $\Hom(F,T)\to\lim\Hom(\cF,T)$ is a homeomorphism.
\end{prop}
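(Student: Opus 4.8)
The statement asserts that the functor $\Hom(-,T)$ from tracts to topological spaces sends finite colimits to limits, \emph{as topological spaces} (we already know it sends them to limits as sets, since $\Tracts$ has the limit/colimit structure described in \autoref{subsection: tensor products}). Since a finite diagram colimit can be built from coproducts and coequalizers, and $\Hom(-,T)$ turns coproducts into products and coequalizers into equalizers, it suffices to check that in each of these two basic cases the canonical continuous bijection is a homeomorphism. So the plan is to (i) verify that the canonical map $\Hom(F,T)\to\lim\Hom(\cF,T)$ is always continuous; (ii) check the coproduct case; (iii) check the coequalizer case; and (iv) assemble the general finite diagram from these.

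\textbf{Step (i): continuity.} The target $\lim\Hom(\cF,T)$ is a subspace of the product $\prod_{F'\in\cF}\Hom(F',T)$, so a map into it is continuous iff each composite with a projection $\pi_{F'}$ is continuous. The composite $\Hom(F,T)\to\lim\Hom(\cF,T)\xrightarrow{\pi_{F'}}\Hom(F',T)$ is precompostion with the structure morphism $\iota_{F'}\colon F'\to F=\colim\cF$, i.e.\ $f\mapsto f\circ\iota_{F'}$. To see this is continuous, use that the topology on $\Hom(F',T)$ is generated by the maps $\ev_a$ for $a\in F'$, and $\ev_a\circ(-\circ\iota_{F'}) = \ev_{\iota_{F'}(a)}$ on $\Hom(F,T)$, which is continuous by definition. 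Hence the canonical map is continuous, and since it is a bijection it remains only to show its inverse is continuous, i.e.\ that it is open (or closed), for which it suffices to show that the subspace topology on $\lim\Hom(\cF,T)$ agrees with the one transported from $\Hom(F,T)$.

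\textbf{Steps (ii)--(iii): the two building blocks.} For a finite coproduct $F=\bigotimes_{i} F_i$, the underlying set bijection $\Hom(F,T)\cong\prod_i\Hom(F_i,T)$ is the tautological one, and each coordinate evaluation $\ev_a$ for $a\in F_i$ corresponds, under the bijection, to $\ev_a$ on the $i$-th factor; conversely $F=\bigotimes F_i$ is generated as a tract over $\Funpm$ by the images of the $F_i$, so every $\ev_b$ for $b\in F$ factors through products and inverses of the $\ev_{a_i}$'s (using that $T^\times$ is a topological group, axioms \ref{TP2} and \ref{TP3}). This shows the two topologies on $\prod_i\Hom(F_i,T)$ coincide. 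For a coequalizer $F=\mathrm{coeq}(g,h\colon F_1\rightrightarrows F_2)$, one has $\Hom(F,T)=\{\,\varphi\in\Hom(F_2,T)\mid \varphi\circ g=\varphi\circ h\,\}$ as sets, and $F$ is generated (over $\Funpm$) by the image of $F_2$, so again every $\ev_b$ on $\Hom(F,T)$ is expressed via the $\ev_a$'s with $a\in F_2$ pulled back along the quotient $F_2\to F$; hence the subspace topology from $\Hom(F_2,T)$ coincides with the intrinsic one. The condition ``$\varphi\circ g=\varphi\circ h$'' cuts out a closed subset because for each generator $a$ of $F_1$ the set $\{\varphi:\ev_{g(a)}(\varphi)=\ev_{h(a)}(\varphi)\}$ is the preimage of the diagonal, closed by \ref{TP1} (a tract is separated enough that points of $T^\times$ are closed, since $T^\times$ is a topological group and $\{0\}$ is closed) -- although separatedness is not even needed for the homeomorphism claim itself.

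\textbf{Step (iv) and the anticipated obstacle.} Any finite diagram colimit is a coequalizer of a pair of maps between finite coproducts, $\colim\cF = \mathrm{coeq}\bigl(\bigotimes_{(u\colon i\to j)}F_i \rightrightarrows \bigotimes_i F_i\bigr)$; applying $\Hom(-,T)$ and invoking (ii) and (iii) gives that $\Hom(\colim\cF,T)$ is homeomorphic to the corresponding equalizer of products, which is exactly $\lim\Hom(\cF,T)$. The one point requiring genuine care -- and the main obstacle -- is the claim that in each case ``every $\ev_b$ on $\Hom(F,T)$ is built from the $\ev_{a_i}$ by continuous operations''. This rests on the explicit description of $F$ in terms of generators and relations: writing $F=\pastgen{\Funpm(x_i)}{S}$ and noting $F^\times$ is generated by $-1$ and the $x_i$, every element of $F$ is either $0$ or a finite product of $\pm$ generators and their inverses; evaluation at such an element is then a composite of multiplications, inversions and constant maps, all continuous by the topological-tract axioms, and evaluation at $0$ is constant. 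Care is needed because the presentation is not canonical, but any presentation works, and the generated topology is the same regardless; once this is pinned down the rest is formal. No further definitions beyond those in the excerpt are required.
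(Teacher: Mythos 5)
Your proof is correct, but the comparison is a bit unusual here: the paper does not supply an argument at all — it simply cites \cite[Thm.\ 3.5]{Baker-Jin-Lorscheid25}. What you have written is a self-contained reconstruction of what such a result must look like: show the canonical map is always continuous (step (i), done by checking that $\ev_a\circ(-\circ\iota_{F'})=\ev_{\iota_{F'}(a)}$), then reduce an arbitrary finite colimit to finite coproducts and a coequalizer, and in each of those two cases observe that the new tract is \emph{generated over $\Funpm$ by the images of the old ones} (the tensor product by the images of the $F_i^\times$ and $-1$; the coequalizer by the image of the surjective quotient $F_2\to F$), so every $\ev_b$ on $\Hom(F,T)$ is a continuous composite of multiplications, inversions and constants built from the $\ev_a$'s on the factors/source (using \ref{TP2} and \ref{TP3}). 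That correctly shows the initial topology on the limit agrees with the transported one, which is exactly the homeomorphism. The only blemish is the throwaway remark in step (iii) that the equalizer is closed ``because it is a preimage of the diagonal, closed by \ref{TP1}'': \ref{TP1} says $\{0\}$ is closed, not that $T$ is Hausdorff, so closedness of the diagonal is not available in general; you flag yourself that this remark is not needed for the homeomorphism claim, which is right, so the gap is harmless. As a method, your argument is the natural one and buys the reader an explicit proof in place of a citation; the paper's route instead outsources the work to a general result about bands in \cite{Baker-Jin-Lorscheid25}, which is shorter on the page but less self-contained.
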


\begin{proof}
 This follows from \cite[Thm.\ 3.5]{Baker-Jin-Lorscheid25}.
\end{proof}

\begin{cor}\label{cor:product}
 Let $F$ be a topological tract, $J\subseteq\Delta^r_n$ an M-convex set with $c(J)$ indecomposable components and $s=n-c(J)$. Then there are (non-canonical) homeomorphisms $\upR^w_J(F)\to\ulineGr_J^w(F)\times(F^\times)^{s+1}$ and $\Gr^w_J(F)\to\ulineGr_J^w(F)\times(F^\times)^{s}$ such that the following diagram commutes:
 \begin{equation}\label{eq:productdiagram}
    \begin{tikzcd}
        \upR^w_J(F)\arrow{d}\arrow{r}&\Gr^w_J(F)\arrow{d}\arrow{r}&\ulineGr_J^w(F)\arrow{d}\\
        \ulineGr_J^w(F)\times(F^\times)^{s+1} \arrow{r}& \ulineGr_J^w(F)\times(F^\times)^{s}  \arrow{r}& \ulineGr_J^w(F)
    \end{tikzcd}.
 \end{equation}
 Here the maps in the first row take elements to their equivalence classes and the maps in the second row are coordinate projections.
\end{cor}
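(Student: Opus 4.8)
The plan is to rephrase all three spaces as $\Hom$-spaces and then combine the algebraic decomposition of \autoref{thm: decomposition of the representation space} with the topological input of the proposition preceding this corollary. By \autoref{prop: universal property of the universal pasture} and \autoref{prop: universal property of the foundation} we have bijections $\upR^w_J(F)=\Hom(\widehat P_J,F)$, $\Gr^w_J(F)=\Hom(P_J,F)$, and $\ulineGr^w_J(F)=\Hom(F_J,F)$, and these are homeomorphisms for the compact-open topology, which is how the topologies on representation spaces, thin Schubert cells, and realization spaces were defined just above (cf.\ \autoref{prop: representation spaces for topological tracts} for the case of $\upR^w_J$). Under these identifications, the two maps in the top row of \eqref{eq:productdiagram}, which send a representation to its $F^\times$-class and then to its $\widehat T(F)$-orbit, become the restriction maps $\Hom(\widehat P_J,F)\to\Hom(P_J,F)\to\Hom(F_J,F)$ associated to the inclusions $F_J\subseteq P_J\subseteq\widehat P_J$: two morphisms out of $\widehat P_J$ agree on the degree-$0$ subtract $P_J$ exactly when they differ by the $F^\times$-action (cf.\ the proof of \autoref{prop: universal property of the universal tract}), and agree on the multidegree-$0$ subtract $F_J$ exactly when they differ by the $\widehat T(F)$-action (\autoref{prop: universal property of the foundation}); these restriction maps are continuous since for $a\in P_J$ or $a\in F_J$ the evaluation $\ev_a$ pulls back to $\ev_a$ on $\Hom(\widehat P_J,F)$.

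Next I would fix a single free presentation of the chain $F_J\subseteq P_J\subseteq\widehat P_J$. By \autoref{lemma: the universal tract is freely generated over the foundation} and \autoref{thm: decomposition of the representation space} we have $P_J\simeq F_J(x_1,\dots,x_s)$ with $s=n-c(J)$; choosing any $x_0\in\widehat P_J^\times$ with $\deg(x_0)=1$, the grading $\deg\colon\widehat P_J\to\Z$ shows that every unit of $\widehat P_J$ is uniquely $x_0^k$ times a unit of $P_J$ and that $N_{\widehat P_J}$ is generated by $N_{P_J}$, so $\widehat P_J\simeq P_J(x_0)\simeq F_J(x_0,x_1,\dots,x_s)$, compatibly with the inclusion $P_J\hookrightarrow\widehat P_J$. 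Since the free algebra $F_J(x_0,\dots,x_s)$ is the coproduct $F_J\otimes\Funpm(x_0)\otimes\dots\otimes\Funpm(x_s)$ in $\Tracts$, and likewise $F_J(x_1,\dots,x_s)=F_J\otimes\Funpm(x_1)\otimes\dots\otimes\Funpm(x_s)$, applying the preceding proposition with target tract $F$ (and the discrete diagram of coproduct factors) yields homeomorphisms
\[
 \Hom(\widehat P_J,F)\ \cong\ \Hom(F_J,F)\times\prod_{i=0}^{s}\Hom(\Funpm(x_i),F),\qquad
 \Hom(P_J,F)\ \cong\ \Hom(F_J,F)\times\prod_{i=1}^{s}\Hom(\Funpm(x_i),F).
\]
Finally, the evaluation $g\mapsto g(x_i)$ is a homeomorphism $\Hom(\Funpm(x_i),F)\to F^\times$: it is a bijection since $x_i\in\Funpm(x_i)^\times$, it is continuous since it equals $\ev_{x_i}$, and its inverse is continuous because every element of $\Funpm(x_i)^\times$ has the form $\pm x_i^k$ and $F^\times$ is a topological group, so each $\ev_a$ factors continuously through $g(x_i)$. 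Composing gives the asserted homeomorphisms $\upR^w_J(F)\to\ulineGr^w_J(F)\times(F^\times)^{s+1}$ and $\Gr^w_J(F)\to\ulineGr^w_J(F)\times(F^\times)^s$.

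It remains to observe that \eqref{eq:productdiagram} commutes. Under the coherent presentation $\widehat P_J=F_J(x_0,\dots,x_s)\supseteq F_J(x_1,\dots,x_s)=P_J\supseteq F_J$, the restriction $\Hom(\widehat P_J,F)\to\Hom(P_J,F)$ becomes, through the homeomorphisms of the previous step, precisely the projection forgetting the $x_0$-coordinate, and $\Hom(P_J,F)\to\Hom(F_J,F)$ becomes the projection forgetting the remaining $x_1,\dots,x_s$-coordinates; these are exactly the maps in the bottom row of \eqref{eq:productdiagram}, so the square commutes. The point requiring the most care is the coherence used in the second paragraph, namely that the free generators of $\widehat P_J$ over $F_J$ can be chosen to extend those of $P_J$ over $F_J$, together with the identification in the first paragraph of the set-theoretic quotient maps with restriction of morphisms; granting these two bookkeeping facts, the corollary is a formal consequence of \autoref{thm: decomposition of the representation space} and the preceding proposition.
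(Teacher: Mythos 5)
Your proof is correct and follows essentially the same approach as the paper's: decompose $P_J \simeq F_J \otimes \Funpm(x_1,\dotsc,x_s)$ (and likewise $\widehat P_J$) via \autoref{lemma: the universal tract is freely generated over the foundation} and \autoref{thm: decomposition of the representation space}, then apply the preceding proposition to convert the coproduct into a product of $\Hom$-spaces. You supply additional detail that the paper glosses over with the phrase ``and similarly for $\widehat P_J$'', in particular the coherent choice of free generators $x_0,x_1,\dotsc,x_s$ extending those for $P_J$ (using the $\Z$-grading to obtain $\widehat P_J \simeq P_J(x_0)$) and the explicit identification of the set-theoretic quotient maps with restriction of morphisms, both of which are needed to actually verify the commutativity of \eqref{eq:productdiagram} rather than merely assert it.
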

\begin{proof}
    By \autoref{lemma: the universal tract is freely generated over the foundation} we have
    \begin{equation*}
        P_J\simeq F_J(x_1,\ldots,x_s)\simeq F_J\otimes \Funpm(x_1,\ldots,x_s).
    \end{equation*}
    Thus by the preceding proposition we have a homeomorphism
    \begin{equation*}
        \Gr_J^w(F)=\Hom(P_J,F)\simeq\Hom(F_J,F)\times\Hom(\Funpm(x_1,\ldots,x_s),F)=\ulineGr_J^w(F)\times(F^\times)^{s},
    \end{equation*}
    and similarly for $\widehat P_J$.
\end{proof}
\begin{cor}
 Let $F$ be a topological tract and $J\subseteq\Delta^r_n$ an M-convex set.  The spaces $\Gr_J^w(F)$ and $\ulineGr_J^w(F)$ carry the quotient topology of $\upR^w_J(F)$.
\end{cor}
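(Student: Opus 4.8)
The plan is to deduce this from \autoref{cor:product}. Recall that by \autoref{prop: representation spaces for topological tracts}, $\upR^w_J(F)$ is a topological space, and the maps $\pi\colon\upR^w_J(F)\to\Gr^w_J(F)$ and $\upR^w_J(F)\to\ulineGr^w_J(F)$ (as well as $\Gr^w_J(F)\to\ulineGr^w_J(F)$) are continuous by the description of the topologies via evaluation maps, since on the level of representing tracts they are induced by the inclusions $P_J\hookrightarrow\widehat P_J$, $F_J\hookrightarrow\widehat P_J$, and $F_J\hookrightarrow P_J$. So the content to prove is that these surjections are \emph{quotient maps}, i.e., that a subset of the target is open precisely when its preimage is open.

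First I would recall the general fact that if $p\colon X\to Y$ is a continuous surjection which admits a continuous section $s\colon Y\to X$ (so $p\circ s=\id_Y$), then $p$ is automatically a quotient map: if $p^{-1}(U)$ is open in $X$, then $U=s^{-1}(p^{-1}(U))$ is open in $Y$ by continuity of $s$. Even better, the diagram \eqref{eq:productdiagram} in \autoref{cor:product} exhibits $\upR^w_J(F)\to\Gr^w_J(F)$ and $\Gr^w_J(F)\to\ulineGr^w_J(F)$, up to the homeomorphisms provided there, as the coordinate projections $\ulineGr^w_J(F)\times(F^\times)^{s+1}\to\ulineGr^w_J(F)\times(F^\times)^s$ and $\ulineGr^w_J(F)\times(F^\times)^s\to\ulineGr^w_J(F)$. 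A coordinate projection $Z\times T\to Z$ (with $T$ nonempty — here $T=(F^\times)^{s+1-s}=F^\times$ or $T=(F^\times)^s$, and $F^\times$ is nonempty since $1\in F^\times$) is an open map, hence a quotient map; more simply, it has the continuous section $z\mapsto(z,t_0)$ for any fixed basepoint $t_0\in T$ (for instance $t_0=(1,\dots,1)$, using that the $(F^\times)^k$ are topological groups with identity element). Since a homeomorphism carries quotient maps to quotient maps, it follows that $\upR^w_J(F)\to\Gr^w_J(F)$ and $\Gr^w_J(F)\to\ulineGr^w_J(F)$ are quotient maps. Composing two quotient maps that arise this way — or directly noting that the composite $\upR^w_J(F)\to\ulineGr^w_J(F)$ corresponds under the homeomorphisms to the projection $\ulineGr^w_J(F)\times(F^\times)^{s+1}\to\ulineGr^w_J(F)$, which again has a continuous section — shows that $\upR^w_J(F)\to\ulineGr^w_J(F)$ is also a quotient map. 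Hence both $\Gr^w_J(F)$ and $\ulineGr^w_J(F)$ carry the quotient topology induced from $\upR^w_J(F)$.

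There is essentially no obstacle here: the only point requiring a sentence of justification is that the vertical maps in \eqref{eq:productdiagram} really are homeomorphisms (which is the statement of \autoref{cor:product}) and that they are compatible with the quotient maps in the top row and the projections in the bottom row (also asserted in \autoref{cor:product}, since by construction the homeomorphisms commute with the projections onto $\ulineGr^w_J(F)$). I would write the proof as follows.

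\begin{proof}
 By \autoref{cor:product} there is a commutative diagram \eqref{eq:productdiagram} in which the vertical maps are homeomorphisms, the top horizontal maps are $\upR^w_J(F)\to\Gr^w_J(F)$ and $\Gr^w_J(F)\to\ulineGr^w_J(F)$, and the bottom horizontal maps are the coordinate projections. For any nonempty topological space $T$ and any space $Z$, the projection $Z\times T\to Z$ is a quotient map, since it is continuous, surjective, and admits the continuous section $z\mapsto(z,t_0)$ for a fixed point $t_0\in T$; we apply this with $T=(F^\times)^{s}$ or $T=(F^\times)^{s+1}$ and $t_0$ the identity element, which is legitimate because $F^\times$ is nonempty (it contains $1$) and is a topological group. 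Since a homeomorphism transports quotient maps to quotient maps, the maps $\upR^w_J(F)\to\Gr^w_J(F)$ and $\Gr^w_J(F)\to\ulineGr^w_J(F)$ are quotient maps; in particular $\Gr^w_J(F)$ carries the quotient topology of $\upR^w_J(F)$. The same argument applied to the outer rectangle of \eqref{eq:productdiagram} (whose bottom map is the projection $\ulineGr^w_J(F)\times(F^\times)^{s+1}\to\ulineGr^w_J(F)$) shows that $\upR^w_J(F)\to\ulineGr^w_J(F)$ is a quotient map, so that $\ulineGr^w_J(F)$ also carries the quotient topology of $\upR^w_J(F)$.
\end{proof}
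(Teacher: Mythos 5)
Your proof is correct and takes essentially the same route as the paper: both deduce the claim from \autoref{cor:product} by observing that, under the homeomorphisms in \eqref{eq:productdiagram}, the maps in question become coordinate projections, which are quotient maps. The only cosmetic difference is the justification for that last fact: you use the existence of a continuous section $z\mapsto(z,t_0)$, while the paper appeals to coordinate projections being open continuous surjections; both are standard and equivalent here.
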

\begin{proof}
    It follows from \autoref{cor:product} that the natural maps $\upR^w_J(F)\to\Gr_J^w(F)$ and $\upR^w_J(F)\to\ulineGr_J^w(F)$ are open and continuous, which implies the claim.
\end{proof}
\begin{cor}
 Let $F$ be a topological tract, $J\subseteq\Delta^r_n$ an M-convex set with $c(J)$ indecomposable components, and $s=n-c(J)$. Let $\widehat T(F)=F^\times\times T(F)$ be the extended torus acting on $\upR^w_J(F)$. Then the orbits of $F^\times$ and $\widehat T(F)$ are closed in $\upR^w_J(F)$, and the orbits are homeomorphic to $F^\times$ and $(F^\times)^{s+1}$, respectively.   
\end{cor}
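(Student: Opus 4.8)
The plan is to realize both families of orbits as fibers of the natural quotient maps out of $\upR^w_J(F)$, to deduce closedness from the fact that the target spaces are Hausdorff, and to read off the homeomorphism type from \autoref{cor:product}. The one input that is not literally among the axioms is that a topological tract $F$ is itself Hausdorff, so I would establish that first. Indeed, the diagonal $\{(a,b)\in F\times F\mid a=b\}$ coincides with $\{(a,b)\mid a+(-b)\in N_F\}$, which is the preimage of the set $\{(x,y)\in F^2\mid x+y\in N_F\}$ under the map $(a,b)\mapsto(a,-b)$; by (TP4) this set is closed and by (TP2) the map is continuous, so the diagonal is closed. Consequently $F^\times$, every finite power $(F^\times)^k$, and — since $\ulineGr^w_J(F)=\Hom(F_J,F)$ and $\Gr^w_J(F)=\Hom(P_J,F)$ carry the topology of pointwise convergence on the discrete sets $F_J$ and $P_J$ — also $\ulineGr^w_J(F)$ and $\Gr^w_J(F)$ are Hausdorff, so their points are closed.

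Now, by the definitions $\Gr^w_J(F)=\upR^w_J(F)/F^\times$ and $\ulineGr^w_J(F)=\upR^w_J(F)/\widehat T(F)$, the $F^\times$-orbits (resp.\ the $\widehat T(F)$-orbits) in $\upR^w_J(F)$ are exactly the fibers of the continuous quotient maps $q_1\colon\upR^w_J(F)\to\Gr^w_J(F)$ and $q_2\colon\upR^w_J(F)\to\ulineGr^w_J(F)$ (continuity: these are induced by the inclusions $P_J,F_J\hookrightarrow\widehat P_J$ and hence are restriction maps for the pointwise topology; compare also the preceding corollary). Since the targets are Hausdorff, every such fiber is closed in $\upR^w_J(F)$, which gives the closedness assertion. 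For the homeomorphism types I would apply \autoref{cor:product}, which provides homeomorphisms $\upR^w_J(F)\cong\ulineGr^w_J(F)\times(F^\times)^{s+1}$ and $\Gr^w_J(F)\cong\ulineGr^w_J(F)\times(F^\times)^{s}$ compatible with the coordinate projections of diagram \eqref{eq:productdiagram}. Under these identifications $q_2$ becomes the projection $\ulineGr^w_J(F)\times(F^\times)^{s+1}\to\ulineGr^w_J(F)$, whose fiber over a point $p$ is $\{p\}\times(F^\times)^{s+1}\cong(F^\times)^{s+1}$, and $q_1$ becomes a coordinate projection $\ulineGr^w_J(F)\times(F^\times)^{s+1}\to\ulineGr^w_J(F)\times(F^\times)^{s}$ forgetting a single torus factor, whose fibers are translates of that factor and hence homeomorphic to $F^\times$. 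Alternatively, for the $F^\times$-orbits one can argue directly: fixing a tuple $\balpha_0$ in the domain of $\brho$ with $\brho(\balpha_0)\neq 0$, the orbit map $a\mapsto a.\brho$ is continuous by (TP2), injective since $J\neq\varnothing$, and has continuous inverse $\bsigma\mapsto\bsigma(\balpha_0)\cdot\brho(\balpha_0)^{-1}$ by (TP2) and (TP3), so it is a homeomorphism onto the orbit.

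Apart from the (easy) verification that $F$ is Hausdorff, the statement is essentially bookkeeping on top of \autoref{cor:product}: once the orbits are identified with fibers of the coordinate projections, closedness follows from $T_1$-ness of the base and the homeomorphism type is immediate. So the ``main obstacle'' is really just to notice that Hausdorffness of $F$ is needed and to extract it from (TP2) and (TP4); everything else is formal.
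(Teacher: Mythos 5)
Your argument is correct and follows essentially the same route as the paper: the paper likewise deduces everything from \autoref{cor:product} by noting that the fibers of the two coordinate projections in the second row of \eqref{eq:productdiagram} are closed and homeomorphic to $F^\times$ and $(F^\times)^{s+1}$, respectively. The one genuinely useful thing you add is making explicit that a topological tract is Hausdorff: the paper's proof asserts closedness of the coordinate-projection fibers, which silently requires points of $\ulineGr^w_J(F)$ and $(F^\times)^s$ to be closed, and your observation that the diagonal of $F$ equals the preimage of $\{(x,y)\mid x+y\in N_F\}$ under the continuous map $(a,b)\mapsto(a,-b)$ --- hence is closed by \ref{TP2} and \ref{TP4} --- is exactly the lemma that justifies this. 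So the content is the same, but your write-up is slightly more careful about the $T_1$ point; the direct orbit-map argument you give for the $F^\times$-orbits is an optional alternative that the paper does not use.
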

\begin{proof}
    This follows from \autoref{cor:product} because the fibers of the maps $\ulineGr_J^w(F)\times(F^\times)^{s+1} \to \ulineGr_J^w(F)\times(F^\times)^{s}$ and  $\ulineGr_J^w(F)\times(F^\times)^{s+1}\to \ulineGr_J^w(F)$ in the second row of \autoref{eq:productdiagram} are closed and homeomorphic to $F^\times$ and $(F^\times)^{s+1}$, respectively.
\end{proof}

%%%%%%%%%%%%%%%%%%%%%%%%%%%%%%%%%%%%%%%%%%%%%%%%%%%%%%%%%%%%%%%%%%%%%%%%%%%%%%%%%%%%%%%%%%%%%%%%%%%%
%%%%%%%%%%%%%%%%%%%%%%%%%%%%%%%%%%%%%%%%%%%%%%%%%%%%%%%%%%%%%%%%%%%%%%%%%%%%%%%%%%%%%%%%%%%%%%%%%%%%%
\begin{small}
 \bibliographystyle{plain}
 \bibliography{lorentzian}
\end{small}
%%%%%%%%%%%%%%%%%%%%%%%%%%%%%%%%%%%%%%%%%%%%%%%%%%%%%%%%%%%%%%%%%%%%%%%%%%%%%%%%%%%%%%%%%%%%%%%%%%%%%
%%%%%%%%%%%%%%%%%%%%%%%%%%%%%%%%%%%%%%%%%%%%%%%%%%%%%%%%%%%%%%%%%%%%%%%%%%%%%%%%%%%%%%%%%%%%%%%%%%%%%

\end{document}